\newtheorem{satz}{Satz}
\theoremstyle{definition}
\newtheorem{example}{Example}
\newtheorem{remark}[example]{Remark}
\newtheorem{assumption}{Assumption}
\theoremstyle{plain}
\newtheorem{theorem}[satz]{Theorem}
\newtheorem{lemma}{Lemma}
\newcommand*{\defeq}{\mathrel{\vcenter{\baselineskip0.5ex \lineskiplimit0pt
			\hbox{\scriptsize.}\hbox{\scriptsize.}}}%
	=}
\newcommand{\f}[1]{{\mathbf{#1}}}
\newcommand{\subg}{{\operatorname{subG}}}
\newcommand{\sube}{{\operatorname{subE}}}
\newcommand{\Oop}[1]{{\operatorname{\mathcal{O}}\left(#1\right)}}
\newcommand{\one}{\mathbbm{1}}
\newcommand{\normz}[1]{{\left\lVert#1\right\rVert_2}}
\newcommand{\normi}[1]{{\left\lVert#1\right\rVert_\infty}}
\newcommand{\norme}[1]{{\left\lVert#1\right\rVert_1}}
\newcommand{\normeq}[1]{{\left\lVert#1\right\rVert_1^2}}
\newcommand{\normzq}[1]{{\left\lVert#1\right\rVert_2^2}}
\newcommand{\skalar}[2]{{\left\langle#1,#2\right\rangle}}
\newcommand{\normiM}[1]{{\left\lVert#1\right\rVert_{\mathrm{M},\infty}}}
\newcommand{\normzM}[1]{{\left\lVert#1\right\rVert_{\mathrm{M},2}}}
\newcommand{\E}{{\mathbb{E}}}
\newcommand{\R}{{\mathbb{R}}}
\newcommand{\N}{{\mathbb{N}}}
\newcommand{\X}{{\mathbb{X}}}
\newcommand{\Prob}{{\mathbb{P}}}
\newcommand{\sign}{{\operatorname{sign}}}
\newcommand{\diag}{{\operatorname{diag}}}
\newcommand{\supp}{{\operatorname{supp}}}
\newcommand{\init}{{\mathrm{init}}}
\newcommand{\WLH}{{\mathrm{WLPH}}}
\newcommand{\PDW}{{\mathrm{PDW}}}
\newcommand{\ALH}{{\mathrm{ALPH}}}
\newcommand{\Hu}{{\mathrm{H}}}
\newcommand{\loss}{{\mathcal{L}}}
\newcommand{\betaS}{{\beta_{\alpha_n,\mathrm{supp}}^*}}
\newcommand{\Cpm}{m}
\newcommand{\Cm}{{C_{\epsilon,\mathrm{m}}}}
\newcommand{\CXl}{{c_{\f{X},\mathrm{l}}}}
\newcommand{\CXu}{{c_{\f{X},\mathrm{u}}}}
\newcommand{\CXsubs}{{c^2_{\f{X},\mathrm{sub}}}}
\newcommand{\CXsubt}{{c^3_{\f{X},\mathrm{sub}}}}
\newcommand{\CXsubf}{{c^4_{\f{X},\mathrm{sub}}}}
\newcommand{\CXsubu}{{c^u_{\f{X},\mathrm{sub}}}}
\newcommand{\Cbeta}{{C_{\beta}}}
\newcommand{\Cbetas}{{C_{\beta}^2}}
\newcommand{\CXsub}{{c_{\f{X},\mathrm{sub}}}}
\newcommand{\Cinit}{{C_{\mathrm{init}}}}
\newcommand{\Csminf}{{C_{\mathrm{S,\f{X}}}}}
\newcommand{\Csminfs}{{C_{\mathrm{S,\f{X}}}^2}}
\newcommand{\Capprox}{{C_{\mathrm{apx}}}}
\newcommand{\Crscf}{{c^{\mathrm{RSC}}_1}}
\newcommand{\Crscs}{{c^{\mathrm{RSC}}_2}}
\newcommand{\Crsct}{{c^{\mathrm{RSC}}_3}}
\newcommand{\Cgradf}{{c^{\mathrm{Grad}}_1}}
\newcommand{\Cgrads}{{c^{\mathrm{Grad}}_2}}
\newcommand{\Cgradt}{{c^{\mathrm{Grad}}_3}}
\newcommand{\Cgradv}{{c^{\mathrm{Grad}}_4}}
\newcommand{\cPo}{{c^{\mathrm{P}}_1}}
\newcommand{\cPt}{{c^{\mathrm{P}}_2}}
\newcommand{\calpha}{{c_\alpha}}
\newcommand{\CQS}{{C_{\mathrm{Q,S}}}}
\newcommand{\CQograd}{{C_{\mathrm{Q}, \loss}}}
\newcommand{\CQgrad}{{C_{\mathrm{Q}, \loss} \, \Cgrads}}
\newcommand{\Clambda}{{C_{\lambda}}}
\newcommand{\ClambdaL}{{C_{\lambda,\mathrm{L}}}}
\newcommand{\Clambdainit}{{C_{\lambda,\mathrm{init}}}}
\newcommand{\Choeld}{{q}}
\newcommand{\Calpha}{{C_{\alpha}}}
\begin{document}
	
	%
	%
	{\title{Support estimation and sign recovery in high-dimensional heteroscedastic mean regression}
		\author{Philipp Hermann and Hajo Holzmann\footnote{Corresponding author. Prof.~Dr.~Hajo Holzmann,  Fachbereich Mathematik und Informatik, Philipps-Universität Marburg, Hans-Meerweinstr.~6, 35043 Marburg, Germany} \\
			\small{Department of Mathematics and Computer Science}  \\
			\small{Philipps-Universität Marburg} \\
			\small{\{herm, holzmann\}@mathematik.uni-marburg.de}}
		\maketitle
	}
	\begin{abstract}
		A current strand of research in high-dimensional statistics deals with robustifying the methodology with respect to deviations from the pervasive light-tail assumptions. In this paper we consider a linear mean regression model with random design and potentially heteroscedastic, heavy-tailed errors, and investigate support estimation and sign recovery. We use a strictly convex, smooth variant of the Huber loss function with tuning parameter depending on the parameters of the problem, as well as the adaptive LASSO penalty for computational efficiency. For the resulting estimator we show sign-consistency and optimal rates of convergence in the $\ell_\infty$ norm as in the homoscedastic, light-tailed setting. In our simulations, we also connect to the recent literature on variable selection with the thresholded LASSO and false discovery rate control using knockoffs, and indicate the relevance of the Donoho - Tanner transition curve for variable selection.  The simulations illustrate the favorable numerical performance of the proposed methodology.

	\end{abstract}
	\noindent {\small {\itshape Keywords.}\quad convergence rates,  Huber loss function, knockoff filter, robust high-dimensional regression, sign recovery, support estimation, variable selection 
		
		\textit{Running title:} Support estimation in heteroscedastic mean regression
		
		%
		%
		%
		\section{Introduction}
		
		Data sets with a large number of features, often of the same order as or even of larger order than the number of observational repetitions, have become ever more common in applications such as microarray data analysis, functional magnetic resonance imaging or consumer data analysis. In consequence, much methodological research has been done in the area of high-dimensional statistics, and the field has developed rapidly. State of the art expositions are provided in \citet{Wainwright2019}, \citet{hastie2015statistical} and \citet{giraud2014introduction}.  
		
		A current strand of research deals with robustifying the available methodology with respect to deviations from light-tail assumptions in particular on the errors, and sometimes also on the predictors. 
		One common approach is to replace the squared loss function by some other, fixed, robust loss function such as the check function from quantile regression and in particular absolute deviation for the median \citep{belloni2011l1, Fan2014}. However, doing so generally changes the target parameter away from the mean, particularly in the random design regression models with potentially heteroscedastic, asymmetric errors that we shall focus on.  
		
		More specifically, consider the random design linear regression model
		\begin{align} 
			Y_j = \f{X_j^\top} \beta^* + \varepsilon_j, \qquad j=1, \ldots, n, \label{generalregressionmodel}
		\end{align}
		in which the real-valued responses $Y_j$ and the $p$-variate covariates $\f{X_j} \in \R^{p}$ are observed, and $\beta^* \in \R^p$ is the unknown parameter vector.  
		%
		We allow for a random design with heteroscedastic errors, and assume that the pairs $(\f{X_1},\varepsilon_1) ,\dotsc,(\f{X_n}, \varepsilon_n)$ are independent and identically distributed (however, $\f{X_j}$ and $\varepsilon_j$ are allowed to be dependent) with $\E\big[\varepsilon_j \mid \f{X_j}\big]=0$, so that $\E\big[Y_j \,| \, \f{X_j}\big]  = \f{X_j^\top} \beta^*$ is the identified conditional mean. 
		We focus on the high-dimensional case and consider heavy tailed, non sub-Gaussian errors $\varepsilon_j$ but restrict ourselves to light-tailed regressors $\f{X_j}$. 
		
		Results in \citet{lederer2020estimating} imply that for uniformly bounded covariates, if the errors have slightly more than a finite fourth moment the ordinary least squares LASSO estimator retains the rates of convergence known from the sub-Gaussian case. 
		For high-dimensional mean regression under still weaker assumptions, \citet{Fan2017} and \citet{Sun2019} considered LASSO estimates with the Huber loss function \citep{huber1964robust} with parameter $\alpha > 0 $ defined by
		\begin{equation}\label{eq:huberloss}
			\tilde l_\alpha(x) = 
			(2\alpha^{-1} |x| - \alpha^{-2})\, \one{\{|x|>\alpha^{-1}\}} + \,
			x^2 \, \one{\{|x|\leq\alpha^{-1}\}} \,.
		\end{equation}
		To deal with the resulting bias, they let the parameter $\alpha$ depend in a suitable way on sample size and dimension.  \citet{Sun2019} show that if the errors have a finite second moment and the covariates are sub-Gaussian, then for $\alpha \simeq (\log (p)/n)^{\frac{1}{2}}$ the  estimator has the same rates of convergence in $\ell_1$ and $\ell_2$ norms as in the light-tailed case. 
		
		We study support estimation, sign recovery and rates in $\ell_\infty$  norm in this framework. Previously, in robust high-dimensional regression \citet{Fan2014} addressed support estimation for quantile regression. \citet{Loh2017statistical} considered robust mean regression for homoscedastic models with independent covariates $\f{X_j}$ and errors $ \varepsilon_j$, which excludes the heteroscedastic setting that we focus on.   
		In a seminal paper, \citet{Wainwright2009} studied support estimation with the LASSO and introduced the primal-dual witness proof method, see also \citet{zhao2006model}.  
		A line of investigation to which we contribute here aims at providing results on support estimation and sign recovery for general design matrices, and in particular on getting rid of the irrepresentability condition required by the LASSO. To this end, a prominent approach is the use of the adaptive LASSO \citep{zou2006adaptive}. A high-dimensional analysis of the adaptive LASSO in homoscedastic regression models with light-tailed errors is provided in \citet{Zhou2009, van2011adaptive}.
		
		In this paper, in the random design, heteroscedastic regression model \eqref{generalregressionmodel} we show sign-consistency and optimal rates of convergence in the $\ell_\infty$ norm for a computationally feasible estimator with the adaptive LASSO penalty and the following variant of the Huber loss function, sometimes called pseudo Huber loss, 
		\begin{align} \label{lalpha}
			l_\alpha(x) = 2 \alpha^{-2} \Big( \sqrt{1 + \alpha^2 x^2} -1 \Big),
		\end{align}
		as proposed by \citet{charbonnier}. 
		Compared to the Huber loss, $l_\alpha$ is smooth and strictly convex, which facilitates our theoretical analysis. While not exactly equal to $x^2$ it approximates the squared loss sufficiently closely for $|x| \leq \alpha^{-1}$ so that mean estimation remains possible, while for large values of $|x|$ it keeps the robustness of the Huber loss. Our numerical experiments show little difference between the estimators based on the classical and the pseudo Huber loss functions.  For the errors we require only slightly more than second moments. The major novel issue in the proofs is that the supports of the target parameter in the linear mean regression model and its robustified version may differ even for small values of the tuning parameter of the Huber loss function.
		
		In our numerical experiments we connect to the recent literature on variable selection with the knockoff methodology and false discovery rate (FDR) control \citep{barber2015controlling, weinstein2020}. Robust versions of both the LASSO as well as the adaptive LASSO are investigated in combination with knockoffs. We consider simulation settings which are favorable as well as hard for sign recovery as theoretically determined by the Donoho-Tanner threshold \citep{donoho2009}. Overall, the robust version of the adaptive LASSO together with a knockoff threshold shows convincing numerical performance.

		The paper is organized as follows. In Section \ref{sec:estimator} we introduce the estimator and set up some notation. Section \ref{sec:mainresults} contains the main results of the paper. After reporting on numerical experiments in Section \ref{sec:sims}, we present a real-data illustration in Section \ref{sec:realdata}. Section \ref{sec:conclusions} concludes, while main steps of the proofs are presented in Section \ref{sec:proofmainsteps}. 
		Additional technical proofs as well as additional simulations are contained in the supplementary appendix.

		\section{The adaptive LASSO with pseudo Huber loss function}\label{sec:estimator}

		We consider an estimator based on minimizing the pseudo Huber loss function with a weighted LASSO penalty given by 
		
		\begin{align}
			\widehat{\beta}_n^{\,\WLH} \in \underset{\beta \in \R^p}{\arg\min} ~ \bigg( \loss_{n,\alpha_n}^{\,\Hu}\big(\beta\big) +  \lambda_n  \sum_{k=1}^p w_k\,|\beta_k|\bigg) \,  \label{defweightedlassohuber1}
		\end{align}

		with regularization parameter $\lambda_n$, robustification parameter $\alpha_n > 0$ and weights $w_k > 0$ for $k \in \{1,\dotsc,p\}$.
		In \eqref{defweightedlassohuber1}, the empirical loss function $\loss_{n,\alpha}^{\,\Hu}$ associated with the pseudo Huber loss is defined by
		\begin{align}\label{eq:emploss}
			\loss_{n,\alpha}^{\,\Hu}\big(\beta\big) \defeq \frac{1}{n} \sum_{i=1}^{n} l_{\alpha} \big( Y_i - \f{X_i^\top} \beta \big) \,,
		\end{align}
		and $l_\alpha$ is as in \eqref{lalpha}. 
		We shall call  $\widehat{\beta}_n^{\,\WLH}$ the weighted LASSO pseudo Huber estimator.
		It estimates the parameter 
		\begin{align}\label{eq:betaalpha}
			\beta_{\alpha_n}^* \defeq \underset{\beta \in \R^p}{\arg\min} ~ \E\Big[l_{\alpha_n} \big( Y_1 - \f{X_1^\top} \beta \big)\Big]\,.  
		\end{align}
		%
		%
		%
		%
		%
		We stress that the assumptions in \citet{Loh2017statistical}, in particular the assumed independence of $\f{X_1}$ and $\varepsilon_1$ together with a centering of $\f{X_1}$, directly implies that $\beta_{\alpha_n}^*$ is equal to the target parameter $\beta^*$. However, in our heteroscedastic setting in which $\f{X_1}$ and $\varepsilon_1$  are dependent, this is no longer the case, so that it will be necessary to let the regularization parameter $\alpha_n \downarrow 0$.

		
		Later on we assume $\E[\f{X_1} \f{X_1^\top}]$ to be positive definite, hence $\beta_{\alpha_n}^*$ is unique by the strict convexity of $l_{\alpha_n}$. For a suitable initial estimator $\widehat{\beta}_{n}^{\,\init} = (\widehat{\beta}_{n,1}^{\,\init}, \ldots, \widehat{\beta}_{n,p}^{\,\init})^\top$ of $\beta^*$ such as the LASSO Huber estimator from \citet{Fan2017}, choosing the (random) weights  
		\begin{align}\label{eq:weightsadaplassoinitest}
			w_k = \max \big\{ 1/\big| \widehat{\beta}_{n,k}^{\,\init} \big| , 1 \big\}, \qquad k=1, \ldots, p,
		\end{align}
		leads to the adaptive LASSO pseudo Huber estimator $\widehat{\beta}_n^{\,\ALH}$ which we shall focus on. 
		Here, if $\big|\widehat{\beta}_{n,k}^{\,\init}\big| = 0 $ so that formally $w_k = \infty$, we require that $\beta_k=0$.
		%
		%
		
		\smallskip
		
		We shall investigate the sign-consistency as well as the rate of convergence in the $\ell_\infty$ distance of $\widehat{\beta}_n^{\,\ALH}$.
		To this end, let us set up some notation used in the following. Denote the support of the coefficient vector $\beta^*$ and its regularized version $\beta_{\alpha_n}^*$ in \eqref{eq:betaalpha} by
		\begin{alignat*}{2}
			S &\defeq \supp\big(\beta^*\big) = \Big\{ k \in \{1,\dotsc,p\} \mid \beta_{k}^* \neq 0 \Big\} \,, \qquad \qquad &&\ \ \  s  \defeq | S |, 	\\
			S_{\alpha_n} &\defeq \supp\big(\beta_{\alpha_n}^*\big) = \Big\{ k \in \{1,\dotsc,p\} \mid \beta_{\alpha_n,k}^* \neq 0 \Big\} \,, && s_{\alpha_n}  \defeq \big| S_{\alpha_n} \big|, 
		\end{alignat*}
		where $| S |$ is the cardinality of $S$. A major additional issue in our investigation will be that the support $S$ of $\beta^*$, the object of interest, differs from the support $S_{\alpha}$ of $\beta^*_\alpha$, the parameter which is actually estimated. Indeed, even if $\beta^*$ is sparse in the sense that $S$ is of small cardinality, this need not be the case for $\beta^*_\alpha$. We illustrate this numerically at the beginning of the simulations in Section \ref{sec:sims}.
		However, our analysis will show that the adaptive LASSO penalty reliably sets the small superfluous entries of $\beta^*_\alpha$ to zero.  
		
		Various results on support recovery and sign consistency depend, in terms of so-called beta-min conditions, on the smallest absolute value of the entries of $\beta^*$ on its support $S$, which we denote by
		\begin{align}
			\beta_{\min}^* &\defeq \min_{k \in S} \big|\beta_{k}^*\big| \,.
		\end{align}
		%
		
		We shall further employ the following notations. For a subset $A \subseteq \{1, \ldots, p\}$, 
		$\beta_A$ denotes the vector $(\beta_A)_i = \beta_i\, \one \{i \in A\}$, $i \in \{1, \ldots, p\}$, and sometimes also the vector $(\beta_i)_{i \in A} \in \R^{|A|}$, where $|A|$ is the cardinality of $A$. $A^c = \{1, \ldots, p\} \setminus A$ is the complement of $A$. If $Q \in \R^{p \times p}$ is a $p \times p$ - matrix, and  $B \subseteq \{1, \ldots, p\}$ is a further subset, $Q_{AB} \in \R^{|A| \times |B|}$ has entries according to row indices in $A$ and column indices in $B$. The $\ell_1$, $\ell_2$ and $\ell_\infty$ norms of a vector $x$ are denoted by $\norme{x}$, $\normz{x}$, $\normi{x}$, and the corresponding matrix operator norms by $\normiM{M}$ for the $\ell_\infty$ norm, and $\normzM{M}$ for the $\ell_2$ norm, also called spectral norm. We have for $M \in \R^{p \times q}$ that
		\[ \normiM{M} = \max_{x \in \R^q \, , \|x\|_\infty \leq 1} \| M x \|_\infty = \max_{1 \leq i \leq p} \sum_{j=1}^q |M_{i,j}|.\]
		The symbol $a \lesssim b$, where $a$ and $b$ will depend on $n$, $p$ and $|S|$, means that $a$ is smaller than $b$ up to constants not depending on $n$, $p$ and $|S|$, and $a \simeq b$ means that $a$ and $b$ are of the same order, that is $a \lesssim b$ as well as $b \lesssim a$.
		Finally, $\sign(t)$ denotes the sign of a number $t$, that is, $\sign(t) = \one\{t>0\}- \one\{t < 0\}$, and $\sign$ is applied coordinate wise to a vector.   
		
		\section{Sign-consistency and rate of convergence in $\ell_\infty$ norm}\label{sec:mainresults}
		
		
		In this section we state our main results on sign-consistency and convergence rates in the $l_\infty$ norm of the adaptive LASSO pseudo Huber estimator in our setting with heteroscedastic, heavy-tailed and potentially asymmetric errors, focusing on the orders and discarding the constants. We give two versions of the result, the first assuming bounds on the $\ell_\infty$ - error of the initial estimator, the second using only bounds on the $\ell_1$ - and $\ell_2$ - norms.

		To derive our results we adopt the following assumptions from \citet{Fan2017}. 
		\begin{assumption} \label{assfan2017}\hfill
			\begin{itemize}
				\item[(i)] For $\Cpm  = 2$ or $\Cpm  = 3$ and $\Choeld >1$ we have that $
				\E \big[\E\big[|\varepsilon_1|^{\Cpm} \big| \f{X_1}\big]^\Choeld \big] \leq \Cm < \infty 
				$, where  $\Cm>0$ is a positive constant.
				\item[(ii)] For constants $ 0 < \CXl < \CXu$ we have that $0 < \CXl \leq \lambda_{\min} \big( \E\big[\f{X_1}\f{X_1^\top}\big] \big) \leq \lambda_{\max} \big( \E\big[\f{X_1}\f{X_1^\top}\big] \big) \leq \CXu < \infty
				$, where $\lambda_{\min}(A)$ and $\lambda_{\max}(A)$ denote the minimal and maximal eigenvalues of a symmetric matrix $A$.
				\item[(iii)] For any $v \in \R^p \setminus \{\f{0}_p\}$ the variable $v^\top \f{X_1}$ is sub-Gaussian with variance proxy at most $\CXsubs \normzq{v}$, $\CXsubs >0 $, that is $\Prob(|v^\top \f{X_1} | \geq t) \leq 2\, \exp\big(- t^2 /(2\, \CXsubs \normzq{v}) \big)$ for all $t \geq 0$. 
			\end{itemize}

		\end{assumption}
		\begin{remark}[Order of approximation and Assumption \ref{assfan2017}]
			The assumptions are essentially those from \citet{Fan2017}. In (i), we use a weaker moment assumption, but slightly more than the finite second moment as required in \citet{Sun2019}. Assumption \ref{assfan2017} leads to a bound on the approximation error of order
			$$	\normz{\beta_{\alpha_n}^* - \beta^*} \leq  \Capprox \, \alpha_n^{\Cpm-1} \,, $$
			for a constant $\Capprox>0$ specified in Lemma \ref{approximation:error:pseudo:huber} in Section \ref{sec:technicalpreps}. 
			As in \citet[Section 5]{Fan2017} for the loss function from \citet{catoni2012challenging}, in (i) we can only make use of moments up to order $3$ when bounding the approximation error. Assumption \ref{assfan2017}, (i) with the weaker $m=2$ leads to the order $\alpha$, which suffices so that the contribution from the approximation error is of the same order as the rate of estimation in supremum norm. Under the stronger assumption with $m=3$, the approximation order becomes $\alpha^2$ which makes this contribution negligible compared to the overall rate.  
			%
		\end{remark}

		\smallskip
		
		For the first result we assume that the initial estimator $\widehat{\beta}_n^{\,\init}$ in the adaptive LASSO pseudo Huber estimator achieves the following rate in the $\ell_\infty$ norm 
		\begin{align}\label{eq:conrateinitiallinf}
			\normi{\widehat{\beta}_n^{\,\init} - \beta^*} \leq \Cinit   \,\bigg(\frac{\log(p)}{n}\bigg)^\frac{1}{2}	,
		\end{align}
		where $\Cinit \geq 1$.
		The next theorem follows from Lemma \ref{signconsadaptivehuber} together with Lemmas \ref{lemmaassstrictdual(ii)} and \ref{rateofrnS} in Section \ref{sec:proofmainsteps}.
		
		\begin{theorem}[Sign-consistency and rate in the $\ell_\infty$ norm under initial $\ell_\infty$ - bound] \label{signconsadaptivehuberwithinitialellinf}
			In model \eqref{generalregressionmodel} under Assumption \ref{assfan2017}, consider the adaptive LASSO estimator $\widehat{\beta}_n^{\,\ALH}$ with initial estimator $\widehat{\beta}_n^{\,\init}$ assumed to satisfy \eqref{eq:conrateinitiallinf}.
			Further, suppose that 
			\begin{align}
				\normiM{\Big(\E\big[\f{X_1}\f{X_1^\top}\big]_{SS}\Big)^{-1}} \leq \Csminf \,, \label{lemmaassstrictdual(ii)(1)}
			\end{align}
			where $\Csminf>0$ is a positive constant, is also satisfied. Assume that the robustification parameter $\alpha_n$ is chosen of the order 
			\begin{align} \label{rangealphaalhwithinitial}
				\alpha_n \simeq \bigg( \frac{\log(p)}{n}\bigg)^{\frac{1}{2}} ,
			\end{align} 
			and that the regularization parameter $\lambda_n$ is chosen of order 
			$ \lambda_n \simeq \, (\log(p))/n$. 
			If 
			$n \gtrsim  s \log(p)$  
			and if $\beta^*$ satisfies a beta-min condition of order 
			$$\beta_{\min}^* \gtrsim \bigg( \frac{\log(p)}{n}\bigg)^{\frac{1}{2}},$$ 
			then with probability at least 
			\begin{align}\label{eq:theprob}
				1-c_1 \exp(-c_2 n) - \frac{c_3}{p^2}\,,
			\end{align}
			
			%
			%
			where $c_1,c_2,c_3>0$ are suitable constants, the adaptive LASSO pseudo Huber estimator $\widehat{\beta}_n^{\,\ALH}$ as a solution to \eqref{defweightedlassohuber1} with weights \eqref{eq:weightsadaplassoinitest} satisfies
			%
			\begin{align}
				\sign\big(\widehat{\beta}_{n}^{\,\ALH}\big) =  \sign\big(\beta^*\big) ~~~~~~\mathrm{and} ~~~~~~ \normi{\widehat{\beta}_{n}^{\,\ALH} - \beta^*} \lesssim \,\bigg(\frac{\log(p)}{n}\bigg)^\frac{1}{2}	. \label{alhwithinitialaussage} 
			\end{align}
			%
			
		\end{theorem}  
		
		While for the classical LASSO there are results under which the sup-norm rate of \eqref{eq:conrateinitiallinf} is guaranteed \citep{lounici2008, Wainwright2019}, these do not apply to the setting with heteroscedasticity and heavy-tailed errors that we consider here. 
		Therefore, we also present a result which makes use of rates in $\ell_2$ and $\ell_1$ norms for the initial estimator. 
		%
		\begin{align}\label{eq:conrateinitial}
			\normz{\widehat{\beta}_n^{\,\init} - \beta^*} \leq \Cinit \, \sqrt{s} \,\bigg(\frac{\log(p)}{n}\bigg)^\frac{1}{2}\, , \quad  \norme{\widehat{\beta}_n^{\,\init} - \beta^*} \leq \, \Cinit \,  s\, \bigg(\frac{\log(p)}{n}\bigg)^\frac{1}{2}. \end{align}
		Indeed, under Assumption \ref{assfan2017}
		the original LASSO Huber estimator given as a solution of \eqref{defweightedlassohuber1} with Huber loss $\tilde l_\alpha$, weights $w_k=1$, $k=1, \ldots, p$, 
		satisfies \eqref{eq:conrateinitial} for $n \gtrsim s \, \log(p)$ under the scaling $\alpha_n \simeq \big(\frac{\log(p)}{n}\big)^\frac{1}{2}$ of the robustification parameter and the choice of the regularization parameter as in \eqref{eq:conrateinitial}, with probability at least $1 - 3/p$, see \citet[Theorem 8]{Sun2019}. From our results in Section \ref{sec:technicalpreps} it follows that the same is true when using the pseudo Huber loss function $l_\alpha$ instead.

		The next theorem follows from Lemma \ref{signconsadaptivehuberwithinitial1} combined with Lemmas \ref{lemmaassstrictdual(ii)} and \ref{lemmaassstrictdual(i)} in Section \ref{sec:proofmainsteps}.

		\begin{theorem}[Sign-consistency and rate in the $\ell_\infty$ norm under inital $\ell_2$ - bound] \label{signconsadaptivehuberwithinitial}
			In model \eqref{generalregressionmodel} under Assumption \ref{assfan2017}, consider the adaptive LASSO estimator $\widehat{\beta}_n^{\,\ALH}$ with initial estimator $\widehat{\beta}_n^{\,\init}$ assumed to satisfy \eqref{eq:conrateinitial}.
			Further, suppose that \eqref{lemmaassstrictdual(ii)(1)} holds, that 
			the robustification parameter $\alpha_n$ is chosen of the order in 
			\eqref{rangealphaalhwithinitial}
			and that the regularization parameter $\lambda_n$ is chosen of order 
			\begin{align}
				\lambda_n \simeq \,\big|\overline{S}\big|^{1/2} \, \frac{ \log(p)}{ n}, \qquad \text{where} \quad \overline{S} = \Big\{ k \in \{1,\dotsc,p\} \,\Big| \,\big|\widehat{\beta}_{n,k}^{\,\init}\big| > \Cinit  \,\bigg(\frac{\log(p)}{n}\bigg)^\frac{1}{2} \Big\}. \label{lambdanalhwithinitial}
			\end{align}
			If 
			$n \gtrsim  s^2 \log(p)$  
			and if $\beta^*$ satisfies a beta-min condition of order 
			$$\beta_{\min}^* \gtrsim s\, \bigg(\frac{\log(p)}{n}\bigg)^\frac{1}{2},$$ 
			%
			%
			then with probability of at least  \eqref{eq:theprob} 
			the adaptive LASSO pseudo Huber estimator $\widehat{\beta}_n^{\,\ALH}$  again satisfies \eqref{alhwithinitialaussage}. 
			%
			
			If we drop assumption \eqref{lemmaassstrictdual(ii)(1)} but instead have $s \leq \log(p)$, 
			then we retain the sign-consistency in \eqref{alhwithinitialaussage} but only obtain a $\ell_\infty$-rate of order 
			\[ \normi{\widehat{\beta}_{n}^{\,\ALH} - \beta^*} \lesssim  \sqrt{s}\, \bigg(\frac{\log(p)}{n}\bigg)^\frac{1}{2}.\]
		\end{theorem}

		\begin{remark}
			Let us compare the results in Theorems \ref{signconsadaptivehuberwithinitialellinf} and \ref{signconsadaptivehuberwithinitial}.
			The assumption \eqref{lemmaassstrictdual(ii)(1)} in both theorems arises naturally in the primal-dual witness method, see e.g.~\citet{Loh2017}, assumption (21) to Corollary 1, and leads to the fast rate in \eqref{alhwithinitialaussage} under the supremum norm. While this rate and support recovery is achieved in both results,  Theorem  \ref{signconsadaptivehuberwithinitial} requires a stronger beta - min condition and a different, larger choice of the regularization parameter in the adaptive LASSO procedure in order to deal with the slower $\ell_2$  - rate of the initial estimator. Note that the bound \eqref{alhwithinitialaussage} together with the sign consistency implies that 
			\[ \normz{\widehat{\beta}_{n}^{\,\ALH} - \beta^*} \lesssim \sqrt{s}\, \bigg(\frac{\log(p)}{n}\bigg)^\frac{1}{2} \quad \text{and} \quad \norme{\widehat{\beta}_{n}^{\,\ALH} - \beta^*} \lesssim s\, \bigg(\frac{\log(p)}{n}\bigg)^\frac{1}{2},\]
			as for the ordinary LASSO Huber estimator.
			
			The final bound in Theorem \ref{signconsadaptivehuberwithinitial} without condition \eqref{lemmaassstrictdual(ii)(1)}  is as in \citet{Zhou2009}. Somewhat unfortunately, this result requires that $s \leq \log(p)$ and hence is only useful in high dimensions, however, at this stage we were not able to get rid of this restriction.
			
			%
			%
			
			%
		\end{remark}

		\section{Simulations}\label{sec:sims}

		In this section we numerically compare the performance of the classical LASSO (L) and adaptive LASSO (AL) with LASSO as first stage method, both with quadratic loss function, with methods based on the Huber loss $\tilde l_\alpha$ in \eqref{eq:huberloss} and the Pseudo Huber loss function $l_\alpha$ in \eqref{lalpha}. Here, we denote the LASSO Huber estimator by LH, the adaptive LASSO Huber estimator with LH as first stage estimator as ALH, both using the original Huber loss $\tilde l_\alpha$, and the  LASSO Pseudo Huber estimator by LPH as well as the adaptive LASSO Pseudo Huber estimator as ALPH. For this last estimator, we consider both LH and LPH as first stage estimators, resulting in the two step procedures ALPH(LH) and ALPH(LPH). 

		First, effects of light or heavy tails and of heteroscedasticity in a setting which is similar to that in \citet{Fan2017} and favorable for sign recovery are investigated in Section \ref{sec:simsasinfan}. 
		
		Second, both for the LASSO as well as also for the adaptive LASSO, and both for ordinary least squares and Huber losses, in Section \ref{sec:simsnew} we additionally employ the knockoff filter as originally introduced in \citet{barber2015controlling}. The adaptive LASSO seems not to have been previously used together with the knockoff filter. Our setting is similar to that in \citet{weinstein2020}. We use two constellations of $n$, $p$ and $s$, one slightly below, the other above the Donoho - Tanner transition curve, 
		resulting in simulation settings which are favorable respectively hard for sign recovery as theoretically determined by the Donoho-Tanner threshold \citep{donoho2009}. 
		See also  \citet{weinstein2020} for a discussion. 
		While using the knockoff filter results in much improved variable selection properties for the methods involving the ordinary LASSO, the best overall performance in our simulations is achieved by the adaptive LASSO with knockoff filter and Huber loss function.

		To compute the estimators in the simulation we use the functions of the packages \texttt{glmnet} (classic LASSO and adaptive LASSO) and \texttt{hqreg} (LASSO with Huber loss and adaptive LASSO with Huber loss). They have a factor of $1/2$ in the quadratic loss. Further, the definition of the Huber loss includes an additional scaling of $\alpha/2$ in the package \texttt{hqreg}, see \citet{Yi2017}. As a consequence, for the Huber loss the regularization parameter $\lambda$ of the (adaptive) LASSO includes this scaling factor of $\alpha$ as well, therefore we actually displayed $\lambda / \alpha$ for the Huber loss, which needs to be compared to $\lambda$ for the ordinary LASSO and the pseudo Huber loss. 
		To compute the estimator for the pseudo Huber loss, we modified the functions of the package \texttt{hqreg} which were provided on GitHub by \citet{Yi2017}. This package uses a semismooth Newton coordinate descent algorithm, in contrast to the classical coordinate descent algorithm  in \texttt{glmnet} or the iterative local adaptive majorize-minimization (I-LAMM) algorithm in \citet{fan2018}.

		The parameters $\alpha$ and $\lambda$ of the estimators are chosen such that the $\ell_2$ distance of the respective estimation error is minimal. For this purpose we use $100$ independent repetitions, where the errors have a specified distribution, and run through a one- or two-dimensional grid for the parameters in each set. 
		In the adaptive versions of the estimators the parameters of the initial estimators are fixed (and equal to the optimal choices for the LASSO), so that we do not require a four-dimensional grid search for the adaptive LASSO. The resulting choices of the robustification parameter $\alpha$ and the regularization parameter $\lambda$ are displayed in the subsequent tables. Somewhat surprisingly, the tuning parameter for the adaptive version of the estimators differs quite strongly between the LASSO and the estimators based on (pseudo) Huber loss, even for homoscedastic, normally distributed errors.

		
		Next we use these values of the parameters $\lambda$ and $\alpha$ in a Monte-Carlo-Simulation with $1000$ iterations. We compute the false positive rate FPR, the average percentage of false positives among true negatives
		as well as the false negative rate FNR, the proportion of false negatives among true positives 
		together with the $\ell_2$ and $\ell_\infty$ distance between estimates and parameters.
		
		\smallskip

		Let us conclude this introduction to the simulations with a numerical illustration of the effect of the regularization parameter $\alpha$ on the target parameter $\beta_\alpha^*$ in \eqref{eq:betaalpha}. Since an exact numerical evaluation is hard, we use estimates for $\beta^* = (3,3,0,0)$ with the small values $p=4$ and $s=2$ in a large sample of size $n=1000000$ without LASSO penalization. 
		Covariates are independent and standard normally distributed, and we use homoscedastic skewed t-distributed errors, $\widetilde{\varepsilon}_i = Q_i - \E[Q_i]$ with $Q_i \sim \text{St}(0,1,20,3)$. Note that standard errors are of order $0.001$. Mean regression recovers the target parameter $\beta^*$ as
		$$
		\widehat{\beta} = \big(3.005334, 3.002371, -0.000047, -0.000319 \big)^\top
		$$
		while for $\beta^*_\alpha$ with $\alpha = 10000$ we obtain the estimate
		$$
		\widehat{\beta}_\alpha = \big(3.000901, 2.999503, 0.002661, 0.002733 \big)^\top \ ,
		$$
		indicating that the last two entries of $\beta^*_\alpha$ are small but not identically zero. 
		Note that for large $\alpha$, $\widehat{\beta}_\alpha$ approximates the median regression estimate 
		$
		\widehat{\beta}_{\text{med}} = \big(3.002109, 3.000245, 0.003135, 0.003515 \big) \ .
		$
		
		\subsection{Effects of tails and heteroscedasticity}\label{sec:simsasinfan}
		
		We consider the high-dimensional linear regression model \eqref{generalregressionmodel} with  $p=400$ normally distributed covariates $\f{X_1},\dotsc,\f{X_n} \sim \mathcal{N}_{p} \big(\f{0}_{p}, \mathrm{I}_{p} \big)$ and $n=200$ observations, and a parameter vector given by
		\begin{align*}
			\beta^* = \big( 3,\dotsc,3,0,\dotsc,0 \big)^\top
		\end{align*}
		with $S = \supp \big( \beta^* \big) = \{1,\dotsc,20\}$ and $s=|S| = 20$. 
		This scenario is favorable for variable selection, since computing the parameters for the Donoho - Tanner transition curve we have that $\delta = n/p = 0.5$, $\rho_{DT}(0.5) \approx 0.35$, and $\rho = s/n =
		0.1 < 0.35$ is below this curve \citep[Figure 1]{donoho2009}. 
		In the following we discuss different types of errors (light/ heavy tails, symmetric/ asymmetric, homo-/ heteroscedastic). In the homoscedastic case we assume $\varepsilon_i = \widetilde{\varepsilon}_i$ with $\widetilde{\varepsilon}_1,\dotsc,\widetilde{\varepsilon}_n$ independent and identically distributed with $\E[\widetilde{\varepsilon}_1] = 0$ and independent of the covariates $\f{X_1},\dotsc,\f{X_n}$, while in the heteroscedastic case the errors are 
		\begin{align*}
			\varepsilon_i = \frac{1}{\sqrt{3}\,\normzq{\beta^*}} \, \big(\f{X_i}^\top \beta^* \big)^2 \, \widetilde{\varepsilon}_i \,.
		\end{align*}
		Evidently, $(\f{X_1}, \varepsilon_1), \ldots, (\f{X_n}, \varepsilon_n)$ are independent and identically distributed and $\E[\varepsilon_i \, |\, \f{X_i}  ]=0$. Furthermore, the factor $1/ \big(\sqrt{3}\,\normzq{\beta^*} \big)$ implies 
		\begin{align*}
			\E\big[\varepsilon_1^2\big] = \frac{1}{3\,\lVert\beta^*\rVert_2^4 } \, \E\Big[\big(\f{X_1}^\top \beta^* \big)^4\Big] \, \E\big[\widetilde{\varepsilon}_1^{\ 2}\big] = \frac{1}{3\,\lVert\beta^*\rVert_2^4 } \, 3\,\lVert\beta^*\rVert_2^4 ~ \E\big[\widetilde{\varepsilon}_1^{\ 2}\big] =  \E\big[\widetilde{\varepsilon}_1^{\ 2}\big] 
		\end{align*}
		since $\f{X_1}^\top \beta^* \sim \mathcal{N}\big(0,\normzq{\beta^*}\big)$. Hence the homo - and heteroscedastic errors are centered and have the same variance in our simulations. For the  $\widetilde{\varepsilon}_1,\dotsc,\widetilde{\varepsilon}_n$ we consider the following two scenarios. 
		
		\begin{itemize}
			\item[(a)] \textbf{Symmetric errors with light tails.}\\
			$\widetilde{\varepsilon}_i \sim \mathcal{N}(0,4)$ with variance equal to $4$.

			%
			%
			%
			%

			\item[(b)] \textbf{Asymmetric errors with heavy tails.}\\
			$\widetilde{\varepsilon}_i = Q_i -  \E[Q_i]$ with $Q_i \sim \text{St}(0,1,0.6,3)$ skew t-distributed with location parameter $0$, scale parameter $1$, skew parameter $0.6$ and $3$ degrees of freedom. An exact definition can be found in \citet{Azzalini2003} and it is $\E[Q_i] = \big(0.6/\sqrt{1.36}\big)\,\sqrt{3/\pi} \,/ \,\Gamma(3/2)$.
		\end{itemize} 
		We emphasize that the resulting marginal distributions of the overall errors $ \varepsilon_1$ in the  homo - and heteroscedastic frameworks considered above differ strongly: The  heteroscedastic errors have heavier tails but the bulk of the distribution is more concentrated near zero. See Figure \ref{fig:densitieserrors} for density plots. 
		
			\begin{figure}[!h]        
	\centering	
	\includegraphics[width=0.4\linewidth]{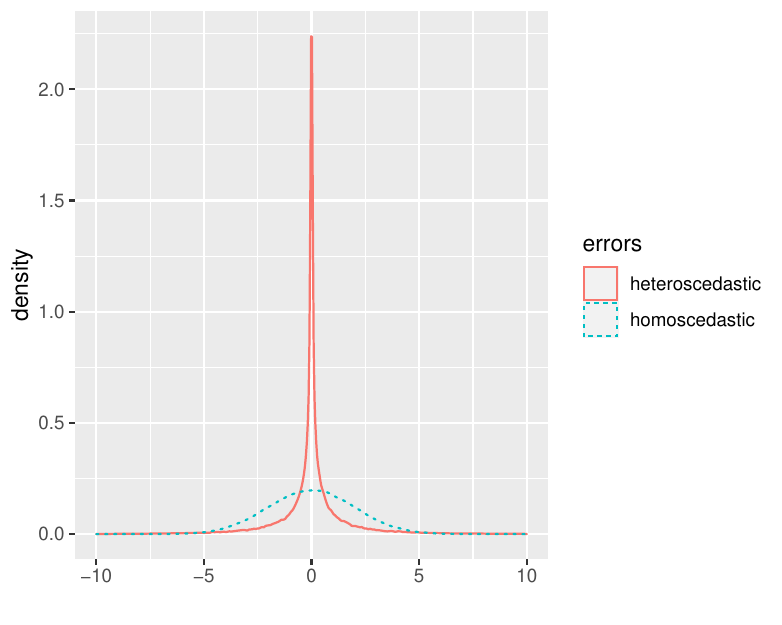}
	\includegraphics[width=0.4\linewidth]{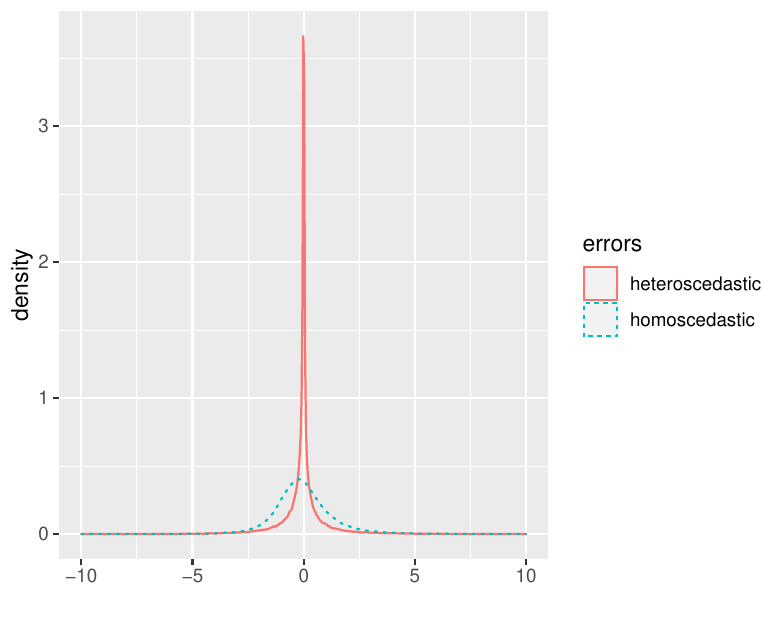}
	\caption{Left figure: Densities for normally distributed (blue) and conditionally normally distributed (red)  errors. Right figure: Densities for skew t-distributed (blue) and conditionally skew t-distributed (red) errors.}\label{fig:densitieserrors}
\end{figure}

		Thus, we expect that robust methods lead to better results for the heteroscedastic than for the homoscedastic setting, as turns out to be the case.


		The simulation results in the above scenarios are displayed in Tables \ref{table:1originalsim} - \ref{table:6originalsim}. Additional results for heavy - tailed but symmetric errors are contained in Section B 
		in the supplementary material.
		Overall we have the following main findings. 
		
		First, for all methods, the  version with adaptive weights is superior to that with ordinary weights for both $\ell_2$ and $\ell_\infty$ estimation error, as well as for the false positive rate (FPR). Second, estimators based on Huber and pseudo Huber loss function perform very similarly. Third, in particular for heteroscedastic errors these estimators have a much better performance than the ordinary LASSO, both in terms of estimation error as well as - in the adaptive versions - for their variable selection properties. Also, the standard errors for the errors in $\ell_2$ - and $\ell_\infty$ - norms displayed in Tables \ref{table:2originalsim} and \ref{table:6originalsim} are smaller.
		Of course, the price to pay is that the additional tuning parameter $\alpha$ has to be chosen. Still, the ordinary LASSO and adaptive LASSO estimator are at least somewhat robust to heteroscedastic errors in that the overall $\ell_2$- and $\ell_\infty$-errors do not increase, see Tables \ref{table:1originalsim} and \ref{table:2originalsim}, which is in line with the theoretical findings in \citet{lederer2020estimating}.


		\begin{table}[!h]
			\centering
			\caption{homoscedastic normally distributed errors. Simulations with $n=200$, $p=400$, $s=20$.}\label{table:1originalsim}
			\begin{tabular}{rrrrrrrr}
				\hline
				& L & AL & LH & LPH & ALH & ALPH (LH) & ALPH (LPH) \\ 
				\hline
				$\lambda$ & 0.154 & 0.695 & 0.157 & 0.150 & 0.066 & 0.067 & 0.069 \\ 
				$\alpha$  &  &  & 0.115 & 0.061 & 0.153  & 0.050 & 0.050 \\ 
				$\ell_2$ norm & 1.66 & 0.93 & 1.67 & 1.67 & 0.83  & 0.83 & 0.83 \\ 
				$\ell_\infty$ norm & 0.60 & 0.41 & 0.61 & 0.61 & 0.38  & 0.38 & 0.38 \\ 
				FPR in \% & 16.14 & 1.83 & 15.76 & 16.32 & 1.06  & 0.99 & 0.97 \\ 
				FNR in \% & 0.00 & 0.00 & 0.00 & 0.00 & 0.00 & 0.00 & 0.00\\ 
				\hline
			\end{tabular}
		\end{table}

		\begin{table}[!h]
			\centering 
			\caption{heteroscedastic, conditionally normally distributed errors. Simulations with $n=200$, $p=400$, $s=20$.}\label{table:2originalsim}
			\begin{tabular}{rrrrrrrr}
				\hline
				& L & AL & LH & ALH & ALPH (LH) \\ 
				\hline
				$\lambda$ & 0.150 & 0.715 & 0.018 & 0.0003 & 0.0003 \\ 
				$\alpha$ &  &  & 3.476 & 57.068 & 55.474 \\ 
				$\ell_2$ norm & 1.65 (0.39) & 0.98 (0.38) & 1.12 (0.24) & 0.23 (0.10) & 0.22 (0.11) \\ 
				$\ell_\infty$ norm & 0.59 (0.14) & 0.41 (0.12) & 0.37 (0.09) & 0.10 (0.04) & 0.09 (0.04) \\ 
				FPR in \% & 15.81 (3.46) & 1.91 (2.81) & 21.47 (1.17) & 0.96  (0.64) & 1.08 (0.74) \\ 
				FNR in \% & 0.00 (0.00) & 0.00 (0.00) & 0.00 (0.00) & 0.00 (0.00) & 0.00 (0.00) \\ 
				\hline
			\end{tabular}
			
		\end{table}
		
		\begin{table}[!h]
			\centering
			\caption{homoscedastic skew t-distributed errors. Simulations with $n=200$, $p=400$, $s=20$.}\label{tab:homoskewt}
			\begin{tabular}{rrrrrrrr}
				\hline
				& L & AL & LH & LPH & ALH & ALPH (LH) & ALPH (LPH) \\ 
				\hline
				$\lambda$ & 0.118 & 0.709 & 0.070 & 0.058 & 0.019 & 0.011 & 0.010\\ 
				$\alpha$ &  &  & 0.863 & 0.871 & 1.124 & 1.842 & 2.184 \\ 
				$\ell_2$ norm & 1.33 & 0.74 & 1.08 & 1.12 & 0.47 & 0.46 & 0.47 \\ 
				$\ell_\infty$ norm & 0.48 & 0.32 & 0.39 & 0.40 & 0.22 & 0.22 & 0.23\\ 
				FPR in \% & 16.37 & 1.56 & 16.48 & 16.49 & 0.52 & 0.63 & 0.53 \\ 
				FNR in \% & 0.01 & 0.01 & 0.00 & 0.00 & 0.02 & 0.00 & 0.01\\ 
				\hline
			\end{tabular}
			
		\end{table}

		\begin{table}[!h]
			\centering
			\caption{heteroscedastic, conditionally skew t-distributed errors. Simulations with $n=200$, $p=400$, $s=20$.}\label{table:6originalsim}
			\begin{tabular}{rrrrrrrr}
				\hline
				& L & AL & LH & ALH & ALPH (LH) \\ 
				\hline
				$\lambda$ & 0.110 & 0.649 & 0.009 & 0.0003 & 0.0002 \\ 
				$\alpha$ &  &  & 7.00 & 33.898 & 50.684 \\ 
				$\ell_2$ norm & 1.28 (0.68) & 0.77 (0.68) & 0.64 (0.15) & 0.11 (0.05) & 0.11 (0.05) \\ 
				$\ell_\infty$ norm & 0.45 (0.19) & 0.32 (0.19) & 0.22 (0.06) & 0.05 (0.02) & 0.05 (0.02) \\ 
				FPR in \% & 16.00 (5.34) & 1.80 (3.35) & 21.18 (1.76) & 0.43 (0.42) & 0.48 (0.45) \\ 
				FNR in \% & 0.02 (0.47) & 0.02 (0.63) & 0.00 (0.00) & 0.00 (0.00) & 0.00 (0.00) \\ 
				\hline
			\end{tabular}
			
		\end{table}

		We also considered a scenario with correlated regressors, which we simulated from a AR(1) model, a common model in econometrics. It gives a correlation between the entries $X_i$ and $X_j$ of the covariates of  $\text{Cor}(X_i, X_j)$ = $\rho^{|i-j|}$ for $i,j = 1,\dotsc,p$, where $\rho$ is the AR(1) - parameter. Thus, $\f{X_1},\dotsc,\f{X_n} \sim \mathcal{N}_{p} \big(\f{0}_{p}, \Sigma \big)$ with $\Sigma \in \R^{p \times p}$, $\Sigma_{ii}=1$ and $\Sigma_{ij}=\rho^{|i-j|}$. For the simulation we took $\rho = 0.9$ thus generating strongly correlated regressors. The results can be found in Tables \ref{tab:corrsimhomo} and \ref{tab:corrsimhetero}. While due to the strong correlation of the regressors overall estimation performance is less strong than in Tables \ref{tab:homoskewt} and \ref{table:6originalsim}, the results are still reasonable and the adaptive LASSO with Huber loss performs best.


		\begin{table}[!h]
			\centering
			\caption{homoscedastic skew t-distributed errors and correlated predictors. Simulations with $n=200$, $p=400$, $s=20$.}\label{tab:corrsimhomo}
			\begin{tabular}{rrrrrr}
				\hline
				& L & AL & LH & ALH \\ 
				\hline
				$\lambda$ & 0.2222 & 0.1542 & 0.0961 & 0.0270 \\ 
				$\alpha$ &  &  & 0.9035 & 0.9255  \\ 
				$\ell_2$ norm & 1.6422 & 1.5942 & 1.2659 & 1.1939 \\ 
				$\ell_\infty$ norm & 0.7818 & 0.7608 & 0.6048 & 0.5780 \\ 
				FPR in \% & 1.6232 & 0.1566 & 1.9413 & 0.0137 \\ 
				FNR in \% & 0.0100 & 0.0200 & 0.0000 & 0.0000 \\ 
				\hline
			\end{tabular}
			
		\end{table}

		\begin{table}[!h]
			\centering
			\caption{heteroscedastic, conditionally skew t-distributed errors and correlated predictors. Simulations with $n=200$, $p=400$, $s=20$.}\label{tab:corrsimhetero}
			\begin{tabular}{rrrrr}
				\hline
				& L & AL & LH & ALH \\ 
				\hline
				$\lambda$ & 4.7848 & 6.0761 & 0.1251 & 0.0380 \\ 
				$\alpha$ & & & 3.0769 & 4.7632 \\ 
				$\ell_2$ norm & 11.4760 & 11.8473 & 3.1094 & 2.5398 \\ 
				$\ell_\infty$ norm  & 5.2745 & 5.3410 & 1.4849 & 1.2584 \\ 
				FPR in \% & 0.1363 & 0.0529 & 2.0805 & 0.1405 \\ 
				FNR in \% & 27.8000 & 38.4650 & 0.0600 & 0.3500 \\ 
				\hline
			\end{tabular}
			
		\end{table}

		\subsection{Incorporating knockoffs}\label{sec:simsnew}

		Here we use a setting similar to \citet{weinstein2020}. Covariates are again independent and standard normally distributed, and heteroscedastic errors are generated as in Section \ref{sec:simsasinfan}.  
		
		Taking advantage of the independence of covariates, we use the methods to generate knockoff variables as described in \citet[Section 2.3 ]{weinstein2020}. For the `counting methods' we generate  a set of $r = 0.3 p$ knockoff variables, thus not pairing individual covariates and their knockoffs, while for the `augmented methods' we use $p$ knockoffs paired to the original covariables. In both cases, the knockoffs are generated as independent and standard normally distributed. As nominal FDR level for the knockoff methods we choose $q=0.05$. For the hyperparameters $\lambda$ and $\alpha$ we use the values as chosen for the estimators. In the counting method we choose the coefficients $\hat{\beta}_j>\tau$ in the tresholding, requiring a strict inequality, since in some simulations the threshold was $\tau = 0$, and using $\hat{\beta}_j \geq \tau$ all coefficients are selected which lead to an increased FPR. 
		
		For the vector of regression coefficients we choose
		\begin{align*}
			\beta^* = \big( 10,\dotsc,10,0,\dotsc,0 \big)^\top \, .
		\end{align*}
		
		For the first scenario we use the parameters $n = 100$, $p = 200$, $s=20$, so that $\delta = n/p = 0.5$, $\rho_{DT}(\delta) \approx 0.35$, and $\rho = s/n = 0.2 < 0.35$ is below the Donoho - Tanner  transition curve \citep[Figure 1]{donoho2009}, so that the setting is favorable for variable selection.

		In Tables \ref{tab1:newsimple}  - \ref{tab4:newsimple} we display the results for the ordinary LASSO, the LASSO with Huber loss, as well as the adaptive LASSO with pseudo Huber loss function. In addition to observations which are similar to those in Section \ref{sec:simsasinfan}, for all three methods, applying the knockoff filter improves the variable selection performance of the methods, substantially for the LASSO - methods, but also for the adaptive LASSO. This has to been seen together with the adaptive LASSO's superior estimation performance. 
		
		\bigskip
		
		Finally, we increase $s=40$ so that $\rho = s/n = 0.4 > 0.35$ is now above the Donoho - Tanner  transition curve, resulting in an unfavorable scenario for variable selection. As displayed in Table 
		\ref{tab6:newsimple}, in combination with the knockoff methodology the adaptive LASSO (with pseudo Huber loss) has a quite reasonable FPR control, which, however, results in more false negatives than for the LASSO methods. Surprisingly, the ordinary LASSO, for which we also used the package \texttt{hqreg} in this setting, has a more favorable performance then the LASSO with Huber loss function.

		\begin{table}[!h]
			\centering
			\caption{homoscedastic normal errors with $n=100$, $p=200$ and $s=20$.}\label{tab1:newsimple}
			\begin{tabular}{crrr}  \hline & L & LH & ALPH(LH) \\   \hline
				$\lambda$ & 0.1321 & 0.0420 & 0.1021 \\   
				$\alpha$ &  & 0.3242 & 0.1000 \\   
				$\ell_2$ norm & 2.7482 & 2.7898 & 1.5024 \\   
				$\ell_\infty$ norm & 1.0193 & 1.0448 & 0.6578 \\   
				FPR in \% & 26.4528 & 25.8134 & 3.6208 \\   
				FNR in \% & 0.0000 & 0.0000 & 0.0000 \\   
				FPR in \% (Knockoff Augmented) & 0.6166 & 0.5287 & 0.2732 \\   
				FNR in \% (Knockoff Augmented) & 0.0000 & 0.0000 & 0.0000 \\   
				FPR in \% (Knockoff Counting) & 1.7414 & 1.6843 & 0.8276 \\   
				FNR in \% (Knockoff Counting) & 0.0000 & 0.0000 & 0.3080 \\    
				\hline\end{tabular}
			
		\end{table}

		\begin{table}[!h]
			\centering
			\caption{heteroscedastic normal errors with $n=100$, $p=200$ and $s=20$.}
			\begin{tabular}{crrr}  \hline & L & LH & ALPH(LH) \\   \hline
				$\lambda$ & 0.1301 & 0.0702 & 0.0032 \\   
				$\alpha$ &  & 2.0652 & 8.2632 \\   
				$\ell_2$ norm & 2.6448 & 2.1385 & 0.6410 \\   
				$\ell_\infty$ norm & 0.9783 & 0.7822 & 0.2693 \\   
				FPR in \% & 25.5842 & 26.7555 & 3.8832 \\   
				FNR in \% & 0.0000 & 0.0000 & 0.0000 \\   
				FPR in \% (Knockoff Augmented) & 0.5894 & 0.5510 & 0.1680 \\   
				FNR in \% (Knockoff Augmented) & 0.0000 & 0.0052 & 0.0052 \\   
				FPR in \% (Knockoff Counting) & 1.7303 & 1.6512 & 0.5126 \\   
				FNR in \% (Knockoff Counting) & 0.0000 & 0.0000 & 0.0000 \\    
				\hline\end{tabular}
			
		\end{table}

		\begin{table}[!h]
			\centering
			\caption{homoscedastic skew t-distributed errors with $n=100$, $p=200$ and $s=20$.}
			\begin{tabular}{crrr}  \hline & L & LH & ALPH(LH) \\   \hline
				$\lambda$ & 0.1061 & 0.0591 & 0.0397 \\   
				$\alpha$ &  & 0.7410 & 0.6000 \\  
				$\ell_2$ norm & 2.1458 & 2.1141 & 0.8876 \\   
				$\ell_\infty$ norm & 0.7940 & 0.7933 & 0.4000 \\   
				FPR in \% & 25.7723 & 25.1944 & 2.5496 \\   
				FNR in \% & 0.0000 & 0.0000 & 0.0000 \\   
				FPR (in \% Knockoff Augmented) & 0.6072 & 0.5664 & 0.2133 \\   
				FNR in \% (Knockoff Augmented) & 0.0000 & 0.0000 & 0.0000 \\   
				FPR in \% (Knockoff Counting) & 1.6208 & 1.6397 & 0.1525 \\   
				FNR in \% (Knockoff Counting) & 0.0000 & 0.0000 & 20.2270 \\    
				\hline\end{tabular}
			
		\end{table}

		\begin{table}[!h]
			\centering
			\caption{heteroscedastic skew t-distributed errors with $n=100$, $p=200$ and $s=20$.}\label{tab4:newsimple}
			\begin{tabular}{crrr}  
				\hline & L & LH & ALPH(LH) \\   \hline
				$\lambda$ & 0.0961 & 0.0671 & 0.0019 \\   
				$\alpha$ &  & 2.3846 & 10.3158 \\   
				$\ell_2$ norm & 2.0060 & 1.3282 & 0.2908 \\   
				$\ell_\infty$ norm & 0.7368 & 0.4869 & 0.1264 \\   
				FPR in \% & 25.0355 & 26.2532 & 2.5040 \\   
				FNR in \% & 0.0000 & 0.0000 & 0.0000 \\   
				FPR in \% (Knockoff Augmented) & 0.6180 & 0.5699 & 0.1581 \\   
				FNR in \% (Knockoff Augmented) & 0.0000 & 0.0000 & 0.0000 \\   
				FPR in \% (Knockoff Counting) & 1.6615 & 1.6523 & 0.3442 \\   
				FNR in \% (Knockoff Counting) & 0.0000 & 0.0000 & 0.0000 \\    
				\hline\end{tabular}
			
		\end{table}

		\begin{table}[!h]
			\centering
			\caption{homoscedastic skew t-distributed errors with $n=100$, $p=200$ and $s=40$}\label{tab6:newsimple}
		\begin{tabular}{crrr}  \hline & L & LH & ALPH(LH) \\   \hline
			$\lambda$ & 0.0440 & 0.0010 & 0.0021 \\   
			$\alpha$ &  & 0.0100 & 7.2105 \\   
			$\ell_2$ norm & 19.5347 & 24.1196 & 19.5423 \\   
			$\ell_\infty$ norm & 5.9852 & 6.9416 & 5.5034 \\   
			FPR in \% & 47.1849 & 38.5621 & 28.3964 \\   
			FNR in \% & 0.3576 & 2.1893 & 2.6331 \\   
			FPR in \% (Knockoff Augmented) & 21.6288 & 16.0540 & 6.2848 \\   
			FNR in \% (Knockoff Augmented) & 8.1360 & 21.4882 & 36.7604 \\   
			FPR in \% (Knockoff Counting) & 14.3911 & 12.4852 & 4.5170 \\   
			FNR in \% (Knockoff Counting) & 2.0311 & 7.7337 & 16.4142 \\    
			\hline\end{tabular}
		
	\end{table}

	\newpage 
	
	\section{Real data example}\label{sec:realdata}

	We revisit the riboflavin data set from \citet{buhlmann2014}. The response variable is the log riboflavin production rate, and as in the mentioned paper we consider a sample of size $n=71$ with $p=4088$ covariates consisting of (log) gene expression levels. See also  \citet{Yi2017} and \citet{tardivel2018}.  
	
	We choose the tuning parameters $\lambda$ and $\alpha$ by 5-fold cross validation, which leads to models with sizes given in the following table. 
	\begin{table}[ht]
		\centering
		\begin{tabular}{rrrrr}  \hline 
			& Lasso & adapt.~Lasso & Lasso Huber & adap.~Lasso Huber \\   \hline 
			CVE (MSE) & 0.2419 & 0.0825 & 0.2215 & 0.3705 \\   
			Number of choosen covariates & 42 & 19 & 16 & 8 \\    \hline
		\end{tabular}
	\end{table}
	
	Note that \citet{buhlmann2014} used 10-fold cross validation, which results in a model with 30 covariates for the ordinary LASSO. 
	The gene \textit{YXLD\_at} which  is identified as significant in \citet{buhlmann2014} is contained in all models together with the gene \textit{ARGF\_at}, all other 40 covariates selected by the classical and hence all 17 selected by the  classical adaptive LASSO differ from the 14 selected by the LASSO with Huber loss. This can be interpreted as a further indication that the gene \textit{YXLD\_at} is significant, while few others are. While the adaptive LASSO with Huber loss functions selects the smallest model, estimation performance as assessed by cross validation is best for the ordinary adaptive LASSO with squared error loss.
	
	In Figures \ref{fig:residulas} and \ref{fig:residulas1} we plot the residuals obtained from the fits with the classical LASSO as well as the LASSO with Huber loss function, the residual plots for the adaptive versions were similar. The plots show departure from normality, and indicate heteroscedasticity as well.

			\begin{figure}[!h]       
	\centering	
	\includegraphics[width=0.9\linewidth]{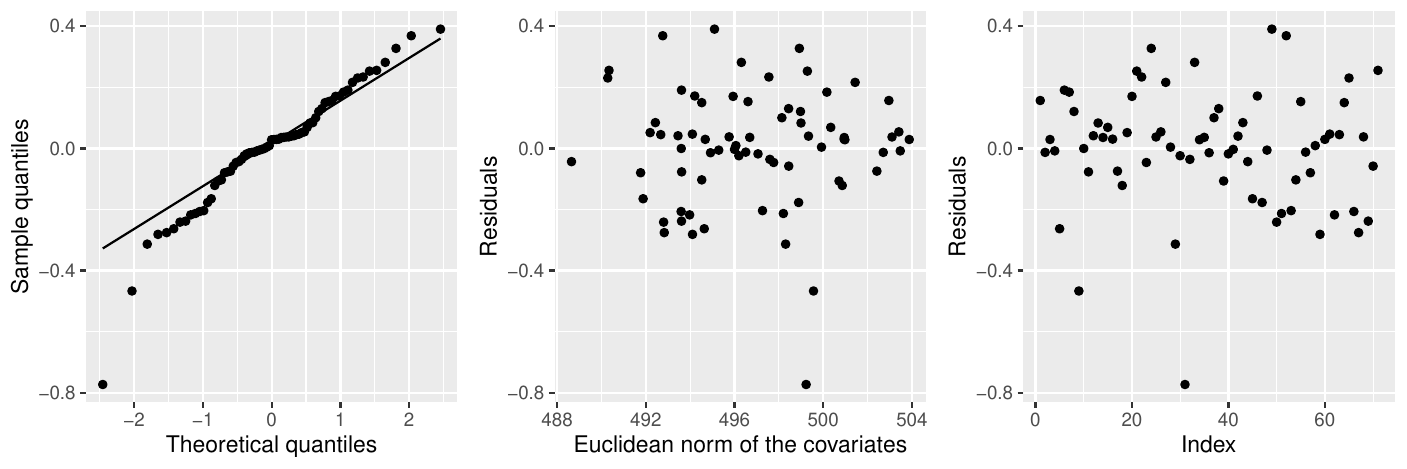}
	\caption{Residuals of LASSO}\label{fig:residulas}
\end{figure}

\begin{figure}[!h]       
	\centering	
	\includegraphics[width=0.9\linewidth]{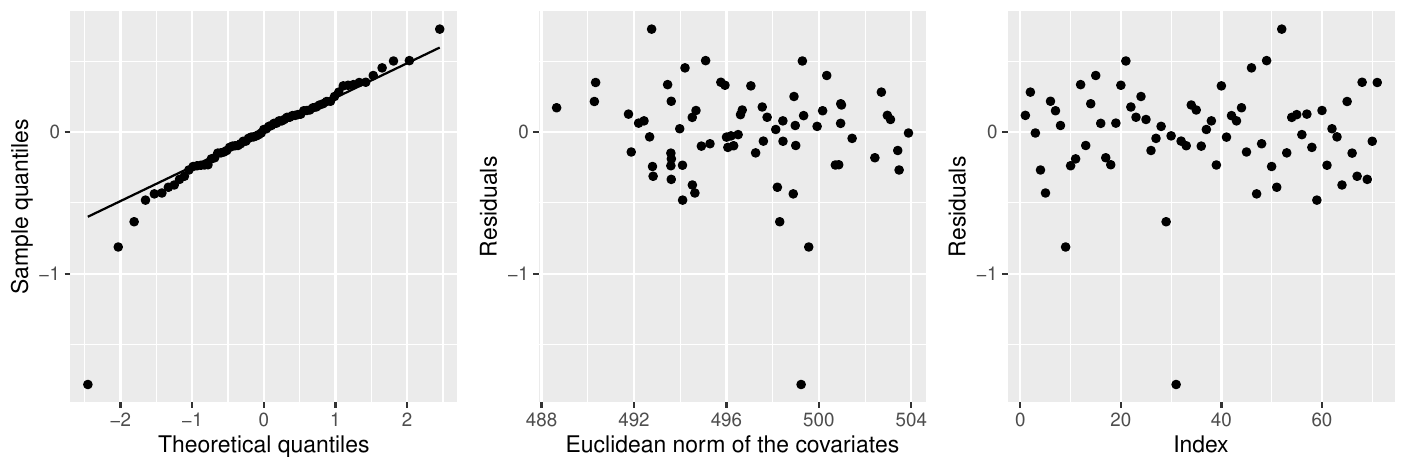}
	\caption{Residuals of LASSO Huber estimate in the real-data example}\label{fig:residulas1}
\end{figure}
	
	%
	%
	
	%
	\section{Conclusions}\label{sec:conclusions}

	In their recent paper, \citet{Sun2019} extended the analysis from \citet{Fan2017} to fixed designs, as well as to conditional moments of $\varepsilon_1$ of order strictly smaller than $2$, in which case they showed that the rates of convergence deteriorate. Results on support estimation, rates of convergence in the $\ell_\infty$ norm together with a data-driven choice of the robustification parameter would be of some interest in this setting as well. 
	Another possible extension or modification of our method would be the use of nonconvex penalty functions such as SCAD as in \citet{Loh2017}, with the methodological aim to avoid a two-stage procedure.
	
	
	The paper was partially motivated by the problem of selecting the random or correlated coefficients in a linear random coefficient regression model
	\begin{equation}\label{eq:linrancoef}
		Y_j = \f{ X_j^\top} \beta_j + \varepsilon_j\,, \qquad j=1, \ldots, n,
	\end{equation}
	where $\beta_j$ are also independent and identically distributed random vectors. 
	Versions of this model have been studied quite intensely - mainly in a nonparametric framework -  in the recent econometrics literature \citep{hoderlein2010analyzing, dunker2017tests}. 
	
	Writing $\f{ \bar X_j} = (1, \f{ X_j^\top})^\top$ and $\theta_j = (\varepsilon_j, \beta_j^\top)^\top $ we may consider the heteroscedastic regression model 
	\[ 	Y_j = \f{\bar X_j^\top} \E[\theta_j] + \f{\bar X_j^\top} \big(\theta_j - \E[\theta_j]\big) \]
	as model for the first moments of the $\beta_j$ and $\varepsilon_j$. A similar, but more complicated heteroscedastic mean regression model - involving products and squares of entries of $\f{\bar X_j}$ - can be designed for the entries of the covariance matrix of the $\theta_j$, where the response $\big(Y_j - \E[Y_j]\big)^2$ also involves the estimation error from the first stage mean regression. Support estimation then selects those coefficients with non-zero variances as well as the correlated pairs of coefficients. 

	Another extension of some interest would be to robustify asymmetric versions of least squares regression \citep{newey1987asymmetric, gu2016high}, that is, high-dimensional expectile regression.

	{\small
		
		\section{Proofs: Main steps}\label{sec:proofmainsteps}
		
		\subsection{Outline of the steps of the proof}
		
		Let us start with an outline of the main steps in the proofs of Theorems \ref{signconsadaptivehuberwithinitialellinf} and \ref{signconsadaptivehuberwithinitial}. Proofs of these main steps are then given in the subsequent sections, while various further technical details are deferred to Section A 
		in the supplementary appendix.
		%
		\begin{enumerate}
			\item \textit{(Reduction to compact parameter set)}. First, in Section \ref{sec:reduc} we reduce the optimization problem in the definition of the estimator $\widehat{\beta}_n^{\,\WLH}$ to a compact set, as is required in the subsequent analysis. 
			\item \textit{(Approximation error)}. Next, in Section \ref{sec:approx} we bound the approximation error $\|\beta^* - \beta^*_{\alpha_n}\|_2$.
			\item \textit{(Primal - dual witness approach under high-level assumptions for general weights)}. For the weighted LASSO estimator with pseudo-Huber loss function, \eqref{defweightedlassohuber1}, in Section \ref{sec:pdwsteps} we list the steps in the primal-dual witness proof method from \citet{wainwright2009information, Loh2017}. Lemma \ref{corsignconsweightedhuber} in Section \ref{sec:pdwsteps} then shows how to implement this approach for general weights under high-level assumptions, resulting in guarantees for support recovery and in $\ell_\infty$ - bounds.  This is the main novel result which shows how to deal with the additional terms arising from the fact that the support $S_{\alpha}$ of $\beta^*_\alpha$, may differ from $S$, the support of $\beta^*$ which is the object of interest.
			\item \textit{(PDW under low-level assumptions for general weights)} In Lemma \ref{signconsweightedhuber} in Section \ref{sec:resultsweightedlasso} we replace the high-level assumption on strict - dual feasibility from step 3.~with a low-level mutual incoherence condition involving the a-priori given weights. We obtain explicit bounds on the $\ell_\infty$ - error and guarantees for support recovery.  
			\item Next in  Lemma \ref{signconsadaptivehuber} we turn to the adaptive LASSO pseudo Huber estimator under bounds for the $\ell_\infty$ - risk of the initial estimator, and check that the weights satisfy the genric conditions from \ref{signconsweightedhuber} with high probability.  Together with Lemmas \ref{lemmaassstrictdual(ii)} and \ref{rateofrnS}, where the latter takes care of the mutual incoherence condition, this proves Theorem \ref{signconsadaptivehuberwithinitialellinf}. 
			\item  Finally in Lemma \ref{signconsadaptivehuberwithinitial1} we analyze the adaptive LASSO pseudo Huber estimator under bounds for the $\ell_2$ - risk of the initial estimator, which then leads to Theorem \ref{signconsadaptivehuberwithinitial}. Since this risk involves the parameter $s^{1/2}$ which then occurs in the choice of the regularization parameter $\lambda_n$ for the adaptive LASSO pseudo Huber estimator, we require an estimator of $s$ which is provided through the set $S$ in Lemma \ref{thresholdingprocedureinitial}. 
		\end{enumerate}

		We shall use the following additional notation.  
		%
		%
		$\X_n = \big(
		\f{X_1},\dotsc,\f{X_n}\big)^\top \in \R^{n \times p}$ is the design matrix, where $\f{X_i} \in \R^p$ is the covariate vector in model \eqref{generalregressionmodel}.
		$w = (w_1, \ldots, w_p)^\top$ denotes the vector of weights from \eqref{defweightedlassohuber1}, and we set $w_{\max}\big(S\big) = \max_{i \in S} w_i$ and $w_{\min}\big(S^c\big) = \min_{i \in S^c} w_i$.
		We denote by $\nabla$ the gradient of a smooth function, and by $\partial$ the subgradient  of a convex function. For vectors $x,y$ of same dimension we denote by $x \odot y$ their Hadamard product (that is coordinate wise product). Inequality signs such as $x < y$ are understood component wise. A diagonal matrix with real entries $d_1, \ldots, d_n$ is denoted by $\diag(d_1, \ldots, d_n)$. 
		\subsection{Preparations}\label{sec:technicalpreps}

		\subsubsection{Reduction of estimator to compact domain}\label{sec:reduc}
		
		Take a constant $\Cbeta \geq \max(1/8, 2\, \normz{\beta^*})$. Our analysis below will then be applied to the estimator restricted to  $\normz{\beta} \leq \Cbeta$, 
		\begin{align}
			\widehat{\beta}_n^{\,\WLH} \in \underset{\beta \in \R^p, \, \normz{\beta} \leq \Cbeta}{\arg\min} ~ \bigg( \loss_{n,\alpha_n}^{\,\Hu}\big(\beta\big) +  \lambda_n  \sum_{k=1}^p w_k\,|\beta_k|\bigg) \,  \label{defweightedlassohuber}
		\end{align}
		Set also
		\begin{align}\label{eq:betaalpha}
			\beta_{\alpha_n}^* \defeq \underset{\beta \in \R^p, \, \normz{\beta} \leq \Cbeta}{\arg\min} ~ \E\Big[l_{\alpha_n} \big( Y_1 - \f{X_1^\top} \beta \big)\Big]\,.  
		\end{align}
		From the error bounds in Lemma \ref{approximation:error:pseudo:huber} for $\beta_{\alpha_n}^*$ and in Theorem \ref{signconsadaptivehuberwithinitial}, first obtained for the definitions in \eqref{defweightedlassohuber} and \eqref{eq:betaalpha}, together with the uniqueness of the global minimum in \eqref{defweightedlassohuber1} \citep{Tibshirani2013}, it follows that the solutions of \eqref{defweightedlassohuber} and \eqref{defweightedlassohuber1} coincide with high probability and that the error bounds also apply to the global optimum. 
		
		However, the error bounds, for example of the approximation error in Lemma \ref{approximation:error:pseudo:huber} or the bound in the primal-dual witness construction in Lemma \ref{lemma:l2:norm:pdw} depend on $C_\beta$ and thus on the norm $\normz{\beta^*}$.

		\bigskip
		
		\subsubsection{Bounding the approximation error}\label{sec:approx}
		
		We start by bounding the approximation error $\|\beta^* - \beta^*_{\alpha_n}\|_2$. 
		
		The proofs of the lemmas in this and the next  subsection, which extend results from \citet{Fan2017} to the pseudo Huber loss function $\l_\alpha$ are provided in the supplement,  Section A.1. 
		See also \citet[Section 5]{Fan2017} for similar extensions to the Cantoni loss function \citep{catoni2012challenging}. 

		To start, straightforward differentiation gives 
		\begin{align}\label{eq:firstderivative}
			l_\alpha'(x) = \frac{2 x}{\sqrt{1+\alpha^2 x^2 }}\, \qquad \text{so that} \qquad \big|l_\alpha'(x)\big| \leq \frac{2 |x|}{\sqrt{\alpha^2 x^2 } } = 2 \alpha^{-1},
		\end{align}
		and 
		\begin{align} \label{eq:secondderivative}
			l_\alpha''(x)  = \frac{2 \alpha^{-3}}{(\alpha^{-2} + x^2)^{3/2}} \qquad \text{so that} \qquad 0 < l_\alpha''(x) \leq  \frac{2 \alpha^{-3}}{(\alpha^{-2})^{3/2}} = 2 \,.
		\end{align}
		In particular $l_\alpha$ is strictly convex. Also note that  $\lim_{\alpha \to 0} l_\alpha(x) = x^2$ for all $x \in \R$.  
		For the empirical loss function in \eqref{eq:emploss} this gives
		\begin{align}\label{eq:gradientgeneral}
			\nabla \loss_{n,\alpha}^{\,\Hu}(\beta) = - \frac{1}{n} \sum_{i=1}^{n} l_\alpha'\big(Y_i-\f{X_i^\top} \beta\big) \f{X_i}, \qquad 	\nabla^2 \loss_{n,\alpha}^{\,\Hu}(\beta) = \frac{1}{n} \sum_{i=1}^{n} l_\alpha''\big(Y_i-\f{X_i^\top} \beta\big) \f{X_i} \f{X_i^\top}\,.
		\end{align}

		The following result is similar to \citet[Theorem 1 and Theorem 6]{Fan2017}, however, we work with a weaker moment assumption.

		\begin{lemma} \label{approximation:error:pseudo:huber}
			Under Assumption \ref{assfan2017} we have for $\beta_{\alpha_n}^*$ in \eqref{eq:betaalpha} that 
			\begin{align}
				\normz{\beta_{\alpha_n}^* - \beta^*} \leq \Capprox  \, \alpha_n^{\Cpm-1} \,, \label{approx:error}
			\end{align} 
			where 
			\begin{align*}
				\Capprox = \frac{5\,2^{\Cpm } \, \CXsub}{\CXl} \, \Bigg[ \Bigg( \frac{q}{q-1} \, \Gamma\bigg(\frac{q}{2(q-1)} \bigg) \Bigg)^\frac{q-1}{q}  (\Cm)^\frac{1}{q}  + \Big(2 \, \big(2 \Cbetas \, \CXsubs\big)^\Cpm \, (2\Cpm)!\, \Gamma(\Cpm) \Big)^\frac{1}{2} \Bigg] \
			\end{align*}
			and $\Gamma(x) = \int_0^\infty t^{x-1} \, \exp(-t) \,dt$, $x>0$, is the gamma function.
		\end{lemma}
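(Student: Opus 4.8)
The target $\beta_{\alpha_n}^*$ of \eqref{eq:betaalpha} minimizes the population loss $L(\beta) \defeq \E\big[l_{\alpha_n}(Y_1 - \f{X_1}^\top\beta)\big]$ over the ball $\{\normz\beta \le \Cbeta\}$; by \eqref{eq:secondderivative} and Assumption \ref{assfan2017}(ii) its integrand is strictly convex and $\nabla^2 L(\beta) = \E\big[l_{\alpha_n}''(Y_1 - \f{X_1}^\top\beta)\,\f{X_1}\f{X_1}^\top\big] \succ 0$, so $\beta_{\alpha_n}^*$ is the unique minimizer. Since $\normz{\beta^*} \le \Cbeta/2 \le \Cbeta$ the point $\beta^*$ is feasible, hence $L(\beta_{\alpha_n}^*) \le L(\beta^*)$, and the segment $\beta_t = \beta^* + t\,\delta$ with $\delta \defeq \beta_{\alpha_n}^* - \beta^*$, $t\in[0,1]$, stays in the ball with $\normz\delta \le 3\Cbeta/2$. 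If $\alpha_n$ is so large that $3\Cbeta/2 \le \Capprox\,\alpha_n^{\Cpm-1}$ there is nothing to prove, so from now on I would assume $\alpha_n$ below that threshold. A second-order Taylor expansion of $L$ about $\beta^*$ together with $L(\beta_{\alpha_n}^*)\le L(\beta^*)$ gives
\[
0 \;\ge\; \langle\nabla L(\beta^*),\delta\rangle + \tfrac12\inf_{t\in[0,1]}\delta^\top\nabla^2 L(\beta_t)\,\delta \;\ge\; -\normz{\nabla L(\beta^*)}\,\normz\delta + \tfrac12\inf_{t\in[0,1]}\delta^\top\nabla^2 L(\beta_t)\,\delta ,
\]
so everything reduces to (a) a lower bound $\inf_t \delta^\top\nabla^2 L(\beta_t)\,\delta \ge \tfrac12\,\CXl\,\normzq\delta$, which yields $\normz\delta \le (4/\CXl)\,\normz{\nabla L(\beta^*)}$, and (b) a bound on $\normz{\nabla L(\beta^*)}$.

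For (a), the point is that $l_{\alpha_n}''(x) = 2(1+\alpha_n^2 x^2)^{-3/2} \ge 1/\sqrt 2$ as soon as $|x|\le 1/\alpha_n$. I would therefore restrict the expectation defining $\delta^\top\nabla^2 L(\beta_t)\,\delta = \E\big[l_{\alpha_n}''(\varepsilon_1 - t\,\f{X_1}^\top\delta)\,(\f{X_1}^\top\delta)^2\big]$ to the event $\{|\varepsilon_1 - t\,\f{X_1}^\top\delta| \le 1/\alpha_n\}$, use $\E[(\f{X_1}^\top\delta)^2] \ge \CXl\normzq\delta$ from Assumption \ref{assfan2017}(ii), and show that the discarded mass $\E\big[(\f{X_1}^\top\delta)^2\,\one\{|\varepsilon_1 - t\,\f{X_1}^\top\delta| > 1/\alpha_n\}\big]$ is at most $\tfrac14\CXl\normzq\delta$ for $\alpha_n$ small. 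This I would split into the contribution of $\{|\varepsilon_1| > 1/(2\alpha_n)\}$, handled by conditioning on $\f{X_1}$, Markov's inequality with Assumption \ref{assfan2017}(i) (only the $\Cpm$-th moment is available, $\Cpm\le 3$), and Hölder with exponents $q,\ q/(q-1)$; and the contribution of $\{|\f{X_1}^\top\delta| > 1/(2\alpha_n)\}$, which is controlled by the sub-Gaussianity of $\f{X_1}^\top\delta$ (variance proxy $\le \CXsubs\normzq\delta \le \CXsubs(3\Cbeta/2)^2$, Assumption \ref{assfan2017}(iii)) via the a priori bound on $\normz\delta$ and sub-Gaussian moment estimates — this is where the term $\big(2(2\Cbetas\CXsubs)^{\Cpm}(2\Cpm)!\,\Gamma(\Cpm)\big)^{1/2}$ in $\Capprox$ arises.

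For (b), since $\E[\varepsilon_1\mid\f{X_1}]=0$ we have $\E[\varepsilon_1\f{X_1}]=0$, so by \eqref{eq:gradientgeneral} (at $\beta^*$, where $Y_1-\f{X_1}^\top\beta^*=\varepsilon_1$)
\[
\nabla L(\beta^*) \;=\; -\,\E\big[l_{\alpha_n}'(\varepsilon_1)\,\f{X_1}\big] \;=\; \E\big[(2\varepsilon_1 - l_{\alpha_n}'(\varepsilon_1))\,\f{X_1}\big].
\]
From \eqref{eq:firstderivative}, $l_\alpha'(x) = 2x(1+\alpha^2x^2)^{-1/2}$, and using $1-(1+y)^{-1/2}\le\min(y,1)$ for $y\ge0$ one gets the elementary pointwise bound $|2x - l_{\alpha_n}'(x)| \le 2|x|\min(\alpha_n^2x^2,1) \le 2\,\alpha_n^{\Cpm-1}|x|^{\Cpm}$ for $\Cpm\in\{2,3\}$. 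Hence for any unit vector $u$,
\[
\langle u,\nabla L(\beta^*)\rangle \;\le\; 2\,\alpha_n^{\Cpm-1}\,\E\big[\,|u^\top\f{X_1}|\,\E[|\varepsilon_1|^{\Cpm}\mid\f{X_1}]\,\big],
\]
and Hölder's inequality with exponents $q,\ q/(q-1)$ bounds the $q$-factor by $\Cm^{1/q}$ (Assumption \ref{assfan2017}(i)) and the other factor $\E[|u^\top\f{X_1}|^{q/(q-1)}]^{(q-1)/q}$ by the sub-Gaussian moment inequality $\E[|Z|^r] \le r(2\sigma^2)^{r/2}\Gamma(r/2)$ with $\sigma^2=\CXsubs$, $r = q/(q-1)$, producing the factor $\big(\tfrac{q}{q-1}\,\Gamma(\tfrac{q}{2(q-1)})\big)^{(q-1)/q}\Cm^{1/q}$. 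Taking the supremum over $u$, substituting into (a), and collecting the residual numerical and $\CXsub$-factors into the prefactor $5\cdot 2^{\Cpm}\CXsub/\CXl$ gives \eqref{approx:error}.

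\textbf{Main obstacle.} The delicate step is the Hessian lower bound in (a): because $l_{\alpha_n}''$ decays like $|x|^{-3}$, the ``good event'' argument only works once $\alpha_n$ is below a threshold, and one must trade the truncation level $1/\alpha_n$ against the merely $\Cpm$-th conditional moments of $\varepsilon_1$ and the sub-Gaussian tails of $\f{X_1}^\top\delta$, with $\normz\delta\le 3\Cbeta/2$ as the only control on $\delta$ available before the estimate is established; handling the ``$\alpha_n$ not small'' regime separately (where the claim is trivial once $\Capprox$ is chosen large enough) keeps the statement clean.
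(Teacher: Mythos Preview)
Your argument is correct in outline and yields a bound of the form $\normz{\beta_{\alpha_n}^*-\beta^*}\le C\,\alpha_n^{\Cpm-1}$ with a constant depending on the same ingredients as $\Capprox$, but the route is genuinely different from the paper's. You Taylor-expand the population pseudo-Huber loss $L$ at $\beta^*$, combine a Hessian lower bound on the segment with a gradient bound at $\beta^*$ (using $|2x-l_{\alpha_n}'(x)|\le 2\alpha_n^{\Cpm-1}|x|^{\Cpm}$ and $\E[\varepsilon_1\f{X_1}]=0$), and handle the failure of the curvature bound for large $\alpha_n$ by falling back on the trivial a priori bound $\normz\delta\le 3\Cbeta/2$. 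The paper instead compares to the \emph{quadratic} loss: it bounds $\CXl\normzq{\beta_{\alpha_n}^*-\beta^*}$ from above by $\E\big[l(Y_1-\f{X_1^\top}\beta_{\alpha_n}^*)-l(Y_1-\f{X_1^\top}\beta^*)\big]$, inserts $\pm l_{\alpha_n}$, uses that $\beta_{\alpha_n}^*$ minimizes the Huber risk to drop that piece, and is left with the difference $g_{\alpha_n}=l-l_{\alpha_n}$ evaluated at two points, to which it applies the mean-value theorem and bounds $|g_{\alpha_n}'(Z)|$ on $\{|Z|\ge\alpha_n^{-1}\}$ and $\{|Z|<\alpha_n^{-1}\}$ separately.

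What each buys: the paper's detour through the quadratic loss gets the curvature $\CXl$ for free (no truncation of $l_{\alpha_n}''$, no threshold on $\alpha_n$, no case split), and the $\Cbeta$-dependent term in $\Capprox$ arises \emph{additively} from expanding $|Z|^{\Cpm}$ with $Z=\varepsilon_1+\f{X_1^\top}(\beta^*-\tilde\beta)$, $\normz{\beta^*-\tilde\beta}\le 2\Cbeta$, which is exactly why the stated constant has the shape it does. In your approach the $\Cbeta$ dependence enters only through the threshold below which the Hessian bound is valid, so the final constant you obtain is $\max\{C_1,(3\Cbeta/2)\alpha_0^{-(\Cpm-1)}\}$ for some $\alpha_0$ you would need to compute; this is of the right order but does not reproduce the displayed formula for $\Capprox$, and your sentence ``this is where the term $\ldots$ arises'' overstates the match. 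Your approach is more modular (separate gradient and curvature estimates), the paper's is sharper on constants and avoids the large-$\alpha_n$ case entirely.
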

		\begin{remark}
			The above result leads to  $\big\|  \beta_{\alpha_n}^* - \beta^* \big\|_2 < \Cbeta/2$ for an (appropriate) choice of $\alpha_n$. Together with the assumption $\normz{ \beta^*} \leq \Cbeta/2$ this will imply that $\beta_{\alpha_n}^*$ is strictly feasible for \eqref{eq:betaalpha}, that is, 
			\begin{equation}\label{eq:strctfeasiblebetaalpha}
				\normz{\beta_{\alpha_n}^*} < \Cbeta,
			\end{equation}
			which we will assume from now on.	
		\end{remark}

		\subsubsection{Restricted strong convexity and properties of derivatives}\label{sec:restrstrognconv}

		Next we show along the lines of \citet[Lemmas 2 and 4]{Fan2017} that restricted strong convexity is satisfied by the pseudo  Huber loss function, which is a core ingredient for the primal-dual witness proof method for general regularized M - estimators \citep{Loh2017}. 

		\begin{lemma} \label{lemma:RSC:pseudo:huber}
			Under Assumption \ref{assfan2017} there exist $\calpha>0$ (depending on $\CXl$, $\CXu$, $\CXsub$ and $\Cbeta $) and $\cPo, \cPt>0$ (depending on $\CXl$ and $\CXsub$) such that for all $\normz{\beta} \leq 4 \Cbeta$, $\normz{\Delta } \leq 8 \Cbeta$	
			and $\alpha \leq \calpha$ with probability at least $1-\cPo \exp(-\cPt n)$ the empirical pseudo Huber loss function $\loss_{n,\alpha}^{\,\Hu}$ satisfies the restricted strong convexity condition
			
			\begin{equation}\label{eq:rsccond}
				\skalar{\nabla \loss_{n,\alpha}^{\,\Hu} (\beta+\Delta) - \nabla \loss_{n,\alpha}^{\,\Hu} (\beta)}{\Delta} \geq \Crscf
				\, \normzq{\Delta} - \Crscs
				\frac{\log(p)}{n} \normeq{\Delta}
			\end{equation}
			
			with
			\begin{align*}
				\Crscf = \frac{\CXl}{16}\,, \qquad \Crscs = \frac{1600\,\CXsubs \, \big(\max\big\{ 4 \CXsub \sqrt{\log(12 \CXsubs/\CXl)},1 \big\} \big)^4}{\CXl} \,.
			\end{align*}
		\end{lemma}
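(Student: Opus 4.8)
The plan is to follow the route of \citet[Lemmas 2 and 4]{Fan2017}, reducing the restricted strong convexity inequality to a uniform lower deviation bound for a truncated empirical second-moment process. Writing $r_i = Y_i - \f{X_i^\top}\beta = \varepsilon_i - \f{X_i^\top}(\beta - \beta^*)$ and $u_i = \f{X_i^\top}\Delta$, the formula for $\nabla \loss_{n,\alpha}^{\,\Hu}$ in \eqref{eq:gradientgeneral} together with the fundamental theorem of calculus gives
\begin{align*}
	\skalar{\nabla \loss_{n,\alpha}^{\,\Hu}(\beta+\Delta) - \nabla \loss_{n,\alpha}^{\,\Hu}(\beta)}{\Delta} = \frac{1}{n} \sum_{i=1}^n u_i^2 \int_0^1 l_\alpha''\big(r_i - (1-t)u_i\big)\,dt \;\geq\; 0,
\end{align*}
the nonnegativity being a restatement of convexity of $l_\alpha$. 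Since by \eqref{eq:secondderivative} the function $l_\alpha''$ is even and strictly decreasing in $|x|$, on the segment joining $r_i - u_i$ and $r_i$ it is minimized at the endpoint of largest modulus, so the integrand is at least $l_\alpha''\big(|\varepsilon_i| + |\f{X_i^\top}(\beta-\beta^*)| + |u_i|\big)$; inserting the closed form $l_\alpha''(x) = 2\,(1+\alpha^2 x^2)^{-3/2}$ reduces the task to bounding $\tfrac{1}{n}\sum_i u_i^2\,(1+\alpha^2 \xi_i^2)^{-3/2}$ from below, where $\xi_i = |\varepsilon_i| + |\f{X_i^\top}(\beta-\beta^*)| + |u_i|$.

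Next I would truncate. Fix constants $M$ and $K$, to be chosen large in terms of the constants of Assumption \ref{assfan2017}, and set $\mathcal{G}_i = \{|\varepsilon_i| \leq M\} \cap \{|\f{X_i^\top}(\beta-\beta^*)| \leq K\} \cap \{|u_i| \leq K\}$. On $\mathcal{G}_i$ we have $\xi_i \leq M + 2K$, so choosing $\calpha$ small enough that $\calpha^2(M+2K)^2$ falls below a fixed numerical threshold forces $(1+\alpha^2\xi_i^2)^{-3/2} \geq 1$ for all $\alpha \leq \calpha$; this is the source of the stated dependence of $\calpha$ on the problem constants. Hence the left-hand side is at least $\tfrac{1}{n}\sum_i u_i^2\,\one\{\mathcal{G}_i\}$, and it remains to prove that, with probability at least $1 - \cPo\exp(-\cPt n)$,
\begin{align*}
	\frac{1}{n}\sum_{i=1}^n u_i^2\,\one\{\mathcal{G}_i\} \;\geq\; \Crscf\,\normzq{\Delta} - \Crscs\,\frac{\log(p)}{n}\,\normeq{\Delta}
\end{align*}
uniformly over $\normz{\beta} \leq 4\Cbeta$ and $\normz{\Delta} \leq 8\Cbeta$. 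For the expectation, $\E[u_i^2] = \Delta^\top \E[\f{X_1}\f{X_1^\top}]\Delta \geq \CXl\normzq{\Delta}$ by Assumption \ref{assfan2017}(ii), while the three truncation corrections $\E[u_i^2\one\{|\varepsilon_i|>M\}]$, $\E[u_i^2\one\{|\f{X_i^\top}(\beta-\beta^*)|>K\}]$ and $\E[u_i^2\one\{|u_i|>K\}]$ are each at most $\eta\,\normzq{\Delta}$ for $M,K$ large: the first by Cauchy--Schwarz, conditional Markov and Assumption \ref{assfan2017}(i) (this is the step that uses only $\Cpm\in\{2,3\}$ conditional moments and where the auxiliary exponent $\Choeld$ enters), the other two by the sub-Gaussianity in Assumption \ref{assfan2017}(iii) and the resulting moment bounds for $(\f{X_i^\top}v)^2$ and $(\f{X_i^\top}v)^4$. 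This yields $\E[u_i^2\one\{\mathcal{G}_i\}] \geq \tfrac{1}{2}\CXl\normzq{\Delta}$.

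The remaining, and main, difficulty is the uniform deviation bound: controlling $\sup\big|\tfrac{1}{n}\sum_i u_i^2\one\{\mathcal{G}_i\} - \E[u_i^2\one\{\mathcal{G}_i\}]\big|$ over $\beta$ and $\Delta$ in the respective balls. Here the summands are uniformly bounded (by $K^2$ on $\mathcal{G}_i$) and depend on $\beta,\Delta$ only through the linear forms $\f{X_i^\top}\Delta$ and $\f{X_i^\top}(\beta-\beta^*)$ under a sub-Gaussian design, so the standard machinery applies: a discretization of the two balls combined with Bernstein's inequality, together with a peeling over the dyadic scales of $\normz{\Delta}$ so that the net resolution can be taken proportional to $\normzq{\Delta}$ (using that $l_\alpha''$ is Lipschitz with constant of order $\alpha$, so the process varies slowly in $\beta$). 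This produces a one-sided bound of the form $\eta\,\normzq{\Delta} + c\,\tfrac{\log(p)}{n}\normeq{\Delta}$ with exponentially small failure probability, in the spirit of \citet[proof of Lemma 4]{Fan2017} and the analogous arguments in \citet{Loh2017}; combining it with the expectation lower bound, absorbing the $\eta\,\normzq{\Delta}$ terms into $\Crscf\normzq{\Delta}$ and tracking the numerical constants delivers $\Crscf = \CXl/16$ and the stated $\Crscs$. I expect this last, empirical-process step to be the most laborious part --- in particular keeping the failure probability at $\cPo\exp(-\cPt n)$ while the slack stays at $\tfrac{\log(p)}{n}\normeq{\Delta}$, and making the argument uniform over the full $\ell_2$-ball in $\beta$.
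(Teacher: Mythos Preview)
Your strategy is the right one and is essentially what the paper does: reduce the left-hand side to a truncated quadratic in $\f{X_i^\top}\Delta$, then invoke the uniform lower bound from \citet[Lemmas~2 and~4]{Fan2017}. Two points, one minor and one substantive.

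The minor slip: $(1+\alpha^2\xi_i^2)^{-3/2}$ is always $\leq 1$, so your claim that it is $\geq 1$ on $\mathcal G_i$ cannot be right. What you actually get on $\mathcal G_i$ with $\alpha \leq \calpha$ is a constant lower bound such as $2^{-3/2}$, hence $l_\alpha''(\xi_i)\geq 1/\sqrt 2$; the paper uses the slightly cleaner $l_\alpha''(c)\geq 1/2$ for $|c|\leq \alpha^{-1}$.

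The substantive issue is your truncation of $|\f{X_i^\top}\Delta|$ at a \emph{fixed} level $K$. The paper (following \citet{Fan2017}) instead truncates at a level proportional to $\normz{\Delta}$, via the auxiliary function
\[
	\varphi_{\tau\normz{\Delta}}(u)=u^2\,\one\{|u|\leq \tau\normz{\Delta}/2\}+(\tau\normz{\Delta}-|u|)^2\,\one\{\tau\normz{\Delta}/2<|u|\leq \tau\normz{\Delta}\},
\]
and then shows
\(
\skalar{\nabla \loss_{n,\alpha}^{\,\Hu}(\beta+\Delta)-\nabla \loss_{n,\alpha}^{\,\Hu}(\beta)}{\Delta}\geq \tfrac{1}{2n}\sum_i \varphi_{\tau\normz{\Delta}}\big(\f{X_i^\top}\Delta\cdot\one\{|Y_i-\f{X_i^\top}\beta|\leq T\}\big)
\)
for all $\alpha\leq 1/(T+8\tau\Cbeta)$. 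This scaled truncation is not cosmetic: it is precisely what makes the empirical-process step in \citet[Lemma~2]{Fan2017} go through. The function $\varphi_{\tau\normz{\Delta}}$ is bounded by $\tau^2\normzq{\Delta}/4$ and $\tau\normz{\Delta}$-Lipschitz, so after symmetrization and Ledoux--Talagrand contraction the supremum reduces to $\tau\normz{\Delta}\cdot \sup_{\Delta}\tfrac{1}{n}\sum_i \epsilon_i \f{X_i^\top}\Delta$, which is what yields the mixed bound $c_1\normz{\Delta}\big(\normz{\Delta}-c_2\sqrt{\log(p)/n}\,\norme{\Delta}\big)$ with failure probability $\cPo\exp(-\cPt n)$.

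With your fixed truncation, the summands $u_i^2\,\one\{\mathcal G_i\}$ are bounded by $K^2$, not by a multiple of $\normzq{\Delta}$, and the indicator kills the Lipschitz property. A Bernstein-plus-net argument on $K^2$-bounded summands gives deviations of order $K^2\sqrt{\log(\text{net size})/n}$, which neither scales with $\normzq{\Delta}$ nor delivers a net of size $p^{O(1)}$ over the full $\ell_2$-ball in $\R^p$; peeling over $\normz{\Delta}$ does not repair this because after rescaling to the unit sphere the truncation level becomes $K/\normz{\Delta}$, which blows up. In short, your sketch of the deviation step does not, as written, produce the required $\eta\normzq{\Delta}+c\,\tfrac{\log(p)}{n}\normeq{\Delta}$ form. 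The fix is exactly the paper's: truncate $|\f{X_i^\top}\Delta|$ at $\tau\normz{\Delta}$ rather than at a fixed $K$, and then the argument of \citet[Lemma~2]{Fan2017} applies verbatim.
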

		
		Restricted strong convexity in particular implies ordinary strong convexity locally on the support $S$ of $\beta^*$. 
		
		\begin{lemma}\label{lem:derivativelem1}
			Under Assumption \ref{assfan2017}, if $\alpha \leq \calpha$ and $n \geq \Crsct s \log(p)$ with $\Crsct=2 \Crscs / \Crscf$ we have with probability at least $1-\cPo \exp(-\cPt n)$ for $\beta \in \R^p$ with $\lVert \beta \rVert_2 \leq 4 \Cbeta$ that
			\begin{align}
				\lambda_{\min}\Big(\big(\nabla^2 \loss_{n,\alpha}^{\,\Hu}(\beta) \big)_{S S} \Big) \geq \frac{\Crscf}{2} = \frac{\CXl}{32}. \label{positvedefinitenesshessian}
			\end{align}
		\end{lemma}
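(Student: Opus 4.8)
The plan is to deduce the local strong convexity bound on $S$ directly from the restricted strong convexity (RSC) condition in Lemma \ref{lemma:RSC:pseudo:huber}, by restricting attention to perturbation directions supported on $S$ and exploiting that on such directions the $\ell_1$-penalty term in \eqref{eq:rsccond} is controlled by the $\ell_2$-term once $n$ is large enough relative to $s \log(p)$.

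First I would fix $\beta \in \R^p$ with $\normz{\beta} \leq 4\Cbeta$ and pick an arbitrary direction $\Delta \in \R^p$ with $\supp(\Delta) \subseteq S$ and, say, $\normz{\Delta} \leq 8\Cbeta$ (the homogeneity of the quadratic form in the Hessian lets us rescale, so the norm restriction is harmless for the eigenvalue statement). Writing $t \mapsto \nabla \loss_{n,\alpha}^{\,\Hu}(\beta + t\Delta)$ and differentiating, one has the exact identity
\begin{align*}
	\skalar{\nabla \loss_{n,\alpha}^{\,\Hu}(\beta+\Delta) - \nabla \loss_{n,\alpha}^{\,\Hu}(\beta)}{\Delta} = \int_0^1 \Delta^\top \nabla^2 \loss_{n,\alpha}^{\,\Hu}(\beta + t\Delta)\, \Delta \; dt,
\end{align*}
but actually it is cleaner to avoid the integral: apply \eqref{eq:rsccond} with the pair $(\beta, \Delta)$ replaced by $(\beta, \tau \Delta)$ for small $\tau>0$, divide by $\tau^2$, and let $\tau \to 0$; since $\loss_{n,\alpha}^{\,\Hu}$ is $C^2$, the left-hand side divided by $\tau^2$ converges to $\Delta^\top \nabla^2 \loss_{n,\alpha}^{\,\Hu}(\beta)\, \Delta$, while the right-hand side becomes $\Crscf \normzq{\Delta} - \Crscs \frac{\log(p)}{n}\normeq{\Delta}$. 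This yields, on the event of probability at least $1 - \cPo \exp(-\cPt n)$ from Lemma \ref{lemma:RSC:pseudo:huber} (which holds uniformly over the relevant $\beta, \Delta$, so a single event suffices),
\begin{align*}
	\Delta^\top \nabla^2 \loss_{n,\alpha}^{\,\Hu}(\beta)\, \Delta \;\geq\; \Crscf \normzq{\Delta} - \Crscs \frac{\log(p)}{n}\normeq{\Delta}.
\end{align*}

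Next, since $\supp(\Delta)\subseteq S$ with $|S| = s$, Cauchy--Schwarz gives $\normeq{\Delta} \leq s\, \normzq{\Delta}$. Substituting and using the hypothesis $n \geq \Crsct s \log(p)$ with $\Crsct = 2\Crscs/\Crscf$, we get $\Crscs \frac{\log(p)}{n} s \leq \Crscf/2$, hence
\begin{align*}
	\Delta^\top \nabla^2 \loss_{n,\alpha}^{\,\Hu}(\beta)\, \Delta \;\geq\; \Big(\Crscf - \Crscs \tfrac{s\log(p)}{n}\Big)\normzq{\Delta} \;\geq\; \tfrac{\Crscf}{2}\normzq{\Delta}.
\end{align*}
Identifying $\Delta^\top \nabla^2 \loss_{n,\alpha}^{\,\Hu}(\beta)\, \Delta$ with $\Delta_S^\top \big(\nabla^2 \loss_{n,\alpha}^{\,\Hu}(\beta)\big)_{SS}\, \Delta_S$ (since $\Delta$ vanishes off $S$) and taking the infimum over unit vectors $\Delta_S \in \R^{s}$ gives $\lambda_{\min}\big((\nabla^2 \loss_{n,\alpha}^{\,\Hu}(\beta))_{SS}\big) \geq \Crscf/2 = \CXl/32$, which is the claim. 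The only mild subtlety — and the single point requiring a word of care rather than routine calculation — is to make sure the rescaling $\Delta \mapsto \tau\Delta$ and the limit $\tau \to 0$ stay within the region $\normz{\Delta} \leq 8\Cbeta$ on which \eqref{eq:rsccond} is asserted, which is immediate for $\tau$ small, and that the probabilistic event in Lemma \ref{lemma:RSC:pseudo:huber} is the same event for all directions (it is, being a uniform statement), so no union bound over $\beta$ or $\Delta$ is needed.
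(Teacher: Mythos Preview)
Your proposal is correct and follows essentially the same route as the paper: both apply the RSC inequality \eqref{eq:rsccond} to $(\beta,\tau\Delta)$ with $\supp(\Delta)\subseteq S$, divide by $\tau^2$ and send $\tau\to 0$ to recover the Hessian quadratic form, then use $\normeq{\Delta}\le s\,\normzq{\Delta}$ together with $n\ge \Crsct s\log(p)$ to conclude. Your remark that the RSC event is uniform over $(\beta,\Delta)$, so no union bound is needed, is exactly right and is implicit in the paper's argument.
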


		The following result gives a bound on the gradient of the empirical loss function, and is analogous to \citet[Lemma 1]{Fan2017}.
		\begin{lemma}\label{ratelinftygradient} 
			Under Assumption \ref{assfan2017} there exist $\Cgradf, \Cgrads>0$ (depending on $\Choeld$, $\Cm$, $\CXsub$ and $\Cbeta$) such that for all $\alpha_n \geq \Cgradf \, \big(\frac{\log(p)}{n}\big)^{\frac{1}{2}}$ with probability at least $1-2/p^2$ the $\ell_\infty$ norm of the gradient of the empirical pseudo Huber loss function at $\beta_{\alpha_n}^*$ is bounded by 
			\begin{align}
				\normi{\nabla \loss_{n,\alpha_n}^{\,\Hu} \big(\beta_{\alpha_n}^*\big)} \leq \Cgrads \, \bigg( \frac{\log(p)}{n}\bigg)^\frac{1}{2} \label{defsetlinftygradient} \,.
			\end{align}
		\end{lemma}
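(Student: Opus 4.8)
\textbf{Proof proposal for Lemma \ref{ratelinftygradient}.}

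The plan is to control $\normi{\nabla \loss_{n,\alpha_n}^{\,\Hu}(\beta_{\alpha_n}^*)}$ by writing the gradient coordinatewise and splitting each summand into a truncated (bounded) part and a remainder that is small in expectation because of the moment assumption. From \eqref{eq:gradientgeneral}, the $k$-th coordinate of the gradient is $-\frac1n\sum_{i=1}^n l_{\alpha_n}'(Y_i - \f{X_i^\top}\beta_{\alpha_n}^*)\, X_{i,k}$. Write $\xi_i \defeq Y_i - \f{X_i^\top}\beta_{\alpha_n}^*$ and $Z_{i,k} \defeq l_{\alpha_n}'(\xi_i)\, X_{i,k}$. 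The first-order optimality condition for $\beta_{\alpha_n}^*$ in \eqref{eq:betaalpha} (which holds because of strict feasibility \eqref{eq:strctfeasiblebetaalpha} established in the remark after Lemma \ref{approximation:error:pseudo:huber}) gives $\E[Z_{1,k}] = 0$ for each $k$, so we are bounding the $\ell_\infty$ norm of a centered empirical mean. The key structural fact, from \eqref{eq:firstderivative}, is the uniform bound $|l_{\alpha_n}'(x)| \le 2\alpha_n^{-1}$; combined with sub-Gaussianity of $X_{1,k}$ (Assumption \ref{assfan2017}(iii) with $v=e_k$, noting $\E[X_{1,k}^2]\le \CXu$), this means $|Z_{i,k}|$ is a product of something bounded by $2\alpha_n^{-1}$ and a sub-Gaussian variable, hence $Z_{i,k}$ itself is sub-Gaussian with variance proxy of order $\alpha_n^{-2}$ — but that alone is too crude because $\alpha_n \to 0$. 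The refinement, exactly as in \citet[Lemma 1]{Fan2017} and \citet[Theorem 8]{Sun2019}, is a Bernstein-type argument: one bounds the \emph{variance} $\E[Z_{i,k}^2] = \E[l_{\alpha_n}'(\xi_1)^2 X_{1,k}^2]$ by $4\,\E[\xi_1^2 X_{1,k}^2/(1+\alpha_n^2\xi_1^2)] \le 4\,\E[\xi_1^2 X_{1,k}^2]$, and this is $\Oop 1$ under Assumption \ref{assfan2017}(i) (second moments) and (ii)--(iii) (using $\xi_1 = \varepsilon_1 + \f{X_1^\top}(\beta^*-\beta_{\alpha_n}^*)$ together with $\normz{\beta^*-\beta_{\alpha_n}^*}\le \Capprox\alpha_n^{\Cpm-1}$ from Lemma \ref{approximation:error:pseudo:huber} and the a-priori bound on $\normz{\beta^*}$), while the almost-sure bound on $|Z_{i,k}|$ is of order $\alpha_n^{-1}$. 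A Bernstein inequality for the sum $\frac1n\sum_i Z_{i,k}$ then yields a tail of the form $\exp(-c\, n t^2)$ for $t \lesssim \alpha_n$ and $\exp(-c\,n\alpha_n t)$ beyond; choosing $t \simeq \sqrt{\log p / n}$ puts us in the sub-Gaussian regime precisely when $\alpha_n \gtrsim \sqrt{\log p/n}$, which is the stated lower bound on $\alpha_n$, and the resulting per-coordinate failure probability is $\le 2/p^{3}$ (say), so a union bound over $k=1,\dots,p$ gives the claimed $1-2/p^2$.

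More concretely, the steps I would carry out are: (1) record the optimality condition $\E[\nabla\loss_\alpha^{\,\Hu,\text{pop}}(\beta_{\alpha_n}^*)] = \f0_p$, i.e.\ $\E[l_{\alpha_n}'(\xi_1)\f{X_1}] = \f0_p$; (2) for fixed $k$, establish the two ingredients for Bernstein — the envelope $\normi{Z_{i,k}}_{\psi}$ of order $\alpha_n^{-1}$ from \eqref{eq:firstderivative}, and the second-moment bound $\E[Z_{1,k}^2]\lesssim 1$, where the implied constant depends on $\Cm$, $\CXsub$, $\CXu$, $\Cbeta$ through the decomposition of $\xi_1$ and Cauchy--Schwarz plus sub-Gaussian moment bounds on $X_{1,k}$; (3) apply a Bernstein (or B-type sub-exponential) concentration inequality to $\frac1n\sum_{i=1}^n Z_{i,k}$ to get, for an absolute constant $c>0$,
\begin{align*}
\Prob\!\left(\Big|\tfrac1n\textstyle\sum_{i=1}^n Z_{i,k}\Big| \ge t\right) \le 2\exp\!\big(-c\, n \min\{t^2, \alpha_n t\}\big);
\end{align*}
(4) set $t = \Cgrads\sqrt{\log p/n}$ with $\Cgrads$ large enough and use $\alpha_n \ge \Cgradf\sqrt{\log p/n}$ so that $\min\{t^2,\alpha_n t\} = t^2 \simeq \log p/n$ once $\Cgradf$ is chosen appropriately relative to $\Cgrads$, giving per-coordinate probability $\le 2 p^{-3}$; (5) take a union bound over $k \in \{1,\dots,p\}$ to conclude $\Prob(\normi{\nabla\loss_{n,\alpha_n}^{\,\Hu}(\beta_{\alpha_n}^*)} \ge \Cgrads\sqrt{\log p/n}) \le 2/p^2$. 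The constants $\Cgradf$ and $\Cgrads$ are then read off from this chain and depend only on $\Choeld,\Cm,\CXsub,\Cbeta$ as asserted (the dependence on $\Choeld$ entering through the moment bound in step (2) when $\Cpm$-th moments with the extra $\Choeld$ power are invoked for higher-moment control of $\xi_1$; for the variance bound alone second moments suffice).

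The main obstacle — and the place where care is genuinely needed rather than routine — is step (2): showing that $\E[Z_{1,k}^2]$ and, more importantly, the sub-exponential Orlicz norm controlling the Bernstein remainder, are bounded by constants \emph{independent of} $\alpha_n$ and of $p,n$. The subtlety is that $\xi_1 = Y_1 - \f{X_1^\top}\beta_{\alpha_n}^*$ is not the model error $\varepsilon_1$ but differs from it by $\f{X_1^\top}(\beta^* - \beta_{\alpha_n}^*)$, a quantity that is itself $\alpha_n$-dependent; one must use the approximation bound of Lemma \ref{approximation:error:pseudo:huber} (so this term is $\Oop{\alpha_n^{\Cpm-1}}$ in $\ell_2$, hence contributes a sub-Gaussian term of vanishing variance proxy), the conditional-moment structure $\E[\varepsilon_1\mid\f{X_1}] = 0$ with $\E[\,\E[|\varepsilon_1|^{\Cpm}\mid\f{X_1}]^{\Choeld}\,] \le \Cm$ from Assumption \ref{assfan2017}(i), and sub-Gaussian tail bounds for $X_{1,k}$, combined via Cauchy--Schwarz / Hölder in a way that keeps every constant explicit. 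This is exactly the computation that \citet{Fan2017} perform for the Huber loss and that here has to be redone for $l_\alpha$, using the clean bound $|l_\alpha'(x)|\le 2\alpha^{-1}$ and $|l_\alpha'(x)|\le 2|x|$ (both immediate from \eqref{eq:firstderivative}); the fact that the pseudo Huber derivative is $C^\infty$ and globally Lipschitz actually makes this step slightly cleaner than for the non-smooth Huber loss, but the bookkeeping on constants is where the real work lies.
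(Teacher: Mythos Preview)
Your proposal is correct and follows essentially the same route as the paper: write the $k$-th gradient coordinate as a centered i.i.d.\ sum (centering via the first-order condition at $\beta_{\alpha_n}^*$), bound the second moment by a constant independent of $\alpha_n$ using $|l_{\alpha_n}'(x)|\le 2|x|$ together with the decomposition $\xi_1=\varepsilon_1+\f{X_1^\top}(\beta^*-\beta_{\alpha_n}^*)$, control higher moments using $|l_{\alpha_n}'(x)|\le 2\alpha_n^{-1}$, apply Bernstein so that the condition $\alpha_n\gtrsim\sqrt{\log p/n}$ places the deviation $t\simeq\sqrt{\log p/n}$ in the sub-Gaussian regime, and finish with a union bound over the $p$ coordinates. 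One small imprecision: you write that ``the almost-sure bound on $|Z_{i,k}|$ is of order $\alpha_n^{-1}$'', but $X_{i,k}$ is only sub-Gaussian, not bounded; the paper handles this by establishing the full moment-growth condition $\E[|Z_{1,k}|^u]\le \tfrac{u!}{2}(2\Cgradv/\alpha_n)^{u-2}\Cgradt$ via the interpolation $|l_{\alpha_n}'(x)|^u\le 4(2\alpha_n^{-1})^{u-2}x^2$, which is exactly the ``Orlicz-norm'' control you allude to in your discussion of step (2).
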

		
		\subsection{Primal-dual witness approach}\label{sec:primaldualwit}

		\subsubsection{Steps in the primal-dual witness approach}\label{sec:pdwsteps}
		The proof of Theorem \ref{signconsadaptivehuberwithinitial} is based on the primal-dual witness (PDW) approach as originally introduced in \citet*{Wainwright2009}. 
		The main novel result in this section for implementing this approach in our setting is Lemma \ref{corsignconsweightedhuber}. It shows in particular  how to deal with the additional terms arising from the fact that the support $S_{\alpha}$ of $\beta^*_\alpha$, may differ from $S$, the support of $\beta^*$ which is the object of interest.
		
		\begin{itemize}
			\item[(i)] Optimize the restricted program 
			\begin{align}
				\widehat{\beta}_n^{\,\PDW} \in \underset{\beta \in \R^{p}, \supp(\beta) \subseteq S,  \normz{\beta} \leq \Cbeta}{\arg\min} ~ \bigg( \loss_{n,\alpha_n}^{\,\Hu} \big(\beta\big) +  \lambda_n  \sum_{k \in S} w_k\,|\beta_k|\bigg)\,, \label{beta:PDW}
			\end{align} 
			where we enforce the constraint that $\supp\big(\widehat{\beta}_n^{\,\PDW}\big) \subseteq S $, and show that all solutions have norm $<  \Cbeta$. 
			\item[(ii)] Choose $\widehat{\gamma} = \widehat{\gamma}_n \in \R^p$ such that (a.) $\widehat{\gamma}_S \in \partial \big\lVert \widehat{\beta}_{n,S}^{\,\PDW} \big\rVert_1$, (b.) it satisfies the zero-subgradient condition
			\begin{align}\label{eq:zerosubgradpdw}
				\nabla \loss_{n,\alpha_n}^{\,\Hu} \big(\widehat{\beta}_n^{\,\PDW}\big) + \lambda_n \big( w \odot \widehat{\gamma} \big) = \f{0}_p\,,
			\end{align}
			and (c.) such that $\widehat{\gamma}_{S^c}$ satisfies the strict dual feasibility condition $\normi{\widehat{\gamma}_{S^c}} < 1$.
			\item[(iii)] Show that $\widehat{\beta}_n^{\,\PDW}$ is also a minimum of the full program \eqref{defweightedlassohuber},
			\begin{align*}
				\underset{\beta \in \R^p, \,\normz{\beta} \leq \Cbeta}{\arg\min} ~ \Bigg( \loss_{n,\alpha_n}^{\,\Hu} \big(\beta\big) +  \lambda_n  \sum_{k =1}^p w_k\,|\beta_k|\Bigg) \,,
			\end{align*} 
			and moreover the uniqueness of the minimizer of this program. 
		\end{itemize}
		%
		
		We shall always assume that 
		\begin{align*}
			\alpha_n \leq \calpha
		\end{align*}
		holds, where $\calpha$ is given in Lemma \ref{lemma:RSC:pseudo:huber}. 
		
		\subsubsection{Solving the PDW construction}\label{sec:solvepdw}
		
		We introduce the notation 
		$$\widehat{Q} \defeq \int_{0}^{1} \nabla^2 \loss_{n,\alpha_n}^{\,\Hu} \Big(\beta_{\alpha_n}^* + t \, \big(\widehat{\beta}_n^{\,\PDW}-\beta_{\alpha_n}^*\big)\Big) dt \,.$$
		\begin{lemma}[Solving the PDW construction] \label{corsignconsweightedhuber}
			Suppose that Assumption \ref{assfan2017} holds and
			that $\beta^*$ satisfies the beta-min condition  
			%
			\begin{align}\label{betamin:fanreg}
				\beta_{\min}^*  > \Capprox  \, \alpha_n^{\Cpm-1}
			\end{align} 
			and that $n \geq \Crsct s \log(p)$.
			Let $\widehat{\beta}_n^{\,\PDW}$ be as in the PDW construction, and suppose that $\widehat{\gamma} \in \R^p$ satisfies $\widehat{\gamma}_S \in \partial \big\lVert \widehat{\beta}_{n,S}^{\,\PDW} \big\rVert_1$ and the zero-subgradient condition \eqref{eq:zerosubgradpdw}. 
			Then,  with probability at least $1-\cPo \exp(-\cPt n)$ the strict dual feasibility condition $\normi{\widehat{\gamma}_{S^c}} < 1$ is equivalent to the condition
			\begin{align}
				&\Bigg| \widehat{Q}_{S^c S} \big(\widehat{Q}_{S S}\big)^{-1} \bigg( \lambda_n \big( w_{S} \odot \widehat{\gamma}_{S} \big) + \Big( \nabla \loss_{n,\alpha_n}^{\,\Hu} \big(\beta_{\alpha_n}^*\big)\Big)_{S} \bigg) - \Big(\nabla \loss_{n,\alpha_n}^{\,\Hu} \big(\beta_{\alpha_n}^*\big)\Big)_{S^c}  \notag \\
				&~~~~~~~~~~~~~~~~~~~~~~~~~~~~+ \Big( \widehat{Q}_{S^c (S_{\alpha_n} \setminus S)} - \widehat{Q}_{S^c S} \big(\widehat{Q}_{S S}\big)^{-1} \widehat{Q}_{S (S_{\alpha_n} \setminus S)}  \Big) \,\beta_{\alpha_n,S_{\alpha_n} \setminus S}^* \Bigg| < w_{S^c}\, \lambda_n \, . \label{strictfeasibilitywh2}
			\end{align}
			Furthermore, if \eqref{strictfeasibilitywh2} is satisfied  we have   that the minimizer $\widehat{\beta}_n^{\,\WLH}$ in \eqref{defweightedlassohuber} is unique and given by $\widehat{\beta}_n^{\,\WLH} = \widehat{\beta}_n^{\,\PDW}$, so that in particular $\supp\big(\widehat{\beta}_n^{\,\WLH}\big) \subseteq \supp\big(\beta^*\big)$, and furthermore that
			\begin{equation}\label{eq:firstgenericnormbound}
				\normi{\widehat{\beta}_n^{\,\WLH}-\beta^*} \leq \phi_{n,\infty},
			\end{equation} 
			where 
			%
			\begin{align}
				\phi_{n,\infty} & = \normiM{\big(\widehat{Q}_{S S}\big)^{-1} }\, \normi{\Big( \nabla \loss_{n,\alpha_n}^{\,\Hu} \big(\beta_{\alpha_n}^*\big)\Big)_{S} } + w_{\max}\big(S\big) \, \lambda_n \,  \normiM{\big(\widehat{Q}_{S S}\big)^{-1} } \notag \\
				&~~~~~~~~~~ + \normi{\beta_{\alpha_n,S}^* - \beta_{S}^*} + \normiM{\big(\widehat{Q}_{S S}\big)^{-1} }\, \normi{\big( \widehat{Q}_{S (S_{\alpha_n} \setminus S)} \,\beta_{\alpha_n,S_{\alpha_n} \setminus S}^*}. \label{eq:boundesterrorwlgeneralpdw}
			\end{align}
			Furthermore, if we have in addition the beta-min condition of the same order  
			\begin{align}
				\beta_{\min}^* &> \phi_{n,\infty}, \label{betaminweightedhuber}
			\end{align}
			then we have the sign-recovery property  $\sign\big(\widehat{\beta}_n^{\,\WLH}\big) = \sign\big(\beta^*\big)$.
		\end{lemma}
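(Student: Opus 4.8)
The plan is to run the three-step primal--dual witness programme of Section~\ref{sec:primaldualwit}, carrying the extra block of coordinates $S_{\alpha_n}\setminus S$ (on which $\beta_{\alpha_n}^*$ is nonzero but $\widehat{\beta}_n^{\,\PDW}$ is forced to vanish) explicitly through the computation. I work on the event of probability at least $1-\cPo\exp(-\cPt n)$ on which Lemma~\ref{lem:derivativelem} applies, so that under $n\geq\Crsct s\log(p)$ the matrix $\widehat{Q}_{SS}$ is invertible with $\lambda_{\min}(\widehat{Q}_{SS})\geq\CXl/32$ and $\normiM{(\widehat{Q}_{SS})^{-1}}\leq 32\sqrt{s}/\CXl$. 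First I note that the beta-min condition \eqref{betamin:fanreg} together with the approximation bound of Lemma~\ref{approximation:error:pseudo:huber} gives $|\beta_{\alpha_n,k}^*|\geq\beta_{\min}^*-\Capprox\,\alpha_n^{\Cpm-1}>0$ for every $k\in S$, hence $S\subseteq S_{\alpha_n}$; thus $S$, $S_{\alpha_n}\setminus S$ and the remaining indices partition $\{1,\dots,p\}$ and $\beta_{\alpha_n}^*$ restricted to $S^c$ equals $\beta_{\alpha_n,S_{\alpha_n}\setminus S}^*$. Since $\widehat{\beta}_n^{\,\PDW}$ is strictly feasible (cf.~\eqref{eq:strctfeasible}), the zero-subgradient condition \eqref{eq:zerosubgradpdw} is exactly its first-order optimality condition for the unconstrained convex objective in \eqref{defweightedlassohuber}.

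The core computation is to linearise \eqref{eq:zerosubgradpdw}. Using the integral mean-value identity $\nabla\loss_{n,\alpha_n}^{\,\Hu}(\widehat{\beta}_n^{\,\PDW})=\nabla\loss_{n,\alpha_n}^{\,\Hu}(\beta_{\alpha_n}^*)+\widehat{Q}\,(\widehat{\beta}_n^{\,\PDW}-\beta_{\alpha_n}^*)$ with $\widehat{Q}$ as in \eqref{defofQhat}, I substitute into \eqref{eq:zerosubgradpdw} and split into the $S$- and $S^c$-blocks. Writing $\Delta=\widehat{\beta}_n^{\,\PDW}-\beta_{\alpha_n}^*$ and using $\Delta_{S^c}=-\beta_{\alpha_n,S_{\alpha_n}\setminus S}^*$ (because $\supp(\widehat{\beta}_n^{\,\PDW})\subseteq S$), the $S$-block reads $(\nabla\loss_{n,\alpha_n}^{\,\Hu}(\beta_{\alpha_n}^*))_S+\widehat{Q}_{SS}\Delta_S-\widehat{Q}_{S(S_{\alpha_n}\setminus S)}\beta_{\alpha_n,S_{\alpha_n}\setminus S}^*+\lambda_n(w_S\odot\widehat{\gamma}_S)=0$, which I solve for $\Delta_S$ by inverting $\widehat{Q}_{SS}$. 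Adding $\beta_{\alpha_n,S}^*-\beta_S^*$ gives a closed form for $\widehat{\beta}_{n,S}^{\,\PDW}-\beta_S^*$; taking $\ell_\infty$ norms, bounding $\normi{w_S\odot\widehat{\gamma}_S}\leq w_{\max}(S)$ since $\widehat{\gamma}_S\in\partial\lVert\,\cdot\,\rVert_1$, and using $\normi{Mv}\leq\normiM{M}\normi{v}$, yields exactly the bound $\phi_{n,\infty}$ of \eqref{eq:boundesterrorwlgeneralpdw}. Substituting the same explicit $\Delta_S$ into the $S^c$-block produces a closed form for $\lambda_n(w_{S^c}\odot\widehat{\gamma}_{S^c})$ equal to the left-hand expression in \eqref{strictfeasibilitywh2} before dividing coordinatewise by $\lambda_n w_i$ (legitimate as $w_i>0$); hence $\normi{\widehat{\gamma}_{S^c}}<1$ is equivalent to \eqref{strictfeasibilitywh2}.

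Assuming \eqref{strictfeasibilitywh2}, so $\normi{\widehat{\gamma}_{S^c}}<1$, the fact that $\widehat{\beta}_{n,S^c}^{\,\PDW}=0$ makes $\widehat{\gamma}$ a genuine subgradient of $\lVert\,\cdot\,\rVert_1$ at $\widehat{\beta}_n^{\,\PDW}$, so \eqref{eq:zerosubgradpdw} certifies that $\widehat{\beta}_n^{\,\PDW}$ is a global minimiser of the convex objective in \eqref{defweightedlassohuber}; being strictly feasible it also solves the constrained program. For uniqueness I argue in the standard way: if $\widetilde{\beta}$ is another minimiser, convexity forces the objective, hence $\loss_{n,\alpha_n}^{\,\Hu}$, to be affine on the segment $[\widehat{\beta}_n^{\,\PDW},\widetilde{\beta}]$; since $l_{\alpha_n}''>0$ this forces $\f{X_i}^\top(\widetilde{\beta}-\widehat{\beta}_n^{\,\PDW})=0$ for all $i$, whence $\nabla\loss_{n,\alpha_n}^{\,\Hu}(\widetilde{\beta})=\nabla\loss_{n,\alpha_n}^{\,\Hu}(\widehat{\beta}_n^{\,\PDW})$ and, comparing zero-subgradient conditions (with $w_i>0$), the associated subgradients coincide; as this common subgradient has sup-norm $<1$ on $S^c$ while it must equal $\sign(\widetilde{\beta}_j)$ wherever $\widetilde{\beta}_j\neq0$, we get $\widetilde{\beta}_{S^c}=0$, and then strict convexity of $\loss_{n,\alpha_n}^{\,\Hu}$ on $\{\beta:\supp(\beta)\subseteq S\}$ (Lemma~\ref{lem:derivativelem1}) yields $\widetilde{\beta}=\widehat{\beta}_n^{\,\PDW}$. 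Thus $\widehat{\beta}_n^{\,\WLH}=\widehat{\beta}_n^{\,\PDW}$ is the unique minimiser with $\supp(\widehat{\beta}_n^{\,\WLH})\subseteq S=\supp(\beta^*)$, and the bound $\normi{\widehat{\beta}_n^{\,\WLH}-\beta^*}\leq\phi_{n,\infty}$ follows because off $S$ both vanish. Finally, under the additional condition \eqref{betaminweightedhuber}, for $k\in S$ we have $|\widehat{\beta}_{n,k}^{\,\WLH}-\beta_k^*|\leq\phi_{n,\infty}<\beta_{\min}^*\leq|\beta_k^*|$, so the sign is preserved and the entry is nonzero, while for $k\in S^c$ both entries vanish, giving $\sign(\widehat{\beta}_n^{\,\WLH})=\sign(\beta^*)$.

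The step I expect to be the main obstacle is the consistent bookkeeping of the $S_{\alpha_n}\setminus S$ block through the $S$- and $S^c$-blocks of the linearised KKT system, so that the correction $\big(\widehat{Q}_{S^c(S_{\alpha_n}\setminus S)}-\widehat{Q}_{S^cS}(\widehat{Q}_{SS})^{-1}\widehat{Q}_{S(S_{\alpha_n}\setminus S)}\big)\beta_{\alpha_n,S_{\alpha_n}\setminus S}^*$ in \eqref{strictfeasibilitywh2} and the term $\normiM{(\widehat{Q}_{SS})^{-1}}\normi{\widehat{Q}_{S(S_{\alpha_n}\setminus S)}\beta_{\alpha_n,S_{\alpha_n}\setminus S}^*}$ in $\phi_{n,\infty}$ come out with exactly the right signs and factors; this is the genuinely new ingredient relative to the classical PDW argument, in which $\beta^*$ itself is the target and no such block arises. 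The other delicate point is the uniqueness step: because $p>n$ the loss $\loss_{n,\alpha_n}^{\,\Hu}$ is not strictly convex globally, so one cannot conclude uniqueness directly and must combine the ``affine along the segment'' observation (using strict positivity of $l_{\alpha_n}''$) with the strong convexity of $\loss_{n,\alpha_n}^{\,\Hu}$ restricted to the $S$-subspace supplied by Lemma~\ref{lem:derivativelem1}.
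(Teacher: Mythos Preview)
Your proposal is correct and follows essentially the same route as the paper: establish $S\subseteq S_{\alpha_n}$ from \eqref{betamin:fanreg} and Lemma~\ref{approximation:error:pseudo:huber}, linearise \eqref{eq:zerosubgradpdw} via $\widehat{Q}$, solve the $S$-block for $\widehat{\beta}_{n,S}^{\,\PDW}-\beta_{\alpha_n,S}^*$ and substitute into the $S^c$-block to obtain \eqref{strictfeasibilitywh2} and \eqref{eq:boundesterrorwlgeneralpdw}, then conclude uniqueness and sign-consistency. Your uniqueness argument (affine along the segment forces equal fitted values, hence equal gradients and a shared subgradient strictly below $1$ on $S^c$) is exactly the content of the paper's appeal to \citet[Lemma~1(ii)]{Tibshirani2013}, just spelled out directly; one small point to make explicit is that if a second minimiser $\widetilde{\beta}$ happened to lie on the boundary $\normz{\widetilde{\beta}}=\Cbeta$, you should apply the zero-subgradient reasoning to an interior point of the segment (which is strictly feasible since $\widehat{\beta}_n^{\,\PDW}$ is) and pass to the limit.
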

		The beta-min condition \eqref{betamin:fanreg} is required so that the approximation error  $\normi{\beta_{\alpha_n}^* -  \beta^*}$ is small, which implies that the support $S_{\alpha_n}$ of $\beta_{\alpha_n}^*$ contains the support $S$ of $\beta^*$. 
		Later, we shall choose $\alpha_n$ and achieve a rate $\phi_{n,\infty}$, which in any case also includes an approximation term $\normi{\beta_{\alpha_n,S}^* - \beta_{S}^*}$, such that  \eqref{betaminweightedhuber} implies \eqref{betamin:fanreg}.

		Later, $\alpha_n$ is chosen of an order tending to zero, so that this is automatically satisfied. 
		
		\subsubsection{The PDW-approach: Technical details and proof of Lemma \ref{corsignconsweightedhuber}}
		
		The following lemma lists some technical properties of the derivatives of the empirical loss function.  
		\begin{lemma}\label{lem:derivativelem}
			We may write 
			\begin{equation}
				\widehat{Q} \defeq \int_{0}^{1} \nabla^2 \loss_{n,\alpha_n}^{\,\Hu} \Big(\beta_{\alpha_n}^* + t \, \big(\widehat{\beta}_n^{\,\PDW}-\beta_{\alpha_n}^*\big)\Big) dt \, = \frac{2}{n} \sum_{i=1}^{n} d_i \,\f{X_i}\, \f{X_i}^\top = \frac{2}{n} \, \X_{n}^\top\, D \, \X_{n} \, , \label{defofQhat}
			\end{equation} 
			where 	 $D=\diag\big(d_1,\dotsc,d_n\big)$ with 
			\begin{align*}
				d_i = \frac{1}{2}\, \int_{0}^{1} l_{\alpha_n}''\,\Big(Y_i-\f{X_i}^\top \big(\beta_{\alpha_n}^* + t \, \big(\widehat{\beta}_n^{\,\PDW}-\beta_{\alpha_n}^*\big)\big)\Big)  dt \quad \quad \in (0,1] \,. 
			\end{align*}
			
			Furthermore, under Assumption \ref{assfan2017}, if $n \geq \Crsct s \log(p)$ with probability at least $1-\cPo \exp(-\cPt n)$ the submatrix $\widehat{Q}_{S S}$ is invertible with minimal eigenvalue bounded below by $\CXl/32$ and we have the bound
			\begin{equation}\label{inftyoperatornormQss}
				\normiM{\big(\widehat{Q}_{S S}\big)^{-1}}  \leq \frac{32 \sqrt{s}}{\CXl}.
			\end{equation}
		\end{lemma}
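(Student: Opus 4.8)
The plan is to verify the three assertions in turn: the integral representation of $\widehat Q$, the lower bound on $\lambda_{\min}(\widehat Q_{SS})$, and the bound on $\normiM{(\widehat Q_{SS})^{-1}}$. To set up, note that a minimizer $\widehat{\beta}_n^{\,\PDW}$ of the restricted program \eqref{beta:PDW} exists (continuous objective, compact nonempty feasible set) and satisfies $\normz{\widehat{\beta}_n^{\,\PDW}} \leq \Cbeta$ by the constraint; combined with $\normz{\beta_{\alpha_n}^*} < \Cbeta$ from \eqref{eq:strctfeasiblebetaalpha}, every point $\beta(t) \defeq \beta_{\alpha_n}^* + t\,(\widehat{\beta}_n^{\,\PDW} - \beta_{\alpha_n}^*) = (1-t)\,\beta_{\alpha_n}^* + t\,\widehat{\beta}_n^{\,\PDW}$, $t \in [0,1]$, of the segment joining $\beta_{\alpha_n}^*$ and $\widehat{\beta}_n^{\,\PDW}$ lies in $\{\beta : \normz{\beta} \leq \Cbeta\} \subseteq \{\beta : \normz{\beta} \leq 4\Cbeta\}$. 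Substituting the Hessian formula \eqref{eq:gradientgeneral} into the definition of $\widehat Q$ and interchanging the finite sum with the integral over $t$,
\begin{equation*}
	\widehat Q = \frac1n \sum_{i=1}^n \left( \int_0^1 l_{\alpha_n}''\big(Y_i - \f{X_i}^\top \beta(t)\big)\, dt \right) \f{X_i}\f{X_i}^\top,
\end{equation*}
so with $d_i = \frac12 \int_0^1 l_{\alpha_n}''(Y_i - \f{X_i}^\top\beta(t))\, dt$ we obtain $\widehat Q = \frac2n \sum_{i=1}^n d_i\, \f{X_i}\f{X_i}^\top = \frac2n\, \X_{n}^\top D\, \X_{n}$ with $D = \diag(d_1,\dots,d_n)$, and the two-sided bound $0 < l_{\alpha_n}''(x) \leq 2$ from \eqref{eq:secondderivative} gives $d_i \in (0,1]$.

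For the eigenvalue bound I would use that $A \mapsto \lambda_{\min}(A) = \min_{\normz{v}=1} v^\top A v$ is concave on the symmetric matrices, being a pointwise minimum of linear functionals. Hence for any unit vector $v$,
\begin{equation*}
	v^\top \widehat Q_{SS}\, v = \int_0^1 v^\top \big(\nabla^2 \loss_{n,\alpha_n}^{\,\Hu}(\beta(t))\big)_{SS}\, v\, dt \ \geq\ \int_0^1 \lambda_{\min}\Big(\big(\nabla^2 \loss_{n,\alpha_n}^{\,\Hu}(\beta(t))\big)_{SS}\Big)\, dt,
\end{equation*}
and minimizing over $v$ yields $\lambda_{\min}(\widehat Q_{SS}) \geq \int_0^1 \lambda_{\min}((\nabla^2 \loss_{n,\alpha_n}^{\,\Hu}(\beta(t)))_{SS})\, dt$. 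Since $\alpha_n \leq \calpha$, $n \geq \Crsct s \log(p)$, and $\normz{\beta(t)} \leq \Cbeta \leq 4\Cbeta$ for every $t$, Lemma \ref{lem:derivativelem1} bounds the integrand from below by $\Crscf/2 = \CXl/32$ on a single event of probability at least $1 - \cPo\exp(-\cPt n)$; on that event therefore $\lambda_{\min}(\widehat Q_{SS}) \geq \CXl/32 > 0$, and in particular $\widehat Q_{SS}$ is invertible.

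Finally, for the operator-norm bound I would combine the elementary inequality $\normiM{M} \leq \sqrt{s}\,\normzM{M}$ for $M \in \R^{s\times s}$ (immediate from $\normi{x} \leq \normz{x}$ and $\normz{x} \leq \sqrt{s}\,\normi{x}$ on $\R^s$) with $\normzM{(\widehat Q_{SS})^{-1}} = 1/\lambda_{\min}(\widehat Q_{SS}) \leq 32/\CXl$, which together give $\normiM{(\widehat Q_{SS})^{-1}} \leq 32\sqrt{s}/\CXl$. The whole argument is essentially bookkeeping and I do not anticipate a genuine obstacle; the only mildly non-routine points are the concavity of $\lambda_{\min}$ used to pull the minimal eigenvalue inside the integral, and the verification that the entire segment $\{\beta(t): t \in [0,1]\}$ stays inside the region where Lemma \ref{lem:derivativelem1} — and behind it the restricted strong convexity of Lemma \ref{lemma:RSC:pseudo:huber} — is available, together with matching the probabilistic event to the one supplied by that lemma.
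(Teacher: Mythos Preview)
Your proposal is correct and follows essentially the same approach as the paper: compute the integral representation from \eqref{eq:gradientgeneral}, invoke Lemma \ref{lem:derivativelem1} uniformly along the segment (which lies in $\{\normz{\beta}\le\Cbeta\}$) to get $\lambda_{\min}(\widehat Q_{SS})\ge \CXl/32$ on the single RSC event, and finish with $\normiM{\cdot}\le\sqrt{s}\,\normzM{\cdot}$. The concavity detour is harmless but unnecessary---once Lemma \ref{lem:derivativelem1} gives $v^\top(\nabla^2\loss_{n,\alpha_n}^{\,\Hu}(\beta(t)))_{SS}v\ge \CXl/32$ for every unit $v$ supported on $S$ and every $t$, integrating directly yields $v^\top\widehat Q_{SS}v\ge \CXl/32$.
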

		\begin{proof}[Proof of Lemma \ref{lem:derivativelem}]
			\eqref{defofQhat} follows from straightforward calculation, see \eqref{eq:gradientgeneral}. Moreover, every point $\beta \in \R^p$ between $\beta_{\alpha_n}^*$ and $\widehat{\beta}_n^{\,\PDW}$ has $\ell_2$ norm smaller than or equal to $\Cbeta$ because $\normz{ \beta_{\alpha_n}^* }, \big\lVert \widehat{\beta}_n^{\,\PDW} \big\rVert_2 \leq \Cbeta$. Hence \eqref{positvedefinitenesshessian} implies the invertibility of $\widehat{Q}_{S S}$ and 
			\eqref{inftyoperatornormQss} follows from  
			\[ \normiM{\big(\widehat{Q}_{S S}\big)^{-1}} \leq \sqrt{s}\, \normzM{\big(\widehat{Q}_{S S}\big)^{-1}} \leq \frac{32 \sqrt{s}}{\CXl}.\]
		\end{proof}

		In the following lemma we show that $\widehat{\beta}_n^{\,\PDW}$ is strictly feasible, meaning $\big\lVert \widehat{\beta}_n^{\,\PDW} \big\rVert_2 < \Cbeta$ holds, for an appropriate choice of $\lambda_n$ and $\alpha_n$.

		\begin{lemma} \label{lemma:l2:norm:pdw}
			Under Assumption \ref{assfan2017} we have for $\widehat{\beta}_n^{\,\PDW}$ in \eqref{beta:PDW} with $\alpha_n \geq \Cgradf \, \big(\frac{\log(p)}{n}\big)^{\frac{1}{2}}$ that 
			\begin{align}
				\normz{\widehat{\beta}_n^{\,\PDW} - \beta^*} \leq \Bigg( \Cgrads \, \bigg( \frac{\log(p)}{n}\bigg)^\frac{1}{2} + w_{\max}\big(S\big) \, \lambda_n + 2 \Cbeta \,\Crscs \frac{\sqrt{s} \log(p)}{n} \Bigg) \, \frac{\sqrt{s}}{\Crscf} + \Capprox  \, \alpha_n^{\Cpm-1}  \label{l2:norm:pdw}
			\end{align} 
			with probability at least $1-\cPo \exp(-\cPt n)-2/p^2$.
		\end{lemma}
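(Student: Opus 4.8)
The plan is to exploit the optimality of $\widehat{\beta}_n^{\,\PDW}$ in the restricted program \eqref{beta:PDW} together with the restricted strong convexity condition from Lemma \ref{lemma:RSC:pseudo:huber}, in the standard ``basic inequality'' fashion, but anchored at $\beta_{\alpha_n}^*$ rather than $\beta^*$, and then transferred to $\beta^*$ via the approximation bound of Lemma \ref{approximation:error:pseudo:huber}. Write $\widehat{\Delta} = \widehat{\beta}_n^{\,\PDW} - \beta_{\alpha_n}^*$, and note $\supp(\widehat{\Delta}) \subseteq S \cup S_{\alpha_n}$; since $\widehat{\beta}_n^{\,\PDW}$ is supported on $S$, we actually only need that $\widehat{\Delta}$ has coordinates in $S$ plus the (small) superfluous coordinates of $\beta_{\alpha_n}^*$, and the relevant penalty terms only involve indices in $S$. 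First I would use that $\widehat{\beta}_n^{\,\PDW}$ minimizes $\loss_{n,\alpha_n}^{\,\Hu}(\beta) + \lambda_n \sum_{k\in S} w_k |\beta_k|$ over the feasible set, and that $\beta^*_{\alpha_n}$ restricted to $S$ is a competitor (or handle the mismatch on $S^c$ directly), to obtain
\begin{align*}
	\loss_{n,\alpha_n}^{\,\Hu}\big(\widehat{\beta}_n^{\,\PDW}\big) - \loss_{n,\alpha_n}^{\,\Hu}\big(\beta_{\alpha_n}^*\big) \leq \lambda_n \sum_{k \in S} w_k \big(|\beta^*_{\alpha_n,k}| - |\widehat{\beta}_{n,k}^{\,\PDW}|\big) \leq \lambda_n\, w_{\max}(S)\, \normz{\widehat{\Delta}}\,\sqrt{s},
\end{align*}
after applying the triangle inequality and $\|\cdot\|_1 \le \sqrt{s}\,\|\cdot\|_2$ on the $S$-block. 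Simultaneously, convexity of $\loss_{n,\alpha_n}^{\,\Hu}$ gives the lower bound $\loss_{n,\alpha_n}^{\,\Hu}(\widehat{\beta}_n^{\,\PDW}) - \loss_{n,\alpha_n}^{\,\Hu}(\beta_{\alpha_n}^*) \geq \skalar{\nabla \loss_{n,\alpha_n}^{\,\Hu}(\beta_{\alpha_n}^*)}{\widehat{\Delta}} \geq -\normi{\nabla \loss_{n,\alpha_n}^{\,\Hu}(\beta_{\alpha_n}^*)}\,\norme{\widehat{\Delta}}$, and here Lemma \ref{ratelinftygradient} controls the gradient term by $\Cgrads (\log(p)/n)^{1/2}$ on the good event (using $\alpha_n \ge \Cgradf(\log(p)/n)^{1/2}$), while $\norme{\widehat{\Delta}} \le \sqrt{s}\,\normz{\widehat{\Delta}}$ again on the $S$-supported part.

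The second main ingredient is to turn the left-hand side into a quadratic lower bound in $\normz{\widehat{\Delta}}$. Applying Lemma \ref{lemma:RSC:pseudo:huber} with $\beta = \beta_{\alpha_n}^*$ and $\Delta = \widehat{\Delta}$ (checking the radius conditions $\normz{\beta_{\alpha_n}^*} \le 4\Cbeta$ and $\normz{\widehat{\Delta}} \le 8\Cbeta$, which hold since both points lie in the feasible ball of radius $\Cbeta$), together with convexity to insert $\skalar{\nabla \loss_{n,\alpha_n}^{\,\Hu}(\beta_{\alpha_n}^*)}{\widehat{\Delta}}$ as a lower bound for the Bregman divergence, yields
\begin{align*}
	\Crscf \normzq{\widehat{\Delta}} - \Crscs \frac{\log(p)}{n}\normeq{\widehat{\Delta}} \leq \loss_{n,\alpha_n}^{\,\Hu}\big(\widehat{\beta}_n^{\,\PDW}\big) - \loss_{n,\alpha_n}^{\,\Hu}\big(\beta_{\alpha_n}^*\big) - \skalar{\nabla \loss_{n,\alpha_n}^{\,\Hu}(\beta_{\alpha_n}^*)}{\widehat{\Delta}}.
\end{align*}
Combining this with the upper bound from optimality, bounding $\skalar{\nabla \loss_{n,\alpha_n}^{\,\Hu}(\beta_{\alpha_n}^*)}{\widehat{\Delta}}$ from below by $-\Cgrads(\log(p)/n)^{1/2}\sqrt{s}\,\normz{\widehat{\Delta}}$, and using $\normeq{\widehat{\Delta}} \le s\,\normzq{\widehat{\Delta}}$, gives an inequality of the shape $\Crscf \normzq{\widehat{\Delta}} \le \big(\Cgrads(\log(p)/n)^{1/2} + w_{\max}(S)\lambda_n + \Crscs \frac{s\log(p)}{n}\normz{\widehat{\Delta}}\big)\sqrt{s}\,\normz{\widehat{\Delta}}$. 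Dividing by $\normz{\widehat{\Delta}}$ (the case $\widehat{\Delta}=0$ being trivial) and absorbing the term $\Crscs \frac{s^{3/2}\log(p)}{n}\normz{\widehat{\Delta}}$ — which I would handle by noting that on the feasible set $\normz{\widehat{\Delta}} \le 2\Cbeta$, turning it into the additive contribution $2\Cbeta\,\Crscs \frac{\sqrt{s}\log(p)}{n}\cdot\sqrt{s}$ — isolates
\begin{align*}
	\normz{\widehat{\Delta}} \leq \Big( \Cgrads (\log(p)/n)^{1/2} + w_{\max}(S)\lambda_n + 2\Cbeta\,\Crscs \tfrac{\sqrt{s}\log(p)}{n}\Big)\frac{\sqrt{s}}{\Crscf}.
\end{align*}

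Finally, I would pass from $\widehat{\Delta}$ to $\widehat{\beta}_n^{\,\PDW}-\beta^*$ by the triangle inequality, $\normz{\widehat{\beta}_n^{\,\PDW}-\beta^*} \le \normz{\widehat{\Delta}} + \normz{\beta_{\alpha_n}^* - \beta^*}$, and invoke Lemma \ref{approximation:error:pseudo:huber} to bound the last term by $\Capprox\,\alpha_n^{\Cpm-1}$, producing exactly \eqref{l2:norm:pdw}. The probability $1-\cPo\exp(-\cPt n)-2/p^2$ is the intersection of the RSC event from Lemma \ref{lemma:RSC:pseudo:huber} and the gradient event from Lemma \ref{ratelinftygradient}, via a union bound. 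The step I expect to be most delicate is the bookkeeping around the penalty and support mismatch: because $\beta_{\alpha_n}^*$ need not be supported on $S$, one has to be careful that the restricted program only penalizes indices in $S$ while $\widehat{\Delta}$ may have mass on $S_{\alpha_n}\setminus S$, so the clean ``$\|\cdot\|_1 \le \sqrt{s}\|\cdot\|_2$'' reductions apply only after correctly separating the $S$-block from the superfluous coordinates; getting the constant in front of the $\frac{\sqrt{s}\log(p)}{n}$ term right (via the crude but legitimate bound $\normz{\widehat{\Delta}} \le 2\Cbeta$ on the feasible set) is the other place where care is needed.
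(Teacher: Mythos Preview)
Your overall strategy (basic inequality + RSC + gradient bound + approximation) is the right one, but anchoring at $\beta_{\alpha_n}^*$ creates a genuine gap that you flag but do not close. The difference $\widehat{\Delta} = \widehat{\beta}_n^{\,\PDW} - \beta_{\alpha_n}^*$ is \emph{not} supported on $S$: on $S^c$ it equals $-\beta_{\alpha_n,S^c}^*$, and since $\beta_{\alpha_n}^*$ need not be sparse (the paper stresses this), $|S_{\alpha_n}\setminus S|$ can be as large as $p-s$. Consequently the reductions $\norme{\widehat{\Delta}} \le \sqrt{s}\,\normz{\widehat{\Delta}}$ and $\normeq{\widehat{\Delta}} \le s\,\normzq{\widehat{\Delta}}$ that you use twice (for the gradient term and for the RSC correction term) simply fail; the only available $\ell_2$ control on $\beta_{\alpha_n,S^c}^*$ from Lemma~\ref{approximation:error:pseudo:huber} does not yield an $\ell_1$ bound without a dimension factor. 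A related issue is the basic inequality itself: $\beta_{\alpha_n}^*$ is not feasible for the restricted program \eqref{beta:PDW}, so comparing objective values gives a bound against $(\beta_{\alpha_n}^*)_S$, not $\beta_{\alpha_n}^*$, and you then still owe the term $\loss_{n,\alpha_n}^{\,\Hu}\big((\beta_{\alpha_n}^*)_S\big) - \loss_{n,\alpha_n}^{\,\Hu}(\beta_{\alpha_n}^*)$.

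The paper resolves this by introducing a different anchor,
\[
\betaS = \underset{\beta\,:\,\supp(\beta)\subseteq S,\ \normz{\beta}\le \Cbeta}{\arg\min}\ \E\big[l_{\alpha_n}(Y_1 - \f{X_1^\top}\beta)\big],
\]
the population pseudo-Huber minimizer \emph{restricted to the support} $S$. Then $\Delta_n^{\PDW}=\widehat{\beta}_n^{\,\PDW}-\betaS$ is genuinely $S$-supported, so $\norme{\Delta_n^{\PDW}}\le\sqrt{s}\,\normz{\Delta_n^{\PDW}}$ is legitimate. The approximation bound $\normz{\betaS-\beta^*}\le \Capprox\,\alpha_n^{\Cpm-1}$ still holds (the proof of Lemma~\ref{approximation:error:pseudo:huber} goes through verbatim because $\beta^*$ is itself $S$-supported and feasible), and the gradient bound of Lemma~\ref{ratelinftygradient} transfers to $\betaS$ since the population first-order condition $\E[l_{\alpha_n}'(Y_1-\f{X_1^\top}\betaS)\,X_{1,k}]=0$ holds for $k\in S$, which is all that is needed after pairing with the $S$-supported $\Delta_n^{\PDW}$. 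The paper then applies the RSC of Lemma~\ref{lemma:RSC:pseudo:huber} in its first-order (gradient) form directly, combined with the KKT inequality for $\widehat{\beta}_n^{\,\PDW}$, rather than via the Bregman-divergence form you wrote (which, incidentally, would cost a factor $1/2$ after integration). Once you swap $\beta_{\alpha_n}^*$ for $\betaS$, the rest of your outline matches the paper's argument and the constants come out exactly as in \eqref{l2:norm:pdw}.
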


		\begin{proof}[Proof of Lemma \ref{lemma:l2:norm:pdw}]
			Let 
			\begin{align}
				\betaS	= \underset{\beta \in \R^p, \supp(\beta) \subseteq S,  \normz{\beta} \leq \Cbeta}{\arg\min} ~ \E\Big[l_{\alpha_n} \big( Y_1 - \f{X_1^\top} \beta \big)\Big] \quad \text{and} \quad \Delta_n^{\PDW} = \widehat{\beta}_n^{\,\PDW} - \betaS\,, \label{def:beta:stern:S}
			\end{align}
			then $\widehat{\beta}_n^{\,\PDW}$ in \eqref{beta:PDW} is a M-estimator of $\betaS$. Following the proof of Lemma \ref{approximation:error:pseudo:huber} leads on the one hand to
			\begin{align*}
				\normz{\betaS - \beta^*} \leq\Capprox  \, \alpha_n^{\Cpm-1} \,.
			\end{align*}  
			In doing so note that 
			\begin{align*}
				\E\Big[ l_{\alpha_n}\big(Y_1 - \f{X_1^\top} \betaS \big) \Big] \leq  \E\Big[ l_{\alpha_n}\big(Y_1 - \f{X_1^\top} \beta^* \big) \Big] \quad \text{and} \quad \normz{\betaS} \leq \Cbeta
			\end{align*}
			because of \eqref{def:beta:stern:S}, $\supp\big(\beta^*\big) = S$ and $\normz{\beta^*} \leq \Cbeta$ by (iv) of Assumption \ref{assfan2017}. Further, $\widehat{\beta}_n^{\,\PDW}$ has to satisfy the first-order necessary condition of a convex constrained optimization problem over a convex set to be a minimum of \eqref{beta:PDW}, cf. \citet[Theorem 3.33]{Ruszczynski2006}, that is, there exists $\widehat{\gamma} \in \partial \big\lVert \widehat{\beta}_{n,S}^{\,\PDW} \big\rVert_1$ so that
			\begin{align*}
				\skalar{\nabla \loss_{n,\alpha_n}^{\,\Hu}\big(\widehat{\beta}_n^{\,\PDW}\big) + \lambda_n \big( w \odot \widehat{\gamma} \big) }{\beta - \widehat{\beta}_n^{\,\PDW}} \geq 0 \quad \text{for all feasible } \beta \in \R^p \,.
			\end{align*}
			Hence by the restricted strong convexity of the empirical pseudo Huber loss in Lemma \ref{lemma:RSC:pseudo:huber} and the first-order necessary condition it follows that
			\begin{align*}
				\Crscf\, \normzq{\Delta_n^{\PDW}} - \Crscs \frac{\log(p)}{n} \normeq{\Delta_n^{\PDW}} &\leq \skalar{\nabla \loss_{n,\alpha_n}^{\,\Hu} (\widehat{\beta}_n^{\,\PDW}) - \nabla \loss_{n,\alpha_n}^{\,\Hu} (\betaS)}{\Delta_n^{\PDW}} \\
				&\leq \skalar{ - \nabla \loss_{n,\alpha_n}^{\,\Hu} (\betaS) - \lambda_n \big( w \odot \widehat{\gamma} \big)}{\Delta_n^{\PDW}}\\
				&\leq \normi{ \nabla \loss_{n,\alpha_n}^{\,\Hu} (\betaS) }\, \norme{\Delta_n^{\PDW}} + w_{\max}\big(S\big) \, \lambda_n \, \norme{\Delta_n^{\PDW}} 
			\end{align*}
			with probability at least $1-\cPo \exp(-\cPt n)$. Here the last inequality follows since $\betaS$ and $\widehat{\beta}_n^{\,\PDW}$ both have support (contained in) $S$. Rearranging leads to
			\begin{align*}
				\Crscf\, \normzq{\Delta_n^{\PDW}} \leq \bigg( \normi{\nabla \loss_{n,\alpha_n}^{\,\Hu} (\betaS)} + w_{\max}\big(S\big) \, \lambda_n + \Crscs \frac{\log(p)}{n} \norme{\Delta_n^{\PDW}} \bigg)  \norme{\Delta_n^{\PDW}} \,.
			\end{align*} 
			We obtain $\norme{\Delta_n^{\PDW}} \leq \sqrt{s}\,\normz{{\Delta_n^{\PDW}}}$ and $\normz{{\Delta_n^{\PDW}}} \leq 2 \Cbeta$ because of \eqref{beta:PDW} and \eqref{def:beta:stern:S}. In addition by following the proof of Lemma \ref{ratelinftygradient} we get $\normi{\nabla \loss_{n,\alpha_n}^{\,\Hu} (\betaS)} \leq \Cgrads \, \big( \frac{\log(p)}{n}\big)^\frac{1}{2} $ with probability at least $1-p^2/2$. Hence it follows that
			\begin{align*}
				\normz{\widehat{\beta}_n^{\,\PDW} - \betaS} \leq \Bigg( \Cgrads \, \bigg( \frac{\log(p)}{n}\bigg)^\frac{1}{2} + w_{\max}\big(S\big) \, \lambda_n + 2 \Cbeta \,\Crscs \frac{\sqrt{s} \log(p)}{n} \Bigg) \, \frac{\sqrt{s}}{\Crscf}
			\end{align*}
			and in total the assertion of the lemma.
		\end{proof}

		\begin{remark}
			The results below will imply that with (appropriate) choices of $\lambda$ and $\alpha_n$, 
			\begin{align*}
				\normz{\widehat{\beta}_n^{\,\PDW} - \beta^*} = \Oop{\bigg(\frac{s \log(p)}{n}\bigg)^\frac{1}{2}}
			\end{align*}
			with high probability, so that in particular $\big\|  \widehat{\beta}_n^{\,\PDW} - \beta^* \big\|_2 < \Cbeta/2$. Together with the assumption $\normz{ \beta^*} \leq \Cbeta/2$ this will imply that $\widehat{\beta}_n^{\,\PDW}$ is strictly feasible for \eqref{beta:PDW}, that is, 
			\begin{equation}\label{eq:strctfeasible}
				\normz{\widehat{\beta}_n^{\,\PDW}} < \Cbeta,
			\end{equation}
			which we will assume from now on. 	
		\end{remark}

		\begin{proof}[Proof of Lemma \ref{corsignconsweightedhuber}]
			We start by showing that under assumption \eqref{betamin:fanreg} we have $S \subseteq S_{\alpha_n}$. To this end, we estimate
			\begin{align*}
				\big|\beta_{\alpha_n,k}^* \big| &= \big|\beta_{\alpha_n,k}^* -  \beta_{k}^* + \beta_{k}^*\big| \geq \big|\beta_{k}^*\big| - \big| \beta_{\alpha_n,k}^* -  \beta_{k}^* \big| \geq \beta_{\min}^* - \normi{\beta_{\alpha_n}^* -  \beta^*} \\
				& \geq \beta_{\min}^* - \normz{\beta_{\alpha_n}^* - \beta^*}\\
				&\geq \beta_{\min}^* - \Capprox  \, \alpha_n^{\Cpm-1} > 0 \,,
			\end{align*}
			for $k \in S$ where the first inequality in the last line follows from \eqref{approx:error} and the final inequality from \eqref{betamin:fanreg}.
			Now, using 
			\begin{align*}
				\widehat{Q} \, \big(\widehat{\beta}_n^{\,\PDW}-\beta_{\alpha_n}^* \big) = \nabla  \loss_{n,\alpha_n}^{\,\Hu} \big(\widehat{\beta}_n^{\,\PDW}\big) - \nabla  \loss_{n,\alpha_n}^{\,\Hu} \big(\beta_{\alpha_n}^*\big) \,,
			\end{align*}
			see \eqref{defofQhat} for the definition of $\widehat{Q}$, 	we may rewrite the subgradient condition \eqref{eq:zerosubgradpdw}, which holds since $\widehat{\beta}_n^{\,\PDW}$ is strictly feasible as in \eqref{eq:strctfeasible}, as 
			\begin{align*}
				\widehat{Q} \, \big(\widehat{\beta}_n^{\,\PDW}-\beta_{\alpha_n}^* \big) + \nabla  \loss_{n,\alpha_n}^{\,\Hu} \big(\beta_{\alpha_n}^*\big) + \lambda_n \big( w \odot \widehat{\gamma}\big)& = \f{0}_p
			\end{align*}
			or in block-form
			{\small 
				\begin{align*} 
					\begin{bmatrix}
						\widehat{Q}_{S S} & \widehat{Q}_{S (S_{\alpha_n} \setminus S)} & \widehat{Q}_{S S_{\alpha_n}^c} \\
						\widehat{Q}_{S^c S} & \widehat{Q}_{S^c (S_{\alpha_n} \setminus S)} & \widehat{Q}_{S^c S_{\alpha_n}^c}
					\end{bmatrix}
					\begin{pmatrix}
						\widehat{\beta}_{n,S}^{\,\PDW}-\beta_{\alpha_n,S}^*\\
						-\beta_{\alpha_n,S_{\alpha_n} \setminus S}^*\\
						\f{0}_{|S_{\alpha_n}^c|}
					\end{pmatrix} &+
					\begin{pmatrix}
						\Big(\nabla \loss_{n,\alpha_n}^{\,\Hu} \big(\beta_{\alpha_n}^*\big)\Big)_{S}\\
						\Big(\nabla \loss_{n,\alpha_n}^{\,\Hu} \big(\beta_{\alpha_n}^*\big)\Big)_{S^c}
					\end{pmatrix} + \lambda_n
					\begin{pmatrix}
						w_{S} \odot \widehat{\gamma}_{S}\\
						w_{S^c} \odot \widehat{\gamma}_{S^c}
					\end{pmatrix} = \f{0}_p, 
				\end{align*}
			}
			where we used that  $\widehat{\beta}_{n,S^c}^{\,\PDW} = \f0_{p-s} $ by the primal-dual witness construction. By invertibility of $\widehat{Q}_{S S}$, see Lemma \ref{lem:derivativelem}, this leads to
			\begin{align}\label{eq:equaesterrorhelp}
				\widehat{\beta}_{n,S}^{\,\PDW}-\beta_{\alpha_n,S}^* = \big(\widehat{Q}_{S S}\big)^{-1} \bigg( - \lambda_n \big( w_{S} \odot \widehat{\gamma}_{S} \big) -  \Big( \nabla \loss_{n,\alpha_n}^{\,\Hu} \big(\beta_{\alpha_n}^*\big)\Big)_{S} + \widehat{Q}_{S (S_{\alpha_n} \setminus S)} \, \beta_{\alpha_n,S_{\alpha_n} \setminus S}^*\bigg)
			\end{align}
			and
			{\small
				\begin{align*}
					\lambda_n \big( w_{S^c} \odot \widehat{\gamma}_{S^c}\big) &= \widehat{Q}_{S^c S} \big(\widehat{Q}_{S S}\big)^{-1} \bigg( \lambda_n \big( w_{S} \odot \widehat{\gamma}_{S} \big) + \Big( \nabla \loss_{n,\alpha_n}^{\,\Hu} \big(\beta_{\alpha_n}^*\big)\Big)_{S} \bigg) - \Big(\nabla \loss_{n,\alpha_n}^{\,\Hu} \big(\beta_{\alpha_n}^*\big)\Big)_{S^c}  \\
					&~~~~~~~~~~+ \Big( \widehat{Q}_{S^c (S_{\alpha_n} \setminus S)} - \widehat{Q}_{S^c S} \big(\widehat{Q}_{S S}\big)^{-1} \widehat{Q}_{S (S_{\alpha_n} \setminus S)}  \Big) \,\beta_{\alpha_n,S_{\alpha_n} \setminus S}^* \,.
			\end{align*}}
			\noindent
			The second equation shows the equivalence of the strict dual feasibility condition $\normi{\widehat{\gamma}_{S^c}} < 1$ and \eqref{strictfeasibilitywh2}. 
			Now, if this holds then we obtain that $\widehat{\gamma} \in \partial \big\lVert\widehat{\beta}_n^{\,\PDW}\big\rVert_1$, and since the loss function $\loss_{n,\alpha_n}^{\,\Hu}$ is convex (and obviously also the weighted $\ell_1$ norm) we obtain by \eqref{eq:zerosubgradpdw} that $\widehat{\beta}_n^{\,\PDW}$ is also a solution of \eqref{defweightedlassohuber}, cf.~\citet[Theorem 3.33]{Ruszczynski2006} and recall from \eqref{eq:strctfeasible} that $\widehat{\beta}_n^{\,\PDW}$ is (assumed to be) strictly feasible. 
			To conclude $\widehat{\beta}_n^{\,\WLH} = \widehat{\beta}_n^{\,\PDW}$ we need to show that this solution is unique. Then apparently $\supp\big(\widehat{\beta}_n^{\,\WLH}\big) \subseteq \supp\big(\beta^*\big)$ and \eqref{eq:boundesterrorwlgeneralpdw} follows from \eqref{eq:equaesterrorhelp}.
			If the beta-min condition \eqref{betaminweightedhuber} holds, then for $k \in S$
			\begin{align*}
				\Big|\widehat{\beta}_{n,k}^{\,\WLH} - \beta_{k}^*\Big| &\leq \normi{\widehat{\beta}_{n}^{\,\WLH}-\beta^*} < \beta_{\min}^* \leq \big| \beta_{k}^* \big|\,,
			\end{align*}
			which implies $\sign\big( \widehat{\beta}_{n,k}^{\,\WLH} \big) = \sign\big( \beta_{k}^* \big)$ and hence the sign-consistency of $\widehat{\beta}_{n}^{\,\WLH}$.

			It remains to show uniqueness of the solution of the program \eqref{defweightedlassohuber}. 	
			To this end we show that 
			all stationary points $\widetilde{\beta}$, that is points satisfying $\nabla \loss_{n,\alpha_n}^{\,\Hu} \big(\widetilde{\beta} \big) = - \lambda_n \big( w \odot \widetilde{\gamma} \big)$ with $\widetilde{\gamma} \in \partial \lVert \widetilde{\beta} \rVert_1$
			have support $S$ (cf. \citet[Lemma 3]{Loh2017} or \citet[Section 2.3]{Tibshirani2013}). 
			Then strict convexity of the loss function restricted to vectors with support $S$, as implied by \eqref{positvedefinitenesshessian}, concludes the proof. 
			
			From the form \eqref{eq:gradientgeneral} of the gradient of the loss function we see that uniqueness of the fitted values $\X_n \, \widetilde{\beta}$ for all stationary points implies uniqueness of the subgradient $\widetilde{\gamma}$, that is $\widetilde{\gamma} = \widehat{\gamma}$. The strict dual feasibility condition $\normi{\widehat{\gamma}_{S^c}} < 1$ for $\widehat{\gamma}$ (cf. \citet[Section 2.3]{Tibshirani2013} or \citet[Lemma 1 (b)]{Wainwright2009}) then implies that $\widetilde{\beta}$ must also have support in $S$.
			Now, uniqueness of the fitted values follows from the strict convexity of the pseudo Huber loss by using Lemma 1 (ii) in \citet{Tibshirani2013}. This concludes the proof of the lemma. 
		\end{proof}

		\subsection{Support recovery and $\ell_\infty$ - bounds for the weighted LASSO pseudo Huber estimator}\label{sec:resultsweightedlasso}

		In the main result in this section, Lemma \ref{signconsweightedhuber},  similarly to \citet[Lemma 8.2]{Zhou2009} we consider support recovery and bounds for a generic form of the weighted LASSO pseudo Huber estimator. To this end we require technical conditions stated in Lemma \ref{lemmaassstrictdual(ii)}. These show how to take care of the terms involving the gradient of the loss in the strict dual feasibility assumption \eqref{strictfeasibilitywh2}.

		\begin{lemma}[Strict dual feasibility and norm bound I] \label{lemmaassstrictdual(ii)} 
			Suppose that Assumption \ref{assfan2017} and \eqref{lemmaassstrictdual(ii)(1)} are satisfied and assume that the robustification parameter $\alpha_n$ is chosen as in \eqref{rangealphaalhwithinitial}. If  $n \geq C_3\, s^2 \log(p)$ for a sufficiently large positive constant $C_3 >0$ then there exist constants $C_1,C_2,\CQS>0 $ and $\CQograd \geq 1$ such that
			\begin{align}\label{eq:boundinvmat}
				\normiM{\big(\widehat{Q}_{S S}\big)^{-1}} \leq \CQS
			\end{align}
			is satisfied with probability at least $1-C_1/ p^{2} - 6/p^{5s}$, and
			\begin{align}\label{eq:esstrictdualfeascondsecond}
				\normi{\widehat{Q}_{S^c S} \, \big(\widehat{Q}_{S S}\big)^{-1} \Big(\nabla \loss_{n,\alpha_n}^{\,\Hu} \big(\beta_{\alpha_n}^*\big)\Big)_{S} - \Big(\nabla \loss_{n,\alpha_n}^{\,\Hu} \big(\beta_{\alpha_n}^*\big)\Big)_{S^c} } \leq \CQgrad \,\bigg( \frac{\log(p)}{n}\bigg)^\frac{1}{2}
			\end{align}
			with probability at least $1 - (4+C_1+C_2)/ p^{2} - 6/p^{5s}$, where $\Cgrads$ is as in Lemma \ref{ratelinftygradient}.
		\end{lemma}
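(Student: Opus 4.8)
The plan is to prove the two assertions separately. The bound \eqref{eq:boundinvmat} is obtained by a matrix‑perturbation argument comparing $\widehat{Q}_{SS}$ with the population matrix $Q^*\defeq 2\,\E[\f{X_1}\f{X_1^\top}]_{SS}$, for which $\normiM{(Q^*)^{-1}}=\tfrac12\Csminf$ by \eqref{lemmaassstrictdual(ii)(1)}; the bound \eqref{eq:esstrictdualfeascondsecond} is obtained from an algebraic identity that makes the mutual‑incoherence matrix $\widehat{Q}_{S^cS}(\widehat{Q}_{SS})^{-1}$ cancel, reducing the expression to a gradient‑type quantity controlled by the maximal inequality underlying Lemma~\ref{ratelinftygradient}. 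Throughout, $\alpha_n\simeq(\log(p)/n)^{1/2}$ (which in particular forces $\alpha_n\le\calpha$) and $n\ge C_3 s^2\log(p)$ with $C_3$ large make several $s$‑dependent error terms small, and Lemma~\ref{lem:derivativelem} supplies the crude bounds $\lambda_{\min}(\widehat{Q}_{SS})\ge\CXl/32$ and $\normiM{(\widehat{Q}_{SS})^{-1}}\le 32\sqrt{s}/\CXl$ to be used as a starting point.

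For \eqref{eq:boundinvmat} I would use the perturbation inequality
\[
\normiM{\big(\widehat{Q}_{SS}\big)^{-1}}\;\le\;\frac{\normiM{(Q^*)^{-1}}}{1-\normiM{(Q^*)^{-1}}\,\normiM{\widehat{Q}_{SS}-Q^*}},
\]
which is valid since $\widehat{Q}_{SS}$ is already known to be invertible, so it suffices to show $\normiM{\widehat{Q}_{SS}-Q^*}$ is a small constant, say below $1/\Csminf$. Decompose $\widehat{Q}_{SS}-Q^*=A_1+A_2+A_3$ with $A_1=\widehat{Q}_{SS}-\big(\nabla^2\loss_{n,\alpha_n}^{\,\Hu}(\beta_{\alpha_n}^*)\big)_{SS}$ (the interpolation error in \eqref{defofQhat}), $A_2=\big(\nabla^2\loss_{n,\alpha_n}^{\,\Hu}(\beta_{\alpha_n}^*)\big)_{SS}-\E\big[\nabla^2\loss_{n,\alpha_n}^{\,\Hu}(\beta_{\alpha_n}^*)\big]_{SS}$ (sampling error at the fixed point), and $A_3=\E\big[\nabla^2\loss_{n,\alpha_n}^{\,\Hu}(\beta_{\alpha_n}^*)\big]_{SS}-Q^*$ (robustification bias). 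For $A_3$ one uses $0\le 2-l_{\alpha_n}''(x)\le 3\alpha_n^2x^2$ with $x=Y_1-\f{X_1^\top}\beta_{\alpha_n}^*$, Cauchy--Schwarz and the moment bounds of Assumption~\ref{assfan2017}, so that each entry is of order $\alpha_n^2$ and $\normiM{A_3}\lesssim s\alpha_n^2\simeq s\log(p)/n$. For $A_2$, each entry is a centred average of sub‑exponential summands ($|l_{\alpha_n}''|\le2$ times a product of two sub‑Gaussian coordinates), so Bernstein's inequality and a union bound over the at most $p^2$ entries give each of them $\lesssim(\log(p)/n)^{1/2}$ with probability at least $1-C_1/p^2$, hence $\normiM{A_2}\lesssim s(\log(p)/n)^{1/2}$. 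For $A_1$ one uses the mean‑value form \eqref{defofQhat}, the pointwise bound $\sup_x|l_{\alpha_n}'''(x)|\lesssim\alpha_n$, and the $\ell_2$ bound $\normz{\widehat{\beta}_n^{\,\PDW}-\beta_{\alpha_n}^*}\lesssim(s\log(p)/n)^{1/2}$ from Lemma~\ref{lemma:l2:norm:pdw} combined with \eqref{approx:error}; after splitting $\widehat{\beta}_n^{\,\PDW}-\beta_{\alpha_n}^*=\big(\widehat{\beta}_{n,S}^{\,\PDW}-\beta_{\alpha_n,S}^*\big)-\beta_{\alpha_n,S^c}^*$, bounding the support‑$S$ part by $\sqrt{s}$ times its $\ell_2$ norm and treating the remaining fixed vector $\beta_{\alpha_n,S^c}^*$ (of $\ell_2$ norm $\lesssim\alpha_n^{\Cpm-1}$, so that $\f{X_i^\top}\beta_{\alpha_n,S^c}^*$ is sub‑Gaussian with variance proxy $\lesssim\alpha_n^{2(\Cpm-1)}$), and applying Cauchy--Schwarz over $i$ together with concentration of empirical moments such as $\tfrac1n\sum_i X_{ij}^2\big(\sum_{k\in S}|X_{ik}|\big)^2$, one gets $\normiM{A_1}\lesssim s^2\log(p)/n+s\,\alpha_n^{\Cpm}$. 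Under $n\ge C_3 s^2\log(p)$ with $C_3$ large, all three are below $1/(2\Csminf)$, which gives \eqref{eq:boundinvmat} with $\CQS$ a fixed multiple of $\Csminf$; the probability $1-C_1/p^2-6/p^{5s}$ collects the union bound for $A_2$ and a $6/p^{5s}$ term from the spectral‑norm concentration of the $S$‑restricted empirical second‑moment matrix (covering argument over $\R^{S}$) that underlies the empirical‑moment estimates and Lemma~\ref{lemma:l2:norm:pdw}.

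For \eqref{eq:esstrictdualfeascondsecond}, write $\nabla\loss_{n,\alpha_n}^{\,\Hu}(\beta_{\alpha_n}^*)=-\tfrac1n\X_n^\top\xi$ with $\xi_i=l_{\alpha_n}'\big(Y_i-\f{X_i^\top}\beta_{\alpha_n}^*\big)$, recall $\widehat{Q}_{AB}=\tfrac2n\X_{n,A}^\top D\,\X_{n,B}$ from \eqref{defofQhat}, and note the identity
\[
\widehat{Q}_{S^cS}\big(\widehat{Q}_{SS}\big)^{-1}\big(\nabla\loss_{n,\alpha_n}^{\,\Hu}(\beta_{\alpha_n}^*)\big)_S-\big(\nabla\loss_{n,\alpha_n}^{\,\Hu}(\beta_{\alpha_n}^*)\big)_{S^c}\;=\;\tfrac1n\,\X_{n,S^c}^\top M\xi,\qquad M=\mathrm{I}_n-D\,\X_{n,S}\big(\X_{n,S}^\top D\,\X_{n,S}\big)^{-1}\X_{n,S}^\top,
\]
together with $\X_{n,S}^\top M\xi=\f{0}$, so $M\xi$ is orthogonal to $\operatorname{col}(\X_{n,S})$; the mutual‑incoherence matrix has disappeared, which is essential since no incoherence condition is assumed. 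The $j$‑th coordinate of $\tfrac1n\X_{n,S^c}^\top M\xi$ ($j\in S^c$) is then the inner product of the $j$‑th covariate column orthogonalized against $\operatorname{col}(\X_{n,S})$ with the residual $M\xi$, a gradient‑type quantity. I would bound its $\ell_\infty$ norm by conditioning on $(\X_{n,S},D)$ and on the error randomness, using that — up to the small sub‑Gaussian correction $\f{X_i^\top}(\beta_{\alpha_n}^*-\beta^*)$ of variance proxy $\lesssim\alpha_n^{2(\Cpm-1)}$ arising from the non‑sparsity of $\beta_{\alpha_n}^*$ — the orthogonalized $S^c$‑columns are conditionally sub‑Gaussian, and rerunning the truncation‑plus‑Bernstein maximal inequality from the proof of Lemma~\ref{ratelinftygradient} over $j\in S^c$, with $\normz{M\xi}\lesssim\sqrt{n}$ (from $\normz{\xi}\lesssim\sqrt{n}$, $\normzM{D}\le1$ and Lemma~\ref{lem:derivativelem}) and, where still needed, $\normiM{(\widehat{Q}_{SS})^{-1}}\lesssim\sqrt{s}/\CXl$. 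This produces the bound $\CQgrad\,(\log(p)/n)^{1/2}=\CQograd\,\Cgrads\,(\log(p)/n)^{1/2}$, and the failure probability is that of \eqref{eq:boundinvmat} plus $4/p^2$ (two instances of Lemma~\ref{ratelinftygradient}) plus $C_2/p^2$ (the new maximal inequality), i.e.\ $1-(4+C_1+C_2)/p^2-6/p^{5s}$.

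The main obstacle will be the interpolation term $A_1$: the naive estimate $\normiM{\widehat{Q}_{SS}-Q^*}\le s\max_{j,k}|(\widehat{Q}_{SS}-Q^*)_{jk}|$ fed into the perturbation inequality with the crude $\sqrt{s}$ factor from Lemma~\ref{lem:derivativelem} would only close for $n\gtrsim s^3\log(p)$, so one must exploit both $\sup_x|l_{\alpha_n}'''(x)|\lesssim\alpha_n$ and the fact that $\normz{\widehat{\beta}_n^{\,\PDW}-\beta_{\alpha_n}^*}$ is of the smaller order $(s\log(p)/n)^{1/2}$ through a careful row‑sum/Cauchy--Schwarz estimate, carrying the non‑sparse tail $\beta_{\alpha_n,S^c}^*$ as a separate, genuinely small contribution. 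In the second assertion the analogous difficulty is that the non‑sparsity of $\beta_{\alpha_n}^*$ enters $\xi$ and hence $M\xi$, which obstructs a clean conditioning on the $S$‑block; isolating the small sub‑Gaussian correction $\f{X_i^\top}(\beta_{\alpha_n}^*-\beta^*)$ and absorbing it into the error terms is what makes the gradient‑type maximal inequality go through.
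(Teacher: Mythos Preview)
Your treatment of \eqref{eq:boundinvmat} via direct $\ell_\infty$-perturbation and the decomposition $A_1+A_2+A_3$ is different from the paper's, which instead bounds the \emph{spectral} norm $\normzM{\widehat{Q}_{SS}-\E[\f{X_1}\f{X_1^\top}]_{SS}}\lesssim 1/\sqrt{s}$ through a truncation at level $\alpha_n^{-1/2}$ combined with a covering argument over $\mathcal{B}_2^s$ (this covering is exactly where the $6/p^{5s}$ term comes from), and then converts back via $\normiM{\cdot}\le\sqrt{s}\,\normzM{\cdot}$. Notably the paper never uses a rate for $\normz{\widehat{\beta}_n^{\,\PDW}-\beta_{\alpha_n}^*}$ here, only the crude bound $\le 2\Cbeta$; your $A_1$ estimate does rely on Lemma~\ref{lemma:l2:norm:pdw}, which feeds the weights and $\lambda_n$ back into an argument that is supposed to precede their choice and so risks circularity. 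Your route may be salvageable with care, but the $6/p^{5s}$ probability does not arise naturally from it.

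For \eqref{eq:esstrictdualfeascondsecond} there is a genuine gap. The identity $\tfrac1n\X_{n,S^c}^\top M\xi$ is correct, but the conditioning step fails: even if you absorb the non-sparsity of $\beta_{\alpha_n}^*$ so that $M$ and $\xi$ depend only on $(\X_{n,S},\varepsilon)$, Assumption~\ref{assfan2017}(iii) gives only \emph{marginal} sub-Gaussianity of linear forms in $\f{X_1}$; it does not imply that $(\f{X_1})_{S^c}$ is sub-Gaussian \emph{conditionally} on $(\f{X_1})_S$, so the columns $\vec{X}_j$, $j\in S^c$, cannot be treated as independent sub-Gaussians once you condition on $M\xi$, and a Bernstein maximal inequality over $j\in S^c$ does not apply. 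The paper sidesteps this entirely by replacing the random matrix $\widehat{Q}_{S^cS}(\widehat{Q}_{SS})^{-1}$ with its deterministic population version $\E[\f{X_1}\f{X_1^\top}]_{S^cS}\big(\E[\f{X_1}\f{X_1^\top}]_{SS}\big)^{-1}$, applying a variant of Lemma~\ref{ratelinftygradient} (Lemma~\ref{lemmamatrixgradient}) with this \emph{fixed} matrix --- whose row-$\ell_2$ norms are bounded by $\Csminf\,\CXu$ thanks to \eqref{lemmaassstrictdual(ii)(1)} --- and then bounding the discrepancy through $\normzM{(\widehat{Q}_{SS})^{-1}-(\E[\ldots]_{SS})^{-1}}\lesssim 1/\sqrt{s}$, an entrywise bound $\max_{k,l}|(\widehat{Q}-\E[\f{X_1}\f{X_1^\top}])_{kl}|\lesssim 1/s$ over all $p^2$ indices, and $\normz{(\nabla\loss)_S}\le\sqrt{s}\,\Cgrads(\log(p)/n)^{1/2}$; the $\sqrt{s}$ factors cancel exactly.
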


		The proof is deferred to the supplement, Section A.2.

		For clarity of formulation in the next main result we shall now simply impose \eqref{eq:esstrictdualfeascondsecond}, and  \eqref{eq:boundinvmat} in the second part, as high-level conditions. 
		
		\begin{lemma} \label{signconsweightedhuber} 
			
			Consider model \eqref{generalregressionmodel} under Assumption \ref{assfan2017}. Suppose that \eqref{eq:esstrictdualfeascondsecond}	holds true, and that 
			the weights satisfy the mutual incoherence condition, that is for some $ \eta \in (0,1) $ we have that
			\begin{align}\label{Qmutualincoherence}
				\bigg| \widehat{Q}_{S^c S} \big( \widehat{Q}_{S S} \big)^{-1} \big( w_{S} \odot \widehat{\gamma}_{S} \big) \bigg| \leq w_{S^c} \big(1 - \eta\big) \,.
			\end{align}
			For the regularization parameter $\lambda_n$, assume that 
			\begin{align}
				w_{\min}\big(S^c\big)\,\lambda_n > \frac{4 \, \CQgrad}{\eta} \bigg(\frac{\log(p)}{n}\bigg)^\frac{1}{2}. \label{lambdanwminconwlhcor}
			\end{align}
			Furthermore, suppose that the robustification parameter $\alpha_n$ is chosen in the range
			\begin{align}
				\Cgradf \, \bigg(\frac{\log(p)}{n}\bigg)^{\frac{1}{2}} \leq \alpha_n \leq \Bigg(\frac{  \Cgrads  }{80\,\Capprox\,\CXsubs}\bigg(\frac{\log(p)}{n}\bigg)^\frac{1}{2} \Bigg)^\frac{1}{\Cpm-1}  \label{rangealphawlhcor}
			\end{align}
			
			and
			
			\begin{align}
				\beta_{\min}^* >  \phi_{n,\infty,s}, \quad \text{where} \quad \phi_{n,\infty,s} = \frac{128}{\CXl}  \, \max \bigg\{ \Cgrads\, \bigg(\frac{s \log(p)}{n} \bigg)^{\frac{1}{2}} \,, \, w_{\max}\big(S\big)\,\lambda_n\,\sqrt{s} \bigg\} \,. \label{betaminconditionwlhcor}
			\end{align}
			
			Then for $n \geq \max\big\{\Crsct s \log(p), 6 \log(p) \big\}$ with probability at least 
			\begin{align}\label{eq:boundprob}
				1-\cPo \exp(-\cPt n) - 2 \exp(- 2n) - \frac{4}{p^2}, 
			\end{align}
			the weighted LASSO pseudo Huber estimator, as a solution to the program \eqref{defweightedlassohuber}, is unique and given by $\widehat{\beta}_n^{\,\WLH} = \widehat{\beta}_n^{\,\PDW}$ and satisfies 
			\begin{align}
				\sign\big(\widehat{\beta}_{n}^{\,\WLH}\big) =  \sign\big(\beta^*\big) ~~~~~~\mathrm{and} ~~~~~~ \normi{\widehat{\beta}_{n}^{\,\WLH} - \beta^*} \leq \phi_{n,\infty,s} . \label{wlhaussage} 
			\end{align}
			with $\phi_{n,\infty,s}$ in \eqref{betaminconditionwlhcor}. 
			
			If in addition \eqref{eq:boundinvmat} is satisfied, we may replace $\phi_{n,\infty,s}$ in the beta-min condition \eqref{betaminconditionwlhcor} and in the $\ell_\infty$ bound in \eqref{wlhaussage} by 
			\begin{align}
				\phi_{n,\infty,f} =  4 \, \CQS \,\max \bigg\{ \Cgrads \, \bigg(\frac{\log(p)}{n} \bigg)^{\frac{1}{2}} \, , \, w_{\max}\big(S\big)\,\lambda_n \bigg\} \,.	\label{betaminconditionwlh2cor}
			\end{align}
			
		\end{lemma}

		If we drop the requirement \eqref{lemmaassstrictdual(ii)(1)} we still obtain a bound of the form 
		\eqref{eq:esstrictdualfeascondsecond} under the somewhat restrictive scaling $s \leq \log(p)$. The bound  \eqref{eq:boundinvmat} is no longer valid and needs to be replaced by \eqref{inftyoperatornormQss}.
		\begin{lemma}[Strict dual feasibility and norm bound II] \label{lemmaassstrictdual(i)} 
			Suppose that Assumption \ref{assfan2017} holds and assume that the robustification parameter satisfies $\alpha_n \geq \sqrt{4/3} \,\Cgradf \, \big(\frac{\log(p)}{n}\big)^{\frac{1}{2}}$, where $\Cgradf$ is as in Lemma \ref{ratelinftygradient}.
			%
			%
			Then for $s \leq \log(p)$ and $n \geq \max\big\{ \Crsct s \log(p), 6 \log(p) \big\}$ we still have \eqref{eq:esstrictdualfeascondsecond}
			
			with probability at least $1-\cPo \exp(-\cPt n) - 6/p^2$.

		\end{lemma}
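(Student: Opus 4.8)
The plan is to mirror the derivation of \eqref{eq:esstrictdualfeascondsecond} contained in the proof of Lemma~\ref{lemmaassstrictdual(ii)} (Section~\ref{sec:lemmaassstrictdual(ii)} of the supplement), replacing every use of the constant bound \eqref{eq:boundinvmat} on $\normiM{(\widehat Q_{SS})^{-1}}$ by the cruder but always available estimates $\normiM{(\widehat Q_{SS})^{-1}}\le 32\sqrt s/\CXl$ from \eqref{inftyoperatornormQss} and $\normzM{(\widehat Q_{SS})^{-1}}\le 32/\CXl$ from Lemma~\ref{lem:derivativelem}, and then using $s\le\log(p)$ together with $n\ge\Crsct s\log(p)$ to absorb the extra $\sqrt s$-factors into the rate $(\log(p)/n)^{1/2}$. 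Write $g\defeq\nabla\loss_{n,\alpha_n}^{\,\Hu}(\beta_{\alpha_n}^*)=-\tfrac1n\sum_{i=1}^n v_i\f{X_i}$ with $v_i\defeq l_{\alpha_n}'(Y_i-\f{X_i^\top}\beta_{\alpha_n}^*)$, so $|v_i|\le 2\alpha_n^{-1}$ by \eqref{eq:firstderivative}, and recall from \eqref{eq:strctfeasiblebetaalpha} that $\beta_{\alpha_n}^*$ is strictly feasible in \eqref{eq:betaalpha}, whence its first-order condition gives $\E[v_1\f{X_1}]=\f{0}_p$. With $\widehat Q=\tfrac2n\X_n^\top D\X_n$ as in \eqref{defofQhat} and $\theta_j\defeq(\widehat Q_{SS})^{-1}\widehat Q_{Sj}\in\R^s$ for $j\in S^c$, the symmetry of $\widehat Q$ yields the identity
\[
	\Big(\widehat Q_{S^c S}(\widehat Q_{SS})^{-1}g_S-g_{S^c}\Big)_j=\frac1n\sum_{i=1}^n\big(X_{ij}-\f{X}_{i,S}^\top\theta_j\big)\,v_i,\qquad j\in S^c,
\]
i.e.\ the average against the score $v$ of the $D$-reweighted least-squares residual of the $j$-th predictor on the active predictors.

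Next I would introduce the deterministic population matrix $\bar Q\defeq\E\big[l_{\alpha_n}''(Y_1-\f{X_1^\top}\beta_{\alpha_n}^*)\,\f{X_1}\f{X_1^\top}\big]$, set $\bar\theta_j\defeq(\bar Q_{SS})^{-1}\bar Q_{Sj}$, and split $\theta_j=\bar\theta_j+(\theta_j-\bar\theta_j)$. Since $0<l_{\alpha_n}''\le 2$ by \eqref{eq:secondderivative} one has $\normzM{\bar Q_{Sj}}\le\normzM{\bar Q}\le 2\CXu$, while arguing as in Lemmas~\ref{lemma:RSC:pseudo:huber} and~\ref{lem:derivativelem1} gives $\lambda_{\min}(\bar Q_{SS})\gtrsim\CXl$ for $\alpha_n\le\calpha$; hence, letting $\rho_j\in\R^p$ be the vector with $j$-th coordinate $1$, $S$-block $-\bar\theta_j$, and all remaining coordinates zero, one has $X_{ij}-\f{X}_{i,S}^\top\bar\theta_j=\rho_j^\top\f{X_i}$ and $\normz{\rho_j}^2=1+\normz{\bar\theta_j}^2$ bounded by a constant, uniformly in $j$. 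Since $g_S=-\tfrac1n\sum_i v_i\f{X}_{i,S}$, the displayed coordinate equals $\tfrac1n\sum_i(\rho_j^\top\f{X_i})v_i+(\theta_j-\bar\theta_j)^\top g_S$. For the first term, $\E[(\rho_j^\top\f{X_1})v_1]=\rho_j^\top\E[v_1\f{X_1}]=0$, $\E[(\rho_j^\top\f{X_1})^2 v_1^2]$ is bounded by a constant via Assumption~\ref{assfan2017}(i),(iii) exactly as in the proof of Lemma~\ref{ratelinftygradient}, and $|(\rho_j^\top\f{X_i})v_i|\le 2\alpha_n^{-1}|\rho_j^\top\f{X_i}|$ with $\rho_j^\top\f{X_i}$ sub-Gaussian; thus $\tfrac1n\sum_i(\rho_j^\top\f{X_i})v_i$ has exactly the structure of a coordinate of $\nabla\loss_{n,\alpha_n}^{\,\Hu}(\beta_{\alpha_n}^*)$, and re-running the proof of Lemma~\ref{ratelinftygradient} for the $p-s\le p$ fixed unit directions $\rho_j/\normz{\rho_j}$ --- this is where the slightly enlarged requirement $\alpha_n\ge\sqrt{4/3}\,\Cgradf(\log(p)/n)^{1/2}$, and the condition $n\ge 6\log(p)$, come into play --- bounds $\max_{j\in S^c}|\tfrac1n\sum_i(\rho_j^\top\f{X_i})v_i|\lesssim(\log(p)/n)^{1/2}$ on an event of probability at least $1-2/p^2$.

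For the correction term I would use $|(\theta_j-\bar\theta_j)^\top g_S|\le\normz{\theta_j-\bar\theta_j}\normz{g_S}\le\sqrt s\,\normi{g_S}\,\normz{\theta_j-\bar\theta_j}\le\Cgrads\sqrt s\,(\log(p)/n)^{1/2}\,\normz{\theta_j-\bar\theta_j}$ (Lemma~\ref{ratelinftygradient}) and the resolvent identity $\theta_j-\bar\theta_j=(\widehat Q_{SS})^{-1}(\widehat Q_{Sj}-\bar Q_{Sj})-(\widehat Q_{SS})^{-1}(\widehat Q_{SS}-\bar Q_{SS})\bar\theta_j$, which with $\normzM{(\widehat Q_{SS})^{-1}}\le 32/\CXl$ reduces everything to bounding $\normzM{\widehat Q_{SS}-\bar Q_{SS}}$ and $\max_{j\in S^c}\normz{\widehat Q_{Sj}-\bar Q_{Sj}}$ by $\lesssim(\log(p)/n)^{1/2}$. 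For this I would first pass from $\widehat Q$ to $\nabla^2\loss_{n,\alpha_n}^{\,\Hu}(\beta_{\alpha_n}^*)=\tfrac1n\sum_i l_{\alpha_n}''(Y_i-\f{X_i^\top}\beta_{\alpha_n}^*)\f{X_i}\f{X_i^\top}$, controlling the difference by the Lipschitz constant of $l_{\alpha_n}''$ (of order $\alpha_n$, hence $\lesssim(\log(p)/n)^{1/2}$ in the regime $\alpha_n\simeq(\log(p)/n)^{1/2}$ in which the lemma is applied), by the $\ell_2$-bound on $\normz{\widehat\beta_n^{\,\PDW}-\beta_{\alpha_n}^*}$ from Lemma~\ref{lemma:l2:norm:pdw} and Lemma~\ref{approximation:error:pseudo:huber}, and by a standard restricted-smoothness bound for $\tfrac1n\X_n^\top\X_n$; then an $\varepsilon$-net argument over the unit sphere of $\R^s$ combined with a Bernstein-type bound (using $l_{\alpha_n}''\le 2$ and the sub-Gaussianity in Assumption~\ref{assfan2017}(iii)) and a union bound over $j\in S^c$ yield $\normzM{(\nabla^2\loss_{n,\alpha_n}^{\,\Hu}(\beta_{\alpha_n}^*))_{SS}-\bar Q_{SS}}\lesssim(s/n)^{1/2}$ and $\max_{j\in S^c}\normz{(\nabla^2\loss_{n,\alpha_n}^{\,\Hu}(\beta_{\alpha_n}^*))_{Sj}-\bar Q_{Sj}}\lesssim((s+\log(p))/n)^{1/2}$, both of order $(\log(p)/n)^{1/2}$ because $s\le\log(p)$. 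Hence $\normz{\theta_j-\bar\theta_j}\lesssim(\log(p)/n)^{1/2}$ uniformly in $j$, and the correction term is $\lesssim\sqrt s\,(\log(p)/n)^{1/2}\cdot(\log(p)/n)^{1/2}=(s\log(p)/n)^{1/2}(\log(p)/n)^{1/2}\le\Crsct^{-1/2}(\log(p)/n)^{1/2}$ by $n\ge\Crsct s\log(p)$. Summing the two contributions and taking a union bound over the events used --- the restricted-strong-convexity/eigenvalue event of probability $\ge 1-\cPo\exp(-\cPt n)$ from Lemmas~\ref{lemma:RSC:pseudo:huber}--\ref{lem:derivativelem}, and at most three gradient/concentration events of probability $\ge 1-2/p^2$ each --- gives \eqref{eq:esstrictdualfeascondsecond} with the asserted probability $1-\cPo\exp(-\cPt n)-6/p^2$.

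The main obstacle is the correction term $(\theta_j-\bar\theta_j)^\top g_S$: absent the mutual-incoherence-type bound \eqref{lemmaassstrictdual(ii)(1)} one has only $\normiM{(\widehat Q_{SS})^{-1}}\le 32\sqrt s/\CXl$, so the $\sqrt s$ produced on passing from $\normi{g_S}$ to $\normz{g_S}$ (and the comparable $\sqrt s$ hidden in $\normz{\theta_j-\bar\theta_j}$) can be neutralised only through the interplay of $s\le\log(p)$ and $n\ge\Crsct s\log(p)$; a further nuisance is that $\widehat Q$ depends on the random restricted estimator $\widehat\beta_n^{\,\PDW}$ through the diagonal matrix $D$, which is why the Lipschitz/path argument together with the $\ell_2$-bound of Lemma~\ref{lemma:l2:norm:pdw} must precede any concentration estimate for $\widehat Q_{SS}$ and $\widehat Q_{Sj}$.
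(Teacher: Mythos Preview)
Your approach differs genuinely from the paper's, and the difference matters. The paper never introduces a population matrix $\bar Q$ or deterministic directions $\bar\theta_j$. Instead it exploits a projection identity (Lemma~\ref{rateofrnS}): writing $\widehat Q_{S^cS}(\widehat Q_{SS})^{-1}=\X_{n,S^c}^\top D\X_{n,S}(\X_{n,S}^\top D\X_{n,S})^{-1}$ and $r_k=(\X_{n,S}^\top D\X_{n,S})^{-1}\X_{n,S}^\top D\vec X_k$ for $k\in S^c$, one has $\normz{D^{1/2}\X_{n,S}r_k}\le\normz{\vec X_k}$ because $D^{1/2}\X_{n,S}(\X_{n,S}^\top D\X_{n,S})^{-1}\X_{n,S}^\top D^{1/2}$ is an orthogonal projection; together with the eigenvalue lower bound on $\widehat Q_{SS}$ and the column-norm control of Lemma~\ref{lemmacolumnnormalization} this gives $\max_{k\in S^c}\normz{r_k}\le 33\CXsub/\sqrt{\CXl}$, a constant independent of $s$. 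One then bounds $\sup_{\normz{u}\le C}|u^\top g_S|$ via a $\tfrac12$-net on $\mathcal B_2^s$ of cardinality $\le 6^s$ combined with Bernstein at level $x=4\log(p)$ --- this is where the factor $\sqrt{4/3}$ in the $\alpha_n$-condition actually enters, not in the union bound over the $p-s$ directions $\rho_j$ as you suggest --- and $s\le\log(p)$ absorbs the $6^s$ in the union bound (Lemma~\ref{ratelinftyQhatgradient}). The whole argument never needs to compare $\widehat Q$ with any population counterpart.

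Your route, by contrast, hinges on controlling $\widehat Q_{SS}-\bar Q_{SS}$ and $\widehat Q_{Sj}-\bar Q_{Sj}$, and the sketch for this step has a gap. First, the lemma as stated provides only a \emph{lower} bound on $\alpha_n$, so the Lipschitz constant of $l_{\alpha_n}''$ being $\lesssim(\log(p)/n)^{1/2}$ is an extra hypothesis you are smuggling in. Second, even granting $\alpha_n\simeq(\log(p)/n)^{1/2}$, the Lipschitz path argument produces terms of the form $\alpha_n\cdot\tfrac1n\sum_i|\f{X}_i^\top\Delta|\,(u^\top\f{X}_{i,S})^2$ with $\Delta=\widehat\beta_n^{\,\PDW}-\beta_{\alpha_n}^*$. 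This is not a ``standard restricted-smoothness bound for $\tfrac1n\X_n^\top\X_n$'': the vector $\Delta$ is random, depends on all of $\X_n$, and because $\beta_{\alpha_n}^*$ is in general \emph{not} sparse one has neither an $\ell_1$-based restricted-smoothness inequality for $\tfrac1n\sum_i(\f{X}_i^\top\Delta)^2$ nor a clean sub-Gaussian tail for $\f{X}_i^\top\Delta$ (note $\norme{\beta_{\alpha_n}^*-\beta^*}$ can be as large as $\sqrt p\,\alpha_n^{m-1}$). The paper handles analogous terms in Lemma~\ref{lemmaspektralnormQssCovariancematrix} via a large/small-residual decomposition rather than a Lipschitz bound, and that argument requires $n\gtrsim s^2\log(p)$; under only $n\ge\Crsct s\log(p)$ as assumed here, your correction step does not go through as written.
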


		The proof is provided in the supplement, Section A.3.

		\begin{proof}[Proof of Lemma \ref{signconsweightedhuber}]
			We shall apply Lemma \ref{corsignconsweightedhuber}. Using  the mutual incoherence condition \eqref{Qmutualincoherence}, in order to show strict dual feasibility as in \eqref{strictfeasibilitywh2} it suffices to prove that 
			\begin{align}
				&\normi{ \widehat{Q}_{S^c S} \big(\widehat{Q}_{S S}\big)^{-1} \Big( \nabla \loss_{n,\alpha_n}^{\,\Hu} \big(\beta_{\alpha_n}^*\big)\Big)_{S}  - \Big(\nabla \loss_{n,\alpha_n}^{\,\Hu} \big(\beta_{\alpha_n}^*\big)\Big)_{S^c} } \notag \\
				&\quad \quad \quad \quad \quad \quad + \normi{ \Big( \widehat{Q}_{S^c (S_{\alpha_n} \setminus S)} - \widehat{Q}_{S^c S} \big(\widehat{Q}_{S S}\big)^{-1} \widehat{Q}_{S (S_{\alpha_n} \setminus S)}  \Big) \,\beta_{\alpha_n,S_{\alpha_n} \setminus S}^* }  < \frac{w_{\min}\big(S^c\big)\,\eta}{2}\, \lambda_n \label{proofsignconsweightedhuber1}\,.
			\end{align}
			The first term is bounded by \eqref{eq:esstrictdualfeascondsecond} (which is satisfied by assumption). We prove in the supplement, Section A.4 that
			\begin{align}
				\normi{ \Big( \widehat{Q}_{S^c (S_{\alpha_n} \setminus S)} - \widehat{Q}_{S^c S} \big(\widehat{Q}_{S S}\big)^{-1} \widehat{Q}_{S (S_{\alpha_n} \setminus S)}  \Big) \,\beta_{\alpha_n,S_{\alpha_n} \setminus S}^* }  \leq 80 \, \Capprox\,\CXsubs\,\alpha_n^{\Cpm-1} \label{proofsignconsweightedhuber2}
			\end{align}
			with probability at least $1-2\exp(-2n)-2/p^2$. Then the choices of $\lambda_n$ and $\alpha_n$  in \eqref{lambdanwminconwlhcor} and in \eqref{rangealphawlhcor} imply \eqref{proofsignconsweightedhuber1}. 
			Since the first beta-min condition in Lemma \ref{corsignconsweightedhuber} is also satisfied in both cases by choice of $\alpha_n$ in \eqref{rangealphawlhcor}, the first part of that lemma up to \eqref{eq:boundesterrorwlgeneralpdw} applies. 
			Here we assumed that $\sqrt{s}\geq \CXl/(2560 \, \CXsubs)$ for \eqref{betaminconditionwlhcor} and $320 \, \CXsubs \, \CQS \geq 1$ for \eqref{betaminconditionwlh2cor}, which can be arranged by choosing the constants appropriately.

			Now we show that $\phi_{n, \infty}$ in \eqref{eq:boundesterrorwlgeneralpdw} is bounded by $\phi_{n, \infty,s}$ and, under the additional condition \eqref{eq:boundinvmat} is even bounded by $\phi_{n, \infty,f}$. Then \eqref{betaminconditionwlhcor} (or the analogous condition with $\phi_{n, \infty,f}$) 
			implies the beta-min condition \eqref{betaminweightedhuber} in Lemma \ref{corsignconsweightedhuber}, which concludes the proof. 
			
			To this end, note that $\phi_{n, \infty}$ is bounded by four times the maximum of the summands in \eqref{eq:boundesterrorwlgeneralpdw}. In addition \eqref{inftyoperatornormQss} leads to
			\begin{align*}
				4 \,w_{\max}\big(S\big) \,\lambda_n \, \normiM{\big(\widehat{Q}_{S S}\big)^{-1} } \leq \frac{128 \,w_{\max}\big(S\big) \, \lambda_n \,\sqrt{s}}{\CXl}
			\end{align*}
			and together with Lemma \ref{ratelinftygradient} and the lower bound of $\alpha_n$ in \eqref{rangealphawlhcor} leads to
			\begin{align*}
				4 \, \normiM{\big(\widehat{Q}_{S S}\big)^{-1}} \, \normi{ \Big( \nabla \loss_{n,\alpha_n}^{\,\Hu} \big(\beta_{\alpha_n}^*\big)\Big)_{S} } \leq \frac{128 \, \Cgrads}{\CXl} \bigg( \frac{s \log(p)}{n}\bigg)^\frac{1}{2}
			\end{align*}
			with probability at least $1 - \cPo \exp(-\cPt n) - 2 / p^{2} $ . Further, Lemma \ref{approximation:error:pseudo:huber} implies 
			\begin{align*}
				4 \, \normi{\beta_{\alpha_n,S}^* - \beta_{S}^*} \leq 4 \, \normz{\beta_{\alpha_n}^* - \beta^*} & \leq 4 \, \Capprox  \, \alpha_n^{\Cpm-1} \leq \frac{128 \,\Cgrads}{\CXl} \Bigg(\frac{s \log(p)}{n} \Bigg)^{\frac{1}{2}}
			\end{align*}
			with the choice of $\alpha_n$ in \eqref{rangealphawlhcor}. Finally, in the supplement, Section \ref{sec:proofsignconsweightedhuber2}, we also show that 
			\begin{equation}\label{secondsupplequweigthuber}
				\normi{ \widehat{Q}_{S (S_\alpha \setminus S)} \,\beta_{\alpha,S_\alpha \setminus S}^*} \leq 80 \, \Capprox\,\CXsubs\,\alpha_n^{\Cpm-1}
			\end{equation}
			with high probability. Together with \eqref{inftyoperatornormQss} these imply 
			\begin{align*}
				4\,\normiM{\big(\widehat{Q}_{S S}\big)^{-1}} \, \normi{ \widehat{Q}_{S (S_\alpha \setminus S)} \,\beta_{\alpha,S_\alpha \setminus S}^*} 
				\leq \frac{10240\, \Capprox\,\CXsubs}{\CXl} \,\sqrt{s}\,\alpha_n^{\Cpm-1} \leq \frac{128 \,\Cgrads}{\CXl} \Bigg(\frac{s \log(p)}{n} \Bigg)^{\frac{1}{2}}
			\end{align*}
			by the choice of $\alpha_n$, which concludes the proof of $\phi_{n,\infty} \leq \phi_{n,\infty,s}$.

			To show $\phi_{n,\infty} \leq \phi_{n,\infty,f}$ under the assumption \eqref{eq:boundinvmat}, after arranging $80 \, \CXsubs \, \CQS \geq 1$ we proceed analogously (and use the estimate \eqref{eq:boundinvmat} instead of \eqref{inftyoperatornormQss} in the previous inequalities). This concludes the proof of the lemma. 

		\end{proof}

		\subsection{The adaptive LASSO pseudo Huber estimator with $\ell_\infty$ - bounds for the first-stage estimator }\label{sec:resultsadaplassogeneral}

		Now in Lemma \ref{signconsadaptivehuber} we provide a result on the estimation error of the adaptive LASSO pseudo Huber estimator $\widehat{\beta}_n^{\,\ALH}$ in \eqref{defweightedlassohuber} with weights in \eqref{eq:weightsadaplassoinitest} for an initial estimator which is assumed to satisfy bounds in $\ell_\infty$, similarly to \citet[Theorem 4.3]{Zhou2009}. Together with Lemma \ref{lemmaassstrictdual(ii)} and Lemma \ref{rateofrnS} to deal with the mutual incoherence condition \eqref{rnconditionalh} this implies Theorem \ref{signconsadaptivehuberwithinitialellinf}. 

		\begin{lemma}[Adaptive LASSO pseudo Huber estimator with $\ell_\infty$ - bounds for the first-stage estimator] \label{signconsadaptivehuber}
			
			Consider model \eqref{generalregressionmodel} under Assumption \ref{assfan2017}. Suppose that \eqref{eq:esstrictdualfeascondsecond}	holds 
			and that $\alpha_n$ is chosen according to \eqref{rangealphawlhcor}. 
			For the estimation error 
			\begin{align}
				\Delta_n \defeq \widehat{\beta}_n^{\,\init} - \beta^*\, \label{defdeltaninitial}
			\end{align}
			of the initial estimator $\widehat{\beta}_n^{\,\init}$, assume that we have upper bounds of the form
			\begin{equation}\label{eq:reateinitialest}
				\normi{\Delta_{n,S}} \leq a_n < 1, \qquad \normi{\Delta_{n,S^c}} \leq b_n < 1,
			\end{equation}
			with sequences $(a_n)$ and $(b_n)$ tending to zero. 
			Furthermore, assume that
			for some $\eta \in (0,1)$ and $\Clambda > 4/\eta$ the 
			regularization parameter is chosen from the range
			\begin{align}
				\frac{4 \, \CQgrad \, b_n}{\eta} \bigg(\frac{\log(p)}{n}\bigg)^\frac{1}{2} < \lambda_{n} \leq \Clambda\,\CQgrad\, b_n \bigg(\frac{\log(p)}{n}\bigg)^\frac{1}{2} \, , \label{lambdanconalh}
			\end{align}
			and in addition suppose that there is a sequence $q_n \leq (1-\eta)/b_n$, which may grow if $b_n \downarrow 0$, such that 
			\begin{equation}
				\normiM{\widehat{Q}_{S^c S} \big( \widehat{Q}_{S S} \big)^{-1}} \leq q_n \label{rnconditionalh}\,.
			\end{equation}
			Finally, setting
			\[ \phi_{n, \infty, s,1} = \frac{128}{\CXl}  \, \max \bigg\{ \Cgrads\, \bigg(\frac{s \log(p)}{n} \bigg)^{\frac{1}{2}} \,, \,\lambda_n\,\sqrt{s} \bigg\},\]
			suppose that the beta-min assumption
			\begin{equation}\label{betaminconditionalh}
				\beta_{\min}^* >  \max \bigg\{ 2 \, a_n \, , \, \phi_{n, \infty, s,1} \, , \, 2 \max\Big\{ \frac{ q_n}{1-\eta} \, ,  \Clambda\,\CQograd \Big\} \, b_n  \bigg\} \,
			\end{equation}
			is satisfied. 
			Then for $n \geq \max\big\{\Crsct s \log(p), 6 \log(p) \big\}$ with probability at least equal to \eqref{eq:boundprob}, 
			%
			the adaptive LASSO pseudo Huber estimator, given as a solution to the program \eqref{defweightedlassohuber} with weights in \eqref{eq:weightsadaplassoinitest}, is unique and given by $\widehat{\beta}_n^{\,\ALH} = \widehat{\beta}_n^{\,\PDW}$ and satisfies 
			\begin{align}
				\sign\big(\widehat{\beta}_{n}^{\,\ALH}\big) =  \sign\big(\beta^*\big) ~~~~~~\mathrm{and} ~~~~~~ \normi{\widehat{\beta}_{n}^{\,\ALH} - \beta^*} \leq \phi_{n, \infty, s,1} \, . \label{alhaussage} 
			\end{align}
			If in addition \eqref{eq:boundinvmat} is also assumed, we can replace $\phi_{n, \infty, s,1}$ in the beta-min condition \eqref{betaminconditionalh} and in the upper bound of the $\ell_\infty$ distance of the estimation error by
			\begin{align}
				\phi_{n, \infty, f,1} = %
				4 \, \CQS \,\max \bigg\{ \Cgrads \, \bigg(\frac{\log(p)}{n} \bigg)^{\frac{1}{2}} \, , \, \lambda_n \bigg\} \,.\label{betaminconditionalh2}
			\end{align}
		\end{lemma}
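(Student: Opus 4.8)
The plan is to obtain Lemma~\ref{signconsadaptivehuber} as a corollary of the general weighted result, Lemma~\ref{signconsweightedhuber}, by checking that the adaptive weights \eqref{eq:weightsadaplassoinitest} satisfy the hypotheses of that lemma under the assumptions made here, and then by showing that the quantity $\phi_{n,\infty,s}$ appearing in its conclusion is dominated by $\phi_{n,\infty,s,1}$ (and, under \eqref{eq:boundinvmat}, that $\phi_{n,\infty,f}$ is dominated by $\phi_{n,\infty,f,1}$).

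First I would record the consequences of \eqref{eq:reateinitialest} and the beta-min condition \eqref{betaminconditionalh} for the weights $w_k=\max\{1/|\widehat{\beta}_{n,k}^{\,\init}|,1\}$. On $S$, since $\beta_{\min}^*>2a_n$, for $k\in S$ we get $|\widehat{\beta}_{n,k}^{\,\init}|\geq|\beta_k^*|-a_n\geq\beta_{\min}^*-a_n>\beta_{\min}^*/2>0$; hence $\sign(\widehat{\beta}_{n,k}^{\,\init})=\sign(\beta_k^*)$ and $w_{\max}(S)\leq\max\{1,2/\beta_{\min}^*\}$. On $S^c$, $|\widehat{\beta}_{n,k}^{\,\init}|\leq b_n<1$, so $w_k=1/|\widehat{\beta}_{n,k}^{\,\init}|\geq 1/b_n$ (interpreting $1/0=\infty$ and enforcing $\beta_k=0$ there, as in the definition of the estimator), so $w_{\min}(S^c)\geq 1/b_n$ and the entries of $w_{S^c}$ satisfy $w_{S^c,j}\geq 1/b_n$.

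Next I would verify the remaining hypotheses of Lemma~\ref{signconsweightedhuber} (condition \eqref{eq:esstrictdualfeascondsecond}, the range \eqref{rangealphawlhcor} for $\alpha_n$, and $n\geq\max\{\Crsct s\log(p),6\log(p)\}$ being assumed here). For \eqref{lambdanwminconwlhcor}: by the lower bound in \eqref{lambdanconalh}, $w_{\min}(S^c)\lambda_n\geq\lambda_n/b_n>(4\CQgrad/\eta)(\log(p)/n)^{1/2}$. For the mutual incoherence \eqref{Qmutualincoherence}: since $\widehat{\gamma}_S\in\partial\norme{\widehat{\beta}_{n,S}^{\,\PDW}}$ we have $\normi{\widehat{\gamma}_S}\leq 1$, so by \eqref{rnconditionalh} every coordinate of $\widehat{Q}_{S^c S}(\widehat{Q}_{SS})^{-1}(w_S\odot\widehat{\gamma}_S)$ is bounded in modulus by $\normiM{\widehat{Q}_{S^c S}(\widehat{Q}_{SS})^{-1}}\,\normi{w_S\odot\widehat{\gamma}_S}\leq q_n\,w_{\max}(S)$; as $w_{S^c,j}(1-\eta)\geq(1-\eta)/b_n$ it suffices to check $q_n\,w_{\max}(S)\,b_n\leq 1-\eta$, and writing $w_{\max}(S)b_n\leq\max\{b_n,2b_n/\beta_{\min}^*\}$ the first alternative gives $q_nb_n\leq 1-\eta$ while the second is handled by the term $2q_nb_n/(1-\eta)$ in \eqref{betaminconditionalh}, which forces $2b_n/\beta_{\min}^*<(1-\eta)/q_n$. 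For the beta-min requirement \eqref{betaminconditionwlhcor} I would show $\phi_{n,\infty,s}\leq\phi_{n,\infty,s,1}$, which reduces to $w_{\max}(S)\lambda_n\leq\max\{\Cgrads(\log(p)/n)^{1/2},\lambda_n\}$: this is trivial when $w_{\max}(S)\leq 1$, and when $w_{\max}(S)\leq 2/\beta_{\min}^*$ it follows from the upper bound $\lambda_n\leq\Clambda\CQgrad b_n(\log(p)/n)^{1/2}$ in \eqref{lambdanconalh}, the bound $\beta_{\min}^*>2\Clambda\CQograd b_n$ from \eqref{betaminconditionalh}, and $\CQgrad=\CQograd\,\Cgrads$. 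Then $\beta_{\min}^*>\phi_{n,\infty,s,1}$ from \eqref{betaminconditionalh} gives $\beta_{\min}^*>\phi_{n,\infty,s}$.

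Having verified all hypotheses (subject to the same conventions on the absolute constants as in the proof of Lemma~\ref{signconsweightedhuber}), that lemma yields, with probability at least \eqref{eq:boundprob}, that $\widehat{\beta}_n^{\,\ALH}=\widehat{\beta}_n^{\,\PDW}$ is the unique solution of \eqref{defweightedlassohuber}, with $\sign(\widehat{\beta}_n^{\,\ALH})=\sign(\beta^*)$ and $\normi{\widehat{\beta}_n^{\,\ALH}-\beta^*}\leq\phi_{n,\infty,s}\leq\phi_{n,\infty,s,1}$, i.e.\ \eqref{alhaussage}. The additional statement follows identically from the alternative conclusion of Lemma~\ref{signconsweightedhuber} under \eqref{eq:boundinvmat}: there $\phi_{n,\infty,s}$ is replaced by $\phi_{n,\infty,f}=4\CQS\max\{\Cgrads(\log(p)/n)^{1/2},w_{\max}(S)\lambda_n\}$, the same estimate $w_{\max}(S)\lambda_n\leq\max\{\Cgrads(\log(p)/n)^{1/2},\lambda_n\}$ gives $\phi_{n,\infty,f}\leq\phi_{n,\infty,f,1}$, and the version of \eqref{betaminconditionalh} with $\phi_{n,\infty,f,1}$ in place of $\phi_{n,\infty,s,1}$ delivers \eqref{betaminconditionalh2}. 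I expect the main obstacle to be exactly the bookkeeping in the previous paragraph: the adaptive weight $w_{\max}(S)$ may blow up like $1/\beta_{\min}^*$ as $\beta_{\min}^*\downarrow 0$, and one must see that the term $2\max\{q_n/(1-\eta),\Clambda\CQograd\}b_n$ in the beta-min condition is precisely what makes this blow-up, multiplied by the (small, of order $b_n$) regularization parameter, leave both the mutual incoherence inequality and the beta-min inequality in their weight-free forms.
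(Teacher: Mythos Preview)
Your proposal is correct and follows essentially the same approach as the paper's own proof: both deduce the result from Lemma~\ref{signconsweightedhuber} by bounding $w_{\max}(S)\leq\max\{1,2/\beta_{\min}^*\}$ and $w_{\min}(S^c)\geq 1/b_n$, then verifying \eqref{lambdanwminconwlhcor} from the lower bound in \eqref{lambdanconalh}, the mutual incoherence \eqref{Qmutualincoherence} via the two-case analysis on $\beta_{\min}^*$, and finally $\phi_{n,\infty,s}\leq\phi_{n,\infty,s,1}$ (respectively $\phi_{n,\infty,f}\leq\phi_{n,\infty,f,1}$) by the inequality $w_{\max}(S)\lambda_n\leq\Cgrads(\log(p)/n)^{1/2}$ obtained from the upper bound in \eqref{lambdanconalh} together with $\beta_{\min}^*>2\Clambda\CQograd b_n$ and $\CQgrad=\CQograd\,\Cgrads$. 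The bookkeeping you flag as the main obstacle is handled exactly as in the paper.
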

		
		\begin{proof}[Proof of Lemma  \ref{signconsadaptivehuber}]
			We shall apply Lemma \ref{signconsweightedhuber}. To this end, we start by checking the mutual incoherence condition \eqref{Qmutualincoherence} and the condition \eqref{lambdanwminconwlhcor} on the regularization parameter $\lambda_n$.
			For \eqref{lambdanwminconwlhcor}, since 
				$\big|\widehat{\beta}_{n,k}^{\,\init} \big| = \big|\beta_{k}^* + \Delta_{n,k}\big| = \big|\Delta_{n,k}\big| \leq \big\lVert\Delta_{n,S^c}\big\rVert_\infty$ 
				for $k \in S^c$, from \eqref{eq:reateinitialest} we obtain that
				\begin{align}
					w_{\min}\big(S^c\big) = \min_{k \in S^c} \Big\{ \max \big\{ \big(\big| \widehat{\beta}_{n,k}^{\,\init} \big|\big)^{-1} , 1 \big\} \Big\} \geq \big\lVert\Delta_{n,S^c}\big\rVert_\infty^{-1} \geq \frac{1}{b_n} \,  \label{wminalh}	\end{align}
				and hence 	$w_{\min} \big(S^c\big) \,\lambda_n  \geq \lambda_{n}/b_n $, which together with the assumption \eqref{lambdanconalh} on $\lambda_n$ gives \eqref{lambdanwminconwlhcor}.	
				Next, we turn to the mutual incoherence condition \eqref{Qmutualincoherence}, for which it suffices to prove
				\begin{align}\label{eq:mutualincohtwo}
					\normiM{\widehat{Q}_{S^c S} \big( \widehat{Q}_{S S} \big)^{-1}} \leq \frac{w_{\min}\big(S^c\big)}{w_{\max}\big(S\big)} \big(1 - \eta\big)\,.
				\end{align}

				From the beta-min condition \eqref{betaminconditionalh} and the bounds in \eqref{eq:reateinitialest} we have in particular that 
					$\beta_{\min}^*/2 > a_n \geq \normi{\Delta_{n,S}} \geq \big|\Delta_{n,k}\big|$
				and hence that
				\begin{align*}
					\big|\widehat{\beta}_{n,k}^{\,\init} \big| = \big|\beta_{k}^* + \Delta_{n,k} \big| \geq \big|\beta_{k}^*\big| - \big| \Delta_{n,k}\big| > \beta_{\min}^* - \frac{\beta_{\min}^*}{2} = \frac{\beta_{\min}^*}{2} 
				\end{align*}
				for $k \in S$. This together with the definition of the weights implies
				\begin{align}
					w_{\max}\big(S\big)  = \max_{k \in S} \Big\{ \max \big\{ \big(\big| \widehat{\beta}_{n,k}^{\,\init} \big|\big)^{-1} , 1 \big\} \Big\} \leq \max \{ 2/\beta_{\min}^* , 1 \} \, . \label{wmaxalh}
				\end{align}
				In order to conclude \eqref{eq:mutualincohtwo} we now consider two cases. 
				If $\beta_{\min}^*\leq 2$, then we have $w_{\max}\big(S\big) \leq 2 / \beta_{\min}^*$ because of \eqref{wmaxalh} and hence with \eqref{wminalh} and the last term in the beta-min condition \eqref{betaminconditionalh} we obtain
				\begin{align*}
					\frac{w_{\min}\big(S^c\big)}{w_{\max}\big(S\big)} \big(1 - \eta\big) \geq \frac{\beta_{\min}^* \, \big(1 - \eta\big)}{2\,b_n} > q_n \geq  \normiM{\widehat{Q}_{S^c S} \big( \widehat{Q}_{S S} \big)^{-1}} \,
				\end{align*}
				by \eqref{rnconditionalh}. 
				If $\beta_{\min}^*> 2$, then $w_{\max}\big(S\big) \leq 1$ and by \eqref{rnconditionalh}, \eqref{wminalh} and the choice of $q_n$ it follow that
				\begin{align*}
					\frac{w_{\min}\big(S^c\big)}{w_{\max}\big(S\big)} \big(1 - \eta\big) \geq \frac{1 - \eta}{b_n} \geq q_n = \normiM{\widehat{Q}_{S^c S} \big( \widehat{Q}_{S S} \big)^{-1}} \,, 
				\end{align*}
				so that \eqref{eq:mutualincohtwo} is satisfied in both cases. 
				
				Next we show that $\phi_{n,\infty, s} \leq \phi_{n,\infty, s, 1}$, then the beta-min condition \eqref{betaminconditionalh} directly implies \eqref{betaminconditionwlhcor}.
				Comparing $\phi_{n,\infty, s, 1}$ and $\phi_{n,\infty, s}$ it remains to show that  
				\begin{align}
					\frac{128 \, w_{\max}\big(S\big)\,\lambda_n\,\sqrt{s} }{\CXl} \leq \frac{128}{\CXl}  \, \max \bigg\{ \Cgrads\, \bigg(\frac{s \log(p)}{n} \bigg)^{\frac{1}{2}} \,, \,\lambda_n\,\sqrt{s} \bigg\}. \label{eq:theboundbetaminagain}
				\end{align} 
				To this end, note that 
				%
				the last lower bound in the inequality \eqref{betaminconditionalh} implies
				\begin{align*}
					\frac{128 \, \Cgrads}{\CXl} \bigg(\frac{s \log(p)}{n} \bigg)^{\frac{1}{2}}  > \frac{128 \, \Cgrads}{\CXl} \bigg(\frac{s \log(p)}{n} \bigg)^{\frac{1}{2}} \,\frac{ 2 \,\Clambda\,\CQograd\, b_n}{ \beta_{\min}^*} \geq \frac{256\,\lambda_n \sqrt{s}}{\CXl\,\beta_{\min}^*} 
				\end{align*}
				
				by the choice of the regularization parameter $\lambda_n$ in \eqref{lambdanconalh}. This together with \eqref{wmaxalh} implies \eqref{eq:theboundbetaminagain}. 	
				So Lemma \ref{signconsweightedhuber} applies and we conclude that the $\ell_\infty$ bound in \eqref{wlhaussage}  can be reduced to \eqref{alhaussage}.
				
				For the sharper bound $ \phi_{n,\infty, f} \leq \phi_{n,\infty, f, 1}$, 
				under assumption \eqref{eq:boundinvmat} one argues similarly. This concludes the proof. 
			\end{proof}
			%
			%
			
			\subsection{The adaptive LASSO pseudo Huber estimator with $\ell_2$ - bounds for the first-stage estimator}\label{sec:resultsadaplassospecial}

			Next, we discuss the performance of the LASSO pseudo Huber estimator in case that the initial estimator satisfies bounds on the $\ell_2$ - and $\ell_1$ - errors. In this case, the regularization parameter $\lambda_n$ for the adaptive LASSO pseudo Huber estimator requires the parameter $s$. Therefore we first present Lemma \ref{thresholdingprocedureinitial}, which is analogous to \citet[Lemma 4.2]{Zhou2009} and which gives a superset $\overline{S}$ of the support $S$, the cardinality of which is of the same order $s$. 
			We set
			$$\lambda_n^{\,\init} = \Big(\frac{\log p}{n}\Big)^{1/2},$$
			the notation pointing to the amount of regularization in the initial estimator. 
			\begin{lemma}[Thresholding procedure] \label{thresholdingprocedureinitial}
				If the initial estimator $\widehat{\beta}_n^{\,\init}$ satisfies \eqref{eq:conrateinitial}, and if the following beta-min condition 
				\begin{align}
					\beta_{\min}^* > 2 \, \Cinit \, \lambda_n^{\,\init} \sqrt{s} \label{thresholdingprocedureinitialbetamin}
				\end{align}
				holds, then the set
					$	\overline{S} = \big\{ k \in \{1,\dotsc,p\} \, \big|\, \big|\widehat{\beta}_{n,k}^{\,\init}\big| > \lambda_n^{\,\init} \big\} \,$
				satisfies
				\begin{align}
					S \subseteq \overline{S} ~~~~~\mathrm{and}~~~~~ s \leq \big|\overline{S}\big| \leq  2 \, \Cinit \, s \,.
				\end{align}
			\end{lemma}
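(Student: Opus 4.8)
The plan is to show the two inclusions $S \subseteq \overline S$ and $\overline S \supseteq$-part giving $|\overline S| \le 2\Cinit s$ separately, using only the $\ell_2$ and $\ell_1$ bounds in \eqref{eq:conrateinitial} together with the beta-min assumption \eqref{thresholdingprocedureinitialbetamin}. For the inclusion $S \subseteq \overline S$, fix $k \in S$ and estimate, via the reverse triangle inequality, $|\widehat\beta_{n,k}^{\,\init}| \ge |\beta_k^*| - |\widehat\beta_{n,k}^{\,\init} - \beta_k^*| \ge \beta_{\min}^* - \normz{\widehat\beta_n^{\,\init} - \beta^*} \ge \beta_{\min}^* - \Cinit\,\lambda_n^{\,\init}\sqrt{s}$, where the last step uses the $\ell_2$ bound in \eqref{eq:conrateinitial} (bounding a single coordinate by the full $\ell_2$ norm). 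By \eqref{thresholdingprocedureinitialbetamin} we have $\beta_{\min}^* - \Cinit\,\lambda_n^{\,\init}\sqrt{s} > \Cinit\,\lambda_n^{\,\init}\sqrt{s} \ge \lambda_n^{\,\init}$ (the latter since $\Cinit \ge 1$ and $s \ge 1$), so $|\widehat\beta_{n,k}^{\,\init}| > \lambda_n^{\,\init}$, i.e.\ $k \in \overline S$. This also immediately gives the lower bound $s = |S| \le |\overline S|$.

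For the upper bound $|\overline S| \le 2\Cinit s$, the idea is a standard counting argument: every index $k \in \overline S$ contributes more than $\lambda_n^{\,\init}$ to $\norme{\widehat\beta_n^{\,\init}}$, but I instead want to bound the count in terms of $\norme{\widehat\beta_n^{\,\init} - \beta^*}$. Split $\overline S = (\overline S \cap S) \cup (\overline S \cap S^c)$; the first piece has cardinality at most $s$. For the second piece, note that for $k \in S^c$ we have $\beta_k^* = 0$, so $|\widehat\beta_{n,k}^{\,\init}| = |\widehat\beta_{n,k}^{\,\init} - \beta_k^*|$, and $k \in \overline S$ forces this to exceed $\lambda_n^{\,\init}$. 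Hence
\begin{align*}
	\big|\overline S \cap S^c\big| \, \lambda_n^{\,\init} \;<\; \sum_{k \in \overline S \cap S^c} \big|\widehat\beta_{n,k}^{\,\init} - \beta_k^*\big| \;\le\; \norme{\widehat\beta_n^{\,\init} - \beta^*} \;\le\; \Cinit\,\lambda_n^{\,\init}\, s,
\end{align*}
using the $\ell_1$ bound in \eqref{eq:conrateinitial}. Dividing by $\lambda_n^{\,\init} > 0$ yields $|\overline S \cap S^c| < \Cinit s$, hence $\le \Cinit s$ (or simply $\le \Cinit s$ as a non-strict bound suffices). Combining, $|\overline S| \le s + \Cinit s \le 2\Cinit s$, since $\Cinit \ge 1$.

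I do not anticipate a genuine obstacle here: the statement is essentially a deterministic consequence of the two norm bounds, and the only mild care needed is to keep track of the constant $\Cinit \ge 1$ (and $s \ge 1$) when comparing $\Cinit\lambda_n^{\,\init}\sqrt s$ and $\lambda_n^{\,\init}$ in the first part, and to use the $\ell_1$ rather than the $\ell_2$ bound in the counting step (using $\ell_2$ would only give $|\overline S \cap S^c| \lesssim \Cinit^2 s$, which is weaker than claimed). One should also note that the beta-min condition \eqref{thresholdingprocedureinitialbetamin} is exactly what is needed both to place all of $S$ in $\overline S$ and is consistent with the counting bound.
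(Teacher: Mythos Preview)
Your proposal is correct and follows essentially the same approach as the paper's own proof: use the $\ell_2$ bound in \eqref{eq:conrateinitial} together with the beta-min condition \eqref{thresholdingprocedureinitialbetamin} to show $S \subseteq \overline S$, then use the $\ell_1$ bound in a counting argument on $\overline S \cap S^c$ to obtain $|\overline S| \le s + \Cinit s \le 2\Cinit s$. Your version is in fact slightly more explicit in justifying $\Cinit\,\lambda_n^{\,\init}\sqrt s \ge \lambda_n^{\,\init}$ via $\Cinit \ge 1$ and $s \ge 1$, which the paper leaves implicit.
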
  
			
			\begin{proof}[Proof of Lemma \ref{thresholdingprocedureinitial}]\hfill\\
				Let $\Delta_n = \widehat{\beta}_n^{\,\init} - \beta^*$. Then from \eqref{eq:conrateinitial} it follows that 
				\begin{align*}
					\normi{\Delta_{n,S}} \leq \normi{\Delta_{n}} \leq \normz{\Delta_{n}} \leq \Cinit \, \lambda_n^{\,\init} \sqrt{s}
				\end{align*}
				and hence for all $k \in S$ that
				\begin{align*}
					\big| \widehat{\beta}_{n,k}^{\,\init} \big| = \big|\beta_{k}^* + \Delta_{n,k} \big| \geq \big|\beta_{k}^*\big| - \big| \Delta_{n,k} \big| \geq \beta_{\min}^* - \normi{\Delta_{n,S}} > \Cinit \, \lambda_n^{\,\init} \sqrt{s} \,
				\end{align*}
				because of inequality \eqref{thresholdingprocedureinitialbetamin}. In consequence, the definition of the set $\overline{S}$ implies the membership $S \subseteq \overline{S}$. Furthermore, for $k \in S^c$ (since $\beta_{k}^* = 0$) it is
				\begin{align*}
					\big|\widehat{\beta}_{n,k}^{\,\init} \big| = \big|\beta_{k}^* + \Delta_{n,k}\big| = \big|\Delta_{n,k}\big| 
				\end{align*}
				and the upper bound of the $\ell_1$ norm of the estimation error in \eqref{eq:conrateinitial} leads to
				\begin{align*}
					\norme{\widehat{\beta}_{n,S^c}^{\,\init}} = \norme{\Delta_{n,S^c}} \leq \norme{\Delta_n} \leq \Cinit \, \lambda_n^{\,\init} \, s \,.
				\end{align*}
				Hence we include at most $ \Cinit \, s $ more entries from $S^c$ in $\overline{S}$, thus
				\begin{align*}
					s \leq \big|\overline{S}\big| \leq s + \Cinit \, s \leq 2 \,\Cinit \, s \,,
				\end{align*}
				which completes the proof.
			\end{proof}
			\begin{lemma} \label{rateofrnS}
				Suppose Assumption \ref{assfan2017} and $n \geq \max\big\{ \Crsct s \log(p), 6 \log(p) \big\}$ hold. Then 
				\begin{align}\label{eq:rateconvQQ}
					\max_{k \in \{1\dotsc,p-s\}} \normz{ \Big(e_k^\top \widehat{Q}_{S^c S} \big( \widehat{Q}_{S S} \big)^{-1} \Big)^\top} \leq \frac{33\,\CXsub}{\sqrt{\CXl}} \quad \text{and} \quad \normiM{\widehat{Q}_{S^c S} \big( \widehat{Q}_{S S} \big)^{-1}} \leq \frac{33\,\CXsub\,\sqrt{s}}{\sqrt{\CXl}}
				\end{align}
				with probability at least $1 - \cPo \exp(-\cPt n) - 2/p^2$.
			\end{lemma}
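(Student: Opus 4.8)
The plan is to work row by row and to exploit the factorisation $\widehat{Q}=\frac2n\X_n^\top D\,\X_n$ with $D=\diag(d_1,\dots,d_n)$, $d_i\in(0,1]$, from Lemma~\ref{lem:derivativelem}. Since $n\geq\Crsct s\log(p)$ and $\alpha_n\leq\calpha$ (in force throughout this section), that lemma also supplies $\lambda_{\min}(\widehat{Q}_{SS})\geq\CXl/32$ on an event $\mathcal{E}_1$ with $\Prob(\mathcal{E}_1)\geq 1-\cPo\exp(-\cPt n)$; I would condition on $\mathcal{E}_1$ throughout. Write $\X_{n,S}$ (resp.\ $\X_{n,S^c}$) for the $n\times s$ (resp.\ $n\times(p-s)$) submatrix of $\X_n$ with columns indexed by $S$ (resp.\ $S^c$).

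First I would observe that the $\tfrac2n$ factors cancel and
\[
\widehat{Q}_{S^cS}\big(\widehat{Q}_{SS}\big)^{-1}
=\X_{n,S^c}^\top D\,\X_{n,S}\big(\X_{n,S}^\top D\,\X_{n,S}\big)^{-1}
=\X_{n,S^c}^\top D^{1/2}\,A\,\big(A^\top A\big)^{-1},\qquad A:=D^{1/2}\X_{n,S}.
\]
Hence for each $k\in\{1,\dots,p-s\}$ the $k$-th row of $\widehat{Q}_{S^cS}(\widehat{Q}_{SS})^{-1}$ equals $u_k^\top A(A^\top A)^{-1}$ with $u_k:=D^{1/2}\X_{n,S^c}e_k\in\R^n$. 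Because $(A(A^\top A)^{-1})^\top A(A^\top A)^{-1}=(A^\top A)^{-1}$ and $A^\top A=\frac n2\widehat{Q}_{SS}$, on $\mathcal{E}_1$ one has $\normzM{A(A^\top A)^{-1}}=\lambda_{\min}(A^\top A)^{-1/2}=(\tfrac n2\lambda_{\min}(\widehat{Q}_{SS}))^{-1/2}\leq 8/\sqrt{n\CXl}$, so that
\[
\normz{\Big(e_k^\top\widehat{Q}_{S^cS}\big(\widehat{Q}_{SS}\big)^{-1}\Big)^\top}\leq \frac{8}{\sqrt{n\CXl}}\,\normz{u_k}.
\]

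Next I would bound $\normz{u_k}$: since $D\preceq\mathrm{I}_n$ we get $\normzq{u_k}\leq\normzq{\X_{n,S^c}e_k}=\sum_{i=1}^n X_{i,j_k}^2$, where $j_k$ denotes the $k$-th element of $S^c$. It thus remains to control $\max_{1\leq j\leq p}\frac1n\sum_{i=1}^n X_{i,j}^2$. Each $X_{i,j}$ is sub-Gaussian with variance proxy $\CXsubs$ by Assumption~\ref{assfan2017}(iii) applied with $v=e_j$, hence $X_{i,j}^2$ is sub-exponential with $\E[X_{1,j}^2]\leq 4\CXsubs$; a Bernstein-type deviation inequality, followed by a union bound over $j\in\{1,\dots,p\}$ using $n\geq 6\log(p)$, yields a numerical constant $c_0$ with $8\sqrt{c_0}\leq 33$ such that $\frac1n\sum_{i=1}^n X_{i,j}^2\leq c_0\,\CXsubs$ for all $j$ on an event of probability at least $1-2/p^2$.

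On the intersection of that event with $\mathcal{E}_1$ we then obtain $\normz{(e_k^\top\widehat{Q}_{S^cS}(\widehat{Q}_{SS})^{-1})^\top}\leq 8\sqrt{c_0\CXsubs}/\sqrt{\CXl}\leq 33\,\CXsub/\sqrt{\CXl}$ for every $k$, which is the first inequality of \eqref{eq:rateconvQQ}; a union bound gives the asserted probability. For the second inequality, each row of $\widehat{Q}_{S^cS}(\widehat{Q}_{SS})^{-1}$ lies in $\R^s$, so its $\ell_1$-norm is at most $\sqrt s$ times its $\ell_2$-norm, and taking the maximum over rows yields $\normiM{\widehat{Q}_{S^cS}(\widehat{Q}_{SS})^{-1}}\leq 33\,\CXsub\sqrt s/\sqrt{\CXl}$. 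The one genuine obstacle is the concentration step for $\max_j\frac1n\sum_i X_{i,j}^2$: everything else is deterministic linear algebra once $\mathcal{E}_1$ holds, but there the Bernstein parameters must be tuned so that the exponential tail survives the union bound over $p$ coordinates under only $n\gtrsim\log(p)$ while still delivering the sharp numerical constant.
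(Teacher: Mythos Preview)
Your argument is correct and follows essentially the same route as the paper: the paper also factorises $\widehat{Q}_{S^cS}(\widehat{Q}_{SS})^{-1}=\X_{n,S^c}^\top D\X_{n,S}(\X_{n,S}^\top D\X_{n,S})^{-1}$, bounds the $\ell_2$-norm of each row by $\lambda_{\min}(\X_{n,S}^\top D\X_{n,S})^{-1/2}\,\normz{\vec X_k}$ (presented via the projection $D^{1/2}\X_{n,S}(\X_{n,S}^\top D\X_{n,S})^{-1}\X_{n,S}^\top D^{1/2}$ rather than the spectral norm of $A(A^\top A)^{-1}$, but this is the same bound), and then passes to $\ell_1$ via $\sqrt s$. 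The concentration step you flag is handled by the paper's Lemma~\ref{lemmacolumnnormalization}, which gives $\max_j\tfrac1n\normzq{\vec X_j}\leq 17\,\CXsubs$ with probability $1-2/p^2$ under $n\geq 6\log(p)$, so that $8\sqrt{c_0}=8\sqrt{17}\approx 32.98\leq 33$ exactly matches the constant.
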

			The technical proof of this lemma is deferred to the supplement, Section A.3.

			\smallskip
			
			For clarity of formulation in the following result we shall again impose \eqref{eq:esstrictdualfeascondsecond}, and  \eqref{eq:boundinvmat} in the second part, as high-level conditions. Theorem \ref{signconsadaptivehuberwithinitial} then follows from the following Lemma \ref{signconsadaptivehuberwithinitial1} together with Lemmas \ref{lemmaassstrictdual(ii)} and \ref{lemmaassstrictdual(i)}.

			\begin{lemma}[Adaptive LASSO pseudo Huber under $\ell_2$ - bound for first stage estimator] \label{signconsadaptivehuberwithinitial1}
				Consider model \eqref{generalregressionmodel} under Assumption \ref{assfan2017}. Suppose that \eqref{eq:esstrictdualfeascondsecond}	holds true, and that the robustification parameter $\alpha_n$ is chosen according to \eqref{rangealphawlhcor}.
				Suppose that the initial estimator $\widehat{\beta}_{n}^{\,\init}$ satisfies \eqref{eq:conrateinitial} with $\lambda_n^{\,\init} = \Clambdainit \,\big(\frac{\log(p)}{n}\big)^\frac{1}{2}$ for some constant $\Clambdainit \geq 16\, \Cgrads/\CXl$, and that for suitable 
				$\eta \in (0,1)$ and $\ClambdaL > 4\,(2 \, \Cinit)^\frac{1}{2}/\eta $
				the regularization parameter is chosen from the range 
				\begin{align}
					\frac{4 \, \CQgrad \, \Cinit\,\lambda_{n}^{\,\init}}{\eta} \bigg(\frac{\big|\overline{S}\big| \log(p)}{n}\bigg)^\frac{1}{2}  < \lambda_n \leq \ClambdaL\, \CQgrad \,\lambda_{n}^{\,\init} \,  \bigg(\frac{\Cinit\,\big|\overline{S}\big| \log(p)}{2 \, n}\bigg)^\frac{1}{2}  \label{lambdanalhwithinitial1}
				\end{align}
				with $\overline{S} = \big\{ k \in \{1,\dotsc,p\} \, \big| \, \big|\widehat{\beta}_{n,k}^{\,\init}\big| > \lambda_n^{\,\init} \big\}$ as above. In addition suppose that the sample size satisfies
				\begin{gather}
					n \geq \max \Bigg\{ \bigg(\frac{33\,\CXsub \, \Cinit\,\Clambdainit}{(1-\eta) \sqrt{\CXl} }\bigg)^2 s^2 \log(p) \,,\, \max \bigg\{ \Crsct \, , \, \bigg(\frac{64\, \Cgrads }{\CXl}\bigg)^2 \bigg\} s \log(p) \, , \, 6 \log(p) \Bigg\} \,, \label{ncondalhwithinitial}
				\end{gather}

				and that we have the beta-min condition 
				\begin{gather}
					\beta_{\min}^* > 2 \max\bigg\{ \frac{33\,\CXsub\,\sqrt{s}}{\sqrt{\CXl}\, (1-\eta)}, \ClambdaL \, \CQograd\bigg\} \Cinit\,\lambda_n^{\,\init} \sqrt{s}\,.  \label{betaminconditionalhwithinitial}
				\end{gather}
					Then with probability at least 
					\begin{align*}
						1-\cPo \exp(-\cPt n) - 2 \exp(- 2n) - \frac{4}{p^2} 
					\end{align*}
					the adaptive LASSO pseudo Huber estimator, given as a solution to the program \eqref{defweightedlassohuber} with weights in \eqref{eq:weightsadaplassoinitest}, is unique and given by $\widehat{\beta}_n^{\,\ALH} = \widehat{\beta}_n^{\,\PDW}$ and satisfies 
					\begin{align}
						\sign\big(\widehat{\beta}_{n}^{\,\ALH}\big) =  \sign\big(\beta^*\big) ~~~~~~\mathrm{and} ~~~~~~ \normi{\widehat{\beta}_{n}^{\,\ALH} - \beta^*} \leq 2 \, \ClambdaL \, \CQograd\, \Cinit \,\lambda_n^{\,\init} \sqrt{s} \, . \label{alhwithinitialaussage(1)} 
					\end{align} 
					If in addition \eqref{eq:boundinvmat} is also assumed, the upper bound of the $\ell_\infty$ distance of the estimation error reduces to 
					\begin{align}
						\normi{\widehat{\beta}_{n}^{\,\ALH} - \beta^*} \leq \max\bigg\{  \frac{4\,\CQS\,\Cgrads}{\Clambdainit} \, , \, \frac{\ClambdaL\, \CQS\,\CQograd \, \Cinit \,\CXl }{16 } \bigg\} \, \lambda_n^{\,\init} \,. \label{betaminconditionalhwithinitial2}
					\end{align}
				\end{lemma}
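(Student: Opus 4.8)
The plan is to deduce this lemma from the generic result Lemma \ref{signconsadaptivehuber} for the adaptive LASSO with a first-stage estimator, combined with the thresholding analysis in Lemma \ref{thresholdingprocedureinitial} and the operator-norm bound in Lemma \ref{rateofrnS}, by verifying each hypothesis of Lemma \ref{signconsadaptivehuber} with appropriate choices of the sequences $a_n$, $b_n$, $q_n$ and the constant $\Clambda$. First I would apply Lemma \ref{thresholdingprocedureinitial}: under \eqref{eq:conrateinitial} and the beta-min condition \eqref{betaminconditionalhwithinitial} (which is strong enough to imply \eqref{thresholdingprocedureinitialbetamin}, since the leading factor on the right-hand side exceeds $2\Cinit$), we get $S \subseteq \overline{S}$ and $s \le |\overline{S}| \le 2\Cinit s$, which pins down the order of $\lambda_n$ in \eqref{lambdanalhwithinitial1} as $\lambda_n \simeq \lambda_n^{\,\init}\sqrt{|\overline{S}|\log(p)/n} \simeq \lambda_n^{\,\init}\sqrt{s\log(p)/n}$.

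Next I would identify the rate sequences for the initial estimator. From \eqref{eq:conrateinitial}, $\normi{\Delta_{n,S}} \le \normz{\Delta_n} \le \Cinit\lambda_n^{\,\init}\sqrt{s}$, so one takes $a_n = \Cinit\lambda_n^{\,\init}\sqrt{s}$. For the off-support coordinates the cleanest bound comes from the thresholding set: for $k \in S^c$ either $k \notin \overline S$, in which case $|\widehat\beta_{n,k}^{\,\init}| \le \lambda_n^{\,\init}$, or $k \in \overline S$, and in either case $|\Delta_{n,k}| = |\widehat\beta_{n,k}^{\,\init}|$ is controlled; the relevant quantity entering \eqref{rnconditionalh}--\eqref{lambdanconalh} is $b_n$, and here one sets $b_n = \Cinit\lambda_n^{\,\init}\sqrt{s}$ as well (using $\normi{\Delta_{n,S^c}} \le \normz{\Delta_n}$), so that $\lambda_n$ in \eqref{lambdanalhwithinitial1} falls inside the admissible window \eqref{lambdanconalh} of Lemma \ref{signconsadaptivehuber} with $\Clambda$ of order $\ClambdaL\sqrt{\Cinit}$. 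Then Lemma \ref{rateofrnS} gives $\normiM{\widehat Q_{S^cS}(\widehat Q_{SS})^{-1}} \le 33\CXsub\sqrt{s}/\sqrt{\CXl}$ on the good event, so \eqref{rnconditionalh} holds with $q_n = 33\CXsub\sqrt{s}/\sqrt{\CXl}$; the requirement $q_n \le (1-\eta)/b_n$ is exactly the first term in the sample-size condition \eqref{ncondalhwithinitial}, since it rearranges to $n \gtrsim (\CXsub\Cinit\Clambdainit/((1-\eta)\sqrt{\CXl}))^2 s^2\log(p)$. The robustification parameter $\alpha_n$ is assumed in the range \eqref{rangealphawlhcor} by hypothesis, and the condition $n \ge \max\{\Crsct s\log(p), 6\log(p)\}$ needed by Lemma \ref{signconsadaptivehuber} is subsumed in \eqref{ncondalhwithinitial}.

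It then remains to check the beta-min condition \eqref{betaminconditionalh} of Lemma \ref{signconsadaptivehuber}, which asks $\beta_{\min}^* > \max\{2a_n,\ \phi_{n,\infty,s,1},\ 2\max\{q_n/(1-\eta),\ \Clambda\CQograd\}b_n\}$. Each of the three arguments is, with the above choices and the order of $\lambda_n$, bounded by a constant times $\Cinit\lambda_n^{\,\init}\sqrt{s}$ — for $\phi_{n,\infty,s,1}$ one uses $\Clambdainit \ge 16\Cgrads/\CXl$ together with $\lambda_n\sqrt{s} \simeq \lambda_n^{\,\init} s$ (and the second term in \eqref{ncondalhwithinitial}) to see that the $(s\log(p)/n)^{1/2}$-term dominates appropriately; for the last argument one uses $q_n/(1-\eta) = 33\CXsub\sqrt{s}/(\sqrt{\CXl}(1-\eta))$ and $\Clambda\CQograd \simeq \ClambdaL\sqrt{\Cinit}\,\CQograd$ — so \eqref{betaminconditionalhwithinitial} implies \eqref{betaminconditionalh}. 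Lemma \ref{signconsadaptivehuber} then yields uniqueness, $\widehat\beta_n^{\,\ALH} = \widehat\beta_n^{\,\PDW}$, sign-consistency, and $\normi{\widehat\beta_n^{\,\ALH}-\beta^*} \le \phi_{n,\infty,s,1}$; bounding $\phi_{n,\infty,s,1}$ via $\lambda_n \le \ClambdaL\CQgrad\lambda_n^{\,\init}(\Cinit|\overline S|\log(p)/(2n))^{1/2} \le \ClambdaL\CQgrad\Cinit\lambda_n^{\,\init}\sqrt{s}$ (and showing this $\lambda_n\sqrt{s}$-term dominates the $\Cgrads(s\log(p)/n)^{1/2}$-term, again by $\Clambdainit \ge 16\Cgrads/\CXl$) gives the stated bound $2\ClambdaL\CQograd\Cinit\lambda_n^{\,\init}\sqrt{s}$ in \eqref{alhwithinitialaussage(1)}. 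Under the additional hypothesis \eqref{eq:boundinvmat}, the refined conclusion \eqref{betaminconditionalh2} of Lemma \ref{signconsadaptivehuber} gives $\phi_{n,\infty,f,1} = 4\CQS\max\{\Cgrads(\log(p)/n)^{1/2},\lambda_n\}$, and substituting the order of $\lambda_n$ and of $\lambda_n^{\,\init}$ yields \eqref{betaminconditionalhwithinitial2}. I expect the main obstacle to be bookkeeping: making the constants line up so that the single beta-min hypothesis \eqref{betaminconditionalhwithinitial} and the single sample-size hypothesis \eqref{ncondalhwithinitial} of this lemma really do imply all of the separate conditions \eqref{lambdanconalh}, \eqref{rnconditionalh}, \eqref{betaminconditionalh} (with $q_n \le (1-\eta)/b_n$) required by Lemma \ref{signconsadaptivehuber}, and in particular verifying that the $\lambda_n\sqrt{s}$ branch and the $(s\log(p)/n)^{1/2}$ branch of $\phi_{n,\infty,s,1}$ are comparable up to the fixed constants, which is what forces the lower bound on $\Clambdainit$.
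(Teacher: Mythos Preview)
Your proposal is correct and follows essentially the same route as the paper: apply Lemma \ref{signconsadaptivehuber} with $a_n=b_n=\Cinit\,\lambda_n^{\,\init}\sqrt{s}$ (from $\normi{\Delta_n}\le\normz{\Delta_n}$), take $q_n=33\,\CXsub\sqrt{s}/\sqrt{\CXl}$ from Lemma \ref{rateofrnS}, use Lemma \ref{thresholdingprocedureinitial} to translate between $|\overline S|$ and $s$, and then verify that \eqref{ncondalhwithinitial} and \eqref{betaminconditionalhwithinitial} force all of \eqref{lambdanconalh}, \eqref{rnconditionalh} and \eqref{betaminconditionalh}. Two small bookkeeping points: with $|\overline S|\le 2\,\Cinit\,s$ the upper bound in \eqref{lambdanalhwithinitial1} gives exactly $\lambda_n\le \ClambdaL\,\CQgrad\,b_n(\log(p)/n)^{1/2}$, so one takes $\Clambda=\ClambdaL$ (not $\ClambdaL\sqrt{\Cinit}$); and the claim ``$\lambda_n\sqrt{s}\simeq \lambda_n^{\,\init}s$'' is not the right scaling --- what you actually need (and the paper does) is to use $n\ge(64\,\Cgrads/\CXl)^2 s\log(p)$ to get $\sqrt{s}\le(\CXl/(64\,\Cgrads))(n/\log(p))^{1/2}$, which collapses the $\lambda_n\sqrt{s}$ branch of $\phi_{n,\infty,s,1}$ to a constant times $\lambda_n^{\,\init}\sqrt{s}$.
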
  

				\begin{proof}[Proof of Lemma \ref{signconsadaptivehuberwithinitial1}] 
					We shall apply Lemma \ref{signconsadaptivehuber}. To check the assumptions, for \eqref{eq:reateinitialest} using \eqref{eq:conrateinitial} we get 
					$ \normi{\Delta_{n}} \leq  \normz{\Delta_{n}} \leq \Cinit\,\lambda_n^{\,\init} \sqrt{s} = a_n = b_n$.
					For the lower bound in \eqref{lambdanconalh}, using \eqref{lambdanalhwithinitial1}, Lemma \ref{thresholdingprocedureinitial} and the choice of $b_n$ we estimate
					\begin{align*}
						\lambda_{n} & > 
						\frac{4 \, \CQgrad \, \Cinit\,\lambda_{n}^{\,\init}}{\eta} \bigg(\frac{\big|\overline{S}\big| \log(p)}{n}\bigg)^\frac{1}{2} \geq \frac{4 \, \CQgrad \, \Cinit\,\lambda_{n}^{\,\init}}{\eta} \bigg(\frac{s \log(p)}{n}\bigg)^\frac{1}{2} \\
						& = \frac{4 \, \CQgrad \, b_n}{\eta} \bigg(\frac{\log(p)}{n}\bigg)^\frac{1}{2},
					\end{align*}
					and similarly for the upper bound
					\begin{align*}
						\lambda_{n} 
						\leq \ClambdaL\, \CQgrad \, \Cinit \,\lambda_{n}^{\,\init} \,  \bigg(\frac{ s \log(p)}{ n}\bigg)^\frac{1}{2} =  \ClambdaL\, \CQgrad \,b_n \,  \bigg(\frac{\log(p)}{ n}\bigg)^\frac{1}{2} \,.
					\end{align*}
					with $\ClambdaL > 4/\eta$.
					%
					Next, \eqref{rnconditionalh} follows from Lemma \ref{rateofrnS} with $q_n = 33\,\CXsub\,\sqrt{s}/\sqrt{\CXl}$ with high probability. In addition the choice of $b_n$ and the lower bound \eqref{ncondalhwithinitial} of the sample size implies
					\begin{align*}
						q_n \leq \frac{33\,\CXsub}{\sqrt{\CXl}} \, \frac{(1-\eta)\,\sqrt{\CXl}}{33\,\CXsub\,\Cinit\,\Clambdainit\,(s\log(p)/n)^\frac{1}{2}} = \frac{1-\eta}{b_n} \,.
					\end{align*}
					So finally we have to check the beta-min condition in \eqref{betaminconditionalh}, which concludes the proof of the lemma in this setting. The last term in the maximum is given by \eqref{betaminconditionalhwithinitial} and the choice of $b_n$ and $q_n$, and $\beta_{\min}^* \geq 2 \, a_n$ is clear because of the choice of $a_n$ and \eqref{betaminconditionalhwithinitial}. 
					Hence for applying Lemma \ref{signconsadaptivehuber} it remains to show that
					\begin{align*}
						\phi_{n, \infty, s,1} \leq 2 \, \ClambdaL \, \CQograd\, \Cinit \,\lambda_n^{\,\init} \sqrt{s} \,.
					\end{align*}
					This bound implies then also \eqref{alhwithinitialaussage(1)} because of \eqref{alhaussage}. It is 
					\begin{align*}
						& \frac{128 \, \Cgrads}{\CXl}  \, \bigg(\frac{s \log(p)}{n} \bigg)^{\frac{1}{2}} = \frac{128 \, \Cgrads}{\CXl \,\Clambdainit}  \, \lambda_n^{\,\init} \sqrt{s}\\
						& \qquad \leq  \, 8\, \CQograd\, \Cinit \,\lambda_n^{\,\init} \sqrt{s} \leq 2 \, \ClambdaL \, \CQograd\, \Cinit \,\lambda_n^{\,\init} \sqrt{s}
					\end{align*}
					since $16\, \Cgrads \leq \CXl \,\Clambdainit\, \CQograd\, \Cinit$.
					Moreover, \eqref{lambdanalhwithinitial1} and \eqref{ncondalhwithinitial} together with Lemma \ref{thresholdingprocedureinitial} lead to
					\begin{align*}
						\frac{128 \,\lambda_n\,\sqrt{s} }{\CXl} &\leq \ \frac{128\,\ClambdaL\, \CQgrad \,\lambda_{n}^{\,\init}}{\CXl} \,  \bigg(\frac{\Cinit\,\big|\overline{S}\big| \log(p)}{2 \, n}\bigg)^\frac{1}{2} \,\frac{\CXl}{64 \, \Cgrads} \bigg(\frac{n}{\log(p)}\bigg)^\frac{1}{2}\\
						& \leq \ 2 \, \ClambdaL \, \CQograd\, \Cinit \,\lambda_n^{\,\init} \sqrt{s} \,.
					\end{align*}
					Under the stronger assumption \eqref{eq:boundinvmat} we show
					\begin{align*}
						\phi_{n, \infty, f,1} \leq \max\bigg\{  \frac{4\,\CQS\,\Cgrads}{\Clambdainit} \, , \, \frac{\ClambdaL\, \CQS\,\CQograd \, \Cinit \,\CXl }{16 } \bigg\} \, \lambda_n^{\,\init}\,,
					\end{align*}
					which implies \eqref{betaminconditionalhwithinitial2} because of \eqref{betaminconditionalh2}. Note that the upper bound is obviously also smaller than the right term in \eqref{betaminconditionalhwithinitial}. It is easy to see that
					\begin{align*}
						4 \, \CQS \, \Cgrads \, \bigg(\frac{\log(p)}{n} \bigg)^{\frac{1}{2}} = \frac{4 \, \CQS \, \Cgrads}{\Clambdainit} \, \lambda_n^{\,\init}
					\end{align*}
					and
					\begin{align*}
						4 \, \CQS \, \lambda_n \leq 4 \, \ClambdaL\,\CQS\, \CQograd \,\Cinit\,\lambda_{n}^{\,\init} \,  \bigg(\frac{\log(p)}{n}\bigg)^\frac{1}{2} \frac{\CXl}{64 } \bigg(\frac{n}{\log(p)}\bigg)^\frac{1}{2} = \frac{\ClambdaL\,\CQS\, \CQograd \,\Cinit\,\CXl}{16 } \,\lambda_{n}^{\,\init}
					\end{align*}
					by \eqref{lambdanalhwithinitial1}, \eqref{ncondalhwithinitial} and Lemma \ref{thresholdingprocedureinitial}, which concludes the proof. 
				\end{proof}

			}

			{\footnotesize

			}
			
			\bigskip
			
			\textit{Address for correspondence:}
			
			Prof.~Hajo Holzmann
			
			Philipps-Universität Marburg
			
			Department of Mathematics and Computer Science
			
			Hans-Meerweinstr. 6
			
			35043 Marburg
			
			Email: holzmann@mathematik.uni-marburg.de

\appendix

{\footnotesize

	\section{Supplement: Further technical proofs}\label{sec:proofstechnical}
	
	At first we introduce further notations. For a random variable $Y \in \R$ we write $Y \sim \subg(\tau)$ with $\tau>0$ if $\Prob(| Y | \geq t) \leq 2\, \exp\big(- t^2 /(2\, \tau^2 ) \big)$ for all $t \geq 0$, and for a random vector $\f{Y} \in \R^d$ we write $\f{Y} \sim \subg_d(\tau)$ if $\Prob(|v^\top \f{Y}  | \geq t) \leq 2\, \exp\big(- t^2 /(2\, \tau^2 \normzq{v}) \big)$ for all $v \in \R^d \setminus \{ \f{0}_d \}$ and $t \geq 0$. In addition a random variable $Y \sim \sube(\tau,b)$ is called sub-Exponential with $\tau,b > 0$ if $\E[Y]=0$ and $\E\big[\exp(t\,Y)\big] \leq \exp\big(t^2 \tau^2/2\big)$ for all $|t| < 1/b$. Furthermore, we denote by $\vec{X}_1,\dotsc,\vec{X}_p \in \R^n$ the columns of $\X_n$ and the rows are $\f{X_i} = (X_{i,1},\dotsc,X_{i,p})^\top$. Finally, $e_k$ is the $k^{\text{th}}$ unit vector, with $k^{\text{th}}$ coordinate equal to $1$, and zero entries otherwise. The dimension of $e_k$ will depend on and be clear from the context. %
	
	\setcounter{equation}{70}
	
	\subsection{Proofs for Section 7.2}\label{sec:proofprep}
	
	\begin{proof}[Proof of Lemma \ref{approximation:error:pseudo:huber}]
		Let $l(x) = x^2$, then 
		by (ii) of Assumption \ref{assfan2017} we get 
		\begin{align}
			\E \Big[l\big(Y_1 - \f{X_1^\top} \beta_{\alpha_n}^* \big) - l\big(Y_1 - \f{X_1^\top} \beta^* \big) \Big] 
			&= \big( \beta_{\alpha_n}^* - \beta^* \big)^\top \E \Big[ \f{X_1} \f{X_1^\top} \Big] \, \big( \beta_{\alpha_n}^* - \beta^* \big) \notag \\
			&\geq \CXl \, \normzq{\beta_{\alpha_n}^* - \beta^*}. \label{proof:approximation:1}
		\end{align}
		Let $g_{\alpha_n}(x) = l(x) - l_{\alpha_n}(x) = x^2 - 2 \alpha_n^{-2} \Big( \sqrt{1 + \alpha_n^2 x^2} -1 \Big)$, then
		\begin{align}
			\E \Big[l\big(Y_1 - \f{X_1^\top} \beta_{\alpha_n}^* \big) - l\big(Y_1 - \f{X_1^\top} \beta^* \big) \Big] &=  \E \Big[ l\big(Y_1 - \f{X_1^\top} \beta_{\alpha_n}^* \big) - l_{\alpha_n}\big(Y_1 - \f{X_1^\top} \beta_{\alpha_n}^* \big) \notag \\
			&\quad \quad \quad \quad+ l_{\alpha_n}\big(Y_1 - \f{X_1^\top} \beta_{\alpha_n}^* \big) - l_{\alpha_n}\big(Y_1 - \f{X_1^\top} \beta^* \big) \notag \\
			&\quad \quad \quad \quad + l_{\alpha_n}\big(Y_1 - \f{X_1^\top} \beta^* \big) - l\big(Y_1 - \f{X_1^\top} \beta^* \big) \Big] \notag \\
			&\leq \E \Big[ g_{\alpha_n}\big(Y_1 - \f{X_1^\top} \beta_{\alpha_n}^* \big) - g_{\alpha_n}\big(Y_1 - \f{X_1^\top} \beta^* \big) \Big] \label{proof:approximation:2}
		\end{align}
		because $\beta_{\alpha_n}^*$ minimizes $\E\big[l_{\alpha_n}(Y_1 - \f{X_1^\top} \beta )\big]$ over  $\normz{\beta} \leq \Cbeta$ and $\normz{\beta^*} \leq \Cbeta$ by (iv) of Assumption \ref{assfan2017}. Furthermore, the mean value theorem implies 
		\begin{align}
			\E \Big[ g_{\alpha_n}\big(Y_1 - \f{X_1^\top} \beta_{\alpha_n}^* \big) - g_{\alpha_n}\big(Y_1 - \f{X_1^\top} \beta^* \big) \Big] &=\E \Big[  g_{\alpha_n}' (Z ) \big(\f{X_1^\top} ( \beta^* -\beta_{\alpha_n}^* ) \big) \Big] \notag  \\
			&\leq \E \Big[  \big| g_{\alpha_n}' (Z )\big| \, \big|\f{X_1^\top} ( \beta^* -\beta_{\alpha_n}^* ) \big| \, \one{ \{|Z| \geq \alpha_n^{-1} \} } \Big] \label{proof:approximation:3} \\
			& \quad \quad \quad + \E \Big[  \big| g_{\alpha_n}' (Z )\big| \, \big|\f{X_1^\top} ( \beta^* -\beta_{\alpha_n}^* ) \big| \, \one{ \{|Z| < \alpha_n^{-1} \} } \Big] \notag 
		\end{align}
		with $Z = Y_1 - \f{X_1^\top} \widetilde{\beta}$ and $\widetilde{\beta}$ between $\beta^*$ and $\beta_{\alpha_n}^*$. Note that $\widetilde{\beta}$ is also a random vector. For the first summand we obtain from (19) that
		\begin{align*}
			\E \Big[  \big| g_{\alpha_n}' (Z )\big| \, &\big|\f{X_1^\top} ( \beta^* -\beta_{\alpha_n}^* ) \big| \, \one{ \{|Z| \geq \alpha_n^{-1} \} } \Big] \\
			&\leq 2 \, \E \bigg[ |Z| \bigg( 1 - \frac{1}{\sqrt{1 + \alpha_n^2 Z^2 }}\bigg) \, \big|\f{X_1^\top} ( \beta^* -\beta_{\alpha_n}^* ) \big| \, \one{ \{|Z| \geq \alpha_n^{-1} \} } \bigg] \,.
		\end{align*}
		Let $\Prob_{\varepsilon}$ be distribution of $\varepsilon_1$ conditional on $\f{X_1}$ and $\E_{\varepsilon}$ the corresponding conditional expectation. Then we get the inequality
		{\allowdisplaybreaks
			\begin{align*}
				\E_{\varepsilon} \bigg[ |Z| \bigg( 1 - \frac{1}{\alpha_n^2 Z^2}\bigg) \one{ \{|Z| \geq \alpha_n^{-1} \} }\bigg] &\leq \E_{\varepsilon} \big[ |Z| \one{ \{|Z| \geq \alpha_n^{-1} \} }\big]\\
				&= \int_0^\infty \Prob_{\varepsilon} \big( |Z| \geq \alpha_n^{-1} , |Z| > t \big) \, dt \\
				&= \int_{\alpha_n^{-1}}^\infty \Prob_{\varepsilon} \big( |Z| > t \big) \, dt + \int_0^{\alpha_n^{-1}} \Prob_{\varepsilon} \big( |Z| \geq \alpha_n^{-1} \big) \, dt \\
				&\leq \int_{\alpha_n^{-1}}^\infty \frac{\E_{\varepsilon} \big[|Z|^\Cpm \big]}{t^\Cpm} \, dt + \int_0^{\alpha_n^{-1}} \frac{\E_{\varepsilon} \big[|Z|^\Cpm\big]}{\alpha_n^{-\Cpm}} \, dt \\
				&= \frac{\alpha_n^{\Cpm-1}}{\Cpm-1} \, \E_{\varepsilon} \big[|Z|^\Cpm\big] + \alpha_n^{\Cpm-1} \, \E_{\varepsilon} \big[|Z|^\Cpm\big] \\
				&\leq 2 \, \alpha_n^{\Cpm-1} \, \E_{\varepsilon} \big[|Z|^\Cpm\big] \,,
			\end{align*}
			where $\Cpm \in \{2,3\}$ is given in Assumption \ref{assfan2017}, and in consequence}
		\begin{align}
			\E \Big[  \big| g_{\alpha_n}' (Z )\big| \, \big|\f{X_1^\top} ( \beta^* -\beta_{\alpha_n}^* ) \big| \, \one{ \{|Z| \geq \alpha_n^{-1} \} } \Big] \leq 4  \, \alpha_n^{\Cpm-1} \, \E \Big[ |Z|^\Cpm \, \big|\f{X_1^\top} ( \beta^* -\beta_{\alpha_n}^* ) \big| \Big] \, . \label{proof:approximation:4}
		\end{align}
		Now we analyze the second term in \eqref{proof:approximation:3}. Taking the derivative in the series expansion 
		%
		%
		\begin{align}
			g_{\alpha_n}(x) = - 2  \sum_{k=2}^{\infty} \binom{1/2}{k} \alpha_n^{2k-2} \, x^{2k}  \,, \qquad \alpha^2 x^2 \leq 1\,, \label{eq:taylor:expansion} 
		\end{align}
		implies that
		\begin{align*}
			\big| g_{\alpha_n}'(x) \big| = \Bigg| - 2  \sum_{k=2}^{\infty} \binom{1/2}{k} 2k \, \alpha_n^{2k-2} \, x^{2k-1} \Bigg| \leq \big| \alpha_n^2 \, x^3 \big| = \alpha_n^2 \, |x|^3 
		\end{align*}
		and hence that
		\begin{align*}
			\E \Big[  \big| g_{\alpha_n}' (Z )\big| \, \big|\f{X_1^\top} ( \beta^* -\beta_{\alpha_n}^* ) \big| \, \one{ \{|Z| < \alpha_n^{-1} \} } \Big] \leq \alpha_n^2 \, \E \Big[  | Z |^3 \, \big|\f{X_1^\top} ( \beta^* -\beta_{\alpha_n}^* ) \big| \, \one{ \{|Z| < \alpha_n^{-1} \} } \Big]
		\end{align*}
		because $\alpha_n |Z| < 1 $. Moreover, it is
		\begin{align*}
			\alpha_n^2 \, \E_{\varepsilon} \big[  | Z |^3 \, \one{ \{|Z| < \alpha_n^{-1} \} } \big] & = \alpha_n^2 \, \E_{\varepsilon} \big[  | Z |^{\Cpm} \, |Z|^{3-\Cpm} \, \one{ \{|Z| < \alpha_n^{-1} \} } \big] \\
			&\leq \alpha_n^{2+\Cpm-3} \, \E_{\varepsilon} \big[  | Z |^{\Cpm} \, \one{ \{|Z| < \alpha_n^{-1} \} }  \big] \leq  \alpha_n^{\Cpm-1} \, \E_{\varepsilon} \big[  | Z |^{\Cpm} \big]
		\end{align*}
		and in consequence
		\begin{align}
			\E \Big[  \big| g_{\alpha_n}' (Z )\big| \, \big|\f{X_1^\top} ( \beta^* -\beta_{\alpha_n}^* ) \big| \, \one{ \{|Z| < \alpha_n^{-1} \} } \Big] \leq \alpha_n^{\Cpm-1} \, \E \Big[ |Z|^\Cpm \, \big|\f{X_1^\top} ( \beta^* -\beta_{\alpha_n}^* ) \big| \Big] \, . \label{proof:approximation:5}
		\end{align}
		So in total we obtain by \eqref{proof:approximation:1} -  \eqref{proof:approximation:5} the inequality
		\begin{align}
			\normzq{\beta_{\alpha_n}^* - \beta^*} \leq \frac{5}{\CXl} \, \E \Big[ |Z|^{\Cpm} \, \big|\f{X_1^\top} ( \beta^* -\beta_{\alpha_n}^* ) \big| \Big]  \, \alpha_n^{\Cpm-1} \, . \label{proof:approximation:6}
		\end{align}
		
		The mean on the right hand side can be upper bounded by
		\begin{align}
			\E \Big[ |Z|^{\Cpm} \, \big|\f{X_1^\top} ( \beta^* -\beta_{\alpha_n}^* ) \big| \Big] &= \E \Big[ \big|\varepsilon_1 + \f{X_1^\top} ( \beta^* -\widetilde{\beta} ) \big|^{\Cpm} \, \big|\f{X_1^\top} ( \beta^* -\beta_{\alpha_n}^* ) \big| \Big] \notag \\
			& \leq 2^{\Cpm -1 } \bigg( \E \Big[ |\varepsilon_1 |^{\Cpm} \, \big|\f{X_1^\top} ( \beta^* -\beta_{\alpha_n}^* ) \big| \Big] + \E \Big[ \big|\f{X_1^\top} ( \beta^* -\widetilde{\beta} ) \big|^{\Cpm} \, \big|\f{X_1^\top} ( \beta^* -\beta_{\alpha_n}^* ) \big| \Big] \bigg) \,. \label{proof:approximation:7}
		\end{align}
		Moreover, for the first term in the brackets we obtain by H\"older's inequality and (i) of Assumption \ref{assfan2017} 
		\begin{align*}
			\E \Big[ |\varepsilon_1 |^{\Cpm} \, \big|\f{X_1^\top} ( \beta^* -\beta_{\alpha_n}^* ) \big| \Big] &= \E \Big[ \E \big[|\varepsilon_1|^{\Cpm} \big| \f{X_1} \big] \, \big|\f{X_1^\top} ( \beta^* -\beta_{\alpha_n}^* ) \big| \Big] \\
			&\leq \E \Big[ \E \big[|\varepsilon_1|^{\Cpm} \big| \f{X_1} \big]^q \Big]^{\frac{1}{q}} \, \E \Big[ \big|\f{X_1^\top} ( \beta^* -\beta_{\alpha_n}^* ) \big|^\frac{q}{q-1} \Big]^\frac{q-1}{q} \\
			&\leq (\Cm)^\frac{1}{q} \, \E \Big[ \big|\f{X_1^\top} ( \beta^* -\beta_{\alpha_n}^* ) \big|^\frac{q}{q-1} \Big]^\frac{q-1}{q} \,.
		\end{align*}
		In addition note that $\f{X_1^\top} ( \beta^* -\beta_{\alpha_n}^* ) \sim \subg \big( \CXsub \, \normz{\beta^* -\beta_{\alpha_n}^*} \big)$ by (iii) of Assumption \ref{assfan2017}, and that the moments of a sub-Gaussian random variable $Q \sim \subg(\tau)$ with $\tau>0$ are bounded by 
		\begin{align}
			\E \big[|Q|^r\big] \leq \big(2 \tau^2 \big)^{\frac{r}{2}} \, r \, \Gamma\bigg(\frac{r}{2}\bigg)\,, \quad \E \big[|Q|^r\big]^\frac{1}{r} \leq \sqrt{2} \, \Bigg( r \, \Gamma\bigg(\frac{r}{2}\bigg) \Bigg)^\frac{1}{r} \, \tau \label{subgaussian:moments}
		\end{align}
		for $r>1$. This can be proven analog to \citet[Lemma 1.4]{Rigollet2019}. 
		Hence
		\begin{align}
			\E \Big[ |\varepsilon_1 |^{\Cpm} \, \big|\f{X_1^\top} ( \beta^* -\beta_{\alpha_n}^* ) \big| \Big] \leq \sqrt{2} \, (\Cm)^\frac{1}{q} \, \Bigg( \frac{q}{q-1} \, \Gamma\bigg(\frac{q}{2(q-1)} \bigg) \Bigg)^\frac{q-1}{q}   \CXsub \, \normz{\beta^* -\beta_{\alpha_n}^*} \,. \label{proof:approximation:8}
		\end{align}
		For the second term in the brackets in \eqref{proof:approximation:7} the Cauchy-Schwarz inequality implies
		\begin{align}
			\E \Big[ \big|\f{X_1^\top} ( \beta^* -\widetilde{\beta} ) \big|^{\Cpm} \, \big|\f{X_1^\top} ( \beta^* -\beta_{\alpha_n}^* ) \big| \Big] &\leq \bigg( \E \Big[ \big|\f{X_1^\top} ( \beta^* -\widetilde{\beta} ) \big|^{2\Cpm} \Big] \, \E \Big[\big|\f{X_1^\top} ( \beta^* -\beta_{\alpha_n}^* ) \big|^2 \Big] \bigg)^\frac{1}{2} \notag \\
			&\leq 2 \, \E \Big[ \big|\f{X_1^\top} ( \beta^* -\widetilde{\beta} ) \big|^{2\Cpm} \Big]^\frac{1}{2} \,\CXsub \, \normz{\beta^* -\beta_{\alpha_n}^*} \,. \label{proof:approximation:9}
		\end{align}
		To give a upper bound for the remaining expected value we consider at first a tail bound for the appropriate random variable. Let $L$ be the line between $\beta^*$ and $\beta_{\alpha_n}^*$, then $L$ is also the convex hull of $\mathcal{V}(L) = \{\beta^*,\beta_{\alpha_n}^*\}$ and we obtain
		\begin{align*}
			\Prob \Big( \big| ( \beta^* - \widetilde{\beta} )^\top \f{X_1} \big| > x \Big) \leq \Prob \big( \max_{u \in L} \big| u^\top \f{X_1} \big| > x \big)
		\end{align*}
		for $x \geq 0$ because $\widetilde{\beta}$ lies between $\beta^*$ and $\beta_{\alpha_n}^*$. Moreover, $\f{X_1^\top} \beta^*$ and $\f{X_1^\top} \beta_{\alpha_n}^* $ are sub-Gaussian with variance proxy $\Cbetas \, \CXsubs$ by (iii) and (iv) of Assumption \ref{assfan2017} and $\normz{\beta_{\alpha_n}^*} \leq \Cbeta$ by \eqref{eq:betaalpha}. Hence \citet[Theorem 1.16]{Rigollet2019} leads to
		\begin{align*}
			\Prob \big( \big| ( \beta^* - \widetilde{\beta} )^\top \f{X_1} \big| > x \big) \leq \Prob \big( \max_{u \in L} \big| u^\top \f{X_1} \big| > x \big) \leq 4 \, \exp \bigg(-\frac{x^2}{2 \Cbetas \, \CXsubs}\bigg) \,.
		\end{align*} 
		In addition \citet[Lemma 1.4]{Rigollet2019} and the corresponding proof imply
		\begin{align}
			\E \Big[ \big|\f{X_1^\top} ( \beta^* - \widetilde{\beta} ) \big|^{2\Cpm} \Big] \leq 2 \, \big(2 \Cbetas \, \CXsubs\big)^\Cpm \, (2\Cpm)!\, \Gamma(\Cpm) \,. \label{proof:approximation:10}
		\end{align}
		In total \eqref{proof:approximation:6} - \eqref{proof:approximation:10} leads to
		\begin{align*}
			\normz{\beta_{\alpha_n}^* - \beta^*} \leq  \Capprox \, \alpha_n^{\Cpm-1}
		\end{align*}
		with
		\begin{align*}
			\Capprox = \frac{5\,2^{\Cpm } \, \CXsub}{\CXl} \, \Bigg( (\Cm)^\frac{1}{q} \Bigg( \frac{q}{q-1} \, \Gamma\bigg(\frac{q}{2(q-1)} \bigg) \Bigg)^\frac{q-1}{q}    + \Big(2 \, \big(2 \Cbetas \, \CXsubs\big)^\Cpm \, (2\Cpm)!\, \Gamma(\Cpm) \Big)^\frac{1}{2} \Bigg) \,.
		\end{align*}
	\end{proof}

	\begin{proof}[Proof of Lemma \ref{lemma:RSC:pseudo:huber}]
		We obtain
		\begin{align*}
			\skalar{\nabla \loss_{n,\alpha}^{\,\Hu} (\beta+\Delta) - \nabla \loss_{n,\alpha}^{\,\Hu} (\beta)}{\Delta} = \frac{1}{n} \sum_{i=1}^{n} \Big( l_{\alpha}' \big( Y_i - \f{X_i^\top} \beta \big) - l_{\alpha}' \big( Y_i - \f{X_i^\top} (\beta + \Delta) \big) \Big) \, \f{X_i^\top} \Delta
		\end{align*}
		for $\beta, \Delta \in \R^p$ by (21). Firstly we show that
		\begin{align}
			\skalar{\nabla \loss_{n,\alpha}^{\,\Hu} (\beta+\Delta) - \nabla \loss_{n,\alpha}^{\,\Hu} (\beta)}{\Delta} \geq \frac{1}{2n} \sum_{i=1}^{n} \varphi_{\tau \normz{\Delta}} \Big( \f{X_i^\top} \Delta \, \one{\{ |Y_i - \f{X_i^\top} \beta| \leq T \}} \Big) \label{proof:RSC:pseudo:lasso:1}
		\end{align}
		for all $\alpha \leq 1/(T + 8 \tau \,\Cbeta)$ and $(\beta,\Delta) \in A \defeq \big\{(\beta,\Delta) : \normz{\beta} \leq 4 \Cbeta \textrm{ and } \normz{\Delta } \leq 8 \Cbeta \big\}$, where 
		\begin{align*}
			\varphi_t(u) = u^2 \, \one{\{ |u| \leq t/2 \}} + \big(t-|u|\big)^2 \, \one{\{ t/2 < |u| \leq t \}}
		\end{align*}
		and 
		\begin{align*}
			T = 96\, \frac{\CXsubs \, \sqrt{\CXu} \, \Cbeta}{\CXl} \,, \quad \quad \tau = \max \Big\{ 4 \CXsub \sqrt{\log(12 \CXsubs/\CXl)},1 \Big\}\,.
		\end{align*}
		The function $\varphi_t$ satisfies obviously $\varphi_t(u) \leq u^2 \, \one{\{|u| \leq t \}}$. Let $i \in \{1,\dotsc,n\}$ be fixed, then we get on the one hand
		\begin{align*}
			\varphi_{\tau \normz{\Delta}} \Big( \f{X_i^\top} \Delta \, \one{\{ |Y_i - \f{X_i^\top} \beta| \leq T \}} \Big) = 0
		\end{align*}
		if $|\f{X_i^\top} \Delta| > \tau \normz{\Delta}$ or $|Y_i - \f{X_i^\top} \beta| > T$. In addition we have always 
		\begin{align*}
			\Big( l_{\alpha}' \big( Y_i - \f{X_i^\top} \beta \big) - l_{\alpha}' \big( Y_i - \f{X_i^\top} (\beta + \Delta) \big) \Big) \, \f{X_i^\top} \Delta \geq 0
		\end{align*}
		because of the convexity of $g(\beta) = l_\alpha( Y_i - \f{X_i^\top} \beta)$. On the other hand, if $|\f{X_i^\top} \Delta| \leq \tau \normz{\Delta}$ and $|Y_i - \f{X_i^\top} \beta| \leq T$ we get
		\begin{align*}
			\big| Y_i - \f{X_i^\top} \beta \big| \leq T \leq \alpha^{-1}
		\end{align*}
		and
		\begin{align*}
			\big| Y_i - \f{X_i^\top} (\beta + \Delta) \big| \leq \big| Y_i - \f{X_i^\top} \beta \big| + \big| \f{X_i^\top} \Delta \big| \leq T + \tau \normz{\Delta} \leq T + 8 \tau \, \Cbeta \leq \alpha^{-1}
		\end{align*}
		because $(\beta,\Delta) \in A$ and the choice of $\alpha$. In addition the mean value theorem implies
		\begin{align*}
			l_{\alpha}' \big( Y_i - \f{X_i^\top} \beta \big) - l_{\alpha}' \big( Y_i - \f{X_i^\top} (\beta + \Delta) \big)  = l_{\alpha}''(c) \Big( Y_i - \f{X_i^\top} \beta - Y_i + \f{X_i^\top} (\beta + \Delta) \Big) = l_{\alpha}''(c) \, \f{X_i^\top} \Delta
		\end{align*}
		with $c \in \big(Y_i - \f{X_i^\top} \beta,Y_i - \f{X_i^\top} (\beta + \Delta) \big)$ since the pseudo Huber loss $l_\alpha$ is twice differentiable. The above conditions lead to $|c| \leq \alpha^{-1}$ as well. Moreover, note that
		\begin{align*}
			l_\alpha''(c) = \frac{2 \alpha^{-3}}{(\alpha^{-2} + c^2)^{3/2}} \geq \frac{2 \alpha^{-3}}{(2 \alpha^{-2})^{3/2}} = \frac{2}{ 2^{3/2}} \geq \frac{1}{2}
		\end{align*}
		for all $|c| \leq \alpha^{-1}$. Hence it follows that
		\begin{align*}
			\Big( l_{\alpha}' \big( Y_i - \f{X_i^\top} \beta \big) - l_{\alpha}' \big( Y_i - \f{X_i^\top} (\beta + \Delta) \big) \Big) \, \f{X_i^\top} \Delta &= l_{\alpha}''(c) \, \big(\f{X_i^\top} \Delta\big)^2 \geq \frac{1}{2} \, \big(\f{X_i^\top} \Delta\big)^2 \\
			&\geq \frac{1}{2} \, \varphi_{\tau \normz{\Delta}} \Big( \f{X_i^\top} \Delta \, \one{\{ |Y_i - \f{X_i^\top} \beta| \leq T \}} \Big)
		\end{align*}
		if $|\f{X_i^\top} \Delta| \leq \tau \normz{\Delta}$ and $|Y_i - \f{X_i^\top} \beta| \leq T$. So in total inequality \eqref{proof:RSC:pseudo:lasso:1} is satisfied for all $(\beta,\Delta) \in A$ and $\alpha \leq 1/(T + 8 \tau \Cbeta)$. Furthermore, the condition of $\alpha$ reduces to $\alpha \leq \calpha$ where $\calpha$ is a positive constant depending on $\CXl$, $\CXu$, $\CXsub$ and $\Cbeta$, because of the choice of $T$ and $\tau$. The proof of \citet[Lemma 2]{Fan2017} provides
		\begin{align}
			\frac{1}{n} \sum_{i=1}^{n} \varphi_{\tau \normz{\Delta}} \Big( \f{X_i^\top} \Delta \, \one{\{ |Y_i - \f{X_i^\top} \beta| \leq T \}} \Big) \geq c_1 \normz{\Delta} \Bigg(\normz{\Delta} - c_2 \bigg(\frac{\log(p)}{n}\bigg)^\frac{1}{2} \norme{\Delta} \Bigg) \label{proof:RSC:pseudo:lasso:2}
		\end{align}
		with $c_1 = \CXl/4$ and $c_2=160 \, \tau^2 \CXsub/ \CXl$. Additionally the proof of \citet[Lemma 4]{Fan2017} leads to
		\begin{align}
			c_1 \normz{\Delta} \Bigg(\normz{\Delta} - c_2 \bigg(\frac{\log(p)}{n}\bigg)^\frac{1}{2} \norme{\Delta} \Bigg) \geq \frac{c_1}{2} \normzq{\Delta} - \frac{c_1 c_2^2}{2} \frac{\log(p)}{n} \normeq{\Delta} \,. \label{proof:RSC:pseudo:lasso:3}
		\end{align}
		All in all the inequalities \eqref{proof:RSC:pseudo:lasso:1} - \eqref{proof:RSC:pseudo:lasso:3} imply the assertion of Lemma \ref{lemma:RSC:pseudo:huber}. 
	\end{proof}

	\begin{proof}[Proof of Lemma \ref{lem:derivativelem1}]
		
		For $v \in B_{S}\defeq \big\{ v \in \R^p \mid \supp(v) \subseteq S, \normz{v} = 1 \big\} $ we have that
		\begin{align*}
			\big(\nabla^2 \loss_{n,\alpha}^{\,\Hu}(\beta) \big) \, v = \lim_{t \to 0} \frac{\nabla\loss_{n,\alpha}^{\,\Hu}\big(\beta+t\,v\big) -\nabla\loss_{n,\alpha}^{\,\Hu}(\beta)}{t}
		\end{align*}
		and hence that
		\begin{align}
			v^\top \big(\nabla^2 \loss_{n,\alpha}^{\,\Hu}(\beta) \big) v = \lim_{t \to 0} \frac{ \skalar{\nabla\loss_{n,\alpha}^{\,\Hu}\big(\beta+t\,v\big) -\nabla\loss_{n,\alpha}^{\,\Hu}(\beta)}{t\,v}}{t^2} \,. \label{limest0}
		\end{align}
		
		The RSC condition \eqref{eq:rsccond} implies that for $t\leq 1$ and $v \in B_S$  we obtain
		\begin{align}
			\skalar{\nabla\loss_{n,\alpha}^{\,\Hu}\big(\beta+t\,v\big) -\nabla\loss_{n,\alpha}^{\,\Hu}(\beta)}{t\,v} \geq  t^2\bigg( \Crscf \normzq{v} - \Crscs\, \frac{\log(p)}{n} \normeq{v} \bigg) \geq  t^2\bigg( \Crscf - \Crscs\, \frac{s \log(p)}{n} \bigg) \label{rscfortv}
		\end{align}
		where we used $\norme{v} \leq \sqrt{s}\,\normz{v}$ since $\supp(v) \subseteq S$ and $\normz{v} = 1 $. 
		Plugging this into \eqref{limest0} together with the condition $n \geq \Crsct s \log(p)$ gives  
		\begin{align*}
			v^\top \big(\nabla^2 \loss_{n,\alpha}^{\,\Hu}(\beta) \big) v \geq \Crscf - \frac{\Crscf}{2} = \frac{\Crscf}{2} \,,
		\end{align*}
		which is equivalent to the estimate \eqref{positvedefinitenesshessian}. 
	\end{proof}

	\begin{proof}[Proof of Lemma \ref{ratelinftygradient}]
		By (19) and (21) we obtain
		\begin{align*}
			\nabla \loss_{n,\alpha_n}^{\,\Hu} \big(\beta_{\alpha_n}^*\big) = - \frac{1}{n} \sum_{i=1}^{n} l_{\alpha_n}'\big(Y_i-\f{X_i^\top} \beta_{\alpha_n}^* \big) \f{X_i} 
		\end{align*}
		with $\big|l_{\alpha_n}'(x)\big| \leq 2 \alpha_n^{-1}$ and $\big|l_{\alpha_n}'(x)\big| \leq 2 |x|$ for all $x \in \R$. Furthermore, by \eqref{subgaussian:moments} in the proof of Lemma  \ref{approximation:error:pseudo:huber} it follows that
		\begin{align}
			\E \big[|Q|^{ru}\big]^\frac{1}{r} \leq \Bigg(\big(2 \tau^2 \big)^{\frac{ru}{2}} \, ru \, \Gamma\bigg(\frac{ru}{2}\bigg) \Bigg)^\frac{1}{r} \leq \big(2 \tau^2 \big)^{\frac{u}{2}} \Big( \big(ru \big)! \Big)^\frac{1}{r} \leq \big(2 \tau^2 \big)^{\frac{u}{2}} \Big(\big(u!\big)^r r^{ru}\Big)^\frac{1}{r} = \big(2 \tau^2 \big)^{\frac{u}{2}} u! \, r^u \label{proof:ratelinftygradient:1}
		\end{align}
		for $Q \sim \subg(\tau)$ with $\tau>0$ and $u,r \in \N$ with $u \geq 2$ and $r/2 \in \N$. In the last inequality we bound the $r$ largest factors of $(ru)!$ by $ru$, then the next $r$ largest factors by $r(u-1)$ and so on. Now we choose $1 < q_1 \leq \Choeld$, where $\Choeld$ is given in Assumption \ref{assfan2017}, such that $r_1=q_1/(q_1-1) \in \N$ and $r_1$ is even. Then we obtain
		\begin{align*}
			\E\bigg[ \Big( l_{\alpha_n}'\big(Y_i-\f{X_i^\top} \beta_{\alpha_n}^* \big) X_{i,k} \Big)^2 \bigg] &\leq 4 \, \E \Big[ \big(\varepsilon_i-\f{X_i^\top} (\beta^* - \beta_{\alpha_n}^*) \big)^2 \, X_{i,k}^2 \Big] \notag \\
			&\leq 8 \, \E \bigg[ \Big( \varepsilon_i^2 + \big(\f{X_i^\top} (\beta^* - \beta_{\alpha_n}^*) \big)^2 \Big) \, X_{i,k}^2 \bigg] \notag \\
			&= 8 \, \E \Big[ \E\big[ \varepsilon_i^2 \big| \f{X_i} \big] \, X_{i,k}^2 + \big(\f{X_i^\top} (\beta^* - \beta_{\alpha_n}^*) \big)^2 \, X_{i,k}^2 \Big] \notag \\
			&\leq 8 \, \E \bigg[ \Big( 1 + \E\big[ |\varepsilon_i|^m \big| \f{X_i} \big] \Big)\, X_{i,k}^2 + \big(\f{X_i^\top} (\beta^* - \beta_{\alpha_n}^*) \big)^2 \, X_{i,k}^2 \bigg] \notag \\
			&\leq \Cgradt
		\end{align*}
		with
		\begin{align*} 
			\Cgradt = 32 \, \CXsubs \Big( 1 + (1+\Cm)^\frac{1}{q_1} r_1^2 + 2^2 \, 64 \, \CXsubs \, \Cbetas \Big) 
		\end{align*}
		for $k=1,\dotsc,p$. In the last inequality we used the H\"older and Cauchy-Schwarz inequality, \eqref{proof:ratelinftygradient:1} with $u=2$ and $r \in \{2,r_1\}$, and the fact that $X_{i,k} \sim \subg(\CXsub)$ and $\f{X_i^\top} (\beta^* - \beta_{\alpha_n}^*) \sim \subg(2 \Cbeta\,\CXsub)$ by (iii) of Assumption \ref{assfan2017}. Analog we obtain for higher moments, $u \geq 3$, using $\big|l_{\alpha_n}'(x)\big|^u \leq 4 \, (2 \alpha_n^{-1})^{u-2} \, x^2$, the estimate
		\begin{align*}
			\E\bigg[ \Big| l_{\alpha_n}'\big(Y_i-\f{X_i^\top} \beta_{\alpha_n}^* \big) \, X_{i,k} \Big|^u \bigg] &\leq 4 \, \bigg( \frac{2}{\alpha_n}\bigg)^{u-2} \E \Big[ \big(\varepsilon_i-\f{X_i^\top} (\beta^* - \beta_{\alpha_n}^*) \big)^2 \, \big|X_{i,k}\big|^u \Big] \notag \\
			&\leq 8 \, \bigg( \frac{2}{\alpha_n}\bigg)^{u-2} \E \bigg[ \Big( 1 + \E\big[ |\varepsilon_i|^m \big| \f{X_i} \big] \Big)\, \big|X_{i,k}\big|^u + \big(\f{X_i^\top} (\beta^* - \beta_{\alpha_n}^*) \big)^2 \, \big|X_{i,k}\big|^u \bigg] \notag \\
			&\leq 8 \, \bigg( \frac{2}{\alpha_n}\bigg)^{u-2} 2^\frac{u}{2} \,\CXsubu \, u! \,\Big( 1 + (1+\Cm)^\frac{1}{q_1} r_1^u  + 2^u \, 64 \, \CXsubs \, \Cbetas \Big) \notag \\
			&= u!\, \bigg( \frac{\sqrt{2} \, 2 \, \CXsub}{\alpha_n}\bigg)^{u-2} 16 \,\CXsubs \Big(1 + (1+\Cm)^\frac{1}{q_1} r_1^u +  2^u \, 64 \, \CXsubs \, \Cbetas \Big) \notag \\
			&\leq \frac{u!}{2} \, \bigg(\frac{2\,\Cgradv}{\alpha_n} \bigg)^{u-2} \, \Cgradt
		\end{align*}
		with
		\begin{align*}
			\Cgradv = \sqrt{2} \, \max(r_1,2) \, \CXsub \,.
		\end{align*}
		In addition note that $\E\big[ l_{\alpha_n}'(Y_i-\f{X_i^\top} \beta_{\alpha_n}^* ) \, X_{i,k} \big] = 0 $ because of (18) and (23). Now Bernstein's inequality, cf. \citet[Proposition 2.9]{Massart2007}, leads to
		\begin{align*}
			\Prob\Bigg(\bigg| \frac{1}{n} \sum_{i=1}^{n} l_{\alpha_n}'\big(Y_i-\f{X_i^\top} \beta_{\alpha_n}^* \big) \, X_{i,k} \bigg| \geq \bigg( \frac{2\Cgradt \,x}{n}\bigg)^\frac{1}{2} + \frac{2 \Cgradv \,x}{\alpha_n \, n}\Bigg) \leq 2 \exp(-x)
		\end{align*}
		for $x >0$ since the terms of the sum are independent. Let $x = 3 \log(p)$ and $\Cgradf =\sqrt{96/\Cgradt} \, \Cgradv/4$, then by the choice of $\alpha_n$ we get
		\begin{align*}
			\frac{2 \Cgradv \,x}{\alpha_n \, n} \leq \frac{24 \Cgradv }{\Cgradv } \, \bigg(\frac{\Cgradt \, \log(p)}{96 n}\bigg)^\frac{1}{2} = \bigg( \frac{2\Cgradt \,x}{n}\bigg)^\frac{1}{2}
		\end{align*}
		and hence
		\begin{align*}
			\Prob\Bigg(\bigg| \frac{1}{n} \sum_{i=1}^{n} l_{\alpha_n}'\big(Y_i-\f{X_i^\top} \beta_{\alpha_n}^* \big) \, X_{i,k} \bigg| \geq 2 \,\bigg( \frac{6 \Cgradt \,\log(p)}{n}\bigg)^\frac{1}{2} \Bigg) \leq 2 \exp\big(-3 \log(p)\big) \,.
		\end{align*}
		Union bound implies
		\begin{align*}
			\Prob\Bigg(\normi{\nabla \loss_{n,\alpha_n}^{\,\Hu} \big(\beta_{\alpha_n}^*\big)} \geq 2 \,\bigg( \frac{6 \Cgradt \,\log(p)}{n}\bigg)^\frac{1}{2} \Bigg) \leq 2 \exp\big(-3 \log(p) + \log(p) \big) = \frac{2}{p^2} 
		\end{align*}
		and $\Cgrads = 2 (6 \Cgradt)^\frac{1}{2}$.
	\end{proof}
	
	\subsection{Proof of Lemma \ref{lemmaassstrictdual(ii)}}\label{sec:lemmaassstrictdual(ii)}

	The proof of Lemma \ref{lemmaassstrictdual(ii)} relies on the following two technical results. 
	
	\setcounter{lemma}{14}
	
	\begin{lemma} \label{lemmaspektralnormQssCovariancematrix}
		Suppose Assumption \ref{assfan2017} and $\alpha_n = \Calpha \big(\frac{\log(p)}{n}\big)^\frac{1}{2}$ for some positive constant $\Calpha >0$ hold. If in addition $n \geq \max \big\{ (576\,\log(6) \, \Calpha \, \Cbetas \,\CXsubs)^2 s^2 \log(p), 16 \log(24)\, s \log(p) \big\}$, then there exist positive constants $C_1,C_2,C_3 >0$ such that 
		\begin{align}
			\normzM{\widehat{Q}_{SS} - \E\big[\f{X_1}\f{X_1^\top}\big]_{SS}} &\leq C_2 \, \max\Bigg\{ \bigg(\frac{s}{n}\bigg)^\frac{1}{2} \,,\, \frac{s}{n} \,,\, \bigg(\frac{\log(p)}{n}\bigg)^\frac{1}{2} \,,\, \bigg(\frac{s \log(p)}{n}\bigg)^\frac{1}{2} \,,\, \alpha_n^\frac{\Cpm}{2} \,,\, \alpha_n^{\Cpm-\frac{1}{2}} \,,\, \alpha_n \Bigg\} \notag \\
			&\leq \frac{C_3}{\sqrt{s}}
		\end{align}
		with probability at least $1-C_1/ p^{2} - 6/p^{5s}$.
	\end{lemma}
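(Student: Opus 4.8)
The plan is to bound $\normzM{\widehat{Q}_{SS}-\E[\f{X_1}\f{X_1^\top}]_{SS}}$ by first removing the data dependence through $\widehat{\beta}_n^{\,\PDW}$ and only afterwards doing any concentration. Starting from $\widehat{Q}_{SS}=\int_0^1(\nabla^2\loss_{n,\alpha_n}^{\,\Hu}(\beta_{\alpha_n}^*+t(\widehat{\beta}_n^{\,\PDW}-\beta_{\alpha_n}^*)))_{SS}\,dt$ in \eqref{defofQhat}, I would condition on the event of Lemma \ref{lemma:l2:norm:pdw} (so that $\normz{\widehat{\beta}_n^{\,\PDW}-\beta^*}\lesssim\sqrt{s\log(p)/n}$) together with the approximation bound $\normz{\beta_{\alpha_n}^*-\beta^*}\le\Capprox\alpha_n^{\Cpm-1}$ of Lemma \ref{approximation:error:pseudo:huber}; then the whole segment joining $\beta_{\alpha_n}^*$ and $\widehat{\beta}_n^{\,\PDW}$ lies in a ball $B_n$ of radius $\rho_n\simeq\sqrt{s\log(p)/n}$ about $\beta_{\alpha_n}^*$, and also in $\{\normz{\beta}\le\Cbeta\}$. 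By the triangle inequality for the $dt$-integral it then suffices to bound $\sup_{\beta\in B_n}\normzM{(\nabla^2\loss_{n,\alpha_n}^{\,\Hu}(\beta))_{SS}-\E[\f{X_1}\f{X_1^\top}]_{SS}}$ at a fixed parameter.

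For fixed $\beta$, I would write $(\nabla^2\loss_{n,\alpha_n}^{\,\Hu}(\beta))_{SS}=\tfrac1n\sum_i l_{\alpha_n}''(Y_i-\f{X_i^\top}\beta)\,\f{X_{i,S}}\f{X_{i,S}^\top}$ and split off the curvature bias: by \eqref{eq:secondderivative} one has $0\le l_{\alpha_n}''(0)-l_{\alpha_n}''(x)\le\min\{3\alpha_n^2x^2,\,l_{\alpha_n}''(0)\}$, hence this matrix equals $l_{\alpha_n}''(0)\cdot\tfrac1n\sum_i\f{X_{i,S}}\f{X_{i,S}^\top}$ minus a positive semidefinite matrix of operator norm at most $3\alpha_n^2\sup_{\normz{v}=1,\,\supp(v)\subseteq S}\tfrac1n\sum_i(Y_i-\f{X_i^\top}\beta)^2(\f{X_{i,S}^\top}v)^2$. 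The first piece concentrates around its population counterpart by the standard sub-Gaussian sample-covariance estimate with deviation parameter of order $s\log(p)$ — the magnitude forced by the target probability $1-6/p^{5s}$ — and this yields the terms $(s/n)^{1/2}$, $s/n$, $(\log(p)/n)^{1/2}$ and $(s\log(p)/n)^{1/2}$. For the curvature-bias piece I would write $Y_i-\f{X_i^\top}\beta=\varepsilon_i+\f{X_i^\top}(\beta^*-\beta)$ and, for $\beta\in B_n$, decompose $\beta^*-\beta$ into its $S$-coordinate part (of $\ell_2$ norm $\lesssim\rho_n+\Capprox\alpha_n^{\Cpm-1}$, which is only ever paired with $\f{X_{i,S}}$, whose norm is of order $\sqrt s$ rather than $\sqrt p$) and the fixed off-$S$ tail $\beta^*_{\alpha_n,S_{\alpha_n}\setminus S}$ of $\beta_{\alpha_n}^*$ (a deterministic vector for which $\f{X_i^\top}(\cdot)$ is sub-Gaussian with variance proxy $\lesssim\alpha_n^{2(\Cpm-1)}\CXsubs$); the supremum over $B_n$ then reduces to suprema over the $s$-sphere and an $s$-dimensional ball only, handled by an $\varepsilon$-net of $\{v:\normz{v}=1,\,\supp(v)\subseteq S\}$ of cardinality $e^{O(s)}$ together with Bernstein's inequality for the sub-exponential sums $\tfrac1n\sum_i\varepsilon_i^2(\f{X_{i,S}^\top}v)^2$, $\tfrac1n\sum_i(\f{X_{i,S}^\top}u)^2(\f{X_{i,S}^\top}v)^2$ and the deterministic-tail analogue, exactly as in the proofs of Lemmas \ref{ratelinftygradient} and \ref{lemma:RSC:pseudo:huber}. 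Assumption \ref{assfan2017}(i),(iii) bound the relevant means — of order $1$ for the $\varepsilon$-term, and of order $\rho_n^2$ and $\alpha_n^{2(\Cpm-1)}$ for the two displacement terms — but only through moments of order $\Cpm$, and that is precisely what produces the $\alpha_n^{\Cpm/2}$, $\alpha_n^{\Cpm-1/2}$ and $\alpha_n$ contributions after multiplying by $\alpha_n^2$.

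Collecting the events gives probability at least $1-C_1/p^2-6/p^{5s}$: the $p^{-2}$ from the event of Lemma \ref{lemma:l2:norm:pdw} and the coordinatewise bounds (with $n\gtrsim\log(p)$ absorbing the $\cPo\exp(-\cPt n)$ terms into $C_1/p^2$), and the $p^{-5s}$ from the $s$-sphere net. The sample-size requirement $n\ge\max\{(576\log(6)\,\Calpha\,\Cbetas\,\CXsubs)^2s^2\log(p),\,16\log(24)\,s\log(p)\}$ is tuned precisely so that (i) $n\gtrsim s\log(p)$, which legitimises the $s\log(p)$-deviation step, and (ii) the largest of the seven terms in the maximum, multiplied by $\sqrt s$, stays below an absolute constant; since moreover $\alpha_n^2=\Calpha^2\log(p)/n$ forces $\alpha_n^{\Cpm/2}\le\alpha_n\le(\log(p)/n)^{1/2}$, the maximum is $O(1/\sqrt s)$ and the second inequality $\le C_3/\sqrt s$ follows.

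The step I expect to be the main obstacle — and the reason the order above matters — is the dependence of the local curvatures $l_{\alpha_n}''(Y_i-\f{X_i^\top}(\cdot))$ on the random, structurally opaque PDW solution $\widehat{\beta}_n^{\,\PDW}$. A naive uniformization over the whole feasible ball $\{\normz{\beta}\le\Cbeta\}$ is far too lossy, since $\sup_{\normz{\beta}\le\Cbeta}|\f{X_i^\top}\beta|$ is of order $\sqrt p\,\Cbeta$ and the resulting powers of $p$ cannot be absorbed by a sample size of order $s^2\log(p)$. The way around this is exactly the two structural facts used above: $\widehat{\beta}_n^{\,\PDW}$ is $O(\sqrt{s\log(p)/n})$-close to $\beta^*$ in $\ell_2$ and supported on $S$, while $\beta_{\alpha_n}^*$ is $O(\alpha_n^{\Cpm-1})$-close to $\beta^*$; hence every parameter displacement entering the estimates is either $S$-supported with small norm (so $\f{X_i^\top}(\cdot)$ only sees the $s$-dimensional subvector $\f{X_{i,S}}$, not the full $\f{X_i}$) or a single fixed deterministic vector of small norm, after which the only uniformity left is genuinely $s$-dimensional.
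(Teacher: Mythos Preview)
Your overall architecture is sensible, but the concentration step for the $\varepsilon$-part contains a genuine gap. You write that you would ``handle $\tfrac1n\sum_i\varepsilon_i^2(\f{X_{i,S}^\top}v)^2$ by Bernstein's inequality for sub-exponential sums''. Under Assumption~\ref{assfan2017}(i) the errors $\varepsilon_i$ are only assumed to have conditional moments of order $m\in\{2,3\}$; in particular $\varepsilon_i^2(\f{X_{i,S}^\top}v)^2$ is \emph{not} sub-exponential (for $m=2$ it need not even have a finite second moment), so Bernstein does not apply. A crude Markov bound would survive, but not uniformly over an $\varepsilon$-net of cardinality $e^{O(s)}$ with the target probability $1-O(p^{-5s})$. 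Your arithmetic in fact signals this: bounding the curvature deficit by $3\alpha_n^2x^2$ and then taking expectations yields a contribution of order $\alpha_n^2$, not the $\alpha_n^{m/2}$, $\alpha_n^{m-1/2}$, $\alpha_n$ that appear in the statement.

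The missing device is a truncation at level $\alpha_n^{-1/2}$. The paper splits according to whether $|Y_i-\f{X_i^\top}\beta_t|\le\alpha_n^{-1/2}$ or not. On the first region one uses the pointwise bound $|2-l_{\alpha_n}''(x)|\le 3\alpha_n^2x^2\le 3\alpha_n$, giving the $\alpha_n$ term after controlling $\tfrac1n\sum_i(\f{X_{i,S}^\top}u)^2$. On the second region one bounds $2-l_{\alpha_n}''\le 2$ and works with the \emph{indicator} $\one\{|Y_i-\f{X_i^\top}\beta_t|>\alpha_n^{-1/2}\}$, which is bounded, so the products $\one\{\cdot\}(\f{X_{i,S}^\top}u)^2$ \emph{are} sub-exponential and Bernstein applies uniformly over the net; the expectation of the indicator is then controlled by Markov using only the $m$th moment of $\varepsilon$, and this is exactly where $\alpha_n^{m/2}$ (and, after combining with $\normz{\beta^*-\beta_{\alpha_n}^*}\lesssim\alpha_n^{m-1}$, the $\alpha_n^{m-1/2}$) arise. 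A second consequence of this truncation is that one never needs the rate of $\widehat\beta_n^{\,\PDW}$ from Lemma~\ref{lemma:l2:norm:pdw} (which, as stated, still carries the unknown factor $w_{\max}(S)\lambda_n$): it suffices to use $\supp(\widehat\beta_n^{\,\PDW})\subseteq S$ and the crude bound $\normz{\widehat\beta_n^{\,\PDW}-\beta^*}\le 2\Cbeta$, and then estimate $\Prob\big(\max_{\normz{u}\le 2\Cbeta}|u^\top(\f{X_1})_S|>\tfrac13\alpha_n^{-1/2}\big)$ via the sub-Gaussian tail---the factor $\Cbetas$ in the sample-size hypothesis is precisely the fingerprint of this argument, not of any $\rho_n$-localisation.
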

	
	\begin{proof}[Proof of Lemma \ref{lemmaspektralnormQssCovariancematrix}]
		
		The following proof uses elements of the proof of Lemma 1 in \citet{Sun2019}.
		Let $\mathcal{B}_2^s = \big\{ u \in \R^s \mid \normz{u} \leq 1 \big\}$. Then using (34) in Lemma 6 we have
		\begin{align}
			\normzM{\frac{2}{n} \sum_{i=1}^{n} \,\big(\f{X_i}\big)_S \big(\f{X_i}\big)_S^\top - \widehat{Q}_{SS}} 
			&= \max_{u \in \mathcal{B}_2^s} \, u^\top \bigg( \frac{2}{n} \sum_{i=1}^{n} \big(1-d_i\big) \,\big(\f{X_i}\big)_S \big(\f{X_i}\big)_S^\top \bigg) u \notag \\
			&= \max_{u \in \mathcal{B}_2^s} \Big( Z_n^1 (u) + Z_n^2 (u) \Big) \notag \\
			&\leq \max_{u \in \mathcal{B}_2^s} Z_n^1 (u) + \max_{u \in \mathcal{B}_2^s} Z_n^2 (u) \label{proof:1:lemmaspektralnormQssCovariancematrix}
		\end{align}
		with
		\begin{align*}
			Z_n^1(u) &= \frac{1}{n} \sum_{i=1}^{n} \Bigg( \int_{0}^{1} \bigg( 2 - l_{\alpha_n}''\,\Big(Y_i-\f{X_i^\top} \big(\beta_{\alpha_n}^* + t \, \big(\widehat{\beta}_n^{\,\PDW}-\beta_{\alpha_n}^*\big)\big)\Big) \bigg) \,  \\
			& \quad \quad \quad \quad \quad \quad \cdot \mathbbm{1}_{[0,\alpha_n^{-1/2}]} \bigg( \Big| Y_i-\f{X_i^\top} \big(\beta_{\alpha_n}^* + t \, \big(\widehat{\beta}_n^{\,\PDW}-\beta_{\alpha_n}^*\big) \big) \Big| \bigg) dt  \Bigg) \Big(u^\top\big(\f{X_i}\big)_S \Big)^2 \,,\\
			Z_n^2(u) &= \frac{1}{n} \sum_{i=1}^{n} \Bigg( \int_{0}^{1} \bigg( 2 - l_{\alpha_n}''\,\Big(Y_i-\f{X_i^\top} \big(\beta_{\alpha_n}^* + t \, \big(\widehat{\beta}_n^{\,\PDW}-\beta_{\alpha_n}^*\big)\big)\Big) \bigg) \,  \\
			& \quad \quad \quad \quad \quad \quad \cdot \mathbbm{1}_{(\alpha_n^{-1/2},\infty)} \bigg( \Big| Y_i-\f{X_i^\top} \big(\beta_{\alpha_n}^* + t \, \big(\widehat{\beta}_n^{\,\PDW}-\beta_{\alpha_n}^*\big) \big) \Big| \bigg) dt  \Bigg) \Big(u^\top\big(\f{X_i}\big)_S \Big)^2 \,.
		\end{align*}
		To handle the first sum in \eqref{proof:1:lemmaspektralnormQssCovariancematrix} we consider the series expansion in \eqref{eq:taylor:expansion}, which implies
		\begin{align*}
			\big| 2 - l_{\alpha_n}''(x) \big| = \Bigg| - 2  \sum_{k=2}^{\infty} \binom{1/2}{k} 2k \, (2k -1) \,\alpha_n^{2k-2} \, x^{2k-2} \Bigg| \leq \big| 3 \alpha_n^2 \, x^2 \big| = 3 \alpha_n^2 \, x^2 \,,
		\end{align*}
		if $\alpha_n^2 \, x^2 < 1$. Hence for small $\alpha_n$ we get
		\begin{align*}
			\max_{u \in \mathcal{B}_2^s} Z_n^1 (u) \leq \max_{u \in \mathcal{B}_2^s} \, \frac{3 \alpha_n}{n} \sum_{i=1}^{n} \Big(u^\top\big(\f{X_i}\big)_S \Big)^2 \,.
		\end{align*}
		Standard spectral norm bounds on the sample covariance matrix (with independent and identically distributed sub-Gaussian rows), cf. \citet[Theorem 6.5]{Wainwright2019}, and (ii) of Assumption \ref{assfan2017} lead to 
		\begin{align}
			\max_{u \in \mathcal{B}_2^s} \, \frac{1}{n} \sum_{i=1}^{n} \Big(u^\top\big(\f{X_i}\big)_S \Big)^2 &= \normzM{\frac{1}{n} \sum_{i=1}^{n} \,\big(\f{X_i}\big)_S \big(\f{X_i}\big)_S^\top} \notag \\
			&\leq \normzM{\E\big[\f{X_1}\f{X_1^\top}\big]_{SS}} + \normzM{\frac{1}{n} \sum_{i=1}^{n} \,\big(\f{X_i}\big)_S \big(\f{X_i}\big)_S^\top - \E\big[\f{X_1}\f{X_1^\top}\big]_{SS}} \notag \\
			&\leq \CXu + C_4 \Bigg( \bigg(\frac{s}{n}\bigg)^\frac{1}{2} + \frac{s}{n} + \bigg(\frac{\log(p)}{n}\bigg)^\frac{1}{2} \Bigg) \label{spektral:norm:bound:sample:covariance}
		\end{align}
		with probability at least $1-C_1/ p^2$ for some positive constants $C_1,C_4 >0$. Hence
		\begin{align}
			\max_{u \in \mathcal{B}_2^s} Z_n^1 (u) \leq 3 \big( \CXu + 3 C_4 \big)\,\alpha_n \label{proof:2:lemmaspektralnormQssCovariancematrix}
		\end{align}
		with high probability.
		For the second sum in \eqref{proof:1:lemmaspektralnormQssCovariancematrix} we firstly estimate
		\begin{align}
			\max_{u \in \mathcal{B}_2^s} Z_n^2 (u) \leq \max_{u \in \mathcal{B}_2^s}  \, \frac{2}{n} \sum_{i=1}^{n} \Bigg( \int_{0}^{1} \mathbbm{1}_{(\alpha_n^{-1/2},\infty)} \bigg( \Big| Y_i-\f{X_i^\top} \Big(\beta_{\alpha_n}^* + t \, \big(\widehat{\beta}_n^{\,\PDW}-\beta_{\alpha_n}^*\big) \Big) \Big| \bigg) dt  \Bigg) \Big(u^\top\big(\f{X_i}\big)_S \Big)^2 \label{proof:3:lemmaspektralnormQssCovariancematrix}
		\end{align}
		because of (20). Now we can rearrange the term in the indicator function as
		\begin{align*}
			\Big| Y_i-\f{X_i^\top} \Big(\beta_{\alpha_n}^* + t \, \big(\widehat{\beta}_n^{\,\PDW}-\beta_{\alpha_n}^*\big) \Big) \Big| 
			&= \Big| \varepsilon_i + (1-t) \, \f{X_i^\top} \big( \beta^* - \beta_{\alpha_n}^* \big)  + t \, \f{X_i^\top} \big(\beta^* - \widehat{\beta}_n^{\,\PDW} \big)  \Big|.
		\end{align*}
		Using the inequality
		\begin{align*}
			\mathbbm{1}_{(\alpha_n^{-1/2},\infty)} \Big(\big| Q_1 + Q_2 + Q_3 \big|\Big) \leq \mathbbm{1}_{(\alpha_n^{-1/2}/3,\infty)} \Big(\big| Q_1 \big|\Big) + \mathbbm{1}_{(\alpha_n^{-1/2}/3,\infty)} \Big(\big| Q_2 \big|\Big) + \mathbbm{1}_{(\alpha_n^{-1/2}/3,\infty)} \Big(\big| Q_3 \big|\Big)
		\end{align*}
		for random variables $Q_1$, $Q_2$ and $Q_3$ leads to
		\begin{align}
			&\frac{2}{n} \sum_{i=1}^{n} \Bigg( \int_{0}^{1} \mathbbm{1}_{(\alpha_n^{-1/2},\infty)} \bigg( \Big| Y_i-\f{X_i^\top} \Big(\beta_{\alpha_n}^* + t \, \big(\widehat{\beta}_n^{\,\PDW}-\beta_{\alpha_n}^*\big) \Big) \Big| \bigg) dt \Bigg) \Big(u^\top\big(\f{X_i}\big)_S \Big)^2 \notag \\
			& \quad \quad \quad \quad \leq \frac{2}{n} \sum_{i=1}^{n} \Bigg( \int_{0}^{1} \bigg[ \mathbbm{1}_{(\alpha_n^{-1/2}/3,\infty)} \big(| \varepsilon_i |\big) + \mathbbm{1}_{(\alpha_n^{-1/2}/3,\infty)} \bigg(\Big|(1-t) \, \f{X_i^\top} \big( \beta^* - \beta_{\alpha_n}^* \big) \Big|\bigg) \notag \\
			& \quad \quad \quad \quad \quad \quad \quad \quad \quad \quad \quad \quad + \mathbbm{1}_{(\alpha_n^{-1/2}/3,\infty)} \bigg(\Big| t \, \f{X_i^\top} \big(\beta^* - \widehat{\beta}_n^{\,\PDW} \big) \Big|\bigg) \bigg] dt \Bigg) \Big(u^\top\big(\f{X_i}\big)_S \Big)^2 \notag \\
			& \quad \quad \quad \quad \leq \frac{2}{n} \sum_{i=1}^{n} \Bigg( \int_{0}^{1} \bigg[ \mathbbm{1}_{(\alpha_n^{-1/2}/3,\infty)} \big(| \varepsilon_i |\big) + \mathbbm{1}_{(\alpha_n^{-1/2}/3,\infty)} \bigg(\Big| \f{X_i^\top} \big( \beta^* - \beta_{\alpha_n}^* \big) \Big|\bigg) \notag \\
			& \quad \quad \quad \quad \quad \quad \quad \quad \quad \quad \quad \quad + \mathbbm{1}_{(\alpha_n^{-1/2}/3,\infty)} \bigg(\Big| \f{X_i^\top} \big(\beta^* - \widehat{\beta}_n^{\,\PDW} \big) \Big|\bigg) \bigg] dt \Bigg) \Big(u^\top\big(\f{X_i}\big)_S \Big)^2 \notag \\
			& \quad \quad \quad \quad = \frac{2}{n} \sum_{i=1}^{n} \mathbbm{1}_{(\alpha_n^{-1/2}/3,\infty)} \big(| \varepsilon_i | \big) \, \Big(u^\top\big(\f{X_i}\big)_S \Big)^2 \notag \\
			& \quad \quad \quad \quad \quad \quad  \quad \quad + \frac{2}{n} \sum_{i=1}^{n} \mathbbm{1}_{(\alpha_n^{-1/2}/3,\infty)} \bigg(\Big| \f{X_i^\top} \big( \beta^* - \beta_{\alpha_n}^* \big) \Big|\bigg) \, \Big(u^\top\big(\f{X_i}\big)_S \Big)^2 \notag \\
			& \quad \quad \quad \quad \quad \quad \quad \quad \quad \quad \quad \quad + \frac{2}{n} \sum_{i=1}^{n} \mathbbm{1}_{(\alpha_n^{-1/2}/3,\infty)} \bigg(\Big| \f{X_i^\top} \big(\beta^* - \widehat{\beta}_n^{\,\PDW} \big) \Big|\bigg) \, \Big(u^\top\big(\f{X_i}\big)_S \Big)^2 \,. \label{proof:4:lemmaspektralnormQssCovariancematrix}
		\end{align}
		We consider each of the three terms separately. By (iii) of Assumption \ref{assfan2017} we get for fixed $u \in \mathcal{B}_2^s$ that $u^\top\big(\f{X_i}\big)_S \sim \subg(\CXsub)$, and following the proof of \citet[Lemma 1.12]{Rigollet2019} together with $\big(\mathbbm{1}_{(\alpha_n^{-1/2}/3,\infty)} (| \varepsilon_i | )\big)^2 = \mathbbm{1}_{(\alpha_n^{-1/2}/3,\infty)} (| \varepsilon_i | )$ leads to
		\begin{align*}
			Q_i(u) = \mathbbm{1}_{(\alpha_n^{-1/2}/3,\infty)} \big(| \varepsilon_i | \big)& \, \Big(u^\top\big(\f{X_i}\big)_S \Big)^2 - \E \bigg[ \mathbbm{1}_{(\alpha_n^{-1/2}/3,\infty)} \big(| \varepsilon_i | \big) \, \Big(u^\top\big(\f{X_i}\big)_S \Big)^2 \bigg] \\
			&\sim \sube \big(16 \, \CXsubs, 16 \, \CXsubs \big) \,.
		\end{align*}
		Bernstein's inequality, cf. \citet[Theorem 1.13]{Rigollet2019}, implies
		\begin{align*}
			\Prob \Bigg(\bigg| \frac{2}{n} \sum_{i=1}^{n} Q_i (u) \bigg| > x \Bigg) \leq 2 \max \Bigg\{ \exp \bigg(-\frac{x^2\,n}{2048\,\CXsubf}\bigg), \exp\bigg(-\frac{x\,n}{64\,\CXsubs }\bigg) \Bigg\} 
		\end{align*}
		for $x>0$ and fixed $u \in \mathcal{B}_2^s$. Now we proceed with a covering argument. Consider a $1/8$-cover $A$ of cardinality $N = N(1/8; \mathcal{B}_2^s, \normz{\,\cdot\,}) \leq 24^s$  of the unit Euclidean ball of $\R^s$ with respect to the Euclidean distance (cf. Lemma 1.18 in \citet{Rigollet2019} or Example 5.8 in \citet{Wainwright2019}). We can argue similarly to the proof in \citet[Theorem 6.5]{Wainwright2019} since we consider also a quadratic form, and obtain
		for $x=256\,\sqrt{\log(24)}\,\CXsubs \, \big(\frac{s \log(p)}{n}\big)^\frac{1}{2}$ that
		\begin{align}
			\Prob \Bigg( \max_{u \in \mathcal{B}_2^s} \bigg| \frac{2}{n} \sum_{i=1}^{n} Q_i (u) \bigg| > x \Bigg) &\leq \Prob \Bigg( \max_{u \in A} \bigg| \frac{2}{n} \sum_{i=1}^{n} Q_i (u) \bigg| > \frac{x}{2} \Bigg) \notag \\
			&\leq 2 \, \big|N\big|\,\max \bigg\{ \exp \big(-8 \log(24)\,s \log(p)\big), \exp\Big(-\big(4 \log(24) \,s \log(p)\,  n\big)^\frac{1}{2}\Big) \bigg\} \notag \\
			&\leq 2 \exp\big( \log(24)\,s - 8  \log(24)\,s \log(p) \big) \notag \\
			&\leq 2 \exp\big( - 4 \log(24)\,s \log(p) \big) \notag \\
			&\leq \frac{2}{p^{5s}} \label{proof:8:lemmaspektralnormQssCovariancematrix}
		\end{align}
		since $4 \log(p) \geq 1$ if $p \geq 2$, and by assumption $n \geq 16 \log(24)\, s \log(p)$.
		%
		In addition we obtain
		\begin{align*} 
			\frac{2}{n} \sum_{i=1}^n \E \bigg[ \mathbbm{1}_{(\alpha_n^{-1/2}/3,\infty)} \big(| \varepsilon_i | \big) \, \Big(u^\top\big(\f{X_i}\big)_S \Big)^2 \bigg] &= 2\, \E \bigg[ \E \Big[ \mathbbm{1}_{(\alpha_n^{-1/2}/3,\infty)} \big(| \varepsilon_1 | \big) \Big| \f{X_1} \Big] \Big(u^\top\big(\f{X_1}\big)_S \Big)^2 \bigg] \\
			&\leq 2 \,\big( 1 + \Cm \big) \, \big( 9 \alpha_n \big)^\frac{\Cpm}{2} \, \E \bigg[ \Big(u^\top\big(\f{X_1}\big)_S \Big)^2 \bigg] \\
			&\leq 2 \,\big( 1 + \Cm \big) \, \CXsubs \,   \big( 9 \alpha_n \big)^\frac{\Cpm}{2}\,,
		\end{align*}
		by Assumption \ref{assfan2017} and an application of the conditional version of Markov's inequality,
		\begin{align*}
			\E \Big[ \mathbbm{1}_{(\alpha_n^{-1/2}/3,\infty)} \big(| \varepsilon_1 | \big) \Big| \f{X_1} \Big] &= \Prob\bigg(|\varepsilon_1| > \frac{1}{3\alpha_n^\frac{1}{2}} \bigg| \f{X_1} \bigg) \leq (9 \alpha_n)^\frac{\Cpm}{2} \, \E \Big[\E\big[|\varepsilon_1|^{\Cpm} \big| \f{X_1}\big] \Big] \\
			&\leq (9 \alpha_n)^\frac{\Cpm}{2} \, \bigg(1 + \E \Big[\E\big[|\varepsilon_1|^{\Cpm} \big| \f{X_1}\big]^\Choeld \Big] \bigg) \\
			&\leq \big( 1 + \Cm \big) \, \big( 9 \alpha_n \big)^\frac{\Cpm}{2} \,.
		\end{align*}
		By building the maximum of the expected values over $u \in \mathcal{B}_2^s$ and collecting terms we find that 
		\begin{align}
			\max_{u \in \mathcal{B}_2^s} \, \frac{2}{n} \sum_{i=1}^{n} \mathbbm{1}_{(\alpha_n^{-1}/3,\infty)} \big(| \varepsilon_i | \big) \, \Big(u^\top\big(\f{X_i}\big)_S \Big)^2 \leq 256\,\sqrt{\log(24)}&\,\CXsubs \, \bigg(\frac{s \log(p)}{n}\bigg)^\frac{1}{2} \notag \\
			&+ 2 \,\big( 1 + \Cm \big) \, \CXsubs \,   \big( 9 \alpha_n \big)^\frac{\Cpm}{2}  \label{proof:5:lemmaspektralnormQssCovariancematrix}
		\end{align}
		with probability at least $1 - 2/p^{5s}$. We proceed similar for the second and third sum in \eqref{proof:4:lemmaspektralnormQssCovariancematrix}, hence it is sufficient to consider the rates of the expected values
		\begin{align*}
			\E \bigg[ \mathbbm{1}_{(\alpha_n^{-1/2}/3,\infty)} \bigg(\Big| \f{X_1^\top} \big( \beta^* - \beta \big) \Big|\bigg) \, \Big(u^\top\big(\f{X_1}\big)_S \Big)^2 \bigg] 
		\end{align*} 
		with $\beta =  \beta_{\alpha_n}^*$ and $\beta= \widehat{\beta}_n^{\,\PDW}$. Obviously it is 
		\begin{align*}
			\mathbbm{1}_{(\alpha_n^{-1/2}/3,\infty)} \bigg(\Big| \f{X_1^\top} \big( \beta^* - \beta_{\alpha_n}^* \big) \Big|\bigg) \leq 3 \, \Big| \f{X_1^\top} \big( \beta^* - \beta_{\alpha_n}^* \big) \Big| \, \alpha_n^\frac{1}{2}
		\end{align*}
		and hence by Assumption \ref{assfan2017}, \citet[Lemma 1.4]{Rigollet2019} and the Cauchy-Schwarz inequality 
		\begin{align*}
			\E \bigg[ \mathbbm{1}_{(\alpha_n^{-1/2}/3,\infty)} \bigg(\Big| \f{X_1^\top} \big( \beta^* - \beta_{\alpha_n}^* \big) \Big|\bigg) \, \Big(u^\top\big(\f{X_1}\big)_S \Big)^2 \bigg] &\leq 3  \alpha_n^\frac{1}{2} \, \E \bigg[ \Big| \f{X_1^\top} \big( \beta^* - \beta_{\alpha_n}^* \big) \Big| \, \Big(u^\top\big(\f{X_1}\big)_S \Big)^2 \bigg] \\
			&\leq 3  \alpha_n^\frac{1}{2} \, \Bigg( \E \bigg[ \Big( \f{X_1^\top} \big( \beta^* - \beta_{\alpha_n}^* \big) \Big)^2 \bigg] \, \E \bigg[ \Big(u^\top\big(\f{X_1}\big)_S \Big)^4 \bigg] \Bigg)^\frac{1}{2} \\
			&\leq 12  \, \CXsubt\,\normz{\beta^* - \beta_{\alpha_n}^*} \, \alpha_n^\frac{1}{2} \,.
		\end{align*}
		Lemma \ref{approximation:error:pseudo:huber} implies 
		\begin{align}
			\E \bigg[ \mathbbm{1}_{(\alpha_n^{-1/2}/3,\infty)} \bigg(\Big| \f{X_1^\top} \big( \beta^* - \beta_{\alpha_n}^* \big) \Big|\bigg) \, \Big(u^\top\big(\f{X_1}\big)_S \Big)^2 \bigg] \leq 12  \, \Capprox \, \CXsubt \, \alpha_n^{\Cpm-\frac{1}{2}} \,. \label{proof:6:lemmaspektralnormQssCovariancematrix}
		\end{align} 
		The vector $\widehat{\beta}_n^{\,\PDW}$ has support $S$ and satisfies $\big\lVert \widehat{\beta}_n^{\,\PDW}  - \beta^* \big\rVert_2 \leq 2 \Cbeta$ by (27) and (iv) of Assumption \ref{assfan2017}, hence it follows that
		\begin{align*}
			\E \Bigg[ \mathbbm{1}_{(\alpha_n^{-1/2}/3,\infty)} \bigg(\Big| \f{X_1^\top} \big( \beta^* - \widehat{\beta}_n^{\,\PDW} \big) \Big|\bigg) \Bigg] &= \Prob \bigg( \Big| \f{X_1^\top} \big( \beta^* - \widehat{\beta}_n^{\,\PDW} \big) \Big| > \frac{1}{3 \alpha_n^\frac{1}{2}}\bigg) \\
			&\leq \Prob \bigg( \max_{u \in \R^{s} : \normz{u} \leq 2 \Cbeta} \Big|u^\top \big(\f{X_1}\big)_S\Big| > \frac{1}{3 \alpha_n^\frac{1}{2}}\bigg) \\
			&=  \Prob \bigg( \max_{u \in \mathcal{B}_2^s} \, \bigg|u^\top \bigg( 2 \Cbeta \big(\f{X_1}\big)_S \Big) \bigg| > \frac{1}{3 \alpha_n^\frac{1}{2}}\bigg)\\
			&\leq \exp \bigg( \log(6) \, s - \frac{1}{288 \,\Cbetas \,\CXsubs\,\alpha_n} \bigg)
		\end{align*}
		by \citet[Theorem 1.19]{Rigollet2019} together with Assumption \ref{assfan2017}. By the choice of $\alpha_n$ and the sample size $n$ we obtain
		\begin{align*}
			\exp \bigg( \log(6) \, s - \frac{1}{288 \,\Cbetas \,\CXsubs\,\alpha_n} \bigg) &= \exp \bigg( \log(6) \, s - \frac{\sqrt{n}}{576 \,\Cbetas \,\CXsubs\,\Calpha \,\sqrt{\log(p)}} - \frac{1}{576 \,\Cbetas \,\CXsubs\,\alpha_n}\bigg)\\
			&\leq \exp \big( \log(6) \, s - \log(6) \, s\big) \, \exp \bigg(  - \frac{\alpha_n^{-1}}{576 \,\Cbetas \,\CXsubs}\bigg) \\
			&= 2 \, \big(576 \,\Cbetas \,\CXsubs\,\alpha_n\big)^2 
		\end{align*}
		since $\exp(x) \geq x^2/2$ for $x>0$. Therefore
		\begin{align}
			\E \bigg[ \mathbbm{1}_{(\alpha_n^{-1/2}/3,\infty)} \bigg(\Big| \f{X_1^\top} \big( \beta^* - \widehat{\beta}_n^{\,\PDW} \big) \Big|\bigg) \, \Big(u^\top\big(\f{X_1}\big)_S \Big)^2 \bigg] \leq \sqrt{2} \,2304 \,\Cbetas \,\CXsubf\,\alpha_n \label{proof:7:lemmaspektralnormQssCovariancematrix}
		\end{align}
		by the Cauchy-Schwarz inequality.
		So finally the previous considerations in \eqref{proof:1:lemmaspektralnormQssCovariancematrix} - \eqref{proof:7:lemmaspektralnormQssCovariancematrix} showed that 
		\begin{align*}
			\normzM{\frac{2}{n} \sum_{i=1}^{n} \,\big(\f{X_i}\big)_S \big(\f{X_i}\big)_S^\top - \widehat{Q}_{SS}} &\leq 768\,\sqrt{\log(24)}\,\CXsubs \, \bigg(\frac{s \log(p)}{n}\bigg)^\frac{1}{2} + 2 \,\big( 1 + \Cm \big) \, \CXsubs \,   \big( 9 \alpha_n \big)^\frac{\Cpm}{2} \\
			& \quad \quad + 24  \, \Capprox \, \CXsubt \, \alpha_n^{\Cpm-\frac{1}{2}} + \Big( 3 \big( \CXu + 3 C_4 \big) + \sqrt{2} \,4608 \,\Cbetas \,\CXsubf \Big) \alpha_n \\
			&\leq C_5 \, \max\Bigg\{ \bigg(\frac{s \log(p)}{n}\bigg)^\frac{1}{2} \,,\, \alpha_n^\frac{\Cpm}{2} \,,\, \alpha_n^{\Cpm-\frac{1}{2}} \,,\, \alpha_n \Bigg\}
		\end{align*}
		for a positive constant $C_5>0$ with probability at least $1-C_1/ p^{2} - 6/p^{5s}$. Furthermore, repeated application of the spectral norm bound in \eqref{spektral:norm:bound:sample:covariance} leads to
		\begin{align*}
			\normzM{\widehat{Q}_{SS} - \E\big[\f{X_1}\f{X_1^\top}\big]_{SS}} &\leq \normzM{ \widehat{Q}_{SS} - \frac{2}{n} \sum_{i=1}^{n} \,\big(\f{X_i}\big)_S \big(\f{X_i}\big)_S^\top} \\
			& \quad \quad \quad \quad \quad + 2 \,\normzM{\frac{1}{n} \sum_{i=1}^{n} \,\big(\f{X_i}\big)_S \big(\f{X_i}\big)_S^\top - \E\big[\f{X_1}\f{X_1^\top}\big]_{SS}}\\
			&\leq C_2 \, \max\Bigg\{ \bigg(\frac{s}{n}\bigg)^\frac{1}{2} \,,\, \frac{s}{n} \,,\, \bigg(\frac{\log(p)}{n}\bigg)^\frac{1}{2} \,,\, \bigg(\frac{s \log(p)}{n}\bigg)^\frac{1}{2} \,,\, \alpha_n^\frac{\Cpm}{2} \,,\, \alpha_n^{\Cpm-\frac{1}{2}} \,,\, \alpha_n \Bigg\} 
		\end{align*}
		for a positive constant $C_2>0$. By the choices of $\alpha_n$ and $n \gtrsim  s^2 \log(p)$ together with $\Cpm \in \{2,3\}$ finally it follows that
		\begin{align*}
			\normzM{\widehat{Q}_{SS} - \E\big[\f{X_1}\f{X_1^\top}\big]_{SS}} \leq C_6 \, \bigg(\frac{s \log(p)}{n}\bigg)^\frac{1}{2} \leq \frac{C_3}{\sqrt{s}}
		\end{align*}
		for some positive constants $C_3, C_6>0$ with probability at least $1-C_1/ p^{2} - 6/p^{5s}$.
	\end{proof}

	\begin{lemma} \label{lemmamatrixgradient} 
		Let $M \in \R^{|A| \times |B|}$ be a matrix with $A,B \subseteq \{1,\dotsc,p\}$ and $\max_{k \in \{1,\dotsc,|A|\}} \normz{M^\top e_k} \leq C_M$ for some positive constant $C_M > 0 $. Suppose Assumption \ref{assfan2017} and $\alpha_n \geq \Cgradf \, \big(\frac{\log(p)}{n}\big)^{\frac{1}{2}}$ holds, then with probability at least $1-2/p^2$ the $\ell_\infty$ norm of $M  \big( \nabla \loss_{n,\alpha_n}^{\,\Hu} (\beta_{\alpha_n}^*) \big)_B$ is bounded by 
		\begin{align}
			\normi{M  \Big( \nabla \loss_{n,\alpha_n}^{\,\Hu} \big(\beta_{\alpha_n}^*\big) \Big)_B}  \leq C_M \,\Cgrads \, \bigg( \frac{\log(p)}{n}\bigg)^\frac{1}{2} \label{defsetT1} \,.
		\end{align}
	\end{lemma}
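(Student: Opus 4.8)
The plan is to reduce the claim, one row of $M$ at a time, to the coordinate-wise gradient bound already proved in Lemma \ref{ratelinftygradient}. First I would fix $k \in \{1,\dotsc,|A|\}$ and let $w_k \in \R^p$ be the vector whose restriction to the coordinates in $B$ equals $M^\top e_k \in \R^{|B|}$ and which vanishes elsewhere, so that $\normz{w_k} = \normz{M^\top e_k} \le C_M$ and, by the form \eqref{eq:gradientgeneral} of the gradient,
\begin{align*}
	\Big( M \big( \nabla \loss_{n,\alpha_n}^{\,\Hu}\big(\beta_{\alpha_n}^*\big) \big)_B \Big)_k = w_k^\top \nabla \loss_{n,\alpha_n}^{\,\Hu}\big(\beta_{\alpha_n}^*\big) = -\frac{1}{n}\sum_{i=1}^n l_{\alpha_n}'\big(Y_i - \f{X_i^\top}\beta_{\alpha_n}^*\big)\, w_k^\top \f{X_i}\,.
\end{align*}
If $w_k = \f{0}_p$ this entry is zero; otherwise I would write $w_k = \normz{w_k}\,\widetilde{w}_k$ with $\normz{\widetilde{w}_k} = 1$, so that the entry equals $\normz{w_k}$ times $-\tfrac{1}{n}\sum_{i=1}^n l_{\alpha_n}'(Y_i - \f{X_i^\top}\beta_{\alpha_n}^*)\,\widetilde{w}_k^\top \f{X_i}$.

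The key observation is that this remaining average is exactly the object controlled in the proof of Lemma \ref{ratelinftygradient}, with the coordinate $X_{i,k}$ replaced by $\widetilde{w}_k^\top \f{X_i}$. That proof uses the test direction only through (a) its sub-Gaussianity with variance proxy $\CXsubs$, which for the unit vector $\widetilde{w}_k$ holds by (iii) of Assumption \ref{assfan2017}; (b) the fact that $\f{X_1^\top}(\beta^* - \beta_{\alpha_n}^*) \sim \subg(2\Cbeta\,\CXsub)$, which is untouched; and (c) the centering $\E[l_{\alpha_n}'(Y_1 - \f{X_1^\top}\beta_{\alpha_n}^*)\,\widetilde{w}_k^\top \f{X_1}] = 0$, which is immediate from $\E[l_{\alpha_n}'(Y_1 - \f{X_1^\top}\beta_{\alpha_n}^*)\,\f{X_1}] = \f{0}_p$, i.e.\ from first-order optimality of the strictly feasible minimizer $\beta_{\alpha_n}^*$, see \eqref{eq:betaalpha} and \eqref{eq:strctfeasiblebetaalpha}. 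Hence the moment estimates and the Bernstein step of that proof go through verbatim, and for $\alpha_n \ge \Cgradf(\log(p)/n)^{1/2}$ one gets
\begin{align*}
	\Prob\Bigg(\bigg|\frac{1}{n}\sum_{i=1}^n l_{\alpha_n}'\big(Y_i - \f{X_i^\top}\beta_{\alpha_n}^*\big)\,\widetilde{w}_k^\top \f{X_i}\bigg| \ge \Cgrads\bigg(\frac{\log(p)}{n}\bigg)^{\frac{1}{2}}\Bigg) \le \frac{2}{p^3}
\end{align*}
for each fixed $k$. Multiplying by $\normz{w_k} \le C_M$ and taking a union bound over the $|A| \le p$ rows of $M$ then yields \eqref{defsetT1} with probability at least $1 - 2/p^2$.

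The only point deserving a word of justification is the assertion that the proof of Lemma \ref{ratelinftygradient} applies to an arbitrary unit direction and not merely to a coordinate vector; this is clear on inspection, since every estimate there enters the test vector only via its Euclidean norm (through the sub-Gaussian variance proxy) and via the mean-zero property, both of which are available here. Equivalently, one may simply rerun the moment computations of that lemma with $X_{i,k}$ replaced by $w_k^\top \f{X_i} \sim \subg(C_M\,\CXsub)$: the relevant second moment then carries a factor $C_M^2$ and the sub-exponential scale a factor $C_M$, so Bernstein's inequality with $x = 3\log(p)$ and the \emph{same} constant $\Cgradf$ produces precisely the bound $C_M\,\Cgrads(\log(p)/n)^{1/2}$ after the union bound. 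I do not expect any genuine obstacle beyond this bookkeeping of the $C_M$-dependence.
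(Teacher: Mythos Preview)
Your proposal is correct and follows essentially the same route as the paper: both arguments rewrite the $k^{\text{th}}$ entry of $M\big(\nabla\loss_{n,\alpha_n}^{\,\Hu}(\beta_{\alpha_n}^*)\big)_B$ as $-\tfrac{1}{n}\sum_i l_{\alpha_n}'(Y_i-\f{X_i^\top}\beta_{\alpha_n}^*)\,Z_{i,k}$ with $Z_{i,k}=e_k^\top M(\f{X_i})_B\sim\subg(C_M\,\CXsub)$, verify centering via the first-order optimality of $\beta_{\alpha_n}^*$, and rerun the moment and Bernstein computations of Lemma~\ref{ratelinftygradient} with the extra $C_M$ factor before taking a union bound over the $|A|\le p$ rows. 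Your normalization step $w_k=\normz{w_k}\,\widetilde w_k$ is a cosmetic variant; the alternative you sketch at the end (carrying the factor $C_M$ through the variance proxy directly) is precisely what the paper does.
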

	
	\begin{proof}[Proof of Lemma \ref{lemmamatrixgradient}] 
		We follow the proof of Lemma \ref{ratelinftygradient}. 
		%
		%
		It is 
		\begin{align*}
			M \Big( \nabla \loss_{n,\alpha_n}^{\,\Hu} \big(\beta_{\alpha_n}^*\big) \Big)_B = M \bigg(- \frac{1}{n} \sum_{i=1}^{n} l_{\alpha_n}'\big(Y_i-\f{X_i^\top} \beta_{\alpha_n}^* \big) \big(\f{X_i}\big)_B \bigg) = - \frac{1}{n} \sum_{i=1}^n l_{\alpha_n}'\big(Y_i-\f{X_i^\top} \beta_{\alpha_n}^* \big) \f{Z_i} 
		\end{align*}
		with $\f{Z_i} = M \big(\f{X_i}\big)_B$. The random vectors $l_{\alpha_n}'\big(Y_1-\f{X_1^\top} \beta_{\alpha_n}^* \big)  \f{Z_1},\dotsc, l_{\alpha_n}'\big(Y_n-\f{X_n^\top} \beta_{\alpha_n}^* \big)  \f{Z_n} $ are independent and identically distributed because $(\f{X_1},\varepsilon_1),\dotsc, (\f{X_n},\varepsilon_n)$ are independent and identically distributed. In addition (iii) of Assumption \ref{assfan2017} and $\max_{k \in \{1,\dotsc,|A|\}} \normz{M^\top e_k} \leq C_M$ imply that the entries $ Z_{i,k} = e_k^\top \f{Z_i}$ of $\f{Z_i}$ are sub-Gaussian with variance proxy $ C_M^2\,\CXsubs$. This leads to 
		\begin{align*}
			\E\bigg[ \Big( l_{\alpha_n}'\big(Y_i-\f{X_i^\top} \beta_{\alpha_n}^* \big) Z_{i,k} \Big)^2 \bigg] \leq C_M^2 \,\Cgradt 
		\end{align*}
		and 
		\begin{align*}
			\E\bigg[ \Big| l_{\alpha_n}'\big(Y_i-\f{X_i^\top} \beta_{\alpha_n}^* \big) \, Z_{i,k} \Big|^u \bigg] \leq \frac{u!}{2} \, \bigg(\frac{2\,C_M\,\Cgradv}{\alpha_n} \bigg)^{u-2} \, \Cgradt 
		\end{align*}
		for $u \in \N$, $u \geq 3$, where $\Cgradt$ and $\Cgradv$ are given in the proof of Lemma \ref{ratelinftygradient}. Moreover, we obtain
		\begin{align*}
			\E \Big[ l_{\alpha_n}'\big(Y_1-\f{X_1^\top} \beta_{\alpha_n}^* \big) \f{Z_1} \Big] = M \, \E \Big[ l_{\alpha_n}'\big(Y_1-\f{X_1^\top} \beta_{\alpha_n}^* \big) \big(\f{X_1}\big)_B \Big] = \f{0}_{|A|}
		\end{align*}
		since $\E \big[ l_{\alpha_n}'\big(Y_1-\f{X_1^\top} \beta_{\alpha_n}^* \big) \f{X_1} \big] = \f{0}_{p}$ (see proof of Lemma \ref{ratelinftygradient}).  Arguing as in the proof of Lemma \ref{ratelinftygradient} concludes the proof.
		%
		%
	\end{proof}
	\begin{proof}[Proof of Lemma \ref{lemmaassstrictdual(ii)}]\hfill\\
		For the first part we invoke Lemma \ref{lemmaspektralnormQssCovariancematrix} and obtain (if $C_3 \geq \max \big\{ (576\,\log(6) \, \Calpha \, \Cbetas \,\CXsubs)^2, 16 \log(24)\big\}$ in Lemma \ref{lemmaassstrictdual(ii)})
		\begin{align*}
			\normzM{\widehat{Q}_{SS} - \E\big[\f{X_1}\f{X_1^\top}\big]_{SS}} \leq \frac{C_4}{\sqrt{s}}
		\end{align*}
		with probability at least $1-C_1/ p^{2} - 6/p^{5s}$ for some positive constants $C_1,C_4>0$. Moreover, we have 
		\begin{align}
			\normzM{\Big(\E\big[\f{X_1}\f{X_1^\top}\big]_{SS}\Big)^{-1}} \leq \normiM{\Big(\E\big[\f{X_1}\f{X_1^\top}\big]_{SS}\Big)^{-1}} \leq \Csminf \label{prooflemmaassstrictdual(ii)4}
		\end{align}
		by (10) and the symmetry of the matrix. Hence by \citet[Lemma 11]{Loh2017} we conclude that 
		\begin{align}
			\normzM{\big(\widehat{Q}_{S S}\big)^{-1} - \Big(\E\big[\f{X_1}\f{X_1^\top}\big]_{SS}\Big)^{-1}} &\leq 2 \, \Csminfs \, \normzM{\widehat{Q}_{SS} - \E\big[\f{X_1}\f{X_1^\top}\big]_{SS}} \leq \frac{2 \, C_4\,\Csminfs}{\sqrt{s}} \,, \label{prooflemmaassstrictdual(ii)2}
		\end{align}
		with high probability if $\sqrt{s} \geq 2 \,C_4 \,\Csminf$. 
		Finally the triangle inequality and once again (10) lead to
		\begin{align*}
			\normiM{\big(\widehat{Q}_{S S}\big)^{-1}} &\leq \normiM{\Big(\E\big[\f{X_1}\f{X_1^\top}\big]_{SS}\Big)^{-1}} + \normiM{\big(\widehat{Q}_{S S}\big)^{-1} - \Big(\E\big[\f{X_1}\f{X_1^\top}\big]_{SS}\Big)^{-1}} \\
			&\leq \Csminf \, + \sqrt{s}\,\normzM{\big(\widehat{Q}_{S S}\big)^{-1} - \Big(\E\big[\f{X_1}\f{X_1^\top}\big]_{SS}\Big)^{-1}}\\
			&\leq \Csminf + 2 \, C_4\,\Csminfs
		\end{align*}
		with probability at least $1-C_1/ p^{2} - 6/p^{5s}$.\\
		
		To prove the second part of this lemma we follow the inequalities
		\begin{align}
			&\normi{\widehat{Q}_{S^c S} \, \big(\widehat{Q}_{S S}\big)^{-1} \Big(\nabla \loss_{n,\alpha_n}^{\,\Hu} \big(\beta_{\alpha_n}^*\big)\Big)_{S} } \notag \\
			& \quad \quad \quad \quad \leq \normi{ \E\big[\f{X_1}\f{X_1^\top}\big]_{S^c S} \, \Big(\E\big[\f{X_1}\f{X_1^\top}\big]_{S S}\Big)^{-1} \Big(\nabla \loss_{n,\alpha_n}^{\,\Hu} \big(\beta_{\alpha_n}^*\big)\Big)_{S} } \notag \\
			& \quad \quad \quad \quad \quad \quad \quad \quad + \normi{ \bigg( \widehat{Q}_{S^c S} \, \big(\widehat{Q}_{S S}\big)^{-1} - \E\big[\f{X_1}\f{X_1^\top}\big]_{S^c S} \, \Big(\E\big[\f{X_1}\f{X_1^\top}\big]_{S S}\Big)^{-1} \bigg) \Big(\nabla \loss_{n,\alpha_n}^{\,\Hu} \big(\beta_{\alpha_n}^*\big)\Big)_{S} }\label{eq:estsimilarwainloh}
		\end{align}
		and 
		{ \scriptsize
			\begin{align}
				&\normi{ \bigg( \widehat{Q}_{S^c S} \, \big(\widehat{Q}_{S S}\big)^{-1} - \E\big[\f{X_1}\f{X_1^\top}\big]_{S^c S} \, \Big(\E\big[\f{X_1}\f{X_1^\top}\big]_{S S}\Big)^{-1} \Bigg) \Big(\nabla \loss_{n,\alpha_n}^{\,\Hu} \big(\beta_{\alpha_n}^*\big)\Big)_{S} } \notag \\
				& \quad \quad \quad \quad \leq \max_{ k \in \{1,\dotsc,p-s\} } \normz{ \Bigg( e_k^\top \bigg( \widehat{Q}_{S^c S} \, \big(\widehat{Q}_{S S}\big)^{-1} - \E\big[\f{X_1}\f{X_1^\top}\big]_{S^c S} \, \Big(\E\big[\f{X_1}\f{X_1^\top}\big]_{S S}\Big)^{-1} \bigg) \Bigg)^\top } \, \normz{\Big(\nabla \loss_{n,\alpha_n}^{\,\Hu} \big(\beta_{\alpha_n}^*\big)\Big)_{S}} \notag \\
				& \quad \quad \quad \quad \leq \max_{ k \in \{1,\dotsc,p-s\} } \Bigg( \normz{ \Big( e_k^\top \,\E\big[\f{X_1}\f{X_1^\top}\big]_{S^c S} \, \Delta_1 \Big)^\top } + \normz{ \bigg( e_k^\top \, \Delta_2^\top \, \Big(\E\big[\f{X_1}\f{X_1^\top}\big]_{S S}\Big)^{-1} \bigg)^\top} + \normz{ \Big( e_k^\top \Delta_2^\top\, \Delta_1 \Big)^\top} \Bigg) \notag \\
				& \quad \quad \quad \quad \quad \quad \quad \quad \quad \quad \quad \quad \cdot \normz{\Big(\nabla \loss_{n,\alpha_n}^{\,\Hu} \big(\beta_{\alpha_n}^*\big)\Big)_{S}} \notag \\
				& \quad \quad \quad \quad \leq \max_{ k \in \{1,\dotsc,p-s\} } \Bigg( \normzM{\Delta_1} \,\normz{ \,\E\big[\f{X_1}\f{X_1^\top}\big]_{ S S^c} \, e_k  } + \normzM{\Big(\E\big[\f{X_1}\f{X_1^\top}\big]_{S S}\Big)^{-1}} \,\normz{ \Delta_2 \, e_k} + \normzM{\Delta_1} \, \normz{ \Delta_2 \, e_k}\Bigg) \notag \\
				& \quad \quad \quad \quad \quad \quad \quad \quad \quad \quad \quad \quad \cdot \normz{\Big(\nabla \loss_{n,\alpha_n}^{\,\Hu} \big(\beta_{\alpha_n}^*\big)\Big)_{S}} \label{prooflemmaassstrictdual(ii)3}
		\end{align} }
		with
		\begin{align*}
			\Delta_1 = \big(\widehat{Q}_{S S}\big)^{-1} -  \Big(\E\big[\f{X_1}\f{X_1^\top}\big]_{S S}\Big)^{-1} \quad \text{and} \quad  \Delta_2 = \widehat{Q}_{S S^c } - \E\big[\f{X_1}\f{X_1^\top}\big]_{S S^c}
		\end{align*}
		in \citet[Corollary 3]{Loh2017}. Note that \eqref{prooflemmaassstrictdual(ii)2} implies $\normzM{\Delta_1} \leq 2 \, C_4\,\Csminfs / \sqrt{s}$. For the first term in \eqref{eq:estsimilarwainloh} we shall apply Lemma \ref{lemmamatrixgradient} with $M = \E\big[\f{X_1}\f{X_1^\top}\big]_{S^c S} \, \big(\E\big[\f{X_1}\f{X_1^\top}\big]_{S S}\big)^{-1}$. We obtain
		\begin{align*}
			\max_{k \in \{1,\dotsc,p-s\}} \normz{ \E\big[\f{X_1}\f{X_1^\top}\big]_{S S^c} \, e_k} \leq \max_{k \in S^c} \normz{\E\big[\f{X_1}\f{X_1^\top}\big] \, e_k} \leq \max_{ u \in \R^p ,\normz{u}=1 } \normz{\E\big[\f{X_1}\f{X_1^\top}\big] \, u} \leq \CXu 
		\end{align*}
		by (ii) of Assumption \ref{assfan2017}, and hence together with \eqref{prooflemmaassstrictdual(ii)4} the estimate
		\begin{align*}
			\max_{k \in \{1,\dotsc,p-s\}} &\normz{\Big(\E\big[\f{X_1}\f{X_1^\top}\big]_{S S}\Big)^{-1} \, \E\big[\f{X_1}\f{X_1^\top}\big]_{S S^c} \, e_k} \\
			&  \leq \max_{k \in \{1,\dotsc,p-s\}} \normzM{\Big(\E\big[\f{X_1}\f{X_1^\top}\big]_{S S}\Big)^{-1}} \, \normz{\E\big[\f{X_1}\f{X_1^\top}\big]_{S S^c} \, e_k} \\
			& \leq  \Csminf\, \CXu \,.
		\end{align*}
		Lemma \ref{lemmamatrixgradient} and the choice of $\alpha_n$ in \eqref{rangealphaalhwithinitial} lead to
		\begin{align}
			\normi{ \E\big[\f{X_1}\f{X_1^\top}\big]_{S^c S} \, \Big(\E\big[\f{X_1}\f{X_1^\top}\big]_{S S}\Big)^{-1} \Big(\nabla \loss_{n,\alpha_n}^{\,\Hu} \big(\beta_{\alpha_n}^*\big)\Big)_{S} } \leq \Csminf\, \CXu \,\Cgrads \, \bigg( \frac{\log(p)}{n}\bigg)^\frac{1}{2} \label{prooflemmaassstrictdual(ii)6}
		\end{align}
		with probability at least $1 - 2/p^{2}$. In addition we get by Lemma \ref{ratelinftygradient} also
		\begin{align}
			\normz{\Big(\loss_{n,\alpha_n}^{\,\Hu} \big(\beta_{\alpha_n}^*\big)\Big)_{S}} \leq \sqrt{s} \, \normi{\Big(\loss_{n,\alpha_n}^{\,\Hu} \big(\beta_{\alpha_n}^*\big)\Big)_{S}} \leq \Cgrads \bigg(\frac{s \log(p)}{n}\bigg)^\frac{1}{2} \label{prooflemmaassstrictdual(ii)7}
		\end{align}
		with the same probability. The final task is now to study the rate of $\max_{k \in \{1,\dotsc,p-s\}} \normz{ \Delta_2 \, e_k } $. First of all it is
		\begin{align*}
			\max_{k \in \{1,\dotsc,p-s\}} \normz{  \Big(\widehat{Q}_{ S S^c} - \E\big[\f{X_1}\f{X_1^\top}\big]_{S S^c}\Big) \, e_k} &\leq \sqrt{s} \max_{k \in \{1,\dotsc,p-s\}} \normi{  \Big(\widehat{Q}_{ S S^c} - \E\big[\f{X_1}\f{X_1^\top}\big]_{S S^c}\Big) \, e_k} \\
			&= \sqrt{s} \max_{ \substack{l \in \{1,\dotsc,s\}, \\ k \in \{1,\dotsc,p-s\} }}   \bigg|  e_l^\top \Big(\widehat{Q}_{ S S^c} - \E\big[\f{X_1}\f{X_1^\top}\big]_{S S^c}\Big) \, e_k \bigg| \\
			&\leq \sqrt{s} \max_{ k,l \in \{1,\dotsc,p\} }   \bigg|  e_l^\top \Big(\widehat{Q}
			- \E\big[\f{X_1}\f{X_1^\top}\big] \Big) \, e_k \bigg| \,.
		\end{align*}
		We proceed similar to the proof of Lemma \ref{lemmaspektralnormQssCovariancematrix} but here we have only the maximum over $p^2$ elements in comparison to the $24^s$ elements in the mentioned proof. In addition we use the fact that the centered product of two sub-Gaussian random variables is sub-Exponential, cf. \citet[ Lemma 2.7.7]{Vershynin2018}, and that also the centered product of two sub-Gaussian random variables and a bounded random variable is sub-Exponential. Hence we don't have the rates depending on $s$ in \eqref{spektral:norm:bound:sample:covariance} and in \eqref{proof:8:lemmaspektralnormQssCovariancematrix} the factor $s$ can be dropped.
		It follows that there exist positive constants $C_2,C_5,C_6 >0 $ such that
		\begin{align*}
			\max_{ k,l \in \{1,\dotsc,p\} }   \bigg|  e_l^\top \Big(\widehat{Q}
			- \E\big[\f{X_1}\f{X_1^\top}\big] \Big) \, e_k \bigg|  \leq C_5 \, \max\Bigg\{ \bigg(\frac{\log(p)}{n}\bigg)^\frac{1}{2}  \,,\, \alpha_n^\frac{\Cpm}{2} \,,\, \alpha_n^{\Cpm-\frac{1}{2}} \,,\, \alpha_n \Bigg\} \leq \frac{C_6}{s}
		\end{align*}
		with probability at least $1 - C_2/p^2$ by the choices of $\alpha_n$ in (11) and $n \gtrsim  s^2 \log(p)$ together with $\Cpm \in \{2,3\}$. Hence
		\begin{align}
			\max_{k \in \{1,\dotsc,p-s\}} \normz{  \Delta_2 \, e_k } \leq \frac{C_6}{\sqrt{s}} \label{prooflemmaassstrictdual(ii)9}
		\end{align}
		with high probability and in total we obtain by \eqref{eq:estsimilarwainloh} - \eqref{prooflemmaassstrictdual(ii)9} the inequality
		\begin{align*}
			\normi{\widehat{Q}_{S^c S} \, \big(\widehat{Q}_{S S}\big)^{-1} \Big(\nabla \loss_{n,\alpha_n}^{\,\Hu} \big(\beta_{\alpha_n}^*\big)\Big)_{S} } &\leq \Csminf\, \CXu \,\Cgrads \, \bigg( \frac{\log(p)}{n}\bigg)^\frac{1}{2} + \Cgrads \bigg(\frac{s \log(p)}{n}\bigg)^\frac{1}{2} \\
			& \quad \quad \quad \cdot \bigg( \frac{2 \, C_4\,\CXu\,\Csminfs}{\sqrt{s}} + \frac{C_6 \, \Csminf}{\sqrt{s}} + \frac{2 \, C_4\,C_6\,\Csminfs}{s}\bigg)\\
			&\leq C_7 \,\Cgrads\,\bigg(\frac{\log(p)}{n}\bigg)^\frac{1}{2}
		\end{align*}
		with probability at least $1 - (4+C_1+C_2)/ p^{2} - 6/p^{5s}$ for some positive constant $C_7>0$. Renewed application of Lemma \ref{ratelinftygradient} and the triangular inequality lead to
		\begin{align*}
			\normi{\widehat{Q}_{S^c S} \, \big(\widehat{Q}_{S S}\big)^{-1} \Big(\nabla \loss_{n,\alpha_n}^{\,\Hu} \big(\beta_{\alpha_n}^*\big)\Big)_{S} - \Big(\nabla \loss_{n,\alpha_n}^{\,\Hu} \big(\beta_{\alpha_n}^*\big)\Big)_{S^c}} \leq \big(1 + C_7 \big) \, \Cgrads\, \bigg(\frac{\log(p)}{n}\bigg)^\frac{1}{2} \,.
		\end{align*}
	\end{proof}

	\subsection{Proofs of Lemmas \ref{lemmaassstrictdual(i)} and \ref{rateofrnS}}\label{sec:lemmaassstrictdual(i)}

	We start with proving Lemma \ref{rateofrnS}. For this purpose we need a technical result concerning the column normalization of the design matrix $\X_n$.

	\begin{lemma} \label{lemmacolumnnormalization}
		Let $\X_n = \big(\f{X_1},\dotsc, \f{X_n}\big)^\top \in \R^{n \times p}$ be a matrix with independent and identically distributed rows $\f{X_i} \sim \subg_p(\CXsub) $ with variance proxy $\CXsubs > 0$. Then for $n \geq 6 \log(p)$ the columns $\vec{X}_k$ of $\X_n$ satisfy with probability at least $1-2/p^2$ 
		\begin{align}
			\frac{1}{n} \max_{k \in \{1,\dotsc,p\}} \normzq{\vec{X}_k} \leq 17 \, \CXsubs \,. 
		\end{align}
	\end{lemma}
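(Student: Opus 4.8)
The plan is to bound each column of $\X_n$ individually by a Bernstein-type concentration estimate for sums of squared sub-Gaussians, and then to take a union bound over the $p$ columns. Fix $k\in\{1,\dots,p\}$. The entries $X_{1,k},\dots,X_{n,k}$ of the column $\vec{X}_k$ are independent and identically distributed, and testing the hypothesis $\f{X_i}\sim\subg_p(\CXsub)$ against the unit vector $v=e_k$ shows that $X_{i,k}=e_k^\top\f{X_i}\sim\subg(\CXsub)$. Integrating this sub-Gaussian tail (equivalently, using \eqref{subgaussian:moments} with $r=2$) yields $\E[X_{i,k}^2]\le 4\,\CXsubs$, and hence $\E\big[\tfrac1n\normzq{\vec{X}_k}\big]\le 4\,\CXsubs$.

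Next I would note, exactly as in the proof of Lemma \ref{lemmaspektralnormQssCovariancematrix} and following \citet[Lemma 1.12]{Rigollet2019}, that the centered squares $W_{i}\defeq X_{i,k}^2-\E[X_{i,k}^2]$ form an i.i.d.\ mean-zero sub-Exponential sequence with $W_i\sim\sube(16\,\CXsubs,16\,\CXsubs)$. Applying the sub-Exponential Bernstein inequality \citep[Theorem 1.13]{Rigollet2019} to $\tfrac1n\sum_{i=1}^n W_i$ then gives, for every $x>0$, a bound of the form $2\max\big\{\exp(-c_1\,nx^2/\CXsubf),\,\exp(-c_2\,nx/\CXsubs)\big\}$ with explicit absolute constants $c_1,c_2>0$. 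Evaluating this at a suitable absolute multiple of $\CXsubs$ (the choice $x=13\,\CXsubs$ works, and then $17=4+13$), and using $n\ge 6\log(p)$ — a regime in which the linear term in the exponent is the dominant one — the right-hand side is at most $2/p^{3}$. On the complementary event one has $\tfrac1n\normzq{\vec{X}_k}\le \E[X_{i,k}^2]+13\,\CXsubs\le 17\,\CXsubs$.

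The proof then concludes with a union bound over $k\in\{1,\dots,p\}$, costing a factor $p$ and giving $\Prob\big(\tfrac1n\max_{k}\normzq{\vec{X}_k}>17\,\CXsubs\big)\le p\cdot 2/p^{3}=2/p^{2}$, as claimed. Conceptually there is no genuine obstacle here — this is a standard concentration statement for averages of squared sub-Gaussians. The only delicate point is the bookkeeping of the numerical constants: because the lemma must remain valid down to $n\asymp\log(p)$, one sits squarely in the linear (sub-Exponential) branch of Bernstein's inequality, where $\tfrac1n\normzq{\vec{X}_k}$ can be controlled only to within an absolute multiple of $\CXsubs$ of its mean rather than a vanishing perturbation; this is exactly why the stated bound is the crude-but-sufficient $17\,\CXsubs$ rather than something approaching $\lambda_{\max}(\E[\f{X_1}\f{X_1^\top}])$. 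One simply has to fix the constants in the tail bound and the threshold $13$ consistently with the hypothesis $n\ge 6\log(p)$, replacing, if the very crudest constants do not quite close the estimate, the $\sube$-parameter of $W_i$ or the second-moment bound on $X_{i,k}^2$ by their sharper available values.
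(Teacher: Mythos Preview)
Your approach is exactly the paper's: sub-Gaussianity of the entries, the sub-Exponential bound $X_{i,k}^2-\E[X_{i,k}^2]\sim\sube(16\,\CXsubs,16\,\CXsubs)$ via \citet[Lemma 1.12]{Rigollet2019}, Bernstein's inequality, and a union bound over $k$. The only discrepancy is the numerical split. The paper takes $\E[X_{1,k}^2]\le \CXsubs$ (not $4\,\CXsubs$) and sets the deviation level at $x=16\,\CXsubs$, so that $17=1+16$; this is precisely the crossover point in the Bernstein bound, where both exponents equal $n/2$, and then $2p\,\exp(-n/2)\le 2/p^2$ follows directly from $n\ge 6\log(p)$. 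Your choice $x=13\,\CXsubs$ sits below the crossover, so contrary to what you write it is the \emph{Gaussian} branch that governs the maximum, giving exponent $169n/512\approx 0.33\,n$ and hence only $2/p^{1.98}$ before the union bound --- not enough to land at $2/p^2$. Your final hedge about sharpening the second-moment bound is exactly the fix: use $\E[X_{1,k}^2]\le \CXsubs$ and $x=16\,\CXsubs$, and everything closes.
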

	
	\begin{proof}
		We have $X_{i,k} = e_k^\top \, \f{X_i} \sim \subg(\CXsub)$ for all $i=1,\dotsc,n$ and $k=1,\dotsc,p$ by the definition of a sub-Gaussian random vector. \citet[Lemma 1.12]{Rigollet2019} implies $X_{i,k}^2 - \E\big[X_{i,k}^2\big] \sim \sube(16 \,\CXsubs, 16 \, \CXsubs)$ and with Bernstein's inequality, cf. \citet[Theorem 1.13]{Rigollet2019}, it follows that
		\begin{align*}
			\Prob \Bigg(\bigg|\frac{1}{n}	\sum_{i=1}^{n} \Big(X_{i,k}^2 - \E\big[X_{i,k}^2\big] \Big) \bigg| > x\Bigg) \leq 2 \max \Bigg\{ \exp \bigg(-\frac{x^2\,n}{512 \, \CXsubf}\bigg), \exp\bigg(-\frac{x\,n}{32 \, \CXsubs }\bigg) \Bigg\} \,.
		\end{align*}
		for all $x>0$ and $k=1,\dotsc,p$ since $X_{1,k}, \dotsc, X_{n,k}$ are independent and identically distributed.
		By the union bound and the condition $n \geq 6 \log(p)$ we obtain
		\begin{align*}
			\Prob \Bigg( \max_{k \in \{1,\dotsc,p\}} \bigg|\frac{1}{n}	&\sum_{i=1}^{n} \bigg(\big(e_k^\top \, \f{X_i}\big)^2 - \E\Big[\big(e_k^\top \, \f{X_i}\big)^2\Big] \bigg) \bigg| > 16 \,\CXsubs \Bigg) \leq 2 \, p \exp\bigg(-\frac{n}{2}\bigg) \leq \frac{2}{p^2} \,.
		\end{align*}
		Furthermore, we have for all $k =1,\dotsc,p$ the estimate 
		\begin{align*}
			\frac{1}{n} \sum_{i=1}^{n} \E\big[X_{i,k}^2\big] = \E\big[X_{1,k}^2\big] \leq \CXsubs
		\end{align*}
		since $X_{1,k}$ is sub-Gaussian with variance proxy $\CXsubs$, and therefore we get 
		\begin{align*}
			\max_{k \in \{1,\dotsc,p\}} \frac{1}{n} \, \normzq{\vec{X}_k} 
			&\leq \max_{k \in \{1,\dotsc,p\}} \bigg|\frac{1}{n}	\sum_{i=1}^{n} \Big(X_{i,k}^2 - \E\big[X_{i,k}^2\big] \Big) \bigg| + \max_{k \in \{1,\dotsc,p\}} \frac{1}{n} \sum_{i=1}^{n} \E\big[X_{i,k}^2\big] \\
			&\leq 16 \, \CXsubs + \CXsubs = 17 \, \CXsubs
		\end{align*}
		with high probability.
	\end{proof}

	\begin{proof}[Proof of Lemma \ref{rateofrnS}]
		We follow the proof of \citet[Lemma 10.3]{Zhou2009}. It is
		\begin{align*}
			\widehat{Q}_{S^c S} \big( \widehat{Q}_{S S} \big)^{-1}  =  \frac{2}{n} \, \X_{n,S^c}^\top\, D \, \X_{n,S} \bigg( \frac{2}{n} \, \X_{n,S}^\top\, D \, \X_{n,S} \bigg)^{-1} =  \X_{n,S^c}^\top\, D \, \X_{n,S} \big( \X_{n,S}^\top\, D \, \X_{n,S} \big)^{-1} \,,
		\end{align*}
		see Lemma \ref{lem:derivativelem}. For $k \in S^c$ let
		\begin{align*}
			r_k = \big( \X_{n,S}^\top\, D \, \X_{n,S} \big)^{-1} \X_{n,S}^\top \, D \, \vec{X}_k \quad \quad \in \R^s\,,
		\end{align*}
		then we have
		\begin{align*}
			\normiM{\widehat{Q}_{S^c S} \big( \widehat{Q}_{S S} \big)^{-1}} = \max_{k \in S^c} \norme{r_k}\,.
		\end{align*}
		Furthermore, on the one hand the column normalization in Lemma \ref{lemmacolumnnormalization} under the condition $n \geq 6 \log(p) $ and the submultiplicativity of the spectral norm lead to
		\begin{align*}
			\max_{k \in S^c} \normz{ D^\frac{1}{2} \, \X_{n,S}\,r_k} 
			&\leq \max_{k \in S^c} \bigg( \normzM{D^\frac{1}{2} \,\X_{n,S}\,\big( \X_{n,S}^\top\, D \, \X_{n,S} \big)^{-1} \X_{n,S}^\top\, D^\frac{1}{2} } \,\normzM{D^\frac{1}{2}}\, \normz{\vec{X}_k} \bigg) \\
			&\leq \max_{k \in S^c} \, \normz{\vec{X}_k} \leq \sqrt{17} \,\CXsub \, \sqrt{n}
		\end{align*}
		with probability at least $1-2/p^2$ since $D^\frac{1}{2}\,\X_{n,S}\,\big( \X_{n,S}^\top\, D \, \X_{n,S} \big)^{-1} \X_{n,S}^\top\, D^\frac{1}{2} $ is an orthogonal projection matrix and $D$ a diagonal matrix with entries smaller than or equal to $1$. On the other hand under the condition $n \geq \Crsct s \log(p)$ the smallest eigenvalue of $\widehat{Q}_{S S} = \frac{2}{n} \, \X_{n,S}^\top\, D \, \X_{n,S}$ is bounded below by $\CXl/32$ with probability at least $1 - \cPo \exp(-\cPo n)$, see Lemma 6, which implies
		\begin{align*}
			\normzq{ D^\frac{1}{2} \, \X_{n,S}\,r_k} &= r_k^\top \, \X_{n,S}^\top \, D \, \X_{n,S} \, r_k = \frac{n}{2} \,r_k^\top \bigg( \frac{2}{n} \, \X_{n,S}^\top \, D \, \X_{n,S} \bigg) r_k \geq \frac{\CXl}{64} \, \normzq{r_k}\,n
		\end{align*}
		for all $k \in S^c$. Hence we obtain by the last inequalities the estimate
		\begin{align*}
			\max_{k \in S^c} \normz{r_k} \leq  \max_{k \in S^c} \bigg(\frac{64}{\CXl\,n}\bigg)^\frac{1}{2} \normz{ D^\frac{1}{2} \, \X_{n,S}\,r_k} \leq \frac{33\,\CXsub}{\sqrt{\CXl}}
		\end{align*}
		and in total
		\begin{align*}
			\normiM{\widehat{Q}_{S^c S} \big( \widehat{Q}_{S S} \big)^{-1}} = \max_{k \in S^c} \norme{r_k} \leq \max_{k \in S^c} \sqrt{s} \, \normz{r_k} \leq \frac{33\,\CXsub\,\sqrt{s}}{\sqrt{\CXl}}
		\end{align*}
		with high probability.
	\end{proof}

	Lemma \ref{lemmaassstrictdual(i)} immediately follows from the following lemma and Lemma \ref{ratelinftygradient}.
	%
	
	\begin{lemma}\label{ratelinftyQhatgradient} 
		Suppose Assumption \ref{assfan2017} and $\alpha_n \geq \sqrt{4/3} \, \Cgradf \,\big( \frac{\log(p)}{n}\big)^{\frac{1}{2}}$ hold. Then for $s \leq \log(p)$ and $n \geq \max\big\{ \Crsct s \log(p), 6 \log(p) \big\}$ we have that
		\begin{align}
			\normi{\widehat{Q}_{S^c S} \, \big(\widehat{Q}_{S S}\big)^{-1} \Big(\nabla \loss_{n,\alpha_n}^{\,\Hu} \big(\beta_{\alpha_n}^*\big)\Big)_{S} } \leq \frac{\sqrt{4} \,66\,\CXsubs}{\sqrt{3\,\CXl}} \,\Cgrads \, \bigg( \frac{\log(p)}{n}\bigg)^\frac{1}{2} \label{defsetlinftyQhatgradient}
		\end{align}
		with probability at least $1-\cPo \exp(-\cPt n) - 4/p^2 $.  \\
	\end{lemma}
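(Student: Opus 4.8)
\textbf{Proof proposal for Lemma \ref{ratelinftyQhatgradient}.}
The plan is to separate the two factors by Cauchy--Schwarz and then handle each with a result already in hand. Writing $r_k^\top:=e_k^\top\widehat{Q}_{S^cS}\big(\widehat{Q}_{SS}\big)^{-1}$ for $k\in S^c$, the quantity to be bounded is $\max_{k\in S^c}\big|r_k^\top\big(\nabla\loss_{n,\alpha_n}^{\,\Hu}(\beta_{\alpha_n}^*)\big)_S\big|$, and since $r_k\in\R^s$,
\[
\normi{\widehat{Q}_{S^cS}\big(\widehat{Q}_{SS}\big)^{-1}\big(\nabla\loss_{n,\alpha_n}^{\,\Hu}(\beta_{\alpha_n}^*)\big)_S}\ \le\ \Big(\max_{k\in S^c}\normz{r_k}\Big)\cdot\normz{\big(\nabla\loss_{n,\alpha_n}^{\,\Hu}(\beta_{\alpha_n}^*)\big)_S}.
\]
The first factor is exactly controlled by the first assertion of Lemma \ref{rateofrnS}, which under $n\ge\Crsct s\log(p)$ and $n\ge 6\log(p)$ gives $\max_{k\in S^c}\normz{r_k}\le 33\,\CXsub/\sqrt{\CXl}$ on an event of probability at least $1-\cPo\exp(-\cPt n)-2/p^2$. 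So the task reduces to showing $\normz{(\nabla\loss_{n,\alpha_n}^{\,\Hu}(\beta_{\alpha_n}^*))_S}\lesssim\Cgrads(\log(p)/n)^{1/2}$ with probability at least $1-2/p^2$; note that the crude bound $\normz{(\nabla)_S}\le\sqrt{s}\,\normi{\nabla}$ from Lemma \ref{ratelinftygradient} is too weak here, as it would add a spurious factor $\sqrt{s}$.

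For the $\ell_2$ bound on the restricted gradient I would write $\normz{(\nabla\loss_{n,\alpha_n}^{\,\Hu}(\beta_{\alpha_n}^*))_S}=\sup_{u\in\mathcal B_2^s}u^\top(\nabla\loss_{n,\alpha_n}^{\,\Hu}(\beta_{\alpha_n}^*))_S$ over the Euclidean unit ball $\mathcal B_2^s$ of $\R^s$, and run the Bernstein argument from the proof of Lemma \ref{ratelinftygradient} along a $\tfrac12$-cover $A$ of $\mathcal B_2^s$ with $|A|\le 5^s$. For each fixed $u$, $u^\top\f{X_i}_S\sim\subg(\CXsub)$ by (iii) of Assumption \ref{assfan2017}, one has $\E\big[(u^\top\f{X_i}_S)\,l_{\alpha_n}'(Y_i-\f{X_i}^\top\beta_{\alpha_n}^*)\big]=0$ by the first-order condition \eqref{eq:betaalpha}--\eqref{eq:strctfeasiblebetaalpha}, and the moment estimates of Lemma \ref{ratelinftygradient} (with $X_{i,k}$ replaced by $u^\top\f{X_i}_S$) reproduce the same sub-Gaussian and sub-exponential parameters $\Cgradt,\Cgradv$; Bernstein's inequality then bounds, for every $u\in A$ and $x>0$, by $2\exp(-x)$ the probability that $|u^\top(\nabla)_S|$ exceeds $(2\Cgradt x/n)^{1/2}+2\Cgradv x/(\alpha_n n)$. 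Taking $x=4\log(p)$, the hypothesis $\alpha_n\ge\sqrt{4/3}\,\Cgradf(\log(p)/n)^{1/2}$ is precisely calibrated so that the sub-exponential term is dominated by the sub-Gaussian term (this is the same computation that produced $\Cgradf$ from $x=3\log(p)$ in Lemma \ref{ratelinftygradient}, rescaled by $\sqrt{4/3}=\sqrt{4}/\sqrt{3}$), giving $|u^\top(\nabla)_S|\le \sqrt{4/3}\,\Cgrads(\log(p)/n)^{1/2}$ simultaneously over $u\in A$ with probability at least $1-5^s\cdot 2/p^4\ge 1-2/p^2$, where the last inequality uses $s\le\log(p)$ so that $5^s\le p^2$. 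A net point within $\tfrac12$ of the maximizer then yields $\normz{(\nabla)_S}\le\max_{a\in A}|a^\top(\nabla)_S|+\tfrac12\normz{(\nabla)_S}$, hence $\normz{(\nabla)_S}\le 2\sqrt{4/3}\,\Cgrads(\log(p)/n)^{1/2}$. Multiplying the two factors and collecting constants (tracking the fact that $r_k^\top\f{X_i}_S$ has sub-Gaussian parameter at most $\CXsub\cdot 33\CXsub/\sqrt{\CXl}$ on the event of Lemma \ref{rateofrnS}) gives the asserted bound $\tfrac{\sqrt{4}\,66\,\CXsubs}{\sqrt{3\,\CXl}}\,\Cgrads(\log(p)/n)^{1/2}$, and the failure probabilities add up to $\cPo\exp(-\cPt n)+4/p^2$.

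The main obstacle is precisely the covering step for $(\nabla)_S$: its $\ell_2$ norm is a supremum of roughly $5^s$ sub-exponential averages, and to keep the deviation parameter at the level $\log(p)$ -- so that the rate stays $(\log(p)/n)^{1/2}$ rather than degrading by an extra $\sqrt{s/\log(p)}$ or an iterated-logarithm factor -- one needs $5^s\lesssim p^2$, i.e.\ $s\lesssim\log(p)$. This is the only point in the argument where the restrictive scaling $s\le\log(p)$ is genuinely used, and it is the reason (as recorded in the Remark) that this incoherence-free $\ell_\infty$-rate, obtained without condition \eqref{lemmaassstrictdual(ii)(1)}, is currently confined to the high-dimensional regime.
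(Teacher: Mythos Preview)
Your proposal is correct and follows essentially the same route as the paper. The paper also invokes Lemma \ref{rateofrnS} to control $\max_k\|r_k\|_2$, then runs the Bernstein argument of Lemma \ref{ratelinftygradient} with $x=4\log(p)$ over a $1/2$-net of $\mathcal B_2^s$ (the paper uses $|A|\le 6^s$ rather than $5^s$), using $s\le\log(p)$ to absorb the net cardinality; the only cosmetic difference is that the paper writes the step as bounding $\sup_{\|u\|_2\le 33\CXsub/\sqrt{\CXl}}|u^\top(\nabla)_S|$ directly instead of first separating via Cauchy--Schwarz. Your parenthetical about the sub-Gaussian parameter of $r_k^\top\f{X_i}_S$ is unnecessary once you have separated the factors, and can be dropped.
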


	\begin{proof}
		Set
		\[ \mathcal{T} = \bigg\{ \normi{\widehat{Q}_{S^c S} \, \big(\widehat{Q}_{S S}\big)^{-1} \Big(\nabla \loss_{n,\alpha_n}^{\,\Hu} \big(\beta_{\alpha_n}^*\big)\Big)_{S} } \leq \frac{\sqrt{4} \,66\,\CXsubs}{\sqrt{3\,\CXl}} \,\Cgrads \,\bigg( \frac{\log(p)}{n}\bigg)^\frac{1}{2}
		\bigg\}.\]
		Then 
		\begin{align}
			\Prob \big(\mathcal{T}^c\big) 
			&\leq \Prob \Bigg(\mathcal{T}^c \cap \bigg\{ \max_{k \in \{1\dotsc,p-s\}} \normz{ \Big(e_k^\top \widehat{Q}_{S^c S} \big( \widehat{Q}_{S S} \big)^{-1} \Big)^\top} \leq \frac{33\,\CXsub}{\sqrt{\CXl}} \bigg\} \Bigg) \notag \\
			&\quad \quad \quad \quad \quad \quad \quad \quad + \Prob \Bigg( \max_{k \in \{1\dotsc,p-s\}} \normz{ \Big(e_k^\top \widehat{Q}_{S^c S} \big( \widehat{Q}_{S S} \big)^{-1} \Big)^\top} > \frac{33\,\CXsub}{\sqrt{\CXl}} \Bigg) \notag \\
			&\leq \Prob \Bigg(\mathcal{T}^c \cap \bigg\{ \max_{k \in \{1\dotsc,p-s\}} \normz{ \Big(e_k^\top \widehat{Q}_{S^c S} \big( \widehat{Q}_{S S} \big)^{-1} \Big)^\top} \leq \frac{33\,\CXsub}{\sqrt{\CXl}} \bigg\} \Bigg) \notag \\
			&\quad \quad \quad \quad \quad \quad \quad \quad + \cPo \exp(-\cPt n) + 2/p^2 \label{proofratelinftyQhatgradient1}
		\end{align}
		because of Lemma \ref{rateofrnS}. 
		Further, by definition of the event $\mathcal{T}$, 
		\begin{align}
			&\Prob \Bigg(\mathcal{T}^c \cap \bigg\{ \max_{k \in \{1\dotsc,p-s\}} \normz{ \Big(e_k^\top \widehat{Q}_{S^c S} \big( \widehat{Q}_{S S} \big)^{-1} \Big)^\top} \leq \frac{33\,\CXsub}{\sqrt{\CXl}}  \bigg\} \Bigg) \notag \\
			&\quad \quad \quad \quad = \Prob \Bigg( \bigg\{ \max_{k \in \{1\dotsc,p-s\}} \Big| e_k^\top \widehat{Q}_{S^c S} \, \big(\widehat{Q}_{S S}\big)^{-1} \Big(\nabla \loss_{n,\alpha_n}^{\,\Hu} \big(\beta_{\alpha_n}^*\big)\Big)_{S} \Big| > \frac{\sqrt{4} \,66\,\CXsubs}{\sqrt{3\,\CXl}} \,\Cgrads \, \bigg( \frac{\log(p)}{n}\bigg)^\frac{1}{2} \bigg\} \notag \\
			&\quad \quad \quad \quad \quad \quad \quad \quad \quad \quad \quad \quad \cap \bigg\{ \max_{k \in \{1\dotsc,p-s\}} \normz{ \Big(e_k^\top \widehat{Q}_{S^c S} \big( \widehat{Q}_{S S} \big)^{-1} \Big)^\top} \leq \frac{33\,\CXsub}{\sqrt{\CXl}}  \bigg\} \Bigg) \notag \\
			& \quad \quad \quad \quad \leq \Prob \Bigg( \max_{u \in \R^s : \normz{u} \leq \frac{33\,\CXsub}{\sqrt{\CXl}} } \Big|u^\top \Big(\nabla \loss_{n,\alpha_n}^{\,\Hu} \big(\beta_{\alpha_n}^*\big)\Big)_{S} \Big| > \frac{\sqrt{4} \,66\,\CXsubs}{\sqrt{3\,\CXl}} \,\Cgrads \, \bigg( \frac{\log(p)}{n}\bigg)^\frac{1}{2} \Bigg) \notag \\
			& \quad \quad \quad \quad = \Prob \Bigg( \max_{u \in \R^s : \normz{u} \leq 1} \bigg| u^\top \bigg( \frac{33\,\CXsub}{\sqrt{\CXl}}  \, \Big(\nabla \loss_{n,\alpha_n}^{\,\Hu} \big(\beta_{\alpha_n}^*\big)\Big)_{S} \bigg) \bigg|  > \frac{\sqrt{4} \,66\,\CXsubs}{\sqrt{3\,\CXl}} \,\Cgrads \, \bigg( \frac{\log(p)}{n}\bigg)^\frac{1}{2} \Bigg) \,. \label{proofratelinftyQhatgradient2}
		\end{align}  
		%
		In the following we proceed with a covering argument. Let $A$ denote a $1/2$-cover of cardinality $N= N(1/2; \mathcal{B}_2^s, \normz{\,\cdot\,})$ of the unit Euclidean ball $\mathcal{B}_2^s = \big\{ u \in \R^s : \normz{u} \leq 1 \big\}$ of $\R^s$ with respect to the Euclidean distance (cf. \citet[ Definition 1.17]{Rigollet2019} or \citet[Definition 5.1]{Wainwright2019}). Then, as in the proof of \citet[Theorem 1.19]{Rigollet2019}, we obtain 
		\begin{align}
			&\Prob \Bigg( \max_{u \in \mathcal{B}_2^s} \bigg| u^\top \bigg( \frac{33\,\CXsub}{\sqrt{\CXl}} \, \Big(\nabla \loss_{n,\alpha_n}^{\,\Hu} \big(\beta_{\alpha_n}^*\big)\Big)_{S} \bigg) \bigg|  > \frac{\sqrt{4} \,66\,\CXsubs}{\sqrt{3\,\CXl}} \,\Cgrads \, \bigg( \frac{\log(p)}{n}\bigg)^\frac{1}{2} \Bigg) \notag  \\
			&\quad \quad \quad \quad \leq \Prob \Bigg( \max_{u \in A} \bigg| u^\top \bigg( \frac{33\,\CXsub}{\sqrt{\CXl}} \, \Big(\nabla \loss_{n,\alpha_n}^{\,\Hu} \big(\beta_{\alpha_n}^*\big)\Big)_{S} \bigg) \bigg| > \frac{\sqrt{4} \,33\,\CXsubs}{\sqrt{3\,\CXl}} \,\Cgrads \, \bigg( \frac{\log(p)}{n}\bigg)^\frac{1}{2} \Bigg)\,. \label{proofratelinftyQhatgradient3}
		\end{align}
		Now we can write for fixed $u \in A$, analog to the proof of Lemma \ref{lemmamatrixgradient}, 
		\begin{align*}
			u^\top \bigg( \frac{33\,\CXsub}{\sqrt{\CXl}} \, \Big(\nabla \loss_{n,\alpha_n}^{\,\Hu} \big(\beta_{\alpha_n}^*\big)\Big)_{S} \bigg) = - \frac{1}{n} \sum_{i=1}^n l_{\alpha_n}'\big(Y_i-\f{X_i^\top} \beta_{\alpha_n}^* \big) Z_i 
		\end{align*}
		with $Z_i = 33\,\CXsub/\sqrt{\CXl} ~ u^\top  \big(\f{X_i}\big)_S$. The random variables $l_{\alpha_n}'\big(Y_1-\f{X_1^\top} \beta_{\alpha_n}^* \big)  Z_1,\dotsc, l_{\alpha_n}'\big(Y_n-\f{X_n^\top} \beta_{\alpha_n}^* \big)  Z_n $ are independent and identically distributed and have mean equal to zero, see proof of Lemma \ref{lemmamatrixgradient} for more details. In addition (iii) of Assumption \ref{assfan2017} implies that the random variables $Z_{1},\dotsc,Z_n$ are sub-Gaussian with variance proxy $ 1089\,\CXsubf/\CXl$. This leads to 
		\begin{align*}
			\E\bigg[ \Big( l_{\alpha_n}'\big(Y_i-\f{X_i^\top} \beta_{\alpha_n}^* \big) Z_{i} \Big)^2 \bigg] \leq \frac{1089\,\CXsubs\,\Cgradt}{\CXl} 
		\end{align*}
		and 
		\begin{align*}
			\E\bigg[ \Big| l_{\alpha_n}'\big(Y_i-\f{X_i^\top} \beta_{\alpha_n}^* \big) \, Z_{i} \Big|^u \bigg] \leq \frac{u!}{2} \, \bigg(\frac{2\,33\,\CXsub\,\Cgradv}{\sqrt{\CXl} \,\alpha_n} \bigg)^{u-2} \, \Cgradt 
		\end{align*}
		for $u \in \N$, $u \geq 3$, where $\Cgradt$ and $\Cgradv$ are given in the proof of Lemma \ref{ratelinftygradient}. Bernstein's inequality and the choice of $\alpha_n$ leads for fixed $u \in A$ to
		\begin{align*}
			\Prob\Bigg(\bigg| \frac{1}{n} \sum_{i=1}^{n} l_{\alpha_n}'\big(Y_i-\f{X_i^\top} \beta_{\alpha_n}^* \big) \, Z_{i} \bigg| \geq \frac{66\,\CXsub}{\sqrt{\CXl}} \,\bigg( \frac{8 \Cgradt \,\log(p)}{n}\bigg)^\frac{1}{2} \Bigg) \leq 2 \exp\big(-4 \log(p)\big) \,,
		\end{align*}
		see proof of Lemma \ref{ratelinftygradient} for more details. By the union bound and the definition of $\Cgrads$ in the proof of Lemma \ref{ratelinftygradient} we get
		\begin{align}
			\Prob \Bigg( \max_{u \in A} \bigg| u^\top \bigg( &\frac{33\,\CXsub}{\sqrt{\CXl}} \, \Big(\nabla \loss_{n,\alpha_n}^{\,\Hu} \big(\beta_{\alpha_n}^*\big)\Big)_{S} \bigg) \bigg| > \frac{\sqrt{4} \,33\,\CXsubs}{\sqrt{3\,\CXl}} \,\Cgrads \, \bigg( \frac{\log(p)}{n}\bigg)^\frac{1}{2} \Bigg) \notag \\
			&\leq 2\,N \, \exp\big(-4 \log(p)\big) \leq  2 \exp\big(- 4 \log(p) + s \log(6) \big) \notag \\
			&\leq 2 \exp\big(- 4 \log(p) + 2 \log(p)  \big) = \frac{2}{p^2} \label{proofratelinftyQhatgradient4}
		\end{align}
		since the $1/2$-covering-number $N$ can be upper bounded by $6^s$, cf. \citet[Lemma 1.18]{Rigollet2019} or  \citet[Example 5.8]{Wainwright2019}, and we assumed $s \leq \log(p)$. In conclusion the inequalities \eqref{proofratelinftyQhatgradient1} - \eqref{proofratelinftyQhatgradient4} imply the assertion of the lemma. 
	\end{proof}

	\subsection{Proof of (50) and of (51)}\label{sec:proofsignconsweightedhuber2}

		From (34) in Lemma 6  we obtain
		\begin{align*}
			\Big( \widehat{Q}_{S^c (S_{\alpha_n} \setminus S)} &- \widehat{Q}_{S^c S} \big(\widehat{Q}_{S S}\big)^{-1} \widehat{Q}_{S (S_{\alpha_n} \setminus S)}  \Big) \,\beta_{\alpha_n,S_{\alpha_n} \setminus S}^* \\
			&= \bigg( \frac{2}{n} \, \X_{n,S^c}^\top\, D \, \X_{n,S_{\alpha_n} \setminus S} - \frac{2}{n} \X_{n,S^c}^\top\, D \, \X_{n,S} \big( \X_{n,S}^\top\, D \, \X_{n,S} \big)^{-1} \X_{n,S}^\top\, D \, \X_{n,S_{\alpha_n} \setminus S} \bigg) \,\beta_{\alpha_n,S_{\alpha_n} \setminus S}^* \\
			&= \frac{2}{n} \, \X_{n,S^c}^\top\, D^\frac{1}{2} \Big( \mathrm{I}_n - D^\frac{1}{2} \, \X_{n,S} \big( \X_{n,S}^\top\, D \, \X_{n,S} \big)^{-1} \X_{n,S}^\top\, D^\frac{1}{2} \Big) D^\frac{1}{2} \, \X_{n,S_{\alpha_n} \setminus S} \,\beta_{\alpha_n,S_{\alpha_n} \setminus S}^* \,.
		\end{align*}
		The matrix in brackets, which we will denote by $\mathrm{P}$, is an orthogonal projection matrix. Therefore 
		using Lemma \ref{lemmacolumnnormalization},
		on an event with probability at least $1-2/p^2$ we obtain
		\begin{align*}
			\max_{k \in \{ 1,\dotsc,p-s \} } \, \normz{\bigg(e_k^\top \frac{2}{n} \,\X_{n,S^c}^\top\, D^\frac{1}{2} \, \mathrm{P}  \, D^\frac{1}{2} \bigg)^\top} \leq \max_{k \in S^c} \bigg( \frac{2}{n} \, \normzM{D^\frac{1}{2}} \,\big\lVert\mathrm{P}\big\rVert_{\mathrm{M},2}\, \normzM{D^\frac{1}{2}} \normz{\vec{X}_k} \bigg) \leq \frac{2\,\sqrt{17} \, \CXsub}{\sqrt{n}} 
		\end{align*}
		since the entries of the diagonal matrix $D$ are smaller than or equal to $1$. Setting $Q = \X_{n,S_{\alpha_n} \setminus S} \,\beta_{\alpha_n,S_{\alpha_n} \setminus S}^*$, for $x>0$ this leads to
		\begin{align*}
			\Prob \Bigg( &\max_{k \in \{ 1,\dotsc,p-s \} } \, \bigg|  e_k^\top \frac{2}{n} \, \X_{n,S^c}^\top\, D^\frac{1}{2} \, \mathrm{P}  \, D^\frac{1}{2} \, \X_{n,S_{\alpha_n} \setminus S} \,\beta_{\alpha_n,S_{\alpha_n} \setminus S}^*\bigg| > x \, , \,  \max_{k \in \{1,\dotsc,p\}} \frac{1}{\sqrt{n}} \,\normz{\vec{X}_k} \leq \sqrt{17} \, \CXsub  \Bigg) \\
			& \quad \leq \Prob \Bigg( \max_{k \in \{ 1,\dotsc,p-s \}} \, \bigg|  e_k^\top \frac{2}{n} \, \X_{n,S^c}^\top\, D^\frac{1}{2} \, \mathrm{P}  \, D^\frac{1}{2} \, Q \bigg| > x \, , \,  \\
			& \quad \quad \quad \quad \quad \quad \quad \quad \quad \max_{k \in \{ 1,\dotsc,p-s \}} \, \normz{\bigg(e_k^\top \frac{2}{n} \,\X_{n,S^c}^\top\, D^\frac{1}{2} \, \mathrm{P}  \, D^\frac{1}{2} \bigg)^\top} \leq \frac{2\,\sqrt{17} \, \CXsub}{\sqrt{n}}  \Bigg) \\
			& \quad \leq \Prob \Bigg( \max_{u \in \R^n : \normz{u} \leq \frac{2\,\sqrt{17} \, \CXsub}{\sqrt{n}}} \big|u^\top Q \big| > x \Bigg) = \Prob \Bigg( \max_{u \in \R^n : \normz{u} \leq 1} \bigg|u^\top \bigg( \frac{2\,\sqrt{17} \, \CXsub}{\sqrt{n}} \, Q \bigg) \bigg| > x \Bigg). 
		\end{align*}
		The vector $Q$ has independent and sub-Gaussian entries
		\begin{align*}
			\big(\f{X_i}\big)_{S_{\alpha_n} \setminus S}^\top \,\beta_{\alpha_n,S_{\alpha_n} \setminus S}^* = \f{X_i}^\top \begin{pmatrix}
				\beta_{\alpha_n,S_{\alpha_n} \setminus S}^* \\
				\f{0}_{|(S_{\alpha_n} \setminus S)^c|}
			\end{pmatrix}
			\sim \subg\Big(\CXsub \, \big\lVert \beta_{\alpha_n,S_{\alpha_n} \setminus S}^* \big\rVert_2 \Big)
		\end{align*}
		by (iii) of Assumption \ref{assfan2017}, and \citet[Theorem 1.6]{Rigollet2019} implies
		\begin{align*}
			\frac{2\,\sqrt{17} \, \CXsub}{\sqrt{n}} \, Q \sim \subg_n \Bigg(\frac{2\,\sqrt{17} \,\CXsubs \, \big\lVert \beta_{\alpha_n,S_{\alpha_n} \setminus S}^* \big\rVert_2}{\sqrt{n}} \Bigg) \,.
		\end{align*}
		Finally \citet[Theorem 1.19]{Rigollet2019} with the choice $\delta=\exp(-2n)$ leads to
		\begin{align*}
			\Prob \Bigg( \max_{u \in \R^n : \normz{u} \leq 1} \bigg|u^\top \bigg( \frac{2\,\sqrt{17} \, \CXsub}{\sqrt{n}} \, Q \bigg) \bigg|  >  16 \,\sqrt{17} \, \CXsubs \, \big\lVert \beta_{\alpha_n,S_{\alpha_n} \setminus S}^* \big\rVert_2 \Bigg) \leq \exp\big(-2n\big) \,,
		\end{align*}
		so that we obtain overall
		%
		%
		%
		\begin{align}
			\Prob \bigg(&\normi{ \Big( \widehat{Q}_{S^c (S_{\alpha_n} \setminus S)} - \widehat{Q}_{S^c S} \big(\widehat{Q}_{S S}\big)^{-1} \widehat{Q}_{S (S_{\alpha_n} \setminus S)} \Big) \, \beta_{\alpha_n,S_{\alpha_n} \setminus S}^* } > 16 \,\sqrt{17} \, \CXsubs \, \big\lVert \beta_{\alpha_n,S_{\alpha_n} \setminus S}^* \big\rVert_2
			\bigg) \notag \\
			& \qquad \qquad \leq  \exp\big(-2n\big) + 2/p^2\,. \label{eq:hilfestagain}
		\end{align}
		Similarly, for the vector
		$\widehat{Q}_{S (S_{\alpha_n} \setminus S)} \,\beta_{\alpha_n,S_{\alpha_n} \setminus S}^* = \frac{2}{n} \,\X_{n,S}^\top\, D \, \X_{n,S_{\alpha_n} \setminus S} \,\beta_{\alpha_n,S_{\alpha_n} \setminus S}^*$, 
		arguing as for \eqref{eq:hilfestagain} we obtain 
		\begin{align*}
			\Prob \Bigg(&\normi{ \widehat{Q}_{S (S_{\alpha_n} \setminus S)} \,\beta_{\alpha_n,S_{\alpha_n} \setminus S}^* } > 16 \,\sqrt{17} \, \CXsubs \, \big\lVert \beta_{\alpha_n,S_{\alpha_n} \setminus S}^* \big\rVert_2 \Bigg) \leq \exp\big(-2n\big) + 2/p^2 \,.
		\end{align*}
		%
		%
		From Lemma \ref{approximation:error:pseudo:huber} we get
		\begin{align*}
			\normz{ \beta_{\alpha_n,S_{\alpha_n} \setminus S}^* } &= \normz{ \beta_{\alpha_n,S_{\alpha_n} \setminus S}^* - \beta_{S_{\alpha_n} \setminus S}^*} \leq \normz{\beta_{\alpha_n}^* - \beta^*} \leq \Capprox\,\alpha_n^{\Cpm-1}
		\end{align*}
		since $\beta_l^* = 0$ for all $l \in S_{\alpha_n} \setminus S$. So in total we have
		\begin{align*}
			&\normi{ \widehat{Q}_{S (S_{\alpha_n} \setminus S)} \,\beta_{\alpha_n,S_{\alpha_n} \setminus S}^* } \, , \, \normi{ \Big( \widehat{Q}_{S^c (S_{\alpha_n} \setminus S)} - \widehat{Q}_{S^c S} \big(\widehat{Q}_{S S}\big)^{-1} \widehat{Q}_{S (S_{\alpha_n} \setminus S)}  \Big) \,\beta_{\alpha_n,S_{\alpha_n} \setminus S}^* } \notag \\
			&\quad \quad \quad \quad \quad \quad \quad \quad \quad \quad \quad \quad \leq 80 \, \Capprox \, \CXsubs \,\alpha_n^{\Cpm-1} 
		\end{align*}
		with probability at least $1- 2 \exp(-2n) - 2/p^2 $, which yields the claimed inequalities in (50) and in  (51). 
		
		\section{Supplement: Additional simulation results}\label{sessupp:sims}
		
		Additional simulation in the setting of Section 4.1.  
		
		\begin{itemize}
			
			\item[(c)] \textbf{Symmetric errors with heavy tails.} \\
			Here we consider $\widetilde{\varepsilon}_i = 2 \, Q_i$ with $Q_i \sim t_3$ t-distributed with $3$ degrees of freedom.\\
			
			\begin{table}[!h]
				\centering
				\begin{tabular}{rrrrrrrr}
					\hline
					& L & AL & LH & LPH & ALH (LH) & ALPH (LH) & ALPH (LPH) \\ 
					\hline
					$\lambda$ & 0.262 & 0.901 & 0.142 & 0.080 & 0.059 & 0.040 & 0.033 \\ 
					$\alpha$ &  &  & 0.429 & 0.742 & 0.563 & 0.769 & 0.974\\ 
					$\ell_2$ norm & 2.85 & 1.89 & 2.34 & 2.35 & 1.17 & 1.18 & 1.19\\ 
					$\ell_\infty$ norm & 1.03 & 0.76 & 0.85 & 0.85 & 0.53 & 0.53 & 0.53 \\ 
					FPR in \% & 15.64 & 2.74 & 16.66 & 17.59 & 1.38 & 1.39 & 1.51\\ 
					FNR in \% & 0.03 & 0.05 & 0.00 & 0.00 & 0.00 & 0.00 & 0.00\\ 
					\hline
				\end{tabular}
				\caption{homoscedastic t-distributed errors. Simulations with $n=200$, $p=400$, $s=20$.}
			\end{table}

			\begin{table}[!h]
				\centering
				\begin{tabular}{rrrrrrrr}
					\hline
					& L & AL & LH & ALH (LH) & ALPH (LH) \\ 
					\hline
					$\lambda$ & 0.226 & 0.849 & 0.019 & 0.0005 & 0.0006\\ 
					$\alpha$ &  &  & 3.574 & 33.854 & 29.368 \\ 
					$\ell_2$ norm & 2.71 (1.42) & 1.87 (1.54) & 1.37 (0.33) & 0.28 (0.13) & 0.28 (0.12) \\ 
					$\ell_\infty$ norm & 0.94 (0.38) & 0.72 (0.40) & 0.46 (0.12) & 0.12 (0.05) & 0.11 (0.05) \\ 
					FPR in \% & 16.36 (5.08) & 3.05 (3.94) & 20.95 (1.65) & 1.13 (0.76) & 1.18 (0.74) \\ 
					FNR in \% & 0.11 (1.83) & 0.16 (2.26) & 0.00 (0.00) & 0.00 (0.00) & 0.00 (0.00) \\ 
					\hline
				\end{tabular}
				\caption{heteroscedastic t-distributed errors. Simulations with $n=200$, $p=400$, $s=20$.}
			\end{table}
		\end{itemize}
		
		\begin{table}[ht]
			\centering
			\begin{tabular}{rrrr}  \hline & L & LH & ALPH \\   \hline
				$\lambda$ & 0.2162 & 0.0551 & 0.0731 \\   
				$\alpha$ &  & 0.3242 & 0.5778 \\   
				$\ell_2$ norm & 2.8438 & 3.1088 & 1.6544 \\   
				$\ell_\infty$ norm & 1.1012 & 1.1842 & 0.7300 \\   
				FPR in \% & 19.0298 & 21.9450 & 2.5748 \\   
				FNR in \% & 0.0000 & 0.0000 & 0.0000 \\   
				FPR in \% (Knockoff Augmented) & 0.5648 & 0.5256 & 0.2076 \\   
				FNR in \% (Knockoff Augmented) & 0.0000 & 0.0000 & 0.0000 \\   
				FPR in \% (Knockoff Counting) & 1.7236 & 1.6661 & 0.1323 \\   
				FNR (in \% Knockoff Counting) & 0.0000 & 0.0000 & 22.1532 \\   
				\hline\end{tabular}
			\caption{homoscedastic t-distributed errors with $n=100$, $p=200$ and $s=20$.}
		\end{table}
		
		\begin{table}[ht]
			\centering
			\begin{tabular}{rrrr}  \hline & L & LH & ALPH \\   \hline
				$\lambda$ & 0.1982 & 0.0711 & 0.0046 \\   
				$\alpha$ &  & 1.8431 & 6.1579 \\   
				$\ell_2$ norm & 2.7213 & 2.1308 & 0.5801 \\   
				$\ell_\infty$ norm & 1.0411 & 0.7810 & 0.2459 \\   
				FPR in \% & 19.3784 & 25.7561 & 3.0001 \\   
				FNR in \% & 0.0000 & 0.0000 & 0.0000 \\   
				FPR (in \% Knockoff Augmented) & 0.5883 & 0.5360 & 0.1327 \\   
				FNR in \% (Knockoff Augmented) & 0.0000 & 0.0000 & 0.0000 \\   
				FPR in \% (Knockoff Counting) & 1.7069 & 1.6506 & 0.2356 \\   
				FNR in \% (Knockoff Counting) & 0.0000 & 0.0000 & 0.0000 \\    
				\hline\end{tabular}
			\caption{heteroscedastic t-distributed errors with $n=100$, $p=200$ and $s=20$.}
		\end{table}

		\begin{table}[ht]
			\centering
			\begin{tabular}{rrrr}  \hline & L & LH & ALPH \\   \hline
				$\lambda$ & 0.0000 & 0.0010 & 0.0391 \\   
				$\alpha$ &  & 0.0100 & 0.2611 \\   
				$\ell_2$ norm & 33.9970 & 26.1026 & 23.8758 \\   
				$\ell_\infty$ norm & 10.4033 & 7.4723 & 6.7253 \\   
				FPR in \% & 100.0000 & 39.8301 & 28.4319 \\   
				FNR in \% & 0.0000 & 2.2497 & 3.0190 \\   
				FPR in \% (Knockoff Augmented) & 77.0232 & 15.5763 & 6.4258 \\   
				FNR in \% (Knockoff Augmented) & 1.9020 & 23.0822 & 39.1649 \\   
				FPR in \% (Knockoff Counting) & 27.9123 & 12.9077 & 5.0184 \\   
				FNR in \% (Knockoff Counting) & 3.0664 & 8.8830 & 22.5527 \\    
				\hline\end{tabular}
			\caption{homoscedastic t-distributed errors with $n=100$, $p=200$ and $s=40$}
	\end{table}

	\begin{landscape}

		\begin{table}[ht]
			\centering
			\tiny
			\begin{tabular}{rrrrrrrrrrrrrrrr} 
				\hline 
				& L &  L (Aug) & L (Count) & AL & AL (Aug) & AL (Count)  & LH & LH (Aug) & LH (Count) & ALH & ALH (Aug) & ALH (Count) & ALPH & ALPH (Aug) & ALPH (Count) \\   \hline
				$\lambda$ & 0.1321 & 0.1321 & 0.1321 & 0.8108 & 0.8108 & 0.8108 & 0.0420 & 0.0420 & 0.0420 & 0.0260 & 0.0260 & 0.0260 & 0.1021 & 0.1021 & 0.1021 \\  
				$\alpha$ & 0.0000 & 0.0000 & 0.0000 & 0.0000 & 0.0000 & 0.0000 & 0.3242 & 0.3242 & 0.3242 & 0.2553 & 0.2553 & 0.2553 & 0.1000 & 0.1000 & 0.1000 \\   
				$\tau$ & 0.0000 & 0.6834 & 0.3945 & 0.0000 & 0.3113 & 0.2478 & 0.0000 & 0.7620 & 0.4087 & 0.0000 & 0.4143 & 0.1489 & 0.0000 & 0.4099 &  \\   
				$\ell_2$ norm & 2.7482 & 0.0000 & 0.0000 & 1.6623 & 0.0000 & 0.0000 & 2.7898 & 0.0000 & 0.0000 & 1.5054 & 0.0000 & 0.0000 & 1.5024 & 0.0000 & 0.0000 \\   
				$\ell_\infty$ norm & 1.0193 & 0.0000 & 0.0000 & 0.6927 & 0.0000 & 0.0000 & 1.0448 & 0.0000 & 0.0000 & 0.6586 & 0.0000 & 0.0000 & 0.6578 & 0.0000 & 0.0000 \\   
				FPR in \%  & 26.4528 & 0.6166 & 1.7414 & 5.3998 & 0.3097 & 1.0421 & 25.8134 & 0.5287 & 1.6843 & 3.6955 & 0.2698 & 0.8476 & 3.6208 & 0.2732 & 0.8276 \\   
				FNR in \% & 0.0000 & 0.0000 & 0.0000 & 0.0000 & 0.0000 & 0.0000 & 0.0000 & 0.0000 & 0.0000 & 0.0000 & 0.0000 & 0.0000 & 0.0000 & 0.0000 & 0.3080 \\ 
				\hline
			\end{tabular}
			\caption{homoscedastic normal errors with $n=100$, $p=200$ and $s=20$.}
		\end{table}

		\begin{table}[ht]
			\centering
			\tiny
			\begin{tabular}{rrrrrrrrrrrrrrrr} 
				\hline 
				& L &  L (Aug) & L (Count) & AL & AL (Aug) & AL (Count)  & LH & LH (Aug) & LH (Count) & ALH & ALH (Aug) & ALH (Count) & ALPH & ALPH (Aug) & ALPH (Count) \\   \hline
				$\lambda$ & 0.1301 & 0.1301 & 0.1301 & 0.8068 & 0.8068 & 0.8068 & 0.0702 & 0.0702 & 0.0702 & 0.0341 & 0.0341 & 0.0341 & 0.0032 & 0.0032 & 0.0032 \\   
				$\alpha$ & 0.0000 & 0.0000 & 0.0000 & 0.0000 & 0.0000 & 0.0000 & 2.0652 & 2.0652 & 2.0652 & 8.2794 & 8.2794 & 8.2794 & 8.2632 & 8.2632 & 8.2632 \\   
				$\tau$ & 0.0000 & 0.6422 & 0.3744 & 0.0000 & 0.2877 & 0.2333 & 0.0000 & 0.6249 & 0.3084 & 0.0000 & 0.1512 & 0.0400 & 0.0000 & 0.1334 & 0.0390 \\   
				$\ell_2$ norm  & 2.6448 & 0.0000 & 0.0000 & 1.6522 & 0.0000 & 0.0000 & 2.1385 & 0.0000 & 0.0000 & 0.6589 & 0.0000 & 0.0000 & 0.6410 & 0.0000 & 0.0000 \\   
				$\ell_\infty$ norm & 0.9783 & 0.0000 & 0.0000 & 0.6827 & 0.0000 & 0.0000 & 0.7822 & 0.0000 & 0.0000 & 0.2815 & 0.0000 & 0.0000 & 0.2693 & 0.0000 & 0.0000 \\   
				FPR in \% & 25.5842 & 0.5894 & 1.7303 & 4.9215 & 0.2643 & 0.9924 & 26.7555 & 0.5510 & 1.6512 & 2.8122 & 0.1989 & 0.6381 & 3.8832 & 0.1680 & 0.5126 \\   
				FNR in \% & 0.0000 & 0.0000 & 0.0000 & 0.0000 & 0.0000 & 0.0000 & 0.0000 & 0.0052 & 0.0000 & 0.0052 & 0.0052 & 0.0000 & 0.0000 & 0.0052 & 0.0000 \\   
			\end{tabular}
			\caption{heteroscedastic normal errors with $n=100$, $p=200$ and $s=20$.}
		\end{table}
		
		\begin{table}[ht]
			\centering
			\tiny
			\begin{tabular}{rrrrrrrrrrrrrrrr} 
				\hline 
				& L &  L (Aug) & L (Count) & AL & AL (Aug) & AL (Count)  & LH & LH (Aug) & LH (Count) & ALH & ALH (Aug) & ALH (Count) & ALPH & ALPH (Aug) & ALPH (Count) \\   \hline
				$\lambda$ & 0.2162 & 0.2162 & 0.2162 & 0.9029 & 0.9029 & 0.9029 & 0.0551 & 0.0551 & 0.0551 & 0.0511 & 0.0511 & 0.0511 & 0.0731 & 0.0731 & 0.0731 \\   
				$\alpha$ & 0.0000 & 0.0000 & 0.0000 & 0.0000 & 0.0000 & 0.0000 & 0.3242 & 0.3242 & 0.3242 & 0.5632 & 0.5632 & 0.5632 & 0.5778 & 0.5778 & 0.5778 \\   l
				$\tau$ & 0.0000 & 0.6606 & 0.3510 & 0.0000 & 0.2629 & 0.1395 & 0.0000 & 0.7491 & 0.3846 & 0.0000 & 0.3822 & 0.1101 & 0.0000 & 0.2184 &  \\   
				$\ell_2$ norm & 2.8438 & 0.0000 & 0.0000 & 1.7110 & 0.0000 & 0.0000 & 3.1088 & 0.0000 & 0.0000 & 1.5867 & 0.0000 & 0.0000 & 1.6544 & 0.0000 & 0.0000 \\   
				$\ell_\infty$ norm & 1.1012 & 0.0000 & 0.0000 & 0.7370 & 0.0000 & 0.0000 & 1.1842 & 0.0000 & 0.0000 & 0.7160 & 0.0000 & 0.0000 & 0.7300 & 0.0000 & 0.0000 \\   
				FPR in \% & 19.0298 & 0.5648 & 1.7236 & 2.6708 & 0.2812 & 0.7695 & 21.9450 & 0.5256 & 1.6661 & 2.2280 & 0.2841 & 0.6510 & 2.5748 & 0.2076 & 0.1323 \\   
				FNR in \% & 0.0000 & 0.0000 & 0.0000 & 0.0000 & 0.0000 & 0.0000 & 0.0000 & 0.0000 & 0.0000 & 0.0000 & 0.0000 & 0.0000 & 0.0000 & 0.0000 & 22.1532 \\ 
				\hline
			\end{tabular}
			\caption{homoscedastic t-distributed errors with $n=100$, $p=200$ and $s=20$.}
		\end{table}
		
		\begin{table}[ht]
			\centering
			\tiny
			\begin{tabular}{rrrrrrrrrrrrrrrr} 
				\hline 
				& L &  L (Aug) & L (Count) & AL & AL (Aug) & AL (Count)  & LH & LH (Aug) & LH (Count) & ALH & ALH (Aug) & ALH (Count) & ALPH & ALPH (Aug) & ALPH (Count) \\   \hline
				$\lambda$ & 0.1982 & 0.1982 & 0.1982 & 1.0050 & 1.0050 & 1.0050 & 0.0711 & 0.0711 & 0.0711 & 0.0411 & 0.0411 & 0.0411 & 0.0046 & 0.0046 & 0.0046 \\   
				$\alpha$ & 0.0000 & 0.0000 & 0.0000 & 0.0000 & 0.0000 & 0.0000 & 1.8431 & 1.8431 & 1.8431 & 6.6091 & 6.6091 & 6.6091 & 6.1579 & 6.1579 & 6.1579 \\   
				$\tau$ & 0.0000 & 0.6171 & 0.3407 & 0.0000 & 0.2239 & 0.1484 & 0.0000 & 0.6155 & 0.3045 & 0.0000 & 0.1318 & 0.0319 & 0.0000 & 0.0735 & 0.0171 \\   
				$\ell_2$ norm & 2.7213 & 0.0000 & 0.0000 & 1.7369 & 0.0000 & 0.0000 & 2.1308 & 0.0000 & 0.0000 & 0.6734 & 0.0000 & 0.0000 & 0.5801 & 0.0000 & 0.0000 \\   
				$\ell_\infty$ norm & 1.0411 & 0.0000 & 0.0000 & 0.7412 & 0.0000 & 0.0000 & 0.7810 & 0.0000 & 0.0000 & 0.2812 & 0.0000 & 0.0000 & 0.2459 & 0.0000 & 0.0000 \\   
				FPR in \% & 19.3784 & 0.5883 & 1.7069 & 2.4135 & 0.2436 & 0.7555 & 25.7561 & 0.5360 & 1.6506 & 1.9505 & 0.2126 & 0.5441 & 3.0001 & 0.1327 & 0.2356 \\   
				FNR in \% & 0.0000 & 0.0000 & 0.0000 & 0.0000 & 0.0000 & 0.0000 & 0.0000 & 0.0000 & 0.0000 & 0.0000 & 0.0000 & 0.0000 & 0.0000 & 0.0000 & 0.0000 \\    \hline
			\end{tabular}
			\caption{heteroscedastic t-distributed errors with $n=100$, $p=200$ and $s=20$.}
		\end{table}
		
		\begin{table}[ht]
			\centering
			\tiny
			\begin{tabular}{rrrrrrrrrrrrrrrr}  \hline 
				& L &  L (Aug) & L (Count) & AL & AL (Aug) & AL (Count)  & LH & LH (Aug) & LH (Count) & ALH & ALH (Aug) & ALH (Count) & ALPH & ALPH (Aug) & ALPH (Count)  \\  \hline
				$\lambda$ & 0.1061 & 0.1061 & 0.1061 & 0.6547 & 0.6547 & 0.6547 & 0.0591 & 0.0591 & 0.0591 & 0.0330 & 0.0330 & 0.0330 & 0.0397 & 0.0397 & 0.0397 \\   
				$\alpha$ & 0.0000 & 0.0000 & 0.0000 & 0.0000 & 0.0000 & 0.0000 & 0.7410 & 0.7410 & 0.7410 & 0.8459 & 0.8459 & 0.8459 & 0.6000 & 0.6000 & 0.6000 \\   
				$\tau$ & 0.0000 & 0.5567 & 0.3193 & 0.0000 & 0.2075 & 0.1757 & 0.0000 & 0.5742 & 0.2940 & 0.0000 & 0.2073 & 0.0613 & 0.0000 & 0.1288 &  \\   
				$\ell_2$ norm & 2.1458 & 0.0000 & 0.0000 & 1.2233 & 0.0000 & 0.0000 & 2.1141 & 0.0000 & 0.0000 & 0.8219 & 0.0000 & 0.0000 & 0.8876 & 0.0000 & 0.0000 \\   
				$\ell_\infty$ norm & 0.7940 & 0.0000 & 0.0000 & 0.5082 & 0.0000 & 0.0000 & 0.7933 & 0.0000 & 0.0000 & 0.3760 & 0.0000 & 0.0000 & 0.4000 & 0.0000 & 0.0000 \\   
				FPR in \% & 25.7723 & 0.6072 & 1.6208 & 4.7156 & 0.2586 & 0.8697 & 25.1944 & 0.5664 & 1.6397 & 2.3048 & 0.2580 & 0.6461 & 2.5496 & 0.2133 & 0.1525 \\   
				FNR in \% & 0.0000 & 0.0000 & 0.0000 & 0.0000 & 0.0000 & 0.0000 & 0.0000 & 0.0000 & 0.0000 & 0.0000 & 0.0000 & 0.0000 & 0.0000 & 0.0000 & 20.2270 \\   
				\hline
			\end{tabular}
			\caption{homoscedastic skew t-distributed errors with $n=100$, $p=200$ and $s=20$.}
		\end{table}

		\begin{table}[ht]
			\centering
			\tiny
			\begin{tabular}{rrrrrrrrrrrrrrrr} 
				\hline 
				& L &  L (Aug) & L (Count) & AL & AL (Aug) & AL (Count)  & LH & LH (Aug) & LH (Count) & ALH & ALH (Aug) & ALH (Count) & ALPH & ALPH (Aug) & ALPH (Count) \\   \hline
				$\lambda$ & 0.0961 & 0.0961 & 0.0961 & 0.8388 & 0.8388 & 0.8388 & 0.0671 & 0.0671 & 0.0671 & 0.0270 & 0.0270 & 0.0270 & 0.0019 & 0.0019 & 0.0019 \\   
				$\alpha$ & 0.0000 & 0.0000 & 0.0000 & 0.0000 & 0.0000 & 0.0000 & 2.3846 & 2.3846 & 2.3846 & 15.5932 & 15.5932 & 15.5932 & 10.3158 & 10.3158 & 10.3158 \\   l
				$\tau$ & 0.0000 & 0.4998 & 0.2892 & 0.0000 & 0.1602 & 0.1701 & 0.0000 & 0.4285 & 0.1950 & 0.0000 & 0.0582 & 0.0132 & 0.0000 & 0.0520 & 0.0120 \\   
				$\ell_2$ norm & 2.0060 & 0.0000 & 0.0000 & 1.2224 & 0.0000 & 0.0000 & 1.3282 & 0.0000 & 0.0000 & 0.2860 & 0.0000 & 0.0000 & 0.2908 & 0.0000 & 0.0000 \\   
				$\ell_\infty$ norm & 0.7368 & 0.0000 & 0.0000 & 0.5053 & 0.0000 & 0.0000 & 0.4869 & 0.0000 & 0.0000 & 0.1257 & 0.0000 & 0.0000 & 0.1264 & 0.0000 & 0.0000 \\   
				FPR in \% & 25.0355 & 0.6180 & 1.6615 & 3.6323 & 0.2045 & 0.7864 & 26.2532 & 0.5699 & 1.6523 & 1.6443 & 0.1970 & 0.4708 & 2.5040 & 0.1581 & 0.3442 \\   
				FNR in \% & 0.0000 & 0.0000 & 0.0000 & 0.0000 & 0.0000 & 0.0000 & 0.0000 & 0.0000 & 0.0000 & 0.0000 & 0.0000 & 0.0000 & 0.0000 & 0.0000 & 0.0000 \\   
				\hline
			\end{tabular}
			\caption{heteroscedastic skew t-distributed errors with $n=100$, $p=200$ and $s=20$.}
		\end{table}

		\begin{table}[ht]
			\centering
			\tiny
			\begin{tabular}{rrrrrrrrrrrrrrrr} 
				\hline 
				& L &  L (Aug) & L (Count) & AL & AL (Aug) & AL (Count)  & LH & LH (Aug) & LH (Count) & ALH & ALH (Aug) & ALH (Count) & ALPH & ALPH (Aug) & ALPH (Count) \\   \hline
				$\lambda$ & 0.0000 & 0.0000 & 0.0000 & 0.0881 & 0.0881 & 0.0881 & 0.0010 & 0.0010 & 0.0010 & 0.0280 & 0.0280 & 0.0280 & 0.0280 & 0.0280 & 0.0280 \\   
				$\alpha$ & 0.0000 & 0.0000 & 0.0000 & 0.0000 & 0.0000 & 0.0000 & 0.0100 & 0.0100 & 0.0100 & 0.4605 & 0.4605 & 0.4605 & 0.2000 & 0.2000 & 0.2000 \\   
				$\tau$ & 0.0000 & 0.0010 & 0.9329 & 0.0000 & 0.0014 & 2.2620 & 0.0000 & 0.7492 & 0.5411 & 0.0000 & 3.0706 & 1.2888 & 0.0000 & 3.1471 &  \\   
				$\ell_2$ norm & 33.9446 & 0.0000 & 0.0000 & 19.8070 & 0.0000 & 0.0000 & 24.4244 & 0.0000 & 0.0000 & 19.7822 & 0.0000 & 0.0000 & 20.1693 & 0.0000 & 0.0000 \\   
				$\ell_\infty$ norm & 10.4495 & 0.0000 & 0.0000 & 6.3186 & 0.0000 & 0.0000 & 7.0241 & 0.0000 & 0.0000 & 5.5899 & 0.0000 & 0.0000 & 5.6800 & 0.0000 & 0.0000 \\   
				FPR in \% & 100.0000 & 76.9096 & 27.7842 & 49.1218 & 42.0734 & 13.8221 & 38.9770 & 15.7287 & 12.6907 & 24.4292 & 6.4585 & 6.4925 & 27.4148 & 6.4859 & 5.8186 \\   
				FNR in \% & 0.0000 & 1.8732 & 3.1316 & 0.5315 & 2.6621 & 4.3375 & 2.1830 & 23.4247 & 8.8513 & 2.6930 & 39.6211 & 11.9733 & 2.7336 & 40.0334 & 15.3074 \\    \hline
			\end{tabular}
			\caption{homoscedastic normal errors with $n=100$, $p=200$ and $s=40$}
	\end{table}

	\begin{table}[ht]
		\centering
		\tiny
		\begin{tabular}{rrrrrrrrrrrrrrrr} 
			\hline 
			& L &  L (Aug) & L (Count) & AL & AL (Aug) & AL (Count)  & LH & LH (Aug) & LH (Count) & ALH & ALH (Aug) & ALH (Count) & ALPH & ALPH (Aug) & ALPH (Count) \\   \hline
			$\lambda$ & 0.0000 & 0.0000 & 0.0000 & 0.1401 & 0.1401 & 0.1401 & 0.0010 & 0.0010 & 0.0010 & 0.0310 & 0.0310 & 0.0310 & 0.0391 & 0.0391 & 0.0391 \\   
			$\alpha$ & 0.0000 & 0.0000 & 0.0000 & 0.0000 & 0.0000 & 0.0000 & 0.0100 & 0.0100 & 0.0100 & 0.6658 & 0.6658 & 0.6658 & 0.2611 & 0.2611 & 0.2611 \\   
			$\tau$& 0.0000 & 0.0010 & 0.9276 & 0.0000 & 0.0037 & 2.2606 & 0.0000 & 0.7592 & 0.5471 & 0.0000 & 3.0908 & 1.3453 & 0.0000 & 3.0886 &  \\   
			$\ell_2$ norm & 33.9970 & 0.0000 & 0.0000 & 18.9579 & 0.0000 & 0.0000 & 26.1026 & 0.0000 & 0.0000 & 23.2632 & 0.0000 & 0.0000 & 23.8758 & 0.0000 & 0.0000 \\   
			$\ell_\infty$ norm & 10.4033 & 0.0000 & 0.0000 & 6.1120 & 0.0000 & 0.0000 & 7.4723 & 0.0000 & 0.0000 & 6.5955 & 0.0000 & 0.0000 & 6.7253 & 0.0000 & 0.0000 \\   
			FPR in \% & 100.0000 & 77.0232 & 27.9123 & 43.9943 & 36.0636 & 13.7381 & 39.8301 & 15.5763 & 12.9077 & 26.7334 & 6.3007 & 6.4555 & 28.4319 & 6.4258 & 5.0184 \\   
			FNR in \% & 0.0000 & 1.9020 & 3.0664 & 0.6059 & 2.8609 & 4.3256 & 2.2497 & 23.0822 & 8.8830 & 2.9531 & 39.2677 & 12.3630 & 3.0190 & 39.1649 & 22.5527 \\    \hline
		\end{tabular}
		\caption{homoscedastic t-distributed errors with $n=100$, $p=200$ and $s=40$}
\end{table}

\begin{table}[ht]
	\centering
	\tiny
	\begin{tabular}{rrrrrrrrrrrrrrrr} 
		\hline 
		& L &  L (Aug) & L (Count) & AL & AL (Aug) & AL (Count)  & LH & LH (Aug) & LH (Count) & ALH & ALH (Aug) & ALH (Count) & ALPH & ALPH (Aug) & ALPH (Count) \\   \hline
		$\lambda$ & 0.0000 & 0.0000 & 0.0000 & 0.0380 & 0.0380 & 0.0380 & 0.0010 & 0.0010 & 0.0010 & 0.0360 & 0.0360 & 0.0360 & 0.0021 & 0.0021 & 0.0021 \\   
		$\alpha$ & 0.0000 & 0.0000 & 0.0000 & 0.0000 & 0.0000 & 0.0000 & 0.0100 & 0.0100 & 0.0100 & 2.1447 & 2.1447 & 2.1447 & 7.2105 & 7.2105 & 7.2105 \\   
		$\tau$ & 0.0000 & 0.0011 & 0.9336 & 0.0000 & 0.0012 & 2.2114 & 0.0000 & 0.6781 & 0.5191 & 0.0000 & 3.0363 & 1.1563 & 0.0000 & 3.0816 &  \\   
		$\ell_2$ norm & 33.8354 & 0.0000 & 0.0000 & 24.0536 & 0.0000 & 0.0000 & 24.1196 & 0.0000 & 0.0000 & 16.6086 & 0.0000 & 0.0000 & 19.5423 & 0.0000 & 0.0000 \\   
		$\ell_\infty$ norm & 10.3949 & 0.0000 & 0.0000 & 7.5885 & 0.0000 & 0.0000 & 6.9416 & 0.0000 & 0.0000 & 4.7185 & 0.0000 & 0.0000 & 5.5034 & 0.0000 & 0.0000 \\   
		FPR in \% & 100.0000 & 76.9475 & 27.6657 & 61.3365 & 56.1213 & 14.1694 & 38.5621 & 16.0540 & 12.4852 & 22.4497 & 6.4327 & 6.1294 & 28.3964 & 6.2848 & 4.5170 \\   
		FNR in \% & 0.0000 & 1.7456 & 2.9645 & 0.4201 & 2.2899 & 4.0385 & 2.1893 & 21.4882 & 7.7337 & 2.2189 & 38.3669 & 10.6450 & 2.6331 & 36.7604 & 16.4142 \\    \hline
	\end{tabular}
	\caption{homoscedastic skew t-distributed errors with $n=100$, $p=200$ and $s=40$}
\end{table}

\end{landscape}

\newpage

		\end{document}